\definecolor{darkgreen}{rgb}{0,0.5,0}
\newtheorem{theorem}{Theorem}[section]
\newtheorem{lemma}[theorem]{Lemma}
\newtheorem{proposition}[theorem]{Proposition}
\newtheorem{question}[theorem]{Question}
\newtheorem{corollary}[theorem]{Corollary}
\theoremstyle{definition}
\newtheorem{definition}[theorem]{Definition}
\newtheorem{example}[theorem]{Example}
\newtheorem{remark}[theorem]{Remark}
\newcommand{\im}{\textrm{im}}
\newcommand{\inv}{\iota}
\newcommand{\ita}{\iota \circ \tau}
\def\Inv{\mathfrak{I}}
\newcommand{\bunderline}[1]{\underline{#1\mkern-2mu}\mkern2mu }
\def\du {\bar{d}}
\def\dl {\bunderline{d}}
\def\H{\mathcal{H}}
\newcommand \bH {\overline{\H}}
\newcommand \bJ {\bar{J}}
\newcommand \bh {\mkern3mu \overline{\mkern-3mu H \mkern-1mu} \mkern1mu}
\newcommand \bHp {\overline{\Hp}}
\def\spinc {{\operatorname{spin^c}}}
\def\s{\mathfrak s}
\def\bs{\bar{\mathfrak{\s}}}
\def\w{\mathbf{w}}
\def\CF {\mathit{CF}}
\def\HF {\mathit{HF}}
\newcommand\HFhat{\widehat{\HF}}
\newcommand\HFp {\HF^+}
\newcommand \CFp {\CF^+}
\newcommand \CFm {\CF^-}
\newcommand \HFm {\HF^-}
\def\HIm{\mathit{HI}^-}
\def\fA{\mathfrak{A}}
\def\cG{\mathcal{G}}
\newcommand \Hc {\HF_{\mathrm{conn}}}
\def\Hp{\mathcal{H}}
\newcommand \Hconn {H_\mathrm{conn}}
\newcommand\alphas{\boldsymbol\alpha}
\newcommand\betas{\boldsymbol\beta}
\newcommand\Ta{\mathbb{T}_\alpha}
\newcommand\Tb{\mathbb{T}_\beta}
\newcommand{\G}{\Theta_\Z^{\tau}}
\newcommand{\Gdiff}{\smash{\Theta_\Z^{\text{diff}}}}
\def\ff {{\mathbb{F}}}
\newcommand{\Z}{\mathbb{Z}}
\newcommand{\Q}{\mathbb{Q}}
\newcommand{\R}{\mathbb{R}}
\let\int\relax
\newcommand{\int}{\mathring}
\DeclareMathOperator{\id}{{id}}
\def\Sym{\mathrm{Sym}}
\DeclareMathSymbol{\wtilde}{\mathord}{largesymbols}{"65}
\title{Corks, Involutions, and Heegaard Floer Homology}
\author{Irving Dai, Matthew Hedden, Abhishek Mallick}
\address{Department of Mathematics, Massachusetts Institute of Technology, Cambridge, MA 02142 \vskip.1in}
\email{idai@mit.edu }
 \address{Department of Mathematics, Michigan State University, East Lansing, MI 48824 \vskip.05in}
 \email{mhedden@math.msu.edu}
\address{Department of Mathematics, Michigan State University, East Lansing, MI 48824 \vskip.05in}
\email{mallicka@math.msu.edu }
\begin{document}

\maketitle

%Instead, we utilize the formalism developed by Hendricks, Manolescu, and Zemke in their work 

\begin{abstract}
Building on the algebraic framework developed by Hendricks, Manolescu, and Zemke, we introduce and study a set of Floer-theoretic invariants aimed at detecting corks. Our invariants obstruct the extension of a given involution over {\em any} homology ball, rather than a particular contractible manifold. Unlike previous approaches, we do not utilize any closed 4-manifold topology or contact topology. Instead, we adapt the formalism of local equivalence coming from involutive Heegaard Floer homology. As an application, we define a modification $\G$ of the homology cobordism group which takes into account an involution on each homology sphere, and prove that this admits a $\Z^\infty$-subgroup of strongly non-extendable corks. The group $\G$ can also be viewed as a refinement of the bordism group of diffeomorphisms. Using our invariants, we furthermore establish several new families of corks and prove that various known examples are strongly non-extendable. Our main computational tool is a monotonicity theorem which constrains the behavior of our invariants under equivariant negative-definite cobordisms, and an explicit method of constructing such cobordisms via equivariant surgery.
\end{abstract}

\section{Introduction}\label{sec:1}

Let $Y$ be an oriented three-dimensional integer homology sphere equipped with an orientation-preserving involution $\tau$. If $W$ is a smooth oriented 4-manifold whose boundary is $Y$, then it is natural to ask whether $\tau$ extends to a diffeomorphism of $W$. Of special interest is the case when $W$ is contractible, as then $\tau$ extends over $W$ as a homeomorphism by Freedman's theorem \cite{Freedman}. In \cite{Akbulutfake}, Akbulut provided the first example of a compact, contractible $W$  admitting a non-extendable involution on its boundary.  He proved this by exhibiting an embedding of $W$ into a blown-up $K3$-surface for which cutting out $W$ and re-gluing via $\tau$ produces an exotic smooth structure. Matveyev \cite{Matveyev} and Curtis-Freedman-Hsiang-Stong \cite{CFHS} later showed that in fact {\em any} two smooth structures on the same (simply-connected) topological 4-manifold are related by the cutting out and re-gluing of some contractible submanifold along a boundary involution. Such an operation is called a \textit{cork twist}. 

The aim of the present paper is to study the non-extendability of such involutions by utilizing their action on the Floer homology of $Y$. This idea has appeared in the literature before; see work of Saveliev \cite{Savelievnote}, Akbulut-Durusoy \cite{AD}, Akbulut-Karakurt \cite{AKar}, and Lin-Ruberman-Saveliev \cite{LRS}. Here, we take a slightly different point of view by encoding the induced action of $\tau$ in an algebraic object called a \textit{$\iota$-complex}. This  notion was introduced by Hendricks, Manolescu, and Zemke \cite{HMZ}, who defined $\iota$-complexes in their work on involutive Heegaard Floer homology. The reader should also consult the work of Alfieri, Kang, and Stipsicz \cite{AKS}, who carry out a similar strategy in the setting where $Y$ is the double branched cover of a knot. The primary tool for our study comes from the following theorem:

\begin{theorem}\label{thm:1.1}
Let $Y$ be an integer homology sphere with  involution $\tau : Y \rightarrow Y$. Then there are two Floer-theoretic invariants
\[
h_{\tau}(Y) = [(\CFm(Y)[-2], \tau)] \text{ and } h_{\ita}(Y) = [(\CFm(Y)[-2], \ita)]
\]
associated to the pair $(Y, \tau)$. If either $h_{\tau}(Y) \neq 0$ or $h_{\ita}(Y) \neq 0$, then $\tau$ does not extend to a diffeomorphism of any homology ball bounded by $Y$.
\end{theorem}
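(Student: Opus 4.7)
The plan is to show that if $\tau$ extends to a diffeomorphism $\tilde\tau$ of a homology ball $W$ with $\partial W = Y$, then each of the pairs $(\CFm(Y)[-2], \tau)$ and $(\CFm(Y)[-2], \ita)$ is locally equivalent to the trivial $\iota$-complex and hence both invariants vanish. The two cases run in parallel using the cobordism obtained by puncturing $W$.

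First I would confirm that the stated pairs really are $\iota$-complexes in the sense of Hendricks--Manolescu--Zemke. A self-diffeomorphism of $Y$ induces a chain map on $\CFm(Y)$, well-defined up to chain homotopy by the naturality of Heegaard Floer homology; since $\tau^2 = \id_Y$, one has $\tau^2 \simeq \id$ on $\CFm(Y)$. Combining this with the naturality of the Hendricks--Manolescu--Zemke involution $\iota$ (which commutes with diffeomorphism-induced maps up to homotopy) yields $(\ita)^2 = \iota\tau\iota\tau \simeq \iota^2\tau^2 \simeq \id$. The remaining axioms (grading, $U$-equivariance, and isomorphism on $U^{-1}H_*$) are immediate from the corresponding properties of $\CFm$, and the local equivalence class is independent of the choice of Heegaard diagram by the standard naturality argument adapted to the $\tau$-equivariant structure.

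For the main obstruction, suppose $\tilde\tau : W \to W$ satisfies $\tilde\tau|_Y = \tau$. Removing a small open ball from the interior of $W$ yields a cobordism $W^\circ : S^3 \to Y$, and since $W$ is a $\Z$-homology ball the associated cobordism map
\[
F_{W^\circ} : \CFm(S^3)[-2] \longrightarrow \CFm(Y)[-2]
\]
is grading preserving and induces an isomorphism on $U^{-1}H_*$. I claim $F_{W^\circ}$ respects both involutions in the appropriate sense: $\tau \circ F_{W^\circ} \simeq F_{W^\circ}$ and $\ita \circ F_{W^\circ} \simeq F_{W^\circ} \circ \iota_{S^3}$. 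The first equivariance follows by applying $\tilde\tau$ to $W^\circ$: it sends the removed ball to a (possibly different) ball in $W$, but any two such balls are isotopic and the resulting diffeomorphism of $S^3$ acts trivially on $\CFm(S^3)$ by Ozsv\'ath--Szab\'o naturality. The second equivariance combines the first with the $\iota$-naturality of cobordism maps, via $\ita \circ F_{W^\circ} \simeq \iota \circ F_{W^\circ} \simeq F_{W^\circ} \circ \iota_{S^3}$. Running the same argument on the reversed cobordism $-W^\circ$ produces a local map in the opposite direction, completing the required local equivalences of $\iota$-complexes in both settings.

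The main obstacle I anticipate is handling the chain-level equivariances cleanly. Cobordism maps, the Hendricks--Manolescu--Zemke action $\iota$, and the induced action of $\tau$ are each defined only up to chain homotopy, and $\tilde\tau$ need not be an involution of $W$ (only on $\partial W$), so one cannot arrange a $\tilde\tau$-invariant ball. The comparison of cobordism maps for $W$ punctured at different balls must be tracked through the Ozsv\'ath--Szab\'o naturality framework with care, and the $\iota$-naturality under cobordism maps must be applied with attention to grading and $\spinc$ structure (which simplify considerably because $Y$ is a homology sphere and $W^\circ$ is a homology cobordism of trivial signature).
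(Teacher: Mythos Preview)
Your proposal is correct and follows essentially the same route as the paper: puncture the homology ball to obtain a homology cobordism between $S^3$ and $Y$, and use equivariance of the cobordism map together with $\iota$-naturality to produce local maps in both directions. The one refinement the paper makes is to first isotope the extension $\tilde\tau$ in the interior of $W$ so that it fixes a ball pointwise, and then remove \emph{that} ball; this yields a genuinely equivariant cobordism $(W^\circ,\tilde\tau|_{W^\circ})$ from $(S^3,\id)$ to $(Y,\tau)$, so that the commutation $\tau\circ F_{W^\circ}\simeq F_{W^\circ}$ falls directly out of Proposition~\ref{lem:3.7} without the separate argument about comparing punctures at different balls that you flagged as the main obstacle.
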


\noindent
Note that we obstruct the extension of $\tau$ as a diffeomorphism, which is stronger than obstructing the extension of $\tau$ as an involution. The invariants $h_{\tau}(Y)$ and $h_{\ita}(Y)$ take values in $\Inv$, the group of  $\iota$-complexes modulo local equivalence (see Section~\ref{sec:2.2}). This group has been studied by several authors in the context of involutive Heegaard Floer homology; see for example \cite{HMZ}, \cite{HHL}, \cite{DS}, \cite{DHSTcobordism}. In this paper, we will leverage these results to better understand the output of $h_\tau$ and $h_{\ita}$. Experts in involutive Heegaard Floer homology may find the appearance of $h_{\ita}(Y)$ somewhat surprising: it turns out that in addition to studying the induced action of $\tau$ on $\CFm(Y)$, it is also fruitful to consider the composition $\ita$, where $\iota$ is the homotopy involution on $\CFm(Y)$ coming from the conjugation symmetry \cite{HM}. Indeed, we give examples of pairs $(Y, \tau)$ whose non-extendability can be detected by one of $h_\tau$ and $h_{\ita}$, but not the other.

 %\blue{I commented out two sentences, as we come back to their point later} %In fact, the authors do not know of any example in which the extendability of $\tau$ depends on $W$, so long as $W$ is a contractible manifold or a homology ball. 

Existing methods for detecting corks have either relied on finding an embedding of $W$ in a closed 4-manifold with non-vanishing smooth 4-manifold invariant, or on requiring that $W$ be Stein and exploiting a slice-Bennequin inequality (see Remark \ref{rem:LRS}). Our approach is thus philosophically different, as $h_{\tau}$ and $h_{\ita}$ are constructed from a purely three-dimensional point-of-view and do not utilize any contact topology. The connection with corks arises from functoriality properties of our invariants with respect to definite cobordisms between 3-manifolds. Indeed, we stress that the 4-manifold $W$ does not appear as data in either $h_{\tau}(Y)$ or $h_{\ita}(Y)$, and that $h_{\tau}(Y)$ and $h_{\ita}(Y)$ obstruct the extension of $\tau$ over \textit{any} homology ball (contractible or otherwise) that $Y$ might bound. We refer to such  $\tau$ as being \textit{strongly non-extendable}. It was only recently that the first example of such an involution was exhibited. In \cite{LRS}, Lin, Ruberman, and Saveliev established this property for the involution on the boundary of Akbulut's original cork. Here, we will exhibit many other examples of corks with strongly non-extendable boundary involutions. The robust nature of our results might lead one to wonder if all cork boundary involutions are strongly non-extendable. (See Question~\ref{question:strong}.) Note that by work of Akbulut and Ruberman \cite{AR}, a fixed $Y$ can certainly bound many different contractible manifolds.

%\todo{Last call: is everyone okay with the simultaneous usage of the terms bordism and cobordism to refer to the same thing? I think I can stomach it as a historical artifact} 
We quantify the profusion of corks with strongly non-extendable involutions by constructing a modification of the usual homology cobordism group $\Theta^3_{\Z}$, which we refer to as the \textit{homology bordism group of involutions}. Roughly speaking, this new object is generated by pairs $(Y, \tau)$, where $Y$ is an oriented integer homology sphere and $\tau$ is an involution on $Y$. We identify $(Y_1, \tau_1)$ and $(Y_2, \tau_2)$ whenever there exists an oriented homology cobordism between them that admits a diffeomorphism restricting to $\tau_i$ on each $Y_i$. The set of  such pairs modulo  equivalence forms a group, which we denote by $\G$ (we suppress reference to the dimension of the manifolds for brevity).   The precise definition of $\G$ is actually somewhat more complicated, due to the fact that we must take disjoint union as our group operation; see Section~\ref{sec:homologycobordism}. Despite this, there is 
the expected forgetful homomorphism (Proposition \ref{prop:forget}) to the homology cobordism group:
\[
F: \G \rightarrow \Theta^3_{\Z}.
\] 
The group $\G$ can be viewed as a refinement of the classical bordism group of diffeomorphisms $\Delta_n$, which was introduced by Browder in \cite{Browder}. The groups $\Delta_n$ are understood in all dimensions, by work of Kreck \cite{Kreckbordism} ($n \geq 4$), Melvin \cite{Melvin} ($n = 3$), and Bonahon \cite{Bonahon} ($n = 2$). In particular, in \cite{Melvin} it was shown that $\Delta_3 = 0$. However, it is natural to ask whether placing homological restrictions on the manifolds and bordisms in question results in a richer group structure. This parallels the situation in which the  three-dimensional oriented cobordism group $\Omega_3$ is trivial, but understanding the homology cobordism group $\Theta^3_{\Z}$ is difficult.   

To this end, we show that our invariants are homomorphisms from $\G$ to $\Inv$:

\begin{theorem}\label{thm:1.2}
The invariants $h_\tau$ and $h_{\ita}$ constitute homomorphisms
\[
h_{\tau}, h_{\ita} : \G \rightarrow \Inv.
\]
\end{theorem}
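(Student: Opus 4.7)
The plan is to verify two properties: (i) well-definedness of $h_\tau$ and $h_{\ita}$ on equivalence classes in $\G$, and (ii) additivity under the disjoint union group operation. I will focus on $h_\tau$, as the argument for $h_{\ita}$ is identical after replacing $\tau$ by $\ita$ everywhere.

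For well-definedness, suppose $(Y_1,\tau_1)$ and $(Y_2,\tau_2)$ represent the same class in $\G$. Then there exists a $\Z$-homology cobordism $W$ from $Y_1$ to $Y_2$ and a diffeomorphism $\tau_W$ of $W$ restricting to $\tau_i$ on each boundary component. I would consider the cobordism map $F_W : \CFm(Y_1) \to \CFm(Y_2)$. Since $W$ is an integer homology cobordism, the structure theorem for Ozsv\'ath--Szab\'o cobordism maps guarantees that $F_W$ is a $U$-equivariant chain map that becomes an isomorphism after inverting $U$, i.e.\ a local equivalence of $\CFm$-complexes. Naturality of the cobordism maps under diffeomorphisms (Juh\'asz--Thurston--Zemke) then yields a chain homotopy $F_W \circ (\tau_1)_* \simeq (\tau_2)_* \circ F_W$, while Hendricks--Manolescu--Zemke naturality of the conjugation involution under cobordism maps yields $F_W \circ \iota_1 \simeq \iota_2 \circ F_W$. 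Composing these two homotopies shows that $F_W$ is a local equivalence between the $\iota$-complexes $(\CFm(Y_1)[-2],\tau_1)$ and $(\CFm(Y_2)[-2],\tau_2)$, and also between their $\ita$-versions. Hence $h_\tau(Y_1) = h_\tau(Y_2)$ and $h_{\ita}(Y_1) = h_{\ita}(Y_2)$ in $\Inv$.

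For additivity, I would reduce the disjoint union to a connected sum via a standard equivariant cobordism and then invoke the Hendricks--Manolescu tensor product formula. Concretely, removing small invariant balls near chosen points and gluing produces a $\Z$-homology cobordism from $Y_1 \sqcup Y_2$ to $Y_1 \# Y_2$ that can be made equivariant after modification near the connect sum region. Applying part (i) reduces the problem to showing
\[
h_\tau(Y_1 \# Y_2) \;=\; h_\tau(Y_1) \otimes h_\tau(Y_2)
\]
in $\Inv$. This follows from the connected sum formula $\CFm(Y_1 \# Y_2) \simeq \CFm(Y_1) \otimes_{\ff[U]} \CFm(Y_2)$ together with its compatibility with diffeomorphisms: under this identification, $(\tau_1 \# \tau_2)_* \simeq (\tau_1)_* \otimes (\tau_2)_*$ and $\iota_{Y_1 \# Y_2} \simeq \iota_{Y_1} \otimes \iota_{Y_2}$ up to chain homotopy. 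Combining these identifications gives the desired local equivalence of $\iota$-complexes, which translates to the homomorphism identity for both $h_\tau$ and $h_{\ita}$.

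The main obstacle I expect is the bookkeeping required to reconcile the disjoint union group operation on $\G$ with the tensor product operation on $\Inv$, since $\CFm$ is defined only for connected manifolds. The cleanest resolution is to show that any two equivariant connect-sum constructions are themselves connected by an equivariant $\Z$-homology cobordism, so that the output in $\Inv$ is independent of choices. A secondary technical point is that $\tau_W$ is only required to be a diffeomorphism (not an involution) of $W$, but this causes no difficulty since naturality of cobordism maps only uses the values of $\tau_W$ on the boundary, where they coincide with the prescribed involutions $\tau_i$.
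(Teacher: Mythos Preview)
Your sketch contains a genuine gap in the well-definedness step. The equivalence relation defining $\G$ is pseudo-homology bordism (Definition~\ref{def:1.13}): the $Y_i$ may be disconnected, and $W$ is only required to have $H_2(W)=0$, so in particular $b_1(W)$ may be positive. You have treated only the special case of a genuine homology cobordism between connected homology spheres. Even in that special case, your claim that ``naturality of the cobordism maps under diffeomorphisms then yields $F_W \circ \tau_1 \simeq \tau_2 \circ F_W$'' hides the actual issue: the cobordism map depends on a choice of path (more generally, ribbon graph) $\Gamma \subset W$, and naturality only gives $\tau_2 \circ F_{W,\Gamma} \simeq F_{W, f(\Gamma)} \circ \tau_1$. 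One must then prove $F_{W,\Gamma} \simeq F_{W, f(\Gamma)}$. For a genuine homology cobordism this follows from path-independence (Lemma~\ref{lem:secZ.1}), but for a general pseudo-homology bordism it is the content of Lemma~\ref{lem:Z.7}, which requires Zemke's graph cobordism machinery and is precisely where the hypothesis that $f$ act as the identity on $H_1(W,\partial W)$ enters. This is the technical heart of the proof and cannot be bypassed.

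Your additivity argument is also off track. The plan to pass from disjoint union to connected sum via an equivariant cobordism runs into the very problem the paper highlights before Definition~\ref{def:1.13}: the involutions $\tau_i$ need not fix any ball, so there is no reason an equivariant connected sum exists, let alone that two such constructions are equivariantly cobordant. The paper sidesteps this entirely: for multi-pointed manifolds with the trivial coloring, $\CFm$ of a disjoint union \emph{is} the tensor product of the factors, so once well-definedness is in hand the homomorphism property is immediate.
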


\noindent
Clearly, if $\tau$ extends over some homology ball $W$, then $(Y, \tau)$ is equivalent to $(S^3, \id)$. Indeed, we may isotope the extension of $\tau$ in the interior of $W$ to fix a ball; cutting out this ball gives the desired cobordism. The focus of this paper may thus be thought of as finding nontrivial elements in the kernel of  the forgetful map $F$.  In fact, we will find elements in the kernel represented by pairs $(Y,\tau)$ satisfying the stronger condition that $Y$ bound a contractible manifold; thus  $\tau$ extends topologically. Using the invariants $h_\tau$ and $h_{\ita}$, we prove that there is an infinite linearly independent family of such classes:

\begin{theorem}\label{thm:1.3}
The kernel of the forgetful homomorphism 
\[
F:\G\rightarrow \Theta^3_\Z
\]
contains a $\Z^\infty$-subgroup spanned by classes $[(Y_i, \tau_i)]$, where each $Y_i$ bounds a contractible Stein manifold. 
\end{theorem}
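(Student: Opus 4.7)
The plan is to use the homomorphisms $h_\tau$ and $h_{\ita}$ of Theorem~\ref{thm:1.2} as detection invariants. Concretely, I want to exhibit an infinite sequence of pairs $(Y_i,\tau_i)$ in which each $Y_i$ bounds a contractible Stein manifold and whose images under one of $h_\tau$ or $h_{\ita}$ generate a $\Z^\infty$-subgroup of $\Inv$. Since $Y_i$ bounding a contractible manifold forces $[Y_i]=0$ in $\Theta^3_\Z$, every such class automatically lies in $\ker F$; and because $h_\tau$ and $h_{\ita}$ are homomorphisms, a $\Z^\infty$ in $\Inv$ pulls back to a $\Z^\infty$ in $\ker F$.

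For the family itself, I would draw on the equivariant-surgery machinery advertised in the abstract. A natural starting point is a plumbing-type cork (for example of Akbulut--Yasui or Mazur type) whose boundary involution comes from an obvious symmetry of the plumbing graph; equivariant surgery on symmetric knots then yields a parametrized family $(Y_i,\tau_i)$. Contractibility and the Stein property of the fillings can be read off from an explicit equivariant handle decomposition with Legendrian $2$-handle attachments. The monotonicity theorem then translates the existence of equivariant negative-definite cobordisms between family members into one-sided constraints on the $\iota$-complexes $h_\tau(Y_i)$.

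To upgrade these constraints to $\Z$-linear independence, I would follow the strategy that has become standard in the involutive setting, as in \cite{HHL,DHSTcobordism}: arrange the family so that the associated local-equivalence classes realize a strictly increasing sequence of values of some numerical invariant of $\iota$-complexes (a refinement of $\bar d$ or $\underline d$, or a staircase-type invariant). Then in any nontrivial finite $\Z$-linear combination the summand extremizing this invariant cannot be cancelled by the others, so the combination survives in $\Inv$.

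The main obstacle will be sharpness. The monotonicity theorem only provides one-sided inequalities, and to pin down the local equivalence class of each $h_\tau(Y_i)$ one must produce complementary equivariant cobordisms in both directions between the $Y_i$ and suitable model manifolds. Orchestrating matched pairs of such cobordisms while maintaining contractibility and Steinness of the fillings is the combinatorially delicate heart of the argument. Once the invariants are computed precisely enough to distinguish the $[h_\tau(Y_i)]$ at different \emph{levels}, the $\Z^\infty$ conclusion follows by a standard extremal-summand argument.
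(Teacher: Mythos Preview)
Your overall framework is right: use the Akbulut--Yasui family $(W_n,\tau_n)$, which bound contractible Stein manifolds, and detect linear independence via $h_\tau$ or $h_{\ita}$ using the monotonicity theorem. But you have misidentified the ``combinatorially delicate heart.'' The paper does \emph{not} produce two-sided bounds or pin down the local equivalence classes $h_\tau(W_i)$ exactly; in fact Remark~\ref{rem:5.10} explicitly records that the authors do not know all the $W_i$ are independent. The interchanging $(-1,-1)$-cobordism from $W_n$ to $B_n=\Sigma(2,2n+1,4n+3)$ gives only the one-sided inequality $h_\tau(W_n)\le X_{\lfloor (n+1)/2\rfloor}$ (or the analogous statement for $h_{\ita}$), and that is all that is used.

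The genuine technical heart, which your proposal does not supply, is extracting $\Z$-linear independence from such one-sided bounds alone. A naive extremal argument with $\bar d$ or $\underline d$ fails here: these are not additive, and knowing $C\le X_i$ does not by itself control numerical invariants of multiples $nC$. The paper instead proves two lemmas. Lemma~\ref{lem:5.9} uses the almost-local-equivalence parametrization of \cite{DHSTcobordism} (the total order on $\widehat{\Inv}$ and the lexicographic description of standard complexes) to show that $C\le X_i$ forces $H_{\mathrm{conn}}(nC)$ to contain a $U$-torsion tower of length $\ge i$ for every $n\neq 0$. Lemma~\ref{lem:5.8} is a K\"unneth argument showing that tower lengths appearing in $H_{\mathrm{conn}}$ of a tensor product must already appear in one of the factors. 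Together these let one inductively choose a subsequence $W_{i_p}$ with $\lfloor (i_p+1)/2\rfloor$ exceeding all tower lengths seen so far, and conclude that no nonzero multiple of $h_\tau(W_{i_p})$ lies in the span of the earlier classes. Your proposal would need to replace the search for reverse cobordisms with this connected-homology machinery.
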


\noindent In other words,  strongly non-extendable involutions on the boundaries of corks generate an infinite-rank subgroup of $\G$. The pairs $(Y_i, \tau_i)$ are drawn  from the work of Akbulut and Yasui \cite{AY}, who showed that they  arise as the boundaries of Stein corks \cite{AY}; see Figure~\ref{fig:1.4}. We expect that the above classes in fact generate a $\mathbb{Z}^\infty$-summand of $\G$. %It may be possible to establish this via a more explicit understanding of (for example) the involutive Heegaard Floer homology of the $Y_i$. (As we will see, understanding $\iota$ often aids in determining the induced actions of different 3-manifold involutions.) However, even computing a closed form for the usual Floer homology is rather involved; see \cite{Hales}.

Double branched covers of knots and links provide another interesting source of manifolds equipped with involutions. There is a well-known homomorphism 
\[ 
\Sigma_{2}: \mathcal{C} \rightarrow  \smash{\Theta^3_{\Z/2\Z}}
\]
from the smooth concordance group of knots to $\smash{\Theta^3_{\Z/2\Z}}$, given by sending the concordance class of a knot to the homology cobordism class of its double branched cover. In analogy with $\G$, one can define a $\Z/2\Z$-homology bordism group of involutions, which we denote by $\smash{\Theta^\tau_{\Z/2\Z}}$. Remembering the data of the covering action gives a refinement of $\Sigma_{2}$, in the sense that we have a factorization
\[
\begin{tikzpicture}[node distance=2cm]

% nodes
\node (A) at (-1, 0) {$\mathcal{C}$};
\node (B) at (3, 0) {$\Theta^3_{\Z/2\Z}$};
\node (C) at (1, -1.4) {$\Theta^\tau_{\Z/2\Z}$};

\path[->,font=\scriptsize,>=angle 90]
(A) edge node[above]{$\Sigma_2$} (B)
(A) edge node[below left]{$\Sigma_2^{\tau}$} (C)
(C) edge node[below right]{$F$} (B);

\end{tikzpicture}
\]
\noindent
A secondary aim for our study is  to provide invariants of $\smash{\Theta^\tau_{\Z/2\Z}}$ which capture this additional concordance information. (Related ideas were independently considered and developed in \cite{AKS}.) The methods of this paper apply equally well to this modified group, and in fact all of our examples are strongly non-extendable over any $\Z/2\Z$-homology ball. To see that the map into $\smash{\Theta^\tau_{\Z/2\Z}}$ indeed constitutes a (significantly) stronger invariant than $\Sigma_{2}$,  note that the pairs $(Y_i, \tau_i)$ of Theorem~\ref{thm:1.3} can be constructed as double branched covers of knots. Hence:

\begin{corollary}\label{cor:branched}
 The kernel of the double branched cover homomorphism 
\[
\Sigma_{2}:\mathcal{C}\rightarrow  \smash{\Theta^3_{\Z/2\Z}}
\]
contains a $\Z^\infty$-subgroup which maps injectively into $\smash{\Theta^\tau_{\Z/2\Z}}$. In fact, we can choose this subgroup so that all of its elements are represented by knots whose double branched covers bound contractible manifolds. %That is, we have:
\begin{comment}
\[
\begin{tikzpicture}[node distance=2cm]

% nodes
\node (A) at (-1, 0) {$\Z^\infty \subseteq \mathcal{C}$};
\node (B) at (3, 0) {$\Theta^3_{\Z/2\Z}$};
\node (C) at (1, -1.4) {$\Z^\infty \subseteq \Theta^\tau_{\Z/2\Z}$};

\path[->,font=\scriptsize,>=angle 90]
(A) edge node[above]{$0$} (B)
(A) edge node[below left]{$\cong$} (C)
(C) edge node[below right]{$0$} (B);

\end{tikzpicture}
\]
\end{comment}
\end{corollary}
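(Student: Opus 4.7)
The plan is to leverage Theorem~\ref{thm:1.3} directly by realizing each of the pairs $(Y_i, \tau_i)$ appearing there as the double branched cover of a knot $K_i \subset S^3$, with $\tau_i$ the covering involution. For each $i$, I would look for a strongly invertible presentation of $(Y_i, \tau_i)$ in which $\tau_i$ acts with one-dimensional fixed-point set that is unknotted; the quotient would then be $S^3$, with branch locus a knot $K_i$ in $S^3$ satisfying $\Sigma_2(K_i) = Y_i$. Concretely, one examines the Akbulut-Yasui corks (depicted in Figure~\ref{fig:1.4}) and their boundary involutions from \cite{AY}, and verifies that the handle diagrams can be drawn symmetrically with respect to a planar axis. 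Equivariant Kirby calculus then produces the branch knots $K_i$ explicitly.

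Granting this identification, I would define the refined map $\Sigma_2^\tau : \mathcal{C} \to \Theta^\tau_{\Z/2\Z}$ sending $[K]$ to the class of $(\Sigma_2(K), \tau_K)$, where $\tau_K$ denotes the covering involution. This is well-defined because a smooth concordance from $K$ to $K'$ lifts to a $\Z/2\Z$-homology cobordism between the double branched covers that carries a diffeomorphism restricting to $\tau_K$ and $\tau_{K'}$ on its boundary components. By construction $\Sigma_2 = F \circ \Sigma_2^\tau$, giving the commuting triangle in the statement. Since each $Y_i$ bounds a contractible $4$-manifold, $[\Sigma_2(K_i)] = 0 \in \Theta^3_{\Z/2\Z}$, so each $[K_i]$ lies in $\ker(\Sigma_2)$.

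To finish, I would argue that the invariants $h_\tau$ and $h_{\ita}$ descend from $\Theta^\tau_{\Z/2\Z}$ to $\Inv$. This parallels Theorem~\ref{thm:1.2}: the relevant observation is that local equivalence of $\iota$-complexes is insensitive to whether one works over $\Z$- or $\Z/2\Z$-homology cobordisms, because the chain maps induced by such cobordisms are $U$-equivariant quasi-isomorphisms on $\HFinf$ in both cases. The linear-independence argument in the proof of Theorem~\ref{thm:1.3} therefore applies verbatim in this setting, showing that the classes $[(Y_i, \tau_i)]$ are $\Z$-linearly independent in $\Theta^\tau_{\Z/2\Z}$. Pulling back along $\Sigma_2^\tau$, the concordance classes $[K_i]$ span a $\Z^\infty$-subgroup of $\ker(\Sigma_2)$ that injects into $\Theta^\tau_{\Z/2\Z}$.

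The main obstacle is the first step: exhibiting each Akbulut-Yasui cork boundary as a double branched cover of a knot in a way that intertwines the cork involution with the covering involution. This is essentially equivariant Kirby calculus on the handle diagrams of \cite{AY}, arranging the entire configuration (handles, framings, and the twisting involution) to be symmetric under a $\pi$-rotation about an axis in the plane of the diagram, from which the branch knot can be read off in the quotient. Once this identification is in hand, the remainder of the proof is a formal combination of Theorem~\ref{thm:1.3} with the naturality of involutive Floer invariants under $\Z/2\Z$-homology cobordism.
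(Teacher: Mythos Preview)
Your proposal is correct and follows essentially the same approach as the paper's proof. The only noteworthy difference is that the paper dispatches your ``main obstacle'' in one line by citing \cite[Figure 4]{Harper}, where the Akbulut--Yasui cork boundaries $(W_i,\tau_i)$ are already exhibited as double branched covers of knots in $S^3$; the paper then simply observes that the linear-independence argument of Theorem~\ref{thm:1.3} goes through over $\smash{\Theta^\tau_{\Z/2\Z}}$, exactly as you outline.
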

\noindent
It would be interesting to detect such subgroups that simultaneously lie in the subgroup of topologically slice knots. %We hope that the information contained in our invariants can address questions about topologically slice knots that eludes other gauge-theoretic techniques. \todo{These last few sentences seem kind of speculative, and I would sort of learn towards deleting them.}

The construction of $h_{\tau}(Y)$ and $h_{\ita}(Y)$ is a fairly straightforward application of  the work of Hendricks, Manolescu, and Zemke \cite{HMZ}, in conjunction with naturality results of Juh{\'a}sz, Thurston, and Zemke \cite{JTZ}. In general, however, computing the action of the mapping class group on $\CFm(Y)$ is very difficult. Thus, the majority of this paper will be devoted to finding methods for indirectly constraining $h_{\tau}(Y)$ and $h_{\ita}(Y)$. To this end, a useful feature of the group $\Inv$ is that it is equipped with a partial order (see Definition~\ref{def:2.5}). A key tool will be the following monotonicity theorem satisfied by our invariants in the presence of equivariant negative-definite cobordisms:
%\todo{this is in some sense a special case of the general monotonicity result, lemma \ref{lem:3.7}, which can be understood without additional definitions.  I wonder if it would be better to state that as well and/or to restate this so that it emphasizes the its constructive aspect (which is why it is useful)}
\begin{theorem}\label{thm:1.4}
Let $(Y_1, \tau_1)$ and $(Y_2, \tau_2)$ be homology spheres equipped with involutions $\tau_1$ and $\tau_2$. 
\begin{enumerate}
\item Suppose there is a $\spinc$-fixing $(-1)$-cobordism from $(Y_1, \tau_1)$ to $(Y_2, \tau_2)$. Then we have $h_{\tau_1}(Y_1) \leq h_{\tau_2}(Y_2)$.
\item Suppose there is a $\spinc$-conjugating $(-1)$-cobordism from $(Y_1, \tau_1)$ to $(Y_2, \tau_2)$. Then we have $h_{\ita_1}(Y_1) \leq h_{\ita_2}(Y_2)$.
\item Suppose there is an interchanging $(-1, -1)$-cobordism from $(Y_1, \tau_1)$ to $(Y_2, \tau_2)$. Then we have $h_{\tau_1}(Y_1) \leq h_{\tau_2}(Y_2)$ and $h_{\ita_1}(Y_1) \leq h_{\ita_2}(Y_2)$.
\end{enumerate}
\end{theorem}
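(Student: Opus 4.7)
The plan is to exhibit, in each of the three cases, a local map of $\iota$-complexes between the appropriate involutive structures on $(\CFm(Y_1)[-2], \cdot)$ and $(\CFm(Y_2)[-2], \cdot)$. Recall that a local map is a grading-preserving chain map that commutes with the involution up to chain homotopy and is a quasi-isomorphism after inverting $U$. The candidate local map in each case will be an appropriately chosen Heegaard Floer cobordism map $F_{W,\s}$ associated to $W$. The plain negative-definite monotonicity of \cite{HMZ} already guarantees that, for a judicious choice of $\s$ on the $(-1)$-cobordism (or pair $\s_1, \s_2$ on the $(-1,-1)$-cobordism), $F_{W,\s}$ is a local map of the underlying $\iota$-complexes equipped with the Heegaard Floer conjugation $\io$. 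The additional content to extract from the equivariance of $W$ is compatibility with the boundary involutions $\ta_i$.

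For part (1), let $T \colon W \to W$ be the diffeomorphism extending $\ta_1$ and $\ta_2$, and choose the extremal $\spinc$-structure $\s$ satisfying $T^*\s = \s$. Applying the naturality of cobordism maps under diffeomorphisms of the cobordism, due to Juh\'asz--Thurston--Zemke \cite{JTZ}, yields a chain homotopy $\ta_2 \circ F_{W,\s} \simeq F_{W, T^*\s} \circ \ta_1 = F_{W,\s} \circ \ta_1$. Thus $F_{W,\s}$ upgrades to a local map of the $\iota$-complexes equipped with $\ta_i$, proving $h_{\ta_1}(Y_1) \leq h_{\ta_2}(Y_2)$.

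For part (2), the hypothesis $T^*\s = \bar{\s}$ gives instead $\ta_2 \circ F_{W,\s} \simeq F_{W, \bar{\s}} \circ \ta_1$, and combining with the conjugation symmetry $\io \circ F_{W, \bar{\s}} \simeq F_{W, \s} \circ \io$ from \cite{HM} produces $\ita_2 \circ F_{W, \s} \simeq \io \circ F_{W, \bar{\s}} \circ \ta_1 \simeq F_{W, \s} \circ \ita_1$. Thus $F_{W, \s}$ is now a local map of the $\ita$-complexes, proving $h_{\ita_1}(Y_1) \leq h_{\ita_2}(Y_2)$. For part (3), the two extremal $\spinc$-structures $\s_1, \s_2$ on the $(-1,-1)$-cobordism are interchanged by $T^*$ and are simultaneously conjugates of one another. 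The sum $F = F_{W, \s_1} + F_{W, \s_2}$ remains a local map on the underlying complexes, and the two pairs of naturality relations --- from $T^*$ swapping $\s_1 \leftrightarrow \s_2$ and from conjugation doing the same --- directly produce $\ta_2 \circ F \simeq F \circ \ta_1$ and $\ita_2 \circ F \simeq F \circ \ita_1$, yielding both inequalities.

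The main obstacle is bookkeeping the interaction of two distinct naturality principles --- diffeomorphism naturality from \cite{JTZ} and conjugation naturality from \cite{HM} --- each of which gives a chain homotopy rather than a strict identity. In part (2), composing these homotopies requires some care, and one needs to verify that they can be chosen compatibly. A subsidiary point is verifying that the grading shift of $F_{W, \s}$ for the extremal $\spinc$-structure vanishes, so that the map is genuinely grading-preserving on the $[-2]$-shifted complexes; this amounts to a small Chern-number calculation on a $(-1)$- or $(-1,-1)$-cobordism. Finally, the existence of a $T^*$-equivariant extremal $\spinc$-structure must be checked in each case, which is a lattice exercise for characteristic vectors in a negative-definite form equipped with an involution.
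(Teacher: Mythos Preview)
Your treatment of parts (1) and (2) is correct and coincides with the paper's approach: the paper packages exactly these two naturality relations (diffeomorphism naturality for $\tau$ and conjugation naturality for $\iota$) into a general monotonicity proposition for equivariant negative-definite cobordisms, and then specializes. One small point: the conditions $T^*\s = \s$ and $T^*\s = \bs$ are part of the \emph{definitions} of $\spinc$-fixing and $\spinc$-conjugating $(-1)$-cobordisms respectively, so no separate ``lattice exercise'' is required there.

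Part (3), however, has a genuine gap. Each of $F_{W,\s_1}$ and $F_{W,\s_2}$ is individually a local map, so after inverting $U$ each induces an isomorphism $\ff[U,U^{-1}] \to \ff[U,U^{-1}]$. Over $\ff = \Z/2\Z$ the only grading-preserving $\ff[U]$-module automorphism of $\ff[U,U^{-1}]$ is the identity, and hence the sum $F = F_{W,\s_1} + F_{W,\s_2}$ induces the \emph{zero} map after inverting $U$. Thus $F$ fails to be a local map, and your argument for both inequalities collapses.

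The paper avoids this by not summing at all. An interchanging $(-1,-1)$-cobordism has $H^2(W)$ generated by classes $x,y$ with $T$ interchanging them; among its four extremal $\spinc$-structures there is both a $T$-\emph{fixed} one (with $c_1(\s) = x+y$) and a $T$-\emph{conjugated} one (with $c_1(\s) = x-y$, so $T^*\s = \bs$). Applying part (1) with the former yields $h_{\tau_1}(Y_1) \leq h_{\tau_2}(Y_2)$; applying part (2) with the latter yields $h_{\ita_1}(Y_1) \leq h_{\ita_2}(Y_2)$. Two separate $\spinc$-structures, two separate local maps.
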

\noindent
For definitions of the terms appearing in Theorem~\ref{thm:1.4}, see Section~\ref{sec:4.2}. Theorem~\ref{thm:1.4} is a special case of a general monotonicity result for equivariant definite cobordisms, which we give in Proposition~\ref{lem:3.7}. We emphasize Theorem~\ref{thm:1.4}, however, since the cobordisms in question are particularly simple and  easily constructed via equivariant surgery methods (see Section~\ref{sec:4}). We apply these techniques to  a wide variety of examples in Section~\ref{sec:5}. %\todo{I propose turning Lemma 4.10 into a theorem or proposition, since Theorem 1.5 as stated is sort of  a corollary of it.  If we restated theorem 1.5 to say $Y_2$ is obtained by equivariant (blank) surgery then the point of it stands out more; but I'm not advocating for that}

Theorem~\ref{thm:1.4} enables one to bound the invariants of $(Y, \tau)$ away from zero by constructing cobordisms from $(Y, \tau)$ to manifolds that we better understand. These inequalities should be thought of as an analogue of the usual behavior of the Ozsv\'ath-Szabo $d$-invariant under negative-definite cobordisms \cite{OSabsgr}.  Theorem~\ref{thm:1.4}  will allow us to use  topological arguments to construct a wide range of interesting and nontrivial examples for which direct computation would be quite difficult. For instance, one of our principal ways of finding corks will be to consider surgeries on equivariant slice knots. In this vein, we have the following consequence of Theorem~\ref{thm:1.4}:

\begin{theorem}\label{thm:1.5}
Let $K$ be a knot in $S^3$ equipped with a strong inversion $\tau$, and let $k \neq -2, 0$. Then we have both $h_\tau(S_{1/(k+2)}(K)) \leq h_\tau(S_{1/k}(K))$ and $h_{\ita}(S_{1/(k+2)}(K)) \leq h_{\ita}(S_{1/k}(K))$.
\end{theorem}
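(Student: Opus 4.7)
The plan is to apply Theorem~\ref{thm:1.4}(3) to an explicitly constructed interchanging $(-1,-1)$-cobordism from $(S_{1/(k+2)}(K),\tau_{k+2})$ to $(S_{1/k}(K),\tau_k)$. Here $\tau_n$ will denote the involution induced on $S_{1/n}(K)$ by the strong inversion $\tau$; this extension exists because $\tau$ acts on $H_1(\partial N(K))$ by $-I$ and hence preserves every Dehn surgery slope on $K$.

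The first step is to identify the attaching curves. Let $K_{1/k}$ denote the core of the surgery solid torus glued into $S^3 \setminus N(K)$ to form $S_{1/k}(K)$, and let $\mu,\lambda$ denote the meridian and Seifert longitude of $K$ in $S^3$. A direct computation in $H_1(\partial N(K))$ shows that the meridian of $K_{1/k}$ is $\mu+k\lambda$, while its Seifert longitude in the homology sphere $S_{1/k}(K)$ is $\lambda$. Consequently, attaching a $(+1)$-framed $2$-handle along a Seifert push-off of $K_{1/k}$ kills the slope $(\mu+k\lambda)+\lambda=\mu+(k+1)\lambda$, producing $S_{1/(k+1)}(K)$ as outgoing boundary. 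This is where the hypothesis $k\neq -2,0$ enters, since one needs $k$ and $k+2$ to be nonzero so that both surgeries are integral of the desired form.

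Iterating, I would attach two $(+1)$-framed $2$-handles along parallel Seifert push-offs $K'_1$ and $K'_2$ of $K_{1/k}$ inside $S_{1/k}(K)$. Since parallel Seifert push-offs of a common knot cobound an annulus disjoint from $K_{1/k}$, one has $\lk(K'_1,K'_2)=0=\lk(K'_i,K_{1/k})$; hence the resulting cobordism $W$ has intersection form $(+1)\oplus(+1)$, and a second application of the slope computation (after commuting the two disjoint handle attachments) shows that its outgoing boundary is $S_{1/(k+2)}(K)$.

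To upgrade the construction to an equivariant one, I would choose a Seifert push-off $K'_1$ disjoint from the fixed set of $\tau_k$, and set $K'_2 := \tau_k(K'_1)$. Because $\tau_k$ preserves $K_{1/k}$ setwise and acts as $-I$ on the relevant torus, $K'_2$ is again a Seifert push-off with the same linking properties, and $\tau_k$ swaps $K'_1$ and $K'_2$ by construction. The involution therefore extends over $W$ by interchanging the two $2$-handles. Reversing the orientation of $W$ yields the desired interchanging $(-1,-1)$-cobordism from $(S_{1/(k+2)}(K),\tau_{k+2})$ to $(S_{1/k}(K),\tau_k)$, and Theorem~\ref{thm:1.4}(3) then produces both inequalities simultaneously.

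The main technical content will be the slope computation identifying the outgoing boundary of $W$ as $S_{1/(k+2)}(K)$, which reduces to the meridian-longitude bookkeeping on $\partial N(K)$; the genericity needed to ensure that $K'_1$ and $K'_2=\tau_k(K'_1)$ are disjoint is automatic, since the fixed set of $\tau_k$ has positive codimension in $S_{1/k}(K)$.
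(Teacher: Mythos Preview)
Your proposal is correct and, like the paper, finishes by invoking Theorem~\ref{thm:1.4}(3), but the cobordism is built by a genuinely different construction. The paper works diagrammatically: it rewrites $S_{1/k}(K)$ via the slam-dunk as $0$-surgery on $K$ together with $(-k)$-surgery on an equivariant meridian $\mu$, then attaches a $\tau$-symmetric pair of $(-1)$-framed unknots $u,\tau u$ each linking $\mu$ once; an equivariant slide of $u,\tau u$ over $K$ verifies the zero-linking condition, and blowing down identifies the outgoing end as $S_{1/(k-2)}(K)$. Your route is more intrinsic---you work directly with the dual knot $K_{1/k}$ and its Seifert push-offs and never introduce an auxiliary Kirby presentation---at the price of having to track Seifert framings through two successive dual-knot surgeries and then turn the cobordism around. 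Two small points would tighten the write-up. First, $K'_1$ avoiding the fixed set does not by itself give $K'_1\cap\tau_k(K'_1)=\emptyset$; the clean argument is to take $K'_1$ as a $\lambda$-parallel curve on $\partial N(K)$ missing the four fixed points of the hyperelliptic action of $\tau$, so that $\tau(K'_1)$ is a parallel (hence disjoint) $\lambda$-curve. Second, you should note that the induced involution on the outgoing end really is $\tau_{k+2}$: it agrees with $\tau$ on $S^3\setminus N(K)$, and Lemma~\ref{lem:4.3} then forces uniqueness of the extension over the surgery solid torus.
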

\noindent
Here, we are using the fact that a symmetry of $K$ induces a symmetry of the manifold obtained by surgery on $K$ (see Section~\ref{sec:4.1} for details). To apply Theorem~\ref{thm:1.5}, let $K$ be a slice knot with $h_\tau(S_{+1}(K)) < 0$, so that $S_{+1}(K)$ is a strongly non-extendable cork by Theorem~\ref{thm:1.1}. Using Theorem~\ref{thm:1.5}, we can bootstrap from this to conclude that the same holds for $\smash{S_{1/k}(K)}$, whenever $k$ is positive and odd. A similar statement applies in the case where $h_{\ita}(S_{+1}(K)) < 0$.

The power of our techniques are best illustrated by the diversity and complexity of examples to which they can be applied. In the next subsection, we describe a number of these calculations. We produce various infinite families of pairs $(Y, \tau)$ for which $Y$ bounds a contractible manifold but $\tau$ does not extend over any homology ball (see Theorems~\ref{thm:1.8} and \ref{thm:1.10}). We also prove that several well-known examples of corks have this property (see Theorems~\ref{thm:1.11} and Theorem~\ref{thm:P}). To demonstrate the flexibility of our method, we enumerate some sporadic families formed by going through KnotInfo \cite{KnotInfo} and considering surgeries on slice knots (see Theorem~\ref{thm:1.9}). To the best of the authors' knowledge, many of these were not previously known to be (the boundaries of) corks, strong or otherwise.

Finally, a word on the conventions of this paper: all Floer-theoretic homology groups will be taken with $\ff = \Z/2\Z$-coefficients, while all usual homology groups will be understood to be over $\Z$. When describing knots, we use the notation and orientation conventions of KnotInfo \cite{KnotInfo}.

\subsection{Computations and definitions}\label{sec:1.1}
We begin by recalling the precise definition of a cork:

\begin{definition} \label{def:1.6} Let $Y$ be an integer homology sphere equipped with an (orientation-preserving) involution $\tau: Y \rightarrow Y$, and suppose that $Y$ bounds a compact, contractible manifold $W$. We say that the triple $(Y, W, \tau)$ is a \textit{cork} if $\tau$ does not extend over $W$ as a diffeomorphism.
\end{definition}
\noindent
Some authors additionally require  that $W$ be Stein (see for example \cite{AY}), but this is less natural for our purposes. We also do not require that $W$ be embedded in any particular 4-manifold. 
\begin{definition}\cite[Section 1.2.2]{LRS}\label{def:1.7} Let $(Y,W,\tau)$ be a cork. We say that the pair $(Y, \tau)$ is a \textit{strong cork} if for \textit{any} homology ball $X$ with $Y = \partial X$, $\tau$ does not extend over $X$ as a diffeomorphism.  We refer to the involution $\tau$ as being \textit{strongly non-extendable}.
\end{definition}
\noindent
We stress that the data of a strong cork is three-dimensional (rather than four-dimensional); the presence of $W$ is simply to ensure that $Y$ bound at least one contractible manifold. Arguably, replacing ``homology ball" in the above definition with ``contractible manifold" would be more natural, but since our approach actually constrains the former, we leave Definition~\ref{def:1.7} as it is. The reader should consult the work of Lin, Ruberman, and Saveliev, in which it is shown that  Akbulut's first example of a cork is strong \cite[Theorem D]{LRS}. 

\begin{remark}\label{rem:LRS}
The proof in \cite[Section 8]{LRS} proceeds by understanding the induced action of $\tau$ on the monopole Floer homology of $Y$. Like most preceding analyses of this example, their computation relies on the fact that the Akbulut cork can be embedded in a closed 4-manifold and that the corresponding cork twist changes a smooth 4-manifold invariant (namely, the Donaldson invariants \cite{Akbulutfake}, Seiberg-Witten invariant \cite{LRS}, or the Ozsv{\'a}th-Szab{\'o} invariant \cite{AD}). (Other methods employ a more three-dimensional perspective, relying instead on a slice-Bennequin inequality \cite{AMexotic}.) In contrast, our approach uses no closed 4-manifold topology or contact topology: it is irrelevant for our purposes whether our examples embed in any closed 4-manifold (with or without nontrivial 4-manifold invariants), or whether they fill contact structures on their boundaries. Moreover, for the arguments in \cite{LRS}, it is necessary to establish various technical results for monopole Floer homology with $\Q$-coefficients, while our approach utilizes Heegaard Floer homology with $\Z/2\Z$-coefficients (so as to be able to apply involutive Heegaard Floer theory).
\end{remark}
%(The relevant invariant in \cite{LRS} being the  Seiberg-Witten invariant.) 
%, the proof of this again depends on embedding $W$ into a closed 4-manifold and applying four-dimensional Seiberg-Witten theory \cite{LMstein}.

Some sample computations are displayed below. We begin with several examples of strong corks formed by surgeries on slice knots. %To the best of the authors' knowledge, this has generally not been the approach towards finding corks taken in the literature.

\begin{theorem} \label{thm:1.8}
For $n > 0$, let $K_{-n, n+1}$ be the family of slice doubly-twist knots displayed in Figure~\ref{fig:1.1}. For $k$ positive and odd, let $V_{n,k}$ be the $(1/k)$-surgery
\[
V_{n,k}  = 
\begin{cases}
S_{1/k}(\overline{K}_{-n, n+1}) &\text{if } n \text{ is odd}\\
S_{1/k}(K_{-n, n+1}) &\text{if } n \text{ is even}.
\end{cases}
\]
Equip $V_{n, k}$ with the indicated involutions $\tau$ and $\sigma$.\footnote{For $n$ odd, we consider the obvious involutions on the mirrored diagram.} Then $(V_{n, k}, \tau)$ and $(V_{n, k}, \sigma)$ are both strong corks. Moreover, in each case $\tau \circ \sigma$ extends as a diffeomorphism over some Mazur manifold.
\end{theorem}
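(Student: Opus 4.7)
The plan is to address the three assertions of the theorem in turn, using Theorem~\ref{thm:1.1} to convert the strong-cork conclusion into nontriviality of $\iota$-complex invariants, Theorem~\ref{thm:1.5} to reduce to small surgeries, and the equivariant cobordism machinery of Theorem~\ref{thm:1.4} to establish the base case. I would also handle the topological preliminaries up front.

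First, since each $K_{-n,n+1}$ is ribbon, the standard ribbon construction (glue the surgery trace of a $(1/k)$-framed 2-handle to the exterior of a pushed-in ribbon disk in $B^4$) produces a Mazur-type contractible 4-manifold bounded by $V_{n,k}$; in particular each $V_{n,k}$ is an integer homology sphere bounding a compact contractible manifold, so the topological hypothesis of Definition~\ref{def:1.6} is satisfied. Next, I would check that the two symmetry axes visible in Figure~\ref{fig:1.1} give two distinct strong inversions on $K_{-n,n+1}$ (taking the mirror when $n$ is odd, per the footnote in the statement), and that each of these induces an involution on $V_{n,k}$ via the construction of Section~\ref{sec:4.1}; these are $\tau$ and $\sigma$.

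To show that $(V_{n,k},\tau)$ and $(V_{n,k},\sigma)$ are strong corks, by Theorem~\ref{thm:1.1} it suffices to show that at least one of $h_\tau(V_{n,k})$, $h_{\ita}(V_{n,k})$ is strictly less than $0$ in $\Inv$ (and similarly for $\sigma$). Iterating Theorem~\ref{thm:1.5} gives
\[
h_\tau(V_{n,k}) \leq h_\tau(V_{n,1}), \qquad h_{\ita}(V_{n,k}) \leq h_{\ita}(V_{n,1})
\]
for every positive odd $k$; since the partial order on $\Inv$ is antisymmetric modulo local equivalence, $A \leq B$ together with $B < 0$ forces $A < 0$, so it suffices to establish the base case $k = 1$. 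To treat the base case, I would build an equivariant negative-definite cobordism from $(V_{n,1},\tau)$ to a model pair $(Y_0,\tau_0)$ whose $\iota$-complex invariants are explicitly known and strictly negative---natural candidates being Brieskorn spheres such as $\pm\Sigma(2,3,7)$ or small surgeries on torus-type knots equipped with standard strong inversions, whose involutive invariants have been computed in the existing literature on involutive Heegaard Floer homology. The cobordism itself is to be produced by the equivariant surgery methods of Section~\ref{sec:4}, starting from an equivariant link presentation of $V_{n,1}$. After determining whether this cobordism is $\spinc$-fixing, $\spinc$-conjugating, or interchanging, the corresponding case of Theorem~\ref{thm:1.4} produces the desired inequality $h_\tau(V_{n,1}) \leq h_{\tau_0}(Y_0) < 0$ (or its $\ita$-analog), and by the reduction above the same bound is inherited by every positive odd $k$. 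The analogous argument with the roles of the two axes swapped handles $\sigma$.

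For the final assertion, note that the composition of two strong inversions with distinct axes is a $\pi$-rotation $\rho = \tau\circ\sigma$ about a third axis, and this $\rho$ is itself a finite-order symmetry of $K_{-n,n+1}$. The standard ribbon disk for $K_{-n,n+1}$ can be arranged to be $\rho$-invariant, so the equivariant trace of the $(1/k)$-surgery glues to the $\rho$-invariant disk exterior to produce a Mazur manifold over which $\tau\circ\sigma$ extends as a smooth diffeomorphism. The main obstacle in this program is the base-case computation in Step 3: one must choose $(Y_0,\tau_0)$ so that the invariants are both known and strictly negative, construct an explicit equivariant cobordism to it, and correctly identify the $\spinc$-action type, which will vary with the parity of $n$ and the choice between $\tau$ and $\sigma$ (and dictates whether it is $h_\tau$ or $h_{\ita}$ that detects each case). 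Organizing this case analysis, rather than any single Floer calculation, is what requires care.
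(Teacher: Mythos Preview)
Your overall strategy matches the paper's: reduce to $k=1$ via Theorem~\ref{thm:1.5}, then construct an equivariant negative-definite cobordism from $V_{n,1}$ to a Brieskorn sphere with known nontrivial invariants and apply Theorem~\ref{thm:1.4}. The paper carries this out with the specific target $A_m = \Sigma(2,3,6m+1)$ (with $m=n$ or $m=n+1$ according to parity) rather than $\Sigma(2,3,7)$ alone, but you correctly anticipate that the parity of $n$ governs which of $h_\tau$ or $h_{\ita}$ detects each involution, and you flag this case analysis as the crux. So the strong-cork portion of your plan is sound and essentially the paper's argument.

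The genuine gap is in your treatment of the final assertion, that $\tau\circ\sigma$ extends over a Mazur manifold. You assert that ``the standard ribbon disk for $K_{-n,n+1}$ can be arranged to be $\rho$-invariant,'' but this is exactly the nontrivial point, and you give no argument for it. Equivariant sliceness under a given symmetry is a delicate question in general, and the obvious ribbon move for $K_{-n,n+1}$ is not visibly symmetric under $\tau\circ\sigma$. The paper avoids this issue entirely by using a different, more hands-on argument due to Akbulut: one exhibits the Mazur filling via Lemma~\ref{lem:5.7} as a single $2$-handle attachment along an unknot $\gamma$ encircling the fusion band (followed by a $3$-handle), and then checks directly that the \emph{isotopy class} of $\gamma$ is preserved by $\tau\circ\sigma$. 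This extends $\tau\circ\sigma$ over the $2$-handle, and the remaining extension over the $3$-handle follows because every diffeomorphism of $S^1\times S^2$ extends over $S^1\times B^3$. You should replace your $\rho$-invariant-disk claim with this argument, or else supply a concrete equivariant ribbon disk.
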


\begin{figure}[h!]
\center
\includegraphics[scale=0.7]{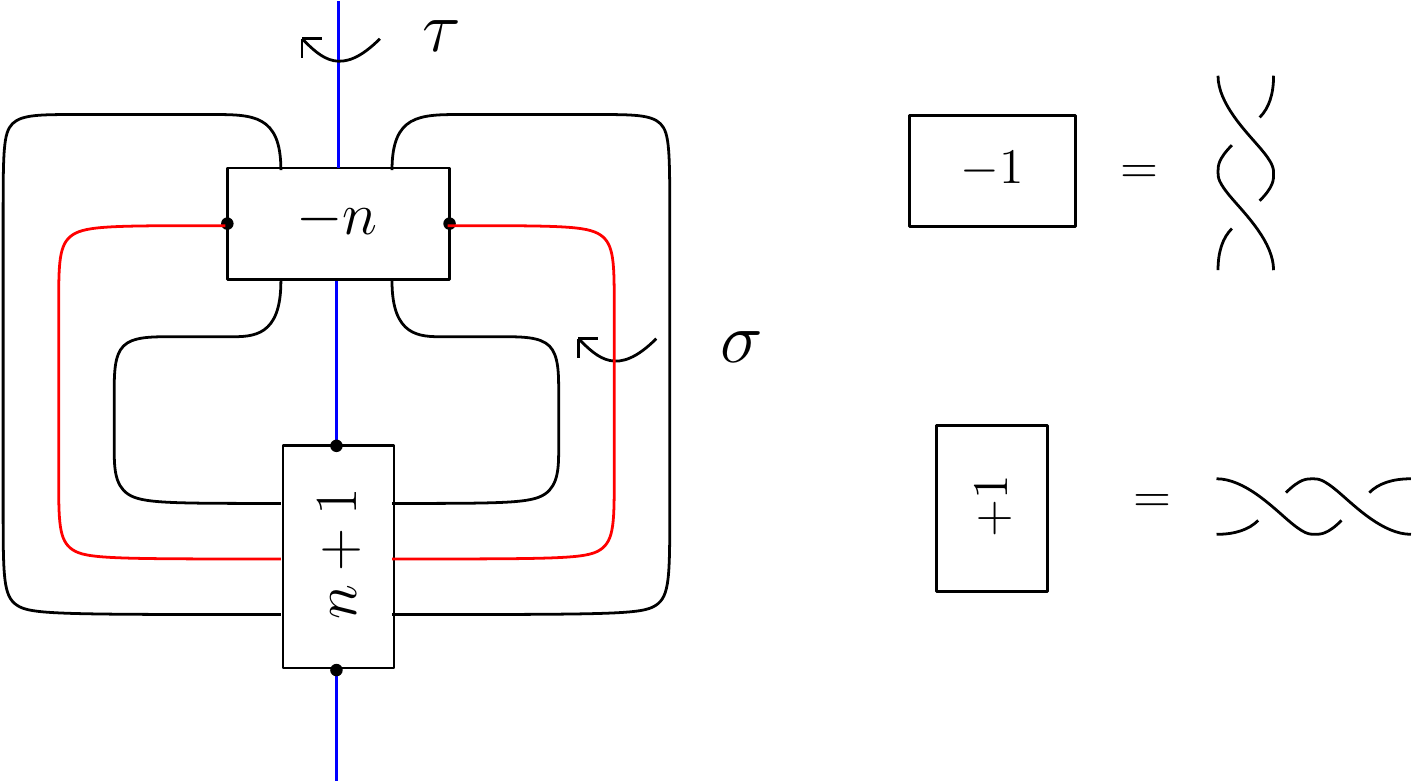}
\caption{Doubly-twist knot $K_{-n, n+1}$. The indicated symmetries $\tau$ and $\sigma$ are given by $180^{\circ}$ rotations about the blue and red axes, respectively. In the latter case, it may be helpful to view $K_{-n, n+1}$ as an annular knot; the action of $\sigma$ is given by rotation about the core of the solid torus. Black dots indicate the intersections of $K_{-n, n+1}$ with the axes of symmetry.}\label{fig:1.1}
\end{figure}
\noindent
To the best of the authors' knowledge, none of manifolds in Theorem~\ref{thm:1.8} were previously known to be (the boundaries of) corks, strong or otherwise. Note that $V_{1,1}$ is $(+1)$-surgery on the stevedore knot, which is the boundary of the Mazur manifold $W^-(0, 1)$ in the notation of Akbulut-Kirby \cite{AKir}. 

As we will see, our approach is especially conducive to situations where $Y$ is given as surgery on a slice knot. To this end, we have the following ``sporadic" families:
\begin{theorem}\label{thm:1.9}
For $k$ positive and odd, $(1/k)$-surgery on the following slice knots:
\[
\overline{9}_{41}, \overline{9}_{46}, 10_{35}, \overline{10}_{75}, 10_{155}, 11n_{49}
\]
are strong corks, with the symmetries displayed in Figure~\ref{fig:1.2}.
\end{theorem}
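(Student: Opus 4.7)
The plan is to combine Theorem~\ref{thm:1.1}, Theorem~\ref{thm:1.4}, and Theorem~\ref{thm:1.5} in a uniform way. First, by Theorem~\ref{thm:1.5}, it suffices to establish the statement in the base case $k=1$: for each knot $K$ in the list, equipped with the indicated strong inversion (or equivariant periodic symmetry) $\tau$, I want to show
\[
h_{\tau}(S_{+1}(K)) < 0 \quad \text{or} \quad h_{\ita}(S_{+1}(K)) < 0.
\]
Once this is known, Theorem~\ref{thm:1.5} propagates the nontriviality to $S_{1/k}(K)$ for every odd $k>0$, and Theorem~\ref{thm:1.1} then promotes nontriviality of the invariant to the conclusion that $\tau$ does not extend over any homology ball bounded by $S_{1/k}(K)$. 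Since $K$ is slice, $S_{1/k}(K)$ bounds a contractible manifold (obtained from the slice disk complement by attaching a suitable 2-handle), so each such $(S_{1/k}(K),\tau)$ is indeed a strong cork.

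To carry out the $k=1$ computation for each individual knot $K$ on the list, the strategy is to exhibit an equivariant negative-definite cobordism of the type allowed by Theorem~\ref{thm:1.4} from $(S_{+1}(K),\tau)$ down to a manifold whose $\iota$-complex invariants are already known to be nontrivial. Concretely, I would use the equivariant surgery machinery of Section~\ref{sec:4}: for each symmetric $K$, find an invariant arc or an invariant crossing change which, when realized as an equivariant handle attachment, yields a $\spinc$-fixing (or $\spinc$-conjugating, or interchanging) $(-1)$- or $(-1,-1)$-cobordism from $S_{+1}(K)$ to the $+1$-surgery on a simpler symmetric knot whose invariants are already computed in Section~\ref{sec:5}. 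A natural target is $S^3_{+1}$ on one of the doubly-twist knots $K_{-n,n+1}$ from Theorem~\ref{thm:1.8}, whose invariants have been shown to be negative. Theorem~\ref{thm:1.4} then yields
\[
h_{\tau}(S_{+1}(K)) \leq h_{\tau}(S_{+1}(K_{-n,n+1})) < 0,
\]
with the analogous inequality for $h_{\ita}$ when the cobordism is $\spinc$-conjugating or interchanging; this is what we need.

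In practice, for each of $\overline{9}_{41}, \overline{9}_{46}, 10_{35}, \overline{10}_{75}, 10_{155}, 11n_{49}$, the work is therefore to draw an equivariant surgery diagram realizing the required cobordism. The symmetry axis prescribed in Figure~\ref{fig:1.2} dictates which arcs are $\tau$-invariant and which pairs of arcs are swapped; the handles must then be attached along such arcs so that the cobordism is genuinely equivariant, and the framings must be chosen so that the intersection form is negative-definite and the right $\spinc$-type condition (fixing vs.\ conjugating vs.\ interchanging) is met. This is the same kind of diagrammatic analysis as in the proof of Theorem~\ref{thm:1.8}, applied knot by knot.

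The main obstacle is the case-by-case construction of these equivariant cobordisms: there is no single uniform diagrammatic move that handles all six knots, and for a couple of them (e.g.\ $10_{155}$ and $11n_{49}$, whose symmetries are less transparent from a generic planar diagram) one has to first put the knot in an explicitly symmetric position before identifying an invariant unknotting arc. Once the diagrams are in hand, however, verifying that the resulting cobordism is equivariant and negative-definite, and then citing Theorems~\ref{thm:1.4}, \ref{thm:1.5}, and \ref{thm:1.1}, is routine.
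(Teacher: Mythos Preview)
Your strategy is correct and matches the paper's approach: reduce to $k=1$ via Theorem~\ref{thm:1.5}, then for each knot construct an equivariant negative-definite cobordism to a manifold with known nontrivial invariants and invoke Theorem~\ref{thm:1.4}. The only difference is the choice of target: the paper cobords each $(S_{+1}(K),\tau)$ directly to $\Sigma(2,3,7)$ (with its appropriate involution, as computed in Lemmas~\ref{lem:5.1}--\ref{lem:5.2}), rather than routing through the doubly-twist surgeries $V_{n,1}$. Since the $V_{n,1}$ themselves are handled by cobording to Brieskorn spheres, your route is a minor detour; going straight to $\Sigma(2,3,7)$ is cleaner and avoids compounding two families of diagrams. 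Either way, the substantive content is the case-by-case construction of the equivariant $(-1)$- or $(-1,-1)$-handle attachments (the paper's Figure~\ref{fig:5.11}), which you correctly identify as the main work and which neither you nor the paper's text spells out beyond the picture.
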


\begin{figure}[h!]
\center
\includegraphics[scale=0.586]{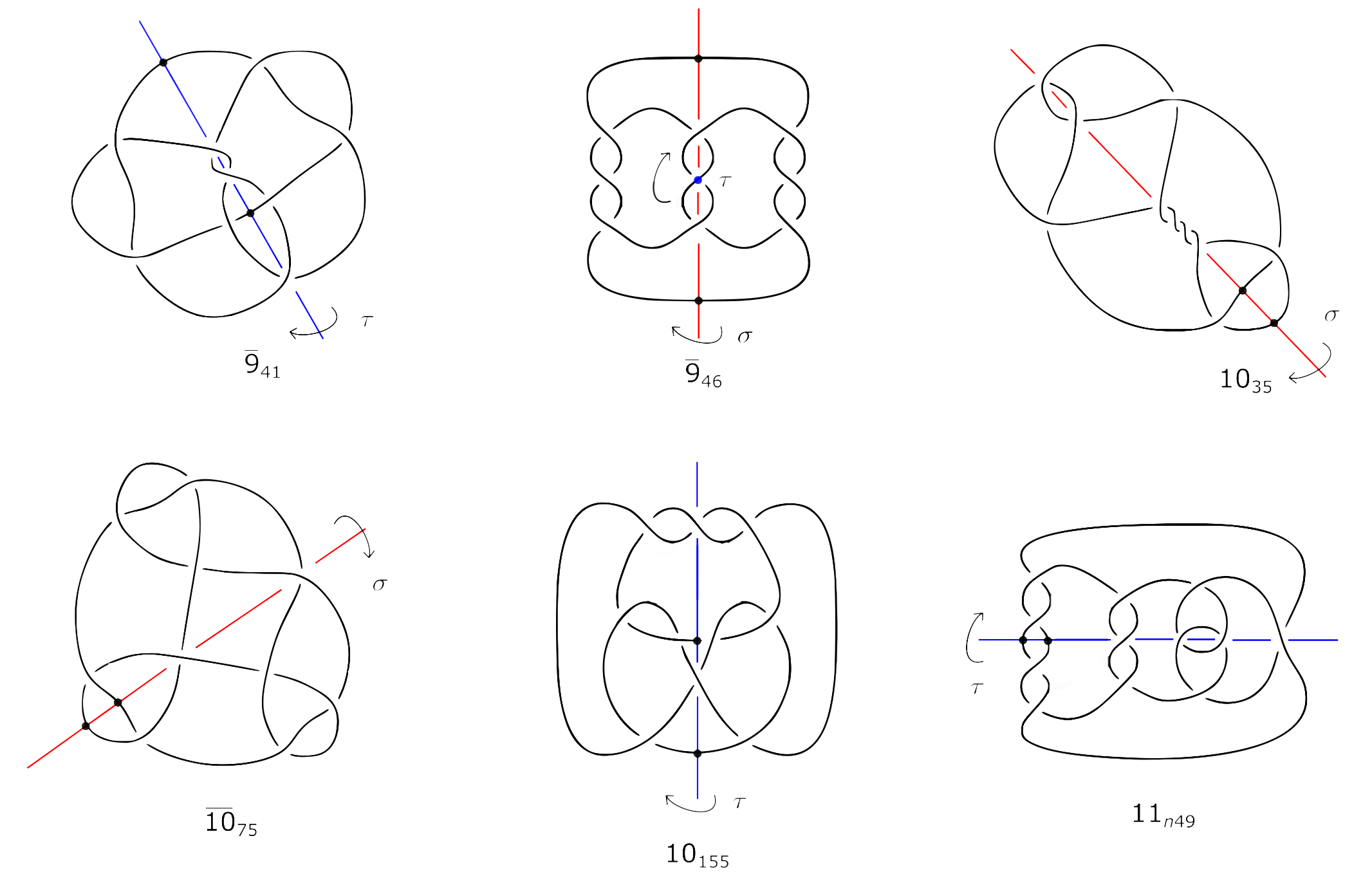}
\caption{Equivariant slice knots of Theorem~\ref{thm:1.9}. Note that $\overline{9}_{46}$ has two involutions for which Theorem~\ref{thm:1.9} holds, with the axis of $\tau$ going into/out of the page.}\label{fig:1.2}
\end{figure}
\noindent
The examples of Theorem~\ref{thm:1.9} were obtained by enumerating slice knots with fewer than twelve crossings in KnotInfo \cite{KnotInfo}, and investigating those possessing symmetric diagrams (the diagrams from \cite{LMdoubly} were helpful). The above list is by no means exhaustive. Note that $\overline{9}_{46}$ is the pretzel knot $P(-3, 3, -3)$, and $(+1)$-surgery on $\overline{9}_{46}$ is  the boundary of Akbulut's first example \cite{Akbulutfake}.

We now turn to some examples given by surgeries on links. Our first family is a straightforward generalization of the initial cork from  \cite{Akbulutfake}:
\begin{theorem}\label{thm:1.10}
For $n > 0$, let $M_n$ be the family of two-component link surgeries displayed on the left in Figure~\ref{fig:1.3}. Equip $M_n$ with the indicated involution $\tau$. Then $(M_n, \tau)$ is a strong cork. In fact, we may modify each $M_n$ by introducing any number of symmetric pairs of negative full twists, as on the right in Figure~\ref{fig:1.3}, and this conclusion still holds.
\end{theorem}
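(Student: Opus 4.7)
The plan is to apply Theorem~\ref{thm:1.1} by showing that at least one of $h_{\tau}$ and $h_{\ita}$ is nonzero on each $(M_n,\tau)$ and on each of its twisted modifications, using the monotonicity in Theorem~\ref{thm:1.4} to bootstrap from cases already treated.

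\textbf{Reducing the twisted modifications to $M_n$.} A symmetric pair of negative full twists in the diagram can be realized by blowing up two points of a filling 4-manifold that are interchanged by $\tau$, and then performing the standard blow-down on the resulting $(-1)$-spheres. Reading this 4-dimensional cobordism in the appropriate direction exhibits it as an interchanging $(-1,-1)$-cobordism from $(M_n,\tau)$ to the twisted manifold $(M_n',\tau)$ in the sense of Section~\ref{sec:4.2}: the two exceptional spheres are swapped by the extension of $\tau$. Theorem~\ref{thm:1.4}(3) then gives
\[
h_{\tau}(M_n) \leq h_{\tau}(M_n') \quad \text{and} \quad h_{\ita}(M_n) \leq h_{\ita}(M_n'),
\]
so nonvanishing for $M_n$ propagates to every symmetric twist modification. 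It therefore suffices to prove $(M_n,\tau)$ is a strong cork.

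\textbf{Handling the base family $M_n$.} My approach is to use equivariant Kirby calculus to identify $(M_n,\tau)$ with $(+1)$-surgery on an equivariant slice knot $K_n$ whose strong inversion induces $\tau$ on the surgery; this is done by turning one of the two surgery components into a dotted circle (carving out a slice disk) while preserving the axis of symmetry. When $n=1$ this recovers the original Akbulut cork as $(+1)$-surgery on $P(-3,3,-3)=\overline{9}_{46}$, which is a strong cork by Theorem~\ref{thm:1.9}. For general $n$, I expect $K_n$ to be a natural generalization lying in the family of pretzel or doubly-twist knots closely related to the $K_{-n,n+1}$ of Theorem~\ref{thm:1.8}; either $K_n$ appears directly in that list, or $(M_n,\tau)$ is related by a further equivariant negative-definite cobordism to a manifold appearing in Theorems~\ref{thm:1.8}--\ref{thm:1.9}. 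In either case, Theorem~\ref{thm:1.4} upgrades the previously established nonvanishing of $h_{\tau}$ or $h_{\ita}$ to the desired nonvanishing on $M_n$.

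\textbf{Main obstacle.} The principal difficulty is the explicit equivariant Kirby-calculus step: beginning from the two-component surgery diagram for $M_n$ and producing an equivariant slice-knot surgery presentation, while tracking both the involution and the $\spinc$ structures through the sequence of handle moves. One must verify that each intermediate move genuinely commutes with $\tau$, so that the assembled cobordism is of the $\spinc$-fixing, $\spinc$-conjugating, or interchanging type required to invoke Theorem~\ref{thm:1.4}. Once this three- and four-dimensional topological identification is in place, all remaining Floer-theoretic input is packaged inside Theorems~\ref{thm:1.4}, \ref{thm:1.8}, and~\ref{thm:1.9}, so that no new involutive Heegaard Floer computations are required.
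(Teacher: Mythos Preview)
Your proposal has two genuine problems.

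\textbf{The twist reduction runs in the wrong direction.} Blowing down a $(-1)$-framed unknot introduces a \emph{positive} full twist, not a negative one. To add a symmetric pair of negative full twists to $M_n$ one attaches a pair of $(+1)$-framed unknots, giving a positive-definite cobordism from $M_n$ to $M_n'$; turned around, this is an interchanging $(-1,-1)$-cobordism from $M_n'$ to $M_n$. Theorem~\ref{thm:1.4} then yields $h_\tau(M_n') \leq h_\tau(M_n)$ and $h_{\ita}(M_n') \leq h_{\ita}(M_n)$, which is the inequality you need. Your stated inequality $h_\tau(M_n) \leq h_\tau(M_n')$ does not let you conclude anything about $M_n'$ from $M_n$. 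This is easily repaired, but as written the logic is backwards.

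\textbf{The argument for $M_n$ itself is incomplete and speculative.} You only handle $n=1$, by quoting that $M_1$ is $(+1)$-surgery on $P(-3,3,-3)$. For $n>1$ you write ``I expect $K_n$ to be a natural generalization\ldots either $K_n$ appears directly in that list, or $(M_n,\tau)$ is related by a further equivariant negative-definite cobordism\ldots''. Neither alternative is established, and there is no reason to believe the slice-knot presentation of $M_n$ lands in the specific families of Theorems~\ref{thm:1.8} or~\ref{thm:1.9}. The paper does not attempt any such identification. Instead it works directly with the two-component link diagram: it attaches a pair of $(-1)$-framed unknots to parallel strands of the two components (an interchanging $(-1,-1)$-cobordism), then performs an explicit sequence of equivariant slides and blow-downs to reduce the result to $(-1)$-surgery on the right-handed trefoil, i.e.\ $\Sigma(2,3,7)$ with its strong involution. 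Lemma~\ref{lem:5.1} then gives $h_\tau(M_n) \leq h_\tau(\Sigma(2,3,7)) < 0$ for every $n$. The point is that the Kirby calculus is done once, uniformly in $n$, on the link picture itself---no slice-knot detour is needed, and the target is a single Brieskorn sphere rather than an $n$-dependent family.
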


\begin{figure}[h!]
\center
\includegraphics[scale=.50]{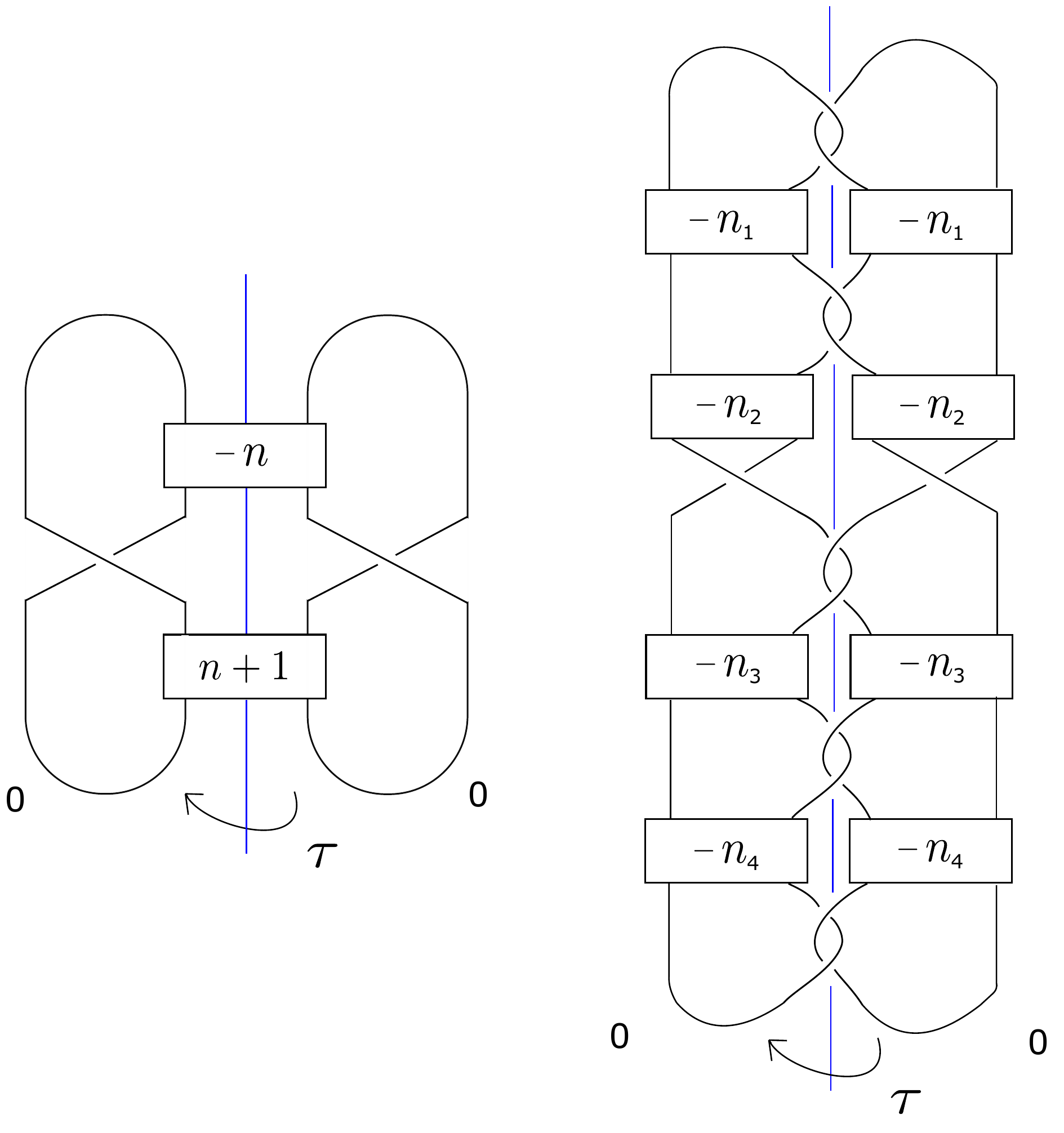}
\caption{Left: the manifold $M_n$. Right: an example of adding symmetric pairs of negative full twists to $M_2$.}\label{fig:1.3}
\end{figure}
\noindent
The family $M_n$ is similar to the family $W_n$ studied by Akbulut and Yasui in \cite{AY} (see also \cite{AKar}).\footnote{By slight abuse of notation, we use $W_n$ to refer to the boundary 3-manifold, rather than the  Mazur 4-manifold defined in \cite{AY}.} In this paper, we show that the $W_n$ are strong. The classes $[(W_n, \tau)]$ will be used in the proof of Theorem~\ref{thm:1.3}.

\begin{theorem}\label{thm:1.11}
For $n > 0$, let $W_n$ be the family of two-component link surgeries displayed on the left in Figure~\ref{fig:1.4}. Equip $W_n$ with the involution $\tau$ coming from the isotopy exchanging the link components. Then $(W_n, \tau)$ is a strong cork. In fact, we may modify each $W_n$ by introducing any number of symmetric pairs of negative full twists, as on the right in Figure~\ref{fig:1.4}, and this conclusion still holds.
\end{theorem}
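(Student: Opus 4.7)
The plan is to apply Theorem~\ref{thm:1.1} by exhibiting a nontrivial value for $h_\tau(W_n)$ or $h_{\ita}(W_n)$ in $\Inv$. Because the involution $\tau$ swaps the two link components of the surgery diagram, the appropriate tool is part~(3) of Theorem~\ref{thm:1.4}, which governs interchanging $(-1,-1)$-cobordisms. The overall strategy is to construct such cobordisms by equivariant surgery, reducing the case of arbitrary $n$ (and arbitrary equivariant twisting) to a base case that can be handled directly.

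First I would dispose of the extra negative full twists. Each symmetric pair of negative full twists inserted into a pair of strands can be realized by attaching a $(-1)$-framed 2-handle along a pair of small meridional unknots encircling those strands. Because this pair of unknots is interchanged by $\tau$, the resulting 2-handle cobordism is an interchanging $(-1,-1)$-cobordism in the sense of Section~\ref{sec:4.2}. Iterating this realizes the twisted version of $W_n$ as one end of an equivariant negative-definite cobordism whose other end is the untwisted $W_n$; Theorem~\ref{thm:1.4}(3) then transports nontriviality of $h_\tau$ or $h_{\ita}$ between them.

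For the untwisted family itself, I would construct an analogous equivariant cobordism relating $W_{n+1}$ to $W_n$ inductively, by locating a $\tau$-interchanged pair of curves in $W_{n+1}$ upon which $(-1)$-framed 2-handle attachment produces $W_n$ (possibly after equivariant handle cancellations, carried out symmetrically). The argument should parallel that used for the closely related family $M_n$ in Theorem~\ref{thm:1.10}. This reduces everything to proving nontriviality of $h_\tau$ or $h_{\ita}$ at the base case $W_1$, which one then identifies with, or relates by one more interchanging $(-1,-1)$-cobordism to, a manifold already established to be a strong cork by Theorem~\ref{thm:1.9} or Theorem~\ref{thm:1.10}.

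The main obstacle I anticipate is the explicit equivariant Kirby-calculus bookkeeping: one must choose the attaching curves so that they genuinely form a $\tau$-interchanged symmetric pair with framings producing a $(-1,-1)$-cobordism, and one must verify that the full composition of cobordisms (twists together with the inductive step) remains interchanging and negative-definite. Once that geometric step is completed, Theorem~\ref{thm:1.4}(3) together with Theorem~\ref{thm:1.1} delivers the conclusion without requiring any further direct Floer-theoretic computation.
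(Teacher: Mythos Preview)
Your treatment of the symmetric pairs of negative full twists is correct and matches the paper: each pair is undone by a $\tau$-interchanged pair of $(-1)$-framed meridional unknots, giving an interchanging $(-1,-1)$-cobordism back to the untwisted $W_n$.

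For the family itself, however, the paper takes a different and more efficient route than your proposed induction $W_{n+1}\to W_n\to\cdots\to W_1$. Instead, it constructs a \emph{single} interchanging $(-1,-1)$-cobordism from $W_n$ directly to the Brieskorn sphere $B_n=S_{-1}(T_{2,2n+1})=\Sigma(2,2n+1,4n+3)$, equipped with some involution $\upsilon$ that the argument does not bother to identify. The key trick is then to invoke the mapping class group rigidity of Brieskorn spheres (Theorem~\ref{thm:5.4}): on $B_n$ the only possible actions on $\CFm$ are $\id$ and $\iota$, so one of $h_{\upsilon}(B_n)$ and $h_{\iota\circ\upsilon}(B_n)$ must equal $h(B_n)=X_{\lfloor (n+1)/2\rfloor}<0$. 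Since an interchanging cobordism bounds \emph{both} $h_\tau$ and $h_{\ita}$, this immediately gives the result. Your reading of the $M_n$ proof as a model for induction on $n$ is off: that proof also goes directly to a Brieskorn sphere (namely $\Sigma(2,3,7)$), not by stepping down through the family.

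Two concrete issues with your plan as written. First, the inductive cobordism $W_{n+1}\to W_n$ via a single $\tau$-interchanged pair of $(-1)$-handles is asserted but not constructed; it is plausible but not obvious from the surgery picture, and verifying it is exactly the equivariant Kirby-calculus work you flag as the obstacle. Second, and more structurally, your approach (even if it succeeds) yields only the uniform bound $h_\tau(W_n)\le h_\tau(W_1)$ or $h_{\ita}(W_n)\le h_{\ita}(W_1)$, whereas the paper's direct cobordism to $B_n$ gives the $n$-dependent bound $X_{\lfloor (n+1)/2\rfloor}$. This stronger bound is precisely what is needed later in the proof of Theorem~\ref{thm:1.3} to extract linear independence, so the paper's approach buys more than just Theorem~\ref{thm:1.11}.
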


\begin{figure}[h!]
\center
\includegraphics[scale=0.5]{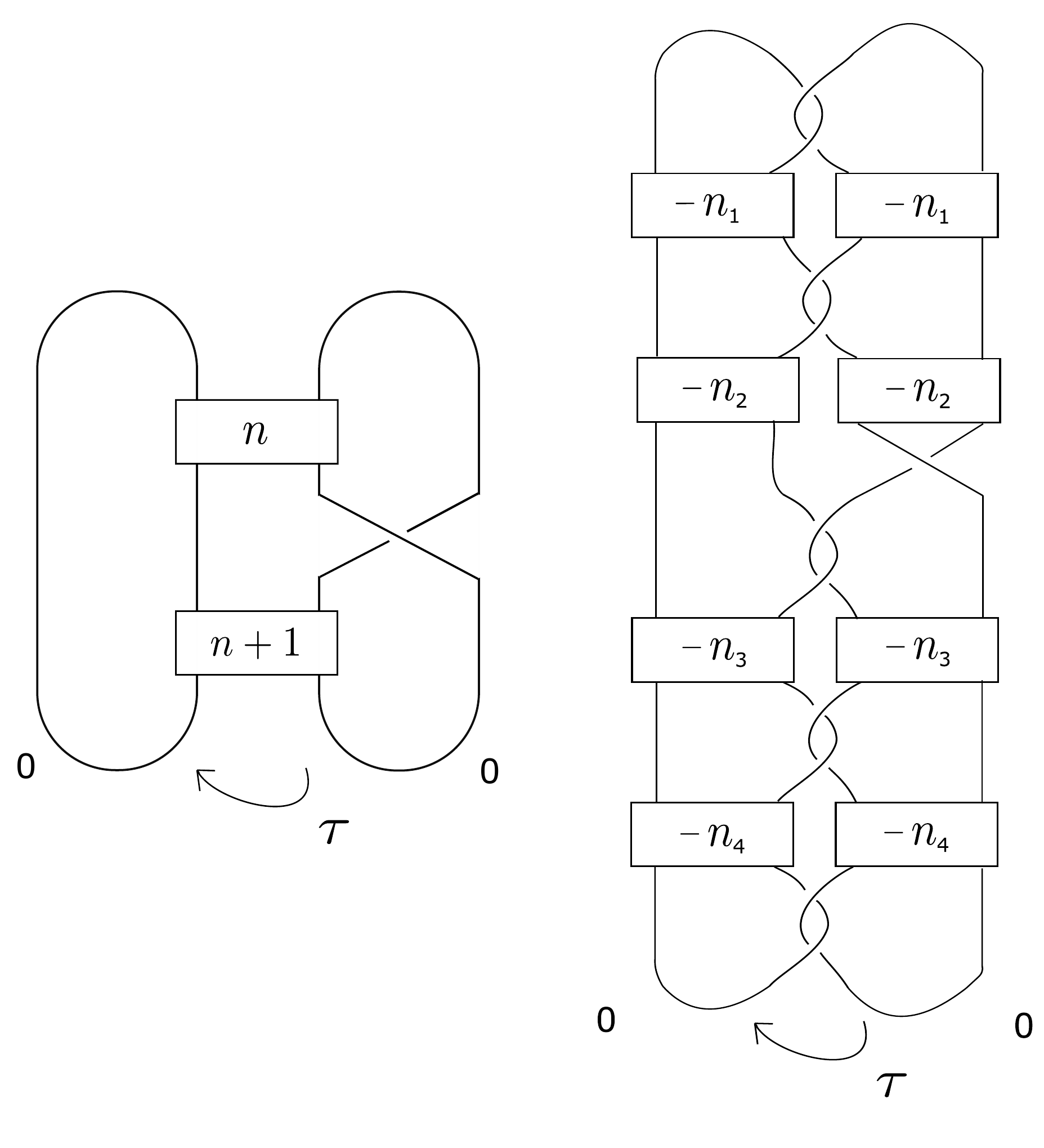}
\caption{Left: the manifold $W_n$ from \cite{AY}. (For a precise definition of $\tau$, see the discussion surrounding the proof of Theorem~\ref{thm:1.11} in Section~\ref{sec:5.4}.) Right: an example of adding symmetric pairs of negative full twists to $W_2$.}\label{fig:1.4}
\end{figure}

%\newpage
\noindent 
Note the above families bound Mazur manifolds \cite{Mazur}. In light of our computations, it is natural to ask:

\begin{question}\label{question:strong}
Is every cork a strong cork?
\end{question}

\noindent
Although one would expect the answer to this question to be ``no", the authors suspect that many (if not all) of the classical examples of corks currently recorded in the literature are strong. (Question~\ref{question:strong} was recently answered in the negative by Hayden and Piccirillo, who constructed several new examples of corks in \cite{HP}.) One particularly prominent class which is not fully addressed in this paper is the family of ``positron" corks introduced by Akbulut and Matveyev in \cite{AMconvex}. Here, we show (using a rather ad-hoc argument) that the first member of this family is a strong cork. We denote this example by $(P, \tau)$. Note that in the notation of \cite{AMconvex}, we have $P = \overline{W}_1$, where again we are conflating the boundary of a cork with the underlying 4-manifold. We stray from the notation of \cite{AMconvex}  due to possible confusion with the orientation reversal of $W_1$. The proof of Theorem~\ref{thm:P} may give some insight into the additional Floer-theoretic computations needed to establish further examples.

%\newpage
\begin{theorem}\label{thm:P}
Let $P$ be the two-component link surgery displayed in the left in Figure~\ref{fig:1.Pos}. Equip $P$ with the indicated involution $\tau$. Then $(P, \tau)$ is a strong cork. In fact, we may modify $P$ by introducing any number of symmetric pairs of negative full twists, as on the right in Figure~\ref{fig:1.Pos}, and this conclusion still holds.
\end{theorem}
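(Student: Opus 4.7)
The plan is to verify the hypothesis of Theorem~\ref{thm:1.1} for each member of the family, i.e., to show that at least one of $h_\tau(P') \neq 0$ or $h_{\ita}(P') \neq 0$ holds, where $P'$ ranges over the untwisted positron and all its symmetric negative-twist variants. The structure will be: first establish the statement for the untwisted $P$, then use equivariant cobordism monotonicity to propagate to every twisted variant uniformly.

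For the base case of $P$ itself, the clean equivariant-surgery arguments used elsewhere in the paper — which apply smoothly to surgeries on strongly invertible knots (via Theorem~\ref{thm:1.5}) or to link surgeries with an obvious exchanging involution (as for $W_n$ in Theorem~\ref{thm:1.11}) — do not directly fit, because the positron's involution does not present $P$ in either standard form. The first thing I would try is to construct a $\spinc$-fixing or interchanging equivariant negative-definite cobordism from $(P, \tau)$ to a manifold whose invariant is already known to be nontrivial, such as $(+1)$-surgery on $\overline{9}_{46}$ (the boundary of Akbulut's cork) or some $(W_n, \tau)$; if such a cobordism can be built using the equivariant surgery toolkit of Section~\ref{sec:4}, strong non-extendability of $P$ follows immediately from Theorem~\ref{thm:1.4} together with the partial order on $\Inv$. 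If no suitably simple cobordism presents itself, I would instead work directly with an equivariant Heegaard diagram for $(P, \tau)$, compute the induced chain maps $\tau$ and $\ita$ on $\CFm(P)$ in its unique $\spinc$ structure, and verify by inspection that at least one of the $\iota$-complexes $(\CFm(P)[-2], \tau)$ and $(\CFm(P)[-2], \ita)$ is locally nontrivial.

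With the base case in hand, the twisted family follows by monotonicity. Each symmetric pair of negative full twists can be realized by attaching a pair of $(-1)$-framed $2$-handles along an equivariantly placed pair of unknots encircling the strands being twisted; the trace of such an attachment is an equivariant negative-definite cobordism between the twisted and untwisted manifolds. Depending on whether this pair of unknots is $\spinc$-fixed or interchanged by $\tau$, part (1) or part (3) of Theorem~\ref{thm:1.4} yields $h_\tau(P') \leq h_\tau(P)$ and $h_{\ita}(P') \leq h_{\ita}(P)$ in $\Inv$. Combined with nontriviality in the base case, the twisted invariants remain nontrivial, so Theorem~\ref{thm:1.1} applies.

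The main obstacle is the base-case computation. The positron is close enough to Akbulut's cork that one might hope to relate them via a purely topologically constructed equivariant cobordism, avoiding any new Floer-level work, but this is by no means automatic: the involution does not manifestly act as an exchange of surgery components, and the naive candidate cobordisms one writes down typically fail to be equivariant on the nose. If the topological route fails, the fallback chain-level computation of the $\tau$-action on $\CFm(P)$ must be performed with care, since the naturality of the induced action depends on the choice of Heegaard diagram, basepoints, and the path used to identify diagrams, in the sense of \cite{JTZ}. This is presumably why the authors describe their argument as \emph{rather ad-hoc}.
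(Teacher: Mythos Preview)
Your overall architecture is correct and matches the paper: establish the base case for $P$, then propagate to every twisted variant by observing that each symmetric pair of negative full twists is realized by an interchanging $(-1,-1)$-cobordism back to $P$, so Theorem~\ref{thm:1.4} pushes nontriviality forward. That second step is exactly what the paper does.

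The gap is in the base case, and it is precisely the content you flag as the ``main obstacle'' without resolving. The paper does take your option (a) --- an equivariant cobordism --- but the target is neither a Brieskorn sphere nor a previously-handled cork boundary. Instead, one constructs an interchanging $(-1,-1)$-cobordism from $(P,\tau)$ to $S_{-1}(6_2)$ (with some involution), via the same ``fundamental cobordism'' trick used for $M_n$ and $W_n$, followed by a sequence of equivariant Kirby moves. This gives $h_\tau(P)\le h_\tau(S_{-1}(6_2))$ and $h_{\ita}(P)\le h_{\ita}(S_{-1}(6_2))$, and it remains to show one of the right-hand sides is strictly negative. This is the ``ad-hoc'' part: $6_2$ has genus two, so the involutive large surgery formula of \cite{HM} does not apply, and there is no general involutive surgery formula. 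The paper's trick is to observe that the quotient map $q:\mathbb{X}^+(1)\to A_0^+$ from the mapping cone to its central piece intertwines $\iota_*$ with $(\iota_0)_*$ (the proof of \cite[Theorem 1.5]{HM} does not use largeness of the surgery coefficient for this commutation). One computes $(\iota_0)_*$ on $H_*(A_0^+)$ from $\iota_K$, sees it swaps the two lowest-grading generators, and pulls this back to conclude $\iota_*$ swaps the two top-grading elements of $\HFm(S_{-1}(6_2))$. Then whichever of $\tau_*$ or $(\iota\circ\tau)_*$ equals $\iota_*$ also has this property, and Remark~\ref{rem:2.H} rules out a local map from the trivial complex.

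Your fallback option (b) --- computing $\tau$ directly on $\CFm(P)$ from an equivariant Heegaard diagram --- is not what is done and would be substantially harder in practice; the point of the cobordism is exactly to trade that computation for a knot-surgery computation where $\iota_K$ is known.
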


\begin{figure}[h!]
\center
\includegraphics[scale=0.65]{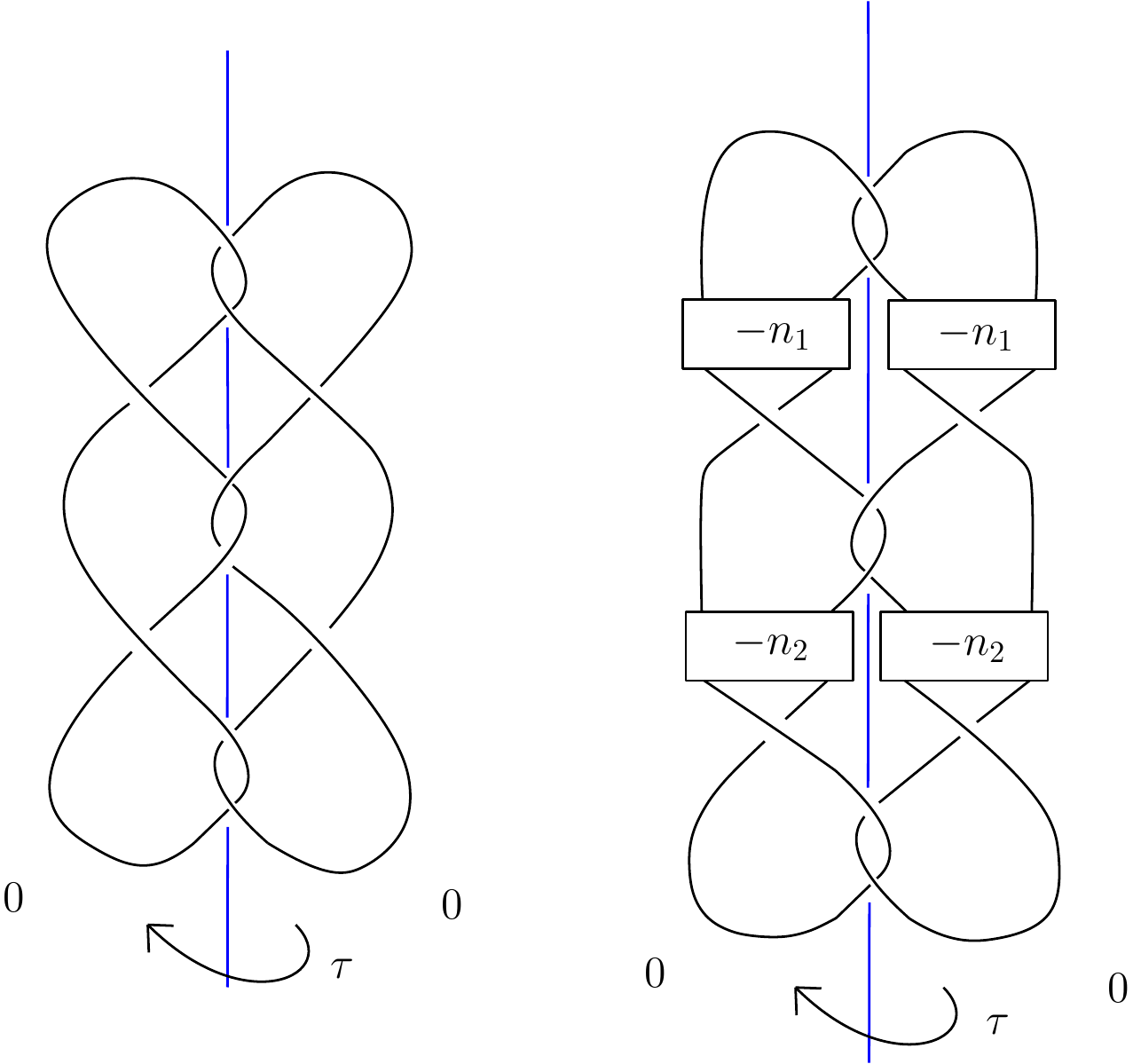}
\caption{Left: the ``positron" cork from \cite{AMconvex}. Right: adding symmetric pairs of negative full twists to $P$.}\label{fig:1.Pos}
\end{figure}

\subsection{Organization}
In the next section, we define $\G$ and discuss its topological context.  In Section~\ref{sec:2}, we then review the Floer theoretic framework underpinning our invariants, and the algebraic structures stemming from it. In Section~\ref{sec:3}, we use this framework to define our invariants $h_\tau$ and $h_{\ita}$, give some motivating examples, and prove Theorem~\ref{thm:1.1}. In Section~\ref{sec:4}, we discuss the behavior of $h_\tau$ and $h_{\ita}$ under cobordisms and prove Theorems~\ref{thm:1.4} and \ref{thm:1.5}. Section~\ref{sec:secZ} represents a somewhat technical interlude which utilizes the work of Zemke \cite{Zemkegraph} to address cobordism with disconnected ends. This leads to the proof of Theorem~\ref{thm:1.2}. Section~\ref{sec:5} represents the calculational heart of the paper, where we prove the remaining theorems by combining the algebraic formalism developed in the preceding sections with topological constructions of equivariant cobordisms.

\subsection*{Acknowledgements} The authors thank Danny Ruberman and Ian Zemke for several helpful conversations.   The authors are especially grateful to Ian  for explaining his work on ribbon graph cobordisms.  Irving Dai was supported by NSF DMS-1902746. Matthew Hedden was supported by NSF DMS-1709016.  Abhishek Mallick would like to thank Matthew Hedden and Kristen Hendricks for their support and helpfulness during the course of this project.

%%%%%%%%%%%%%%%%%%%%%%%%%%%%%%%%%%%%%%%%%%%%%%%%%%%%%%%%%%%%%%%%%%%%%%%%%%%%%%%%%%%%%%%%%%%%%%%%%%%%%%%%%%%%%%%%%%%%%%%%%%%%%%%%%%%%%%%%%%%%%%%%%%%%%%%%%%%%%%%%%%%%%%%%%%%%%%%%%%%%%%%%%%%%%%%%%%%%%%%%%%%%%%%%%%%%%%%%%%%%%%%%%%%%%%%%%%%%%%%%%%%%%%%%%%%%%%%%%%%%%%%%%%%%%%%%%%%%%%%%%%%%%%%%%%%%%%%%%%%%%%%%%%%%%%%%%%%%%%%%%%%%%%%%%%%%%%%%%%%%%%%%%%%%%%%%%%%%%%%%%%%%%%%%%%%%%%%%%%%%%%%%%%%%%%%%%%%%%%%%%%%%%%%%%%%%%%%%%%%%%%%%%%%%%%%%%%%%%%%%%%%%%%%%%%%%%%%%%%%%%%%%%%%%%%%%%%%%%%%%%%%%%%%%%%%%%%%%%%%%%%%%%%%%%%%%%%%%%%%%%%%%%%%%%%%%%%%%%%%%%%%%%%%%%%%%%%%%%%%%%%%%%%%%%%%%%%%%%%%%%%%%%%%%%%%%%%%%%%%%%%%%%%%%%%%%%%%%%%%%%%%%%%%%%%%%%%%%%%%%%%%%%%%%%%%%%%%%%%%%%%%%%%%%%%%%%%%%%%%%%%%%%%%%%%%%%%%%%%%%%%%%%%%%%%%%%%%%%%%%%%%%%%%%%%%%%%%%%%%%%%%%%%%%%%%%%%%%%%%%%%%%%%%%%%%%%%%%%%%%

%\section{Topological  Preliminaries}\label{sec:2}

\section{Homology bordism group of involutions}\label{sec:homologycobordism}
In this section, we give a precise definition of $\G$ and discuss some motivation for its construction. This group naturally falls within the more general context of the bordism group of diffeomorphisms, which was popularized by Browder. Let $M_1$ and $M_2$ be two closed, oriented $n$-manifolds, each equipped with an orientation-preserving diffeomorphism $f_i$ ($i = 1, 2$). We say that $(M_1, f_1)$ and $(M_2, f_2)$ are \textit{bordant} if there exists a bordism $W$ between them which admits an orientation-preserving diffeomorphism restricting to $f_i$ on $M_i$. Here, neither the $M_i$ nor $W$ are assumed to be connected. Bordism is an equivalence relation, where transitivity follows from uniqueness of collar neighborhoods of boundary components.

\begin{definition}(\cite[pg.~22]{Browder}\label{def:1.12} or \cite[Definition 1.4]{Kreck}) The \textit{$n$-dimensional bordism group of orientation-preserving diffeomorphisms} $\Delta_n$ is the abelian group whose underlying  set consists of bordism classes of pairs $(M^n, f)$, endowed with the addition operation induced by disjoint union. The empty $n$-manifold serves as the identity, and inverses are given by orientation reversal.
\end{definition}

In analogy with $\Theta^3_{\Z}$, one would like to refine the three-dimensional group $\Delta_3$ by requiring $M$ to be a homology sphere and $W$ to be a homology cobordism. However, this presents certain technical difficulties due to the fact that the connected sum of $(M_1, f_1)$ and $(M_2, f_2)$ may not in general be well-defined. Indeed, note that in order to form $(M_1\#M_2,f_1\#f_2)$, one must first isotope each $f_i$ to fix a ball $B_i \subseteq M_i$. If $f_i$ and $f_i'$ are isotopic, then $(M_i, f_i)$ and $(M_i, f_i')$ are certainly bordant via the diffeomorphism of the cylinder $M_i \times I$ induced by the isotopy. However, it does \textit{not} follow that the homology cobordism class of $(M_1\#M_2,f_1\#f_2)$ is independent of the choice of isotopy. To see this, let $f_i$ and $f_i'$ ($i = 1, 2$) be two diffeomorphisms of $Y_i$ fixing $B_i$. Suppose that $f_i$ and $f_i'$ are isotopic, but that the intermediate stages of this isotopy do not fix any ball in $M_i$. Then it is not clear how to define a diffeomorphism on $(M_1 \# M_2) \times I$ restricting to $f_1 \# f_2$ and $f_1' \# f_2'$ at either end. We thus instead follow Definition~\ref{def:1.12} and take disjoint union to be our group operation.

In this context (i.e., the case of disconnected ends), the most natural generalization of the notion of a homology cobordism is a cobordism with the homology of an $n$-punctured $S^4$. Unfortunately, this class is not closed under composition (see Figure~\ref{fig:1.5}), so one is instead forced to consider the equivalence relation $\sim_p$ generated by all such cobordisms. Note that a composite cobordism constructed in this manner has $H_2(W) = 0$. In this paper, we will generalize this slightly and use the following (slightly coarser) equivalence relation:

\begin{figure}[h!]
\center
\includegraphics[scale=0.63]{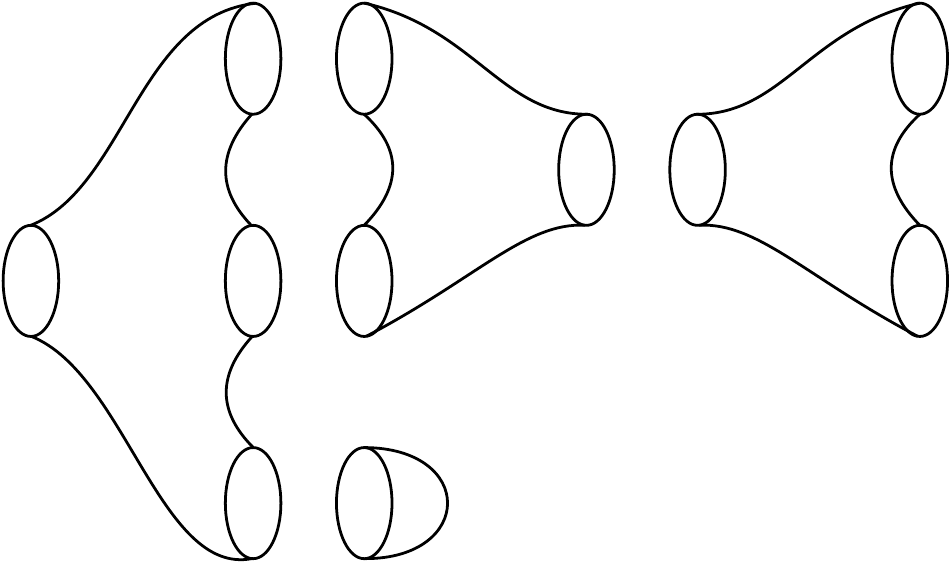}
\caption{A composition of cobordisms, each with the homology of an $n$-punctured $S^4$. Note that the composition need not have the homology of a punctured sphere.}\label{fig:1.5}
\end{figure}
%One could of course isotope $f_1$ and $f_2$ so that each fix a ball, and then form the connected sum $(Y_1\#Y_2,f_1\#f_2)$.  While an isotopy of $f_i$ preserves the homology cobordism class of $(Y_i,f_i)$,  the resulting homology cobordism class of the diffeomorphism $f_1\#f_2$ seems to depend on the chosen isotopies.   We therefore follow Definition~\ref{def:1.12} and take disjoint union for our group operation.

\begin{definition}\label{def:1.13}
Consider the class of pairs $(Y, f)$, where:
\begin{enumerate}
\item $Y$ is a compact (possibly empty) disjoint union of oriented integer homology 3-spheres; and,
\item $f$ is an orientation-preserving diffeomorphism of $Y$ which fixes each component of $Y$ setwise.
\end{enumerate}
We say that two such pairs $(Y_1, f_1)$ and $(Y_2, f_2)$ are \textit{pseudo-homology bordant} if there exists a pair $(W, g)$ with the following properties:
\begin{enumerate}
\item $W$ is a compact, oriented cobordism between $Y_1$ and $Y_2$ with $H_2(W) = 0$; and,
\item $g$ is an orientation-preserving diffeomorphism of $W$ such that:
\begin{enumerate}
\item $g$ restricts to $f_i$ on each $Y_i$; and,
\item $g$ induces the identity map on $H_1(W, \partial W)$.
\end{enumerate}
\end{enumerate}
In this situation, we write $(Y_1, f_1) \sim (Y_2, f_2)$. It is clear that $\sim$ is an equivalence relation.
\end{definition}

\begin{remark}\label{rem:homological}
Note that $H^2(W) = H_2(W, \partial W) = H_2(W) = 0$. In particular, $W$ has only one $\spinc$-structure. The vanishing of the second homology and cohomology groups imply, by Lefschetz duality and the universal coefficients theorem, that all absolute and relative homology groups are free. An easy argument furthermore shows that $f$ must act as the identity on all homology groups $H_*(W)$ and all relative groups $H_*(W, \partial W)$.
\end{remark}

We will give some motivation for Definition~\ref{def:1.13} shortly. However, the reader should note that if $W$ has the homology of an $n$-punctured $S^4$, then $g$ automatically acts as the identity on $H_1(W, \partial W)$, using the fact that $f_i$ fixes each connected component of $Y_i$ setwise. Hence Definition~\ref{def:1.13} is a generalization of the equivalence relation $\sim_p$ discussed in the previous paragraph.\footnote{Unfortunately, the authors do not have an example showing that $\sim$ and $\sim_p$ are actually distinct.}

\begin{definition}\label{def:homologydiff}
The \textit{(three-dimensional) homology bordism group of orientation-preserving diffeomorphisms} $\Gdiff$ is the abelian group whose underlying set consists of pseudo-homology bordism classes of pairs $(Y, f)$ as in Definition \ref{def:1.13}, endowed with the addition operation induced by disjoint union. The empty 3-manifold serves as the identity, and inverses are given by orientation reversal.\end{definition}

For the reader more comfortable with the monoidal operation of connected sum, we make two remarks.  The first is that every diffeomorphism of $S^3$ extends over $B^4$, by Cerf \cite{Cerf}. Thus, instead of taking the empty set as the identity, we may take the class of $S^3$, equipped with any diffeomorphism.  Second, suppose we are given pairs $(Y_1,f_1)$ and $(Y_2,f_2)$ for which there exist $f_i$-equivariant balls $B_i\subset Y_i$ and an $f_i$-equivariant diffeomorphism from $B_1$ to $B_2$. Then we can form the connected sum $(Y_1 \# Y_2, f_1 \# f_2)$. It is evident that in this situation
\[
(Y_1, f_1) \sqcup (Y_2, f_2) \sim (Y_1 \# Y_2, f_1 \# f_2),
\]
using the cobordism formed by attaching a 1-handle to the outgoing end of $(Y_1 \sqcup Y_2) \times I$. (We refer to such a cobordism as a \textit{connected sum cobordism}. Note that this has the homology of a thrice-punctured $S^4$.) Thus, disjoint union agrees with the connected sum, although the latter is not always well-defined.

The reader may wonder as to the requirement that $g$ act as the identity on $H_1(W, \partial W)$.\footnote{Again, note that this is always satisfied if $W$ has the homology of a punctured $S^4$ or is a composition of such cobordisms.} It turns out that this condition will be crucial when applying various Floer-theoretic functoriality results of Zemke during the proof of Theorem~\ref{thm:1.2}. (See Section~\ref{sec:secZ}.) Indeed, we have chosen Definition~\ref{def:1.13} to be the coarsest possible equivalence relation under which $h_\tau$ and $h_{\ita}$ are invariant, in the sense that if $(Y_1, \tau_1)$ and  $(Y_2, \tau_2)$ are related by $\sim$, then $h_{\tau_1}(Y_1) = h_{\tau_2}(Y_2)$ and $h_{\ita_1}(Y_1) = h_{\ita_2}(Y_2)$. Note that two classes which are distinguished by our invariants up to $\sim$ are of course distinguished up to any finer equivalence relation.

It is easily checked that $\Gdiff$ admits a forgetful homomorphism to $\Theta^3_{\Z}$. This provides some additional motivation for the condition that $H_2(W) = 0$ in Definition~\ref{def:1.13}:

\begin{proposition}\label{prop:forget} 
There is a surjective homomorphism 
\[
F:\Gdiff\rightarrow \Theta^3_{\Z}
\]
obtained by forgetting the data of $f_i$ and $g$ in Definition~\ref{def:1.13}.  
\end{proposition}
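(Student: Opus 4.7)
The map $F$ is defined by $F(Y, f) = [Y] \in \Theta^3_{\Z}$, where for a disjoint union $Y = \bigsqcup_j Y^{(j)}$ the class $[Y]$ is interpreted as the formal sum $\sum_j [Y^{(j)}]$ (equivalently $[\#_j Y^{(j)}]$ under the group operation on $\Theta^3_{\Z}$). Surjectivity is immediate: every integer homology $3$-sphere $Y$ is the image of $(Y, \id_Y) \in \Gdiff$. The homomorphism property is essentially formal, since the operation on $\Gdiff$ is disjoint union and on $\Theta^3_{\Z}$ is connected sum, and the identity $[Y^{(1)} \sqcup Y^{(2)}] = [Y^{(1)} \# Y^{(2)}]$ in $\Theta^3_{\Z}$ is witnessed by the standard three-punctured $S^4$ connected-sum cobordism, which is a genuine integer homology cobordism.

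The heart of the argument is well-definedness: if $(Y_1, f_1) \sim (Y_2, f_2)$ via $(W, g)$, one must show that the $\Theta^3_{\Z}$-classes agree. Forgetting $g$, I would attach standard connected-sum cobordisms $V_1$ and $V_2$ at each end of $W$ to produce a new cobordism $\tilde{W} \colon \#_j Y_1^{(j)} \to \#_j Y_2^{(j)}$. A Mayer--Vietoris computation---using that each $Y_i^{(j)}$ is a homology sphere (so $H_1(Y_i^{(j)}) = H_2(Y_i^{(j)}) = 0$) and that each $V_i$ has $H_1(V_i) = H_2(V_i) = 0$---yields $H_2(\tilde{W}; \Z) = 0$. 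By Remark~\ref{rem:homological}, $H_1(\tilde{W})$ is free.

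To conclude $[\#_j Y_1^{(j)}] = [\#_j Y_2^{(j)}]$, the plan is to modify $\tilde{W}$ via interior surgery along embedded circles generating $H_1(\tilde{W})$, producing an honest integer homology cobordism between the two connected sums. The vanishing $H_2(\tilde{W}) = 0$ together with the freeness of $H_1(\tilde{W})$ forces $w_2(\tilde{W}) = 0$, so $\tilde{W}$ is spin, which provides some rigidity. The main obstacle is precisely the surgery step: in four dimensions, generic interior surgery on $S^1$ trades an $H_1$ class for an $H_2$ class, so the surgery curves must be chosen carefully---using the decomposition $\tilde{W} = V_1 \cup W \cup V_2$ and duals in $H_3(\tilde{W}, \partial\tilde{W})$---to ensure that $H_2$ remains zero while $H_1$ is killed.
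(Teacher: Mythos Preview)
Your plan is the paper's: forget the diffeomorphism data, make both ends connected, then surger out $H_1$ to obtain an honest homology cobordism between $\#_j Y_1^{(j)}$ and $\#_j Y_2^{(j)}$. The only cosmetic difference is that the paper drills out arc neighborhoods \emph{inside} $W$ to merge the boundary components (which preserves $H_1$ and $H_2$), whereas you attach external connect-sum cobordisms $V_i$. Your $\tilde W$ then has larger $H_1$ (the extra loops run through a $V_i$ and back through $W$), but it is still free, so the same surgery argument applies.

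The surgery step you flag as the ``main obstacle'' is not one, and the paper simply asserts it. Once $H_2(\tilde W) = 0$ and $\partial \tilde W$ is a union of homology spheres, Lefschetz duality and universal coefficients force every absolute and relative homology group of $\tilde W$ to be free (this is Remark~\ref{rem:homological}). In particular the injection $H_1(\tilde W) \hookrightarrow H_1(\tilde W, \partial \tilde W)$ splits, so a basis element $[c]$ remains primitive in the relative group; the nondegenerate intersection pairing then produces a $3$-cycle meeting $c$ transversely once, and its restriction to $\tilde W \setminus \nu(c)$ bounds the meridional $S^2$. Hence surgery on $c$ kills a rank of $H_1$ without creating any $H_2$, and one iterates over a basis. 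Your instinct about duals in $H_3$ is exactly right; no delicate choice of curves beyond ``a basis of $H_1$'' is needed.

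Finally, drop the spin remark: every embedded circle in an oriented $4$-manifold already has trivial normal bundle, and the two framings yield surgered manifolds with identical integral homology. Spin-ness plays no role here.
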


\begin{proof} 
Forgetting all diffeomorphism data in Definition \ref{def:1.13} clearly still yields an equivalence relation. We denote the corresponding group by $\Theta^\sqcup_{\Z}$. This is generated by integer homology spheres equipped the operation of disjoint union, where the equivalence relation consists of cobordisms with $H_2(W) = 0$. There is an obvious surjective homomorphism
\[
F':\Gdiff\rightarrow \Theta^\sqcup_{\Z}.
\]
It is easy to check that $\Theta^\sqcup_{\Z}$ is naturally isomorphic to the usual homology cobordism group $\Theta^3_{\Z}$. The isomorphism in question is given by sending a disconnected 3-manifold $Y=\sqcup_i Y_i$ to the connected sum of its path components $\#_iY_i$. To see that this respects equivalences, suppose that $W$ is a cobordism as in Definition~\ref{def:1.13} with $H_2(W) = 0$. Then one can easily turn $W$ into a homology cobordism $W'$ whose two ends are the connected sums of all the incoming and outgoing boundary components of $W$, respectively. This is done by deleting neighborhoods of arcs connecting the incoming and outgoing components of $W$, respectively, and surgering out a set of closed curves that form a basis of $H_1(W)$. Note that $H_1(W)$ is free by Remark~\ref{rem:homological}.

Given that the map  $\Theta^\sqcup_{\Z}\rightarrow \Theta^3_{\Z}$ is well-defined, it is clearly a bijective homomorphism. Indeed, it maps disjoint unions to connected sums, verifying the homomorphism property; injectivity follows from the fact that  attaching 3-handles to a homology ball (to undo any connected sums enacted by our map) yields a punctured homology sphere, which we view as a pseudo-homology bordism to the empty set;  surjectivity is obvious. 
\end{proof}
\noindent
Note that in the above proof, we do \textit{not} obtain a diffeomorphism on the homology cobordism formed by surgering out arcs and curves on $W$, unless these can be chosen to be $g$-invariant. Indeed, the reader may view the requirement that $g$ act as the identity on $H_1(W, \partial W)$ as a homological (and hence less restrictive) version of this condition.

%\todo{Cut some discussion.}
%Obviously, $F$ restricts to a surjective homomorphism from $\G$ to $\Theta^3_{\Z}$. Roughly speaking, we think of $\ker F$ as capturing the additional subtlety associated to the presence of $\tau$. 
%We have mostly introduced Definition~\ref{def:1.17} so as to be able to cleanly state Theorems~\ref{thm:1.2} and \ref{thm:1.3}. 

In this paper, we will specialize to the case in which $f$ is an involution on $Y$, due to the resulting connection with the theory of corks. This yields the following subgroup of $\Gdiff$:

\begin{definition}\label{def:1.17}
Let $\G$ be the subgroup of $\Gdiff$ generated by pseudo-homology bordism classes $[(Y, \tau)]$, where $\tau$ is an involution. We call $\G$ the \textit{(three-dimensional) homology bordism group of involutions}. %Note that $g : W \rightarrow W$ is \textit{not} required to be an involution. 
\end{definition}
\noindent
We will furthermore be interested in classes $[(Y, \tau)] \in \ker F$ for which $Y$ bounds a contractible manifold, so that $\tau$ extends topologically. 

While Definition~\ref{def:1.13} might seem rather cumbersome, the reader will not lose much by considering only individual homology spheres and homology balls. Indeed, one can think of Definition~\ref{def:1.13} simply as a generalized situation in which the Floer-theoretic techniques of this paper also happen to apply. It is in fact possible to define equivariant connected sums for all of the families we consider here; one can then compare $(Y_1, \tau_1)$ and $(Y_2, \tau_2)$ by asking whether $- Y_1 \# Y_2$ bounds a homology ball over which $\tau_1 \# \tau_2$ extends. (See the discussion of boundary sums of corks in \cite[Section 1]{AKMR}.) Any two pairs which are equivalent via this relation are easily seen to be equivalent via $\sim$, although \textit{a priori} the converse need not hold. We have thus opted for Definition~\ref{def:1.13} out of generality and also to more closely parallel the construction of $\Delta_3$. In the examples of Section~\ref{sec:1.1}, connected sums can be taken by stacking one equivariant surgery diagram above the other, and inverses are given by mirroring and negating all surgery coefficients. We record a connected sum formula in Proposition~\ref{lem:secZ.4}.

\begin{remark}\label{rem:1.19}
Note that if $Y$ is a homology sphere (bounding a contractible manifold) with involution $\tau$, then $[(Y, \tau)]$ being nonzero in $\G$ is technically stronger than $Y$ being a strong cork. This is because the nontriviality of $[(Y, \tau)]$ actually obstructs the extension of $\tau$ over any null-bordism $W$ satisfying the conditions of Definition~\ref{def:1.13}. The authors do not have an example elucidating this distinction. 
\end{remark}

\section{Floer-theoretic overview}\label{sec:2}

In this section we give a non-technical overview of the arguments and Floer-theoretic results used in this paper. For convenience, we assume throughout that $Y$ is an integer homology sphere or, where appropriate, a disjoint union of integer homology spheres.

\subsection{The framework for the invariants}\label{sec:2.1}

We begin with a bird's eye perspective of the landscape where our invariants  reside. This is the realm of  Hendricks and Manolescu's involutive Heegaard Floer homology \cite{HM}. Their construction modifies the usual Heegaard Floer homology of Ozsv\'ath and Szab\'o \cite{OS3manifolds1}, \cite{OS3manifolds2}, taking into account the conjugation action on the Heegaard Floer complex coming from interchanging the $\alpha$- and $\beta$-curves. More precisely, Hendricks and Manolescu associate an algebraic object called an \textit{$\iota$-complex} to an integer homology sphere $Y$. This is a pair $(\CFm(Y), \iota)$, where $\CFm(Y)$ is the usual Heegaard Floer complex of $Y$, and 
\[
\iota : \CFm(Y) \rightarrow \CFm(Y)
\]
is a homotopy involution on $\CFm(Y)$ defined using the above-mentioned conjugation symmetry \cite[Section 2.2]{HM}. Up to the appropriate notion of homotopy equivalence, the pair $(\CFm(Y), \iota)$ is a well-defined 3-manifold invariant. %Taking the mapping cone of $1 + \iota$ on $\CFm(Y)$ gives a chain complex $\CFIm(Y)$ whose homology is called the \textit{involutive Heegaard Floer homology $\HFIm(Y)$ of $Y$} \cite[Section 2.3]{HM}. In this paper, we will generally work directly with $(\CFm(Y), \iota)$, rather than $\CFIm$ or $\HFIm$.

In \cite{HMZ}, Hendricks, Manolescu, and Zemke define an equivalence relation on the set of $\iota$-complexes, called \textit{local equivalence}. This notion captures the algebraic relationship imposed on $\iota$-complexes by the presence of a homology cobordism between homology spheres. They then consider the set
\[
\Inv = \{\text{(abstract) } \iota \text{-complexes}\} \text{ }/\text{ local equivalence}
\]
consisting of all possible $\iota$-complexes modulo local equivalence. Taking the local equivalence class of the (grading-shifted\footnote{The grading shift convention is due to the fact that (as usually defined) $\CFm(S^3)$ has uppermost generator in grading $-2$, instead of zero.}) $\iota$-complex of $Y$ thus gives an element of $\Inv$, which we denote by $h(Y)$:
\[
Y \mapsto h(Y) = [(\CFm(Y)[-2], \iota)].
\]
In \cite[Section 8]{HMZ}, it is shown that $\Inv$ admits a group structure, with the group operation being given by tensor product. The identity element, denoted throughout by $0$, is the local equivalence class of $S^3$ or, more algebraically, the complex $\mathbb{F}[U]$ with trivial differential and identity involution. With this group structure, Hendricks, Manolescu, and Zemke  show that $h$ is a homomorphism
\[
h: \Theta^3_{\Z} \rightarrow \Inv.
\]
One can analyze algebraic properties of $\Inv$ and/or the image of $h$ as a means to better understand $\Theta^3_{\Z}$. Although $\Inv$ itself is not completely understood, this strategy has been used effectively by several authors. In particular, many auxiliary invariants of $\Inv$ have been defined (see \cite{HHL}, \cite{DHSTcobordism}) and various subgroups are well-understood \cite{DS}. Here, we will leverage these results about $\Inv$ to help us understand the output of $h_\tau$ and $h_{\ita}$.

In the involutive Heegaard Floer setting, $\iota$ is constructed using the conjugation symmetry on $\CFm$, but in general any homotopy involution on $\CFm$ defines an element of $\Inv$. In particular, by work of Juh\'asz, Thurston, and Zemke \cite{JTZ}, the mapping class group of $Y$ acts naturally on the Heegaard Floer complex of $Y$.\footnote{For subtleties addressing the basepoint, see Section~\ref{sec:3.1}. We are using here the fact that if $Y$ is a homology sphere, then the basepoint-moving map on $Y$ is trivial up to $U$-equivariant homotopy, by work of Zemke \cite{Zemkegraph}.} Thus, any involution $\tau$ on $Y$ induces a homotopy involution on $\CFm(Y)$. In this situation, we additionally have a third homotopy involution on $\CFm(Y)$ given by the composition $\ita$. (Here, we abuse notation slightly and use $\tau$ to also denote the homotopy equivalence class of the induced action on $\CFm(Y)$. When we write $\ita$, we similarly mean the composition of this action with $\iota$.) If $Y$ is equipped with an involution $\tau$, we can thus replace $\iota$ with the actions of $\tau$ and $\ita$, respectively, to obtain two new elements of $\Inv$. As in the involutive Floer case, the pairs $(\CFm(Y), \tau)$ and $(\CFm(Y), \ita)$ are well-defined invariants of $(Y, \tau)$ up to an appropriate notion of homotopy equivalence. Just as $[(\CFm(Y), \iota)]$ is an invariant of homology cobordism, we moreover show that 
\[
h_\tau(Y) = [(\CFm(Y)[-2], \tau)] \text{ and } h_{\ita}(Y) = [(\CFm(Y)[-2], \ita)]
\]
are pseudo-homology bordism invariants of $(Y, \tau)$ (in the sense of Definition~\ref{def:1.13}), and constitute homomorphisms from $\G$ to $\Inv$. %Curiously, it turns out that knowledge of two of $h$, $h_\tau$, and $h_{\ita}$ does not necessarily determine the third. (See Example~\ref{ex:3.5}.)

While this alteration is straightforward, the problem is that understanding the induced action of $\tau$ on $\CFm(Y)$ is generally very difficult. In this paper, we will use the following special case of \cite[Theorem 5.3]{AKS}:

\begin{theorem}\cite[Theorem 5.3]{AKS}\label{thm:2.1}
Let $Y = \Sigma(p, q, r)$ be a Brieskorn sphere, and let $\tau$ be the involution on $Y$ given by viewing $Y$ as the double branched cover of the Montesinos knot $k(p, q, r)$.\footnote{Here, we use the notation of e.g.\ \cite{Savelievfloer} for Montesinos knots.} Then $\tau \simeq \iota$, where $\iota$ is the involutive Heegaard Floer homotopy involution of Hendricks and Manolescu.
\end{theorem}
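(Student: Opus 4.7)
The plan is to produce a Heegaard diagram $\H = (\Sigma, \alphas, \betas, z)$ for $\Sigma(p,q,r)$ in which the covering involution $\tau$ can be identified directly with the conjugation symmetry used to define $\iota$. Recall that $\iota$ arises by comparing $\H$ to its conjugate diagram $\bH = (-\Sigma, \betas, \alphas, z)$ via naturality of the Heegaard Floer complex, so the goal is to exhibit a diagram in which the action of $\tau$ carries $\H$ to $\bH$ up to a canonical sequence of elementary Heegaard moves. The natural starting point is the genus-$2$ Heegaard splitting of $\Sigma(p,q,r)$ coming from a $3$-bridge presentation of the Montesinos knot $k(p,q,r)$: the bridge sphere $S \subset S^3$ lifts to the Heegaard surface $\Sigma$, and the two bridge $3$-balls lift to the two handlebodies. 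The covering involution $\tau$ preserves this decomposition and acts on $\Sigma$ as a hyperelliptic involution.

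Next, I would select the $\alphas$- and $\betas$-curves as lifts of $\tau$-symmetric arcs on $S$, which is possible because of the symmetric structure of the Montesinos pattern (the $3$-bridge presentation admits an evident reflection symmetry swapping its upper and lower halves that is compatible with the branched covering). After this careful choice, one can arrange that $\tau$ carries the $\alphas$-system to a curve system isotopic to $\betas$ once composed with a suitable orientation reversal of $\Sigma$. This realizes the action of $\tau$ as a sequence of Heegaard moves taking $\H$ to $\bH$, and combined with the naturality results of Juhász--Thurston--Zemke \cite{JTZ}, yields the desired homotopy $\tau \simeq \iota$ on $\CFm(\Sigma(p,q,r))$.

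The main technical obstacle is carrying out this identification of Heegaard diagrams rigorously: one must choose the attaching curves with precision, track the behavior of the orientation of $\Sigma$ under $\tau$, and handle the basepoint (either by placing $z$ on the axis of symmetry of $\tau$, or by invoking Zemke's basepoint-moving results to justify that the resulting ambiguity is trivial on a homology sphere). A more robust alternative would be to bypass direct diagram manipulation and instead work with the lattice-homology model for $\CFm(\Sigma(p,q,r))$ afforded by the negative-definite plumbing description of a Brieskorn sphere: the action of $\iota$ on this model is known, and a parallel equivariant analysis of the branched covering should show that $\tau$ induces the same chain homotopy class of involutions. Either way, the crux is matching the two homotopy involutions at the chain level rather than merely up to some coarser equivalence relation.
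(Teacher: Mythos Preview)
The paper does not prove Theorem~\ref{thm:2.1}; it is quoted as \cite[Theorem 5.3]{AKS} and used as a black box. So there is no ``paper's own proof'' to compare your attempt against. That said, your general strategy---build a Heegaard diagram for $\Sigma(p,q,r)$ from a bridge presentation of $k(p,q,r)$ and identify $\tau$ with the conjugation symmetry on that diagram---is indeed the strategy of \cite{AKS}.

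There is, however, a genuine gap in your sketch. The covering involution $\tau$ covers the \emph{identity} on $S^3$, so it sends each bridge $3$-ball to itself and therefore preserves each handlebody of the Heegaard splitting. In particular $\tau(\alpha_i)=\alpha_i$ and $\tau(\beta_i)=\beta_i$ as unoriented curves; $\tau$ does \emph{not} carry the $\alphas$-system to the $\betas$-system, with or without an orientation reversal of $\Sigma$. The ``evident reflection symmetry swapping upper and lower halves'' that you invoke is a symmetry of the pair $(S^3,k(p,q,r))$, not the deck transformation, and its lift to $\Sigma(p,q,r)$ is a different diffeomorphism from $\tau$. So the key step ``$\tau$ takes $\H$ to $\bH$'' fails as written, and the argument does not close.

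What actually happens in \cite{AKS} is subtler: since $\tau\H=\H$ for the bridge diagram, the induced map of $\tau$ on $\CFm$ is just the tautological pushforward (the hyperelliptic permutation of intersection points), and one must then exhibit an explicit sequence of Heegaard moves from $\bH$ back to $\H$ and verify that the resulting $\iota$ agrees with this permutation at the chain level. That identification is the real content, and your outline does not supply it. Your proposed alternative via lattice homology would face the separate, nontrivial problem of computing the geometric action of $\tau$ in that model.
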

\noindent
If $Y$ is a Brieskorn sphere which bounds a homology ball, then it follows from this that $h_\tau(Y) = h(Y) = 0$ (since $\tau \simeq \iota$) and $h_{\ita}(Y) = 0$ (since $\ita \simeq \id$). Thus, Theorem~\ref{thm:2.1} cannot be used directly to find new examples of corks. Indeed, to the best of the authors' knowledge, the following is open:

\begin{question}
Do there exist corks (strong or otherwise) with boundary a Brieskorn sphere?
\end{question}
\noindent
Our approach may be taken as mild evidence that no such examples exist.

Our strategy will instead be to find examples of pairs $(Y, \tau)$ that bound contractible manifolds, but admit cobordisms to other manifolds which we better understand. We will show in Theorem~\ref{thm:1.4} that $h_\tau$ and $h_{\ita}$ are monotonic (in an appropriate sense) under a very simple set of ``cobordism moves". These will allow us to use various elementary topological manipulations to establish a wide range of interesting examples. A schematic picture of this is given in Figure~\ref{fig:2.1}. \\

\begin{figure}[h!]
\center
\includegraphics[scale=0.6]{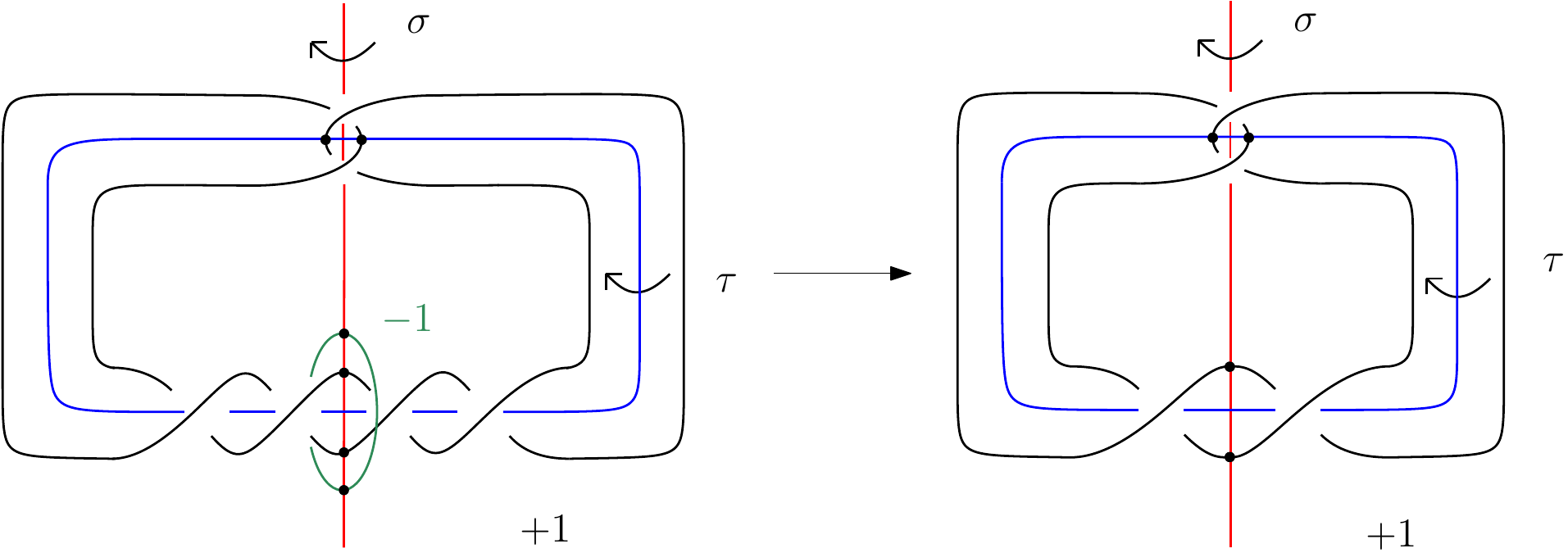}
\caption{A cobordism between two manifolds-with-involution(s). Note that on the left we have $(+1)$-surgery on the stevedore knot $6_1$ (which bounds a contractible manifold), while on the right we have $\Sigma(2, 3, 7)$. The cobordism in question is given by attaching a 2-handle along the $(-1)$-framed green curve.}\label{fig:2.1}
\end{figure}

\subsection{Local equivalence}\label{sec:2.2}
We now review the algebraic formalism of involutive Heegaard Floer homology. The reader who is already familiar with involutive Heegaard Floer homology as in \cite{HM} and \cite{HMZ} (and who has been convinced of the general setup by the preceding subsection) may wish to skip ahead to Section~\ref{sec:3}.

Throughout, we assume that $Y$ is an integer homology sphere. Let $\Hp = (H, J)$ be a Heegaard pair for $Y$, consisting of an embedded Heegaard diagram $H = (\Sigma, \alphas, \betas, z)$, together with a family of almost-complex structures $J$ on $\Sym^g(\Sigma)$. Associated to $\Hp$, we have the Heegaard Floer complex $\CFm(\Hp)$, which is generated by the intersection points $\Ta \cap \Tb$ in $\Sym^g(\Sigma)$. If $\Hp$ and $\Hp'$ are two Heegaard pairs for $Y$, then by work of Juh\'asz and Thurston \cite{JTZ}, any (basepoint-preserving) sequence of Heegaard moves relating $\Hp$ and $\Hp'$ defines a homotopy equivalence
\[
\Phi(\Hp, \Hp'): \CFm(\Hp) \rightarrow \CFm(\Hp').
\]
The map $\Phi$ is itself unique up to chain homotopy, in the sense that any two sequences of Heegaard moves define chain-homotopic maps $\Phi$. 

Now consider the conjugate Heegaard pair $\bHp = (\bh, \bJ)$. This is defined by reversing the orientation on $\Sigma$ and interchanging the $\alpha$ and $\beta$ curves to give the Heegaard splitting $\bh = (-\Sigma, \betas, \alphas, z)$. We also take the conjugate family $\bJ$ of almost-complex structures on $\Sym^g(-\Sigma)$. The points of $\Ta \cap \Tb$ are in obvious correspondence with the points of $\Tb \cap \Ta$, and $J$-holomorphic disks with boundary on $(\Ta, \Tb)$ are in bijection with $\bJ$-holomorphic disks with boundary on $(\Tb, \Ta)$. This yields a canonical isomorphism
\[
\eta: \CFm(\Hp) \rightarrow \CFm(\bHp).
\]
Note that this is \textit{not} the map $\Phi(\Hp, \bHp)$ defined in the previous paragraph. Instead, defining
\[
\iota = \Phi(\bHp, \Hp) \circ \eta,
\]
we obtain a chain map from $\CFm(\Hp)$ to itself. In \cite[Lemma 2.5]{HM}, it is shown that $\inv$ is a homotopy involution. We formalize this in the following:

\begin{definition}\cite[Definition 8.1]{HMZ}\label{def:2.2}
An {\em $\inv$-complex} is a pair $(C, \inv)$, where
\begin{enumerate}
\item $C$ is a (free, finitely generated, $\Z$-graded) chain complex over $\ff[U]$, with
\[
U^{-1}H_*(C) \cong \ff[U, U^{-1}].
\]
Here, $U$ has degree $-2$.
\item $\inv : C \to C$ is a ($\ff[U]$-equivariant, grading-preserving) homotopy involution; that is, $\inv^2$ is $U$-equivariantly chain homotopic to the identity.
\end{enumerate}
Two $\inv$-complexes $(C, \inv)$ and $(C', \inv')$ are called {\em homotopy equivalent} if there exist chain homotopy equivalences
\[
f : C \to C', \ \ g : C' \to C
\]
that are homotopy inverses to each other, and such that 
\[
f \circ \inv \simeq \inv' \circ f,  \ \ \ g \circ \inv' \simeq \inv \circ g,
\]
where $\simeq$ denotes $\ff[U]$-equivariant chain homotopy.
\end{definition}
\noindent
In \cite[Section 2.3]{HM}, it is shown that the homotopy equivalence class of $(\CFm(\Hp), \iota)$ is independent of $\Hp$. Thus, we may unambiguously talk about the homotopy type of the $\iota$-complex $(\CFm(Y), \iota)$.

In order to study homology cobordism, we introduce the following definition:

\begin{definition}\cite[Definition 8.5]{HMZ}\label{def:2.3}
Two $\inv$-complexes $(C, \inv)$ and $(C', \inv')$ are called {\em locally equivalent} if there exist ($U$-equivariant, grading-preserving) chain maps
\[
f : C \to C', \ \ g : C' \to C
\]
such that 
\[
f \circ \inv \simeq \inv' \circ f,  \ \ \ g \circ \inv' \simeq \inv \circ g,
\]
and $f$ and $g$ induce isomorphisms on homology after localizing with respect to $U$. We call a map $f$ as above a \textit{local map} from $(C, \inv)$ to $(C', \inv')$, and similarly we refer to $g$ as a local map in the other direction.
\end{definition}
\noindent
Note that this is a strictly weaker equivalence relation than homotopy equivalence. If $Y_1$ and $Y_2$ are homology cobordant, then their $\iota$-complexes are locally equivalent, with the maps $f$ and $g$ being the usual maps in Heegaard Floer theory induced by a cobordism. 

In \cite[Proposition 8.8]{HMZ}, it is shown that the set of all (abstract) $\iota$-complexes modulo local equivalence forms a group:

\begin{definition}\cite[Section 8.3]{HMZ}\label{def:2.4}
Let $\Inv$ be the set of $\inv$-complexes up to local equivalence. This has a multiplication given by tensor product, which sends (the local equivalence classes of) two $\inv$-complexes $(C_1, \inv_1)$ and $(C_2, \inv_2)$ to (the local equivalence class of) their tensor product complex $(C_1 \otimes C_2, \inv_1 \otimes \inv_2)$. The identity element of $\Inv$ is given by the trivial complex consisting of a single $\ff[U]$-tower starting in grading zero, together with the identity map on this complex. Inverses in $\Inv$ are given by dualizing.
\end{definition}
\noindent
In \cite[Theorem 1.8]{HMZ}, Hendricks, Manolescu, and Zemke additionally show that the map $h(Y) = [(\CFm(Y)[-2], \iota)]$ sending a pair $(Y, \s)$ to the local equivalence class of its (grading-shifted) $\inv$-complex is a homomorphism
\[
h: \Theta_{\Z}^3 \rightarrow \Inv.
\]
We stress that the local equivalence class of any $(C, \iota)$ obviously depends on the choice of $\iota$; in particular, if $\iota$ is homotopic to the identity then $(C, \iota)$ is locally equivalent to the trivial complex, up to overall grading shift.

Finally, one very important property of $\Inv$ is that it comes equipped with a partial order:

\begin{definition}\label{def:2.5}
Let $(C_1, \inv_1)$ and $(C_2, \inv_2)$ be two $\iota$-complexes. If there is a local map $f: C_1 \rightarrow C_2$, then we write $(C_1, \inv_1) \leq (C_2, \inv_2)$. If, in addition, there does \textit{not} exist any local map from $(C_2, \inv_2)$ to $(C_1, \inv_1)$, we write the strict inequality $(C_1, \inv_1) < (C_2, \inv_2)$. 
\end{definition}
\noindent
Since the composition of two local maps is local, it is clear that the above definition respects local equivalence. Because the tensor product of two local maps is also local, this partial order respects the group structure on $\Inv$. Note that it is \textit{not} always true that a given $\iota$-complex can be compared to the trivial complex. That is, Definition~\ref{def:2.5} does not define a \textit{total} order on $\Inv$. See \cite[Example 2.7]{DHSTcobordism} for further discussion.

It is often helpful to think of an $\iota$-complex in terms of its homology and the induced action of $\iota$. In general, of course, if $(C, \iota)$ is an $\iota$-complex, then the homological action of $\iota$ on $H_*(C)$ does not determine the chain homotopy type of $\iota$. However, in certain simple cases, it turns out that knowing $H_*(C)$ and the induced action $\iota_*$ suffices to recover the homotopy equivalence class of $(C, \iota)$. In particular, if $H_*(C)$ is concentrated in a single mod $2$ grading, then one can combinatorially write down a model for $(C, \iota)$ (called the \textit{standard complex}) which is correct up to homotopy equivalence. In such situations, we will thus sometimes blur the distinction between $(H_*(C), \iota_*)$ and $(C, \iota)$. See \cite[Section 4]{DM} for precise definitions and further discussion.

\begin{remark}\label{rem:2.H}
To rule out the existence of a local map from $(C_1, \iota_1)$ to $(C_2, \iota_2)$, it suffices to prove that there is no $\ff[U]$-module map $F$ from $H_*(C_1)$ to $H_*(C_2)$ such that:
\begin{enumerate}
\item $F$ maps $U$-nontorsion elements to $U$-nontorsion elements; and,
\item $F$ intertwines the actions of $(\iota_1)_*$ and $(\iota_2)_*$.
\end{enumerate}
However, in general Definition~\ref{def:2.5} is strictly stronger than the existence of such an $F$. For example, the complex $- X_1$ (see Example~\ref{ex:2.9} below) is strictly greater than zero, but this cannot be proven using only the action of $\iota_*$ on homology.
\end{remark}

\subsection{Connected homology} \label{sec:2.3}
In this subsection, we summarize the construction of connected Floer homology, defined by Hendricks, Hom, and Lidman \cite{HHL}. This associates to an $\iota$-complex $(C, \iota)$ an $\ff[U]$-module $H_\mathrm{conn}(C)$ whose isomorphism type is invariant under local equivalence. While $H_\mathrm{conn}(C)$ is \textit{a priori} strictly weaker than the local equivalence class of $(C, \iota)$, it is perhaps somewhat easier to understand.

\begin{definition}\cite[Definition 3.1]{HHL}\label{def:2.6}
Let $(C, \inv)$ be an $\inv$-complex. A \textit{self-local equivalence} is a local map from $(C, \inv)$ to itself; that is, a (grading-preserving) chain map $f: C \to C$ such that $f \circ \inv \simeq \inv \circ f$ and $f$ induces an isomorphism on homology after inverting the action of $U$.
\end{definition}
\noindent
Hendricks, Hom, and Lidman define a preorder on the set of self-local equivalences of $(C, \inv)$ by declaring $f \lesssim g$ whenever $\ker f \subseteq \ker g$. A self-local equivalence $f$ is then said to be \textit{maximal} if $g \gtrsim f$ implies $g \lesssim f$ for any self-local equivalence $g$.

Maximal self-local equivalences should heuristically be thought of as producing a local equivalence between $C$ and some very small subcomplex of $C$ given by $\im f$. (This subcomplex is small because $\ker f$ is large.) Note, however, that since $\inv$ is a homotopy involution and $f$ commutes with $\inv$ only up to homotopy, the action of $\inv$ need not preserve $\im f$, and similarly for the relevant homotopy maps. However, according to \cite[Lemma 3.7]{HHL}, we can modify $\inv$ to produce an actual homotopy involution $\inv_f$ on $\im f$ such that $(C, \inv)$ and $(\im f, \inv_f)$ are locally equivalent. 

Hendricks, Hom, and Lidman further show that a maximal self-local equivalence always exists \cite[Lemma 3.3]{HHL}, and that any two maximal self-local equivalences give homotopy equivalent $\inv$-complexes $(\im f, \inv_f)$ and $(\im g, \inv_g)$ \cite[Lemma 3.8]{HHL}. We can thus make the unambiguous definition:

\begin{definition}\cite[Definition 3.9]{HHL}\label{def:2.7}
Let $(C, \inv)$ be an $\inv$-complex. The \textit{connected complex of} $(C, \inv)$, which we denote by $(C_{\mathrm{conn}}, \inv_{\mathrm{conn}})$, is defined to be $(\im f, \inv_f)$, where $f$ is any maximal self-local equivalence. This is well-defined up to homotopy equivalence of $\inv$-complexes.
\end{definition}
\noindent
Note that since $(C, \inv)$ and $(C_{\mathrm{conn}}, \inv_{\mathrm{conn}})$ are locally equivalent, the connected complex is in fact an invariant of the local equivalence class of $(C, \inv)$. Indeed, the connected complex should be thought of as the simplest possible local representative of $(C, \inv)$. The connected homology is then the torsion submodule of the homology of this complex, shifted up in grading by one:

\begin{definition}\cite[Definition 3.13]{HHL}\label{def:2.8}
Let $(C, \inv)$ be an $\inv$-complex. The \textit{connected homology of} $(C, \inv)$, denoted by $H_\mathrm{conn}(C)$, is defined to be the $U$-torsion submodule of $H_*(C_{\mathrm{conn}})$, shifted up in grading by one. Here, $H_*(C_{\mathrm{conn}})$ is the usual homology of $C_{\mathrm{conn}}$ as a $\ff[U]$-complex.
\end{definition}
\noindent
The \textit{connected Heegaard Floer homology of $Y$}, denoted  $\HF_\mathrm{conn}(Y)$, is then defined to be the connected homology of $(\CFm(Y), \inv)$. The grading shift by one is enacted so that $\HF_\mathrm{conn}$ is a summand of $\HF_\mathrm{red}$, viewed as a quotient of $\HFp$. 

\subsection{Simple families}\label{sec:2.4}
We conclude our overview by discussing a  simple family of $\iota$-complexes that will serve to illustrate the formalism at hand. As these examples will recur throughout the paper, we encourage the reader to develop some familiarity with them.

\begin{example}\label{ex:2.9}
For $i > 0$, consider the chain complex spanned by the generators $v, \iota v,$ and $\alpha$, with
\[
\partial \alpha = U^i(v + \iota v).
\]
Here, $v$ and $\iota v$ lie in Maslov grading zero, while $\alpha$ has grading $-2i + 1$. The action of $\iota$ interchanges $v$ and $\iota v$ and fixes $\alpha$. We denote this $\iota$-complex (or sometimes its local equivalence class) by $X_i$. The homology of $X_i$ is displayed in Figure~\ref{fig:2.2}; note that the induced action of $\iota$ is given by the obvious involution reflection through the vertical axis. 

\begin{figure}[h!]
\center
\includegraphics[scale=1.1]{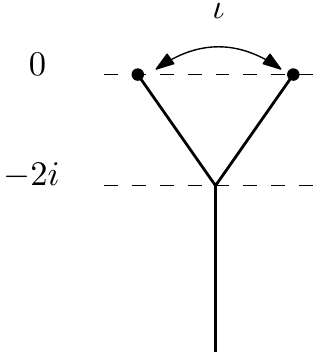}
\caption{Homology of $X_i$, expressed as a graded root with involution. Vertices of the graph correspond to $\mathbb{F}$-basis elements supported in grading given by the height (shown on the left). Edges between vertices indicate the action of $U$, and we suppress all vertices forced by this relation. Thus, for instance, the two upper legs of the graded root contain $i$ vertices (excluding the symmetric vertex lying in grading $-2i$). See for example \cite[Definition 2.11]{DM}.}\label{fig:2.2}
\end{figure}

The reader should verify that the only self-local equivalences of $X_i$ are isomorphisms. The connected homology of $X_i$ is thus simply (the $U$-torsion part of) its usual homology, so that $H_\mathrm{conn}(X_i)$ is just $\ff[U]/(U^i\ff[U])$. In particular, this shows that the local equivalence classes of the $X_i$ are nonzero and mutually distinct. We can refine their distinction by considering  the partial order on $\Inv$. It is easily checked that
\[
\cdots < X_3 < X_2 < X_1 < 0,
\]
where $0$ denotes the trivial $\iota$-complex. Indeed, there is evidently a local map showing that $X_1 \leq 0$, by mapping both $v$ and $\iota v$ to $x$ and $\alpha$ to zero. However, the only $\iota$-equivariant map in the other direction sends $x$ to $v + \iota v$, which is $U$-torsion in homology (See Figure~\ref{fig:2.3}.) Thus, the inequality is strict. The proof that $X_{i+1} < X_i$ is similar.
\end{example}

\begin{figure}[h!]
\center
\includegraphics[scale=1]{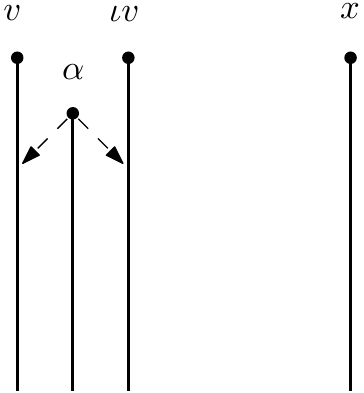}
\caption{Left: the complex $X_1$. Right: the trivial complex $0$.}\label{fig:2.3}
\end{figure}

The classes $X_i$ actually play quite an important role in the study of $\Theta^3_{\Z}$ and $\Inv$. In \cite[Theorem 1.7]{DM}, it is shown that the $X_i$ are linearly independent in $\Inv$, and in fact they span a $\Z^\infty$-summand of $\Inv$ by \cite[Theorem 1.1]{DHSTcobordism}. Here, we will use the fact that $(-1)$-surgery on the right-handed $(2, 2n+1)$-torus knots realize the $X_i$:
\[
h(S_{-1}(T_{2, 2n+1})) = X_{\lfloor (n+1)/2 \rfloor}
\]
See the proof of \cite[Theorem 1.4]{HHL}. Note that $S_{-1}(T_{2, 2n+1})$  can be identified with the  Brieskorn sphere $\Sigma(2, 2n+1, 4n+3)$. 

%%%%%%%%%%%%%%%%%%%%%%%%%%%%%%%%%%%%%%%%%%%%%%%%%%%%%%%%%%%%%%%%%%%%%%%%%%%%%%%%%%%%%%%%%%%%%%%%%%%%%%%%%%%%%%%%%%%%%%%%%%%%%%%%%%%%%%%%%%%%%%%%%%%%%%%%%%%%%%%%%%%%%%%%%%%%%%%%%%%%%%%%%%%%%%%%%%%%%%%%%%%%%%%%%%%%%%%%%%%%%%%%%%%%%%%%%%%%%%%%%%%%%%%%%%%%%%%%%%%%%%%%%%%%%%%%%%%%%%%%%%%%%%%%%%%%%%%%%%%%%%%%%%%%%%%%%%%%%%%%%%%%%%%%%%%%%%%%%%%%%%%%%%%%%%%%%%%%%%%%%%%%%%%%%%%%%%%%%%%%%%%%%%%%%%%%%%%%%%%%%%%%%%%%%%%%%%%%%%%%%%%%%%%%%%%%%%%%%%%%%%%%%%%%%%%%%%%%%%%%%%%%%%%%%%%%%%%%%%%%%%%%%%%%%%%%%%%%%%%%%%%%%%%%%%%%%%%%%%%%%%%%%%%%%%%%%%%%%

\section{$\tau$- and $\ita$-Local Equivalence}\label{sec:3}

\subsection{$\tau$- and $\ita$-complexes}\label{sec:3.1}
We now adapt the material of the previous section to the situation at hand. Let $Y$ be a homology sphere equipped with an involution $\tau : Y \rightarrow Y$. As discussed in Section~\ref{sec:2.1}, the idea will be to repeat the algebraic construction of involutive Heegaard Floer homology,  with the role of $\iota$ played by the induced action of $\tau$ on $\CFm(Y)$.  This alteration is fairly straightforward, but a key new feature which arises is the presence of an additional homotopy involution on $\CFm(Y)$ given by the composition $\iota \circ \tau$. Taken together, these two involutions will provide a powerful tool for establishing the non-extendability of various involutions on $Y$.

We begin by considering the induced action of $\tau$ on $\CFm(Y)$. We denote this chain homotopy involution by $\tau$ as well, and rely on the context to make clear whether $\tau$ refers to a diffeomorphism of $Y$ or a homotopy class of chain maps. The fact that an involution on $Y$ induces (the homotopy class of) a homotopy involution $\tau:\CFm(Y)\rightarrow \CFm(Y)$ follows from the work of Juh\'asz, Thurston, and Zemke \cite{JTZ}, who showed that the (based) mapping class group acts naturally on Heegaard Floer homology. However, for readers less familiar with \cite{JTZ}, we give a brief overview emphasizing the connection with Section~\ref{sec:2.2}. Let $\H$ be a choice of Heegaard data for $Y$, and suppose that $\tau$ fixes the basepoint $z$ of $\H$. Applying $\tau$ to $\H$, we obtain a ``pushforward" set of Heegaard data which we denote by $t\H$. Explicitly, we think of $\Sigma$ as embedded in $Y$, so that $\tau$ maps $\Sigma$ to another embedded surface $\tau(\Sigma)$ in $Y$ with the obvious pushforward $\alpha$- and $\beta$-curves. We similarly pushforward the family of almost complex structures $J$ on $\Sym^g(\Sigma)$ using the diffeomorphism between $\Sigma$ and $\tau(\Sigma)$ effected by $\tau$. There is a tautological chain isomorphism
\[
t: \CFm(\H) \rightarrow \CFm(t\H)
\]
given by the map sending an intersection point in $\Ta \cap \Tb$ to its corresponding pushforward intersection point. The action of $\tau$ is then defined to be the homotopy class of the chain map
\[
\tau = \Phi(t\H, \H) \circ t : \CFm(\H) \rightarrow \CFm(\H),
\]
where $\Phi(t\H, \H)$ is the Juh\'asz-Thurston-Zemke homotopy equivalence from $\CFm(t\H)$ to $\CFm(\H)$. Theorem 1.5 of \cite{JTZ} shows that induced map $\tau_*$ on homology is well-defined, and an invariant of the pointed mapping class represented by $\tau$.  The proof of their result, however, shows that the homotopy class of $\tau$ is also invariant. (See \cite[Proposition 2.3]{HM}.)

In the case that $\tau$ does not fix a point on $Y$, we first consider an isotopy $h_s: Y \rightarrow Y$ that moves $\tau z$ back to $z$ along some arc $\gamma$. Composing $\tau$ with the result of this isotopy gives an isotoped diffeomorphism $\tau_\gamma = h_1 \circ \tau$, which now fixes the basepoint $z$. We then  define the action of $\tau$ to be the mapping class group action of $\tau_\gamma$:
\[
\tau = \Phi(t_{\gamma}\H, \H) \circ t_{\gamma} : \CFm(\H) \rightarrow \CFm(\H),
\]
where $t_\gamma$ is the tautological pushforward associated to $\tau_\gamma$. The fact that this is independent of $\gamma$ follows from work of Zemke \cite{Zemkegraph}, who showed that for a homology sphere $Y$, the $\pi_1$-action on $\CFm(Y)$ is trivial up to $U$-equivariant homotopy. Explicitly, let
\[
f_{\gamma}: \CFm(t\H) \rightarrow \CFm(t_{\gamma}\H)
\]
be the pushforward map associated to isotopy along $\gamma$, so that $t_{\gamma} = f_{\gamma} \circ t$. Let $\gamma'$ be a different arc connecting $\tau z$ to $z$. Then $t_{\gamma'}\H$ is obtained from from $t_{\gamma}\H$ by an isotopy which pushes $z$ around the closed loop $\gamma^{-1} * \gamma'$. The basepoint-moving action of $\gamma^{-1} * \gamma'$ on $\CFm(t_{\gamma} \H)$ is equal to
\[
(\gamma^{-1} * \gamma')_* \simeq \Phi(t_{\gamma'}\H, t_{\gamma} \H) \circ f_{\gamma'} \circ f_{\gamma}^{-1}.
\]
Since $Y$ is a homology sphere, this is $U$-equivariantly homotopic to the identity by \cite[Theorem D]{Zemkegraph}. We thus have
\[
\Phi(t_{\gamma'}\H, \H) \circ f_{\gamma'} \simeq \Phi(t_{\gamma} \H, \H) \circ \Phi(t_{\gamma'} \H, t_{\gamma}\H) \circ f_{\gamma'} \simeq \Phi(t_{\gamma}\H, \H) \circ f_{\gamma}.
\]
Composing both sides of this with $t$ shows that $\Phi(t_{\gamma'}\H, \H) \circ t_{\gamma'} \simeq \Phi(t_{\gamma}\H, \H) \circ t_{\gamma}$, as desired. For the purposes of Floer theory, we will thus generally think of $\tau$ as having been isotoped to fix a basepoint of $Y$, and in such situations we will blur the distinction between $\tau$ and $\tau_\gamma$.

\begin{lemma}
Let $Y$ be a homology sphere equipped with an involution $\tau : Y \rightarrow Y$. Then the map $\tau : \CFm(Y) \rightarrow \CFm(Y)$ constructed above is a well-defined homotopy involution.
\end{lemma}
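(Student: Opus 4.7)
The plan is to verify two properties: well-definedness of the chain map $\tau$ up to $\ff[U]$-equivariant chain homotopy, and the relation $\tau \circ \tau \simeq \id$. Most of the well-definedness has in fact been addressed in the discussion preceding the lemma. Independence of the underlying Heegaard data $\H$ follows from the naturality results of Juhász-Thurston-Zemke \cite{JTZ}: any two basepoint-preserving sequences of Heegaard moves yield chain-homotopic transition maps $\Phi$, and the tautological pushforward $t$ intertwines the transition maps on the source and target Heegaard data up to homotopy. Independence of the choice of isotopy arc $\gamma$ in the non-basepoint-fixing case is precisely the explicit calculation given in the paragraph above the lemma, which uses Zemke's basepoint-triviality theorem. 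For the rest of the argument, we may therefore assume $\tau$ has been isotoped to fix the basepoint $z$, and write $\tau = \Phi(t\H, \H) \circ t$.

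The central step in proving $\tau^2 \simeq \id$ is a naturality identity obtained by pushing the transition map forward under the diffeomorphism $\tau$. Because pushforward by $\tau$ converts a sequence of Heegaard moves from $t\H$ to $\H$ into a sequence of Heegaard moves from $t^2\H$ to $t\H$, and because the tautological map $t$ is induced by an honest diffeomorphism on the underlying Heegaard data, we should have
\[
t \circ \Phi(t\H, \H) \simeq \Phi(t^2\H, t\H) \circ t.
\]
Combined with the composition law $\Phi(t^2\H, t\H) \circ \Phi(t\H, \H) \simeq \Phi(t^2\H, \H)$ for transition maps, this yields
\[
\tau \circ \tau \simeq \Phi(t^2\H, \H) \circ t^2.
\]
By construction, the right-hand side is the mapping class group action associated to the basepoint-preserving diffeomorphism $\tau^2 : Y \to Y$.

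Finally, although $\tau$ is an honest involution of $Y$, the isotoped representative $\tau_\gamma$ satisfies $\tau_\gamma^2 = \id_Y$ only up to a smooth isotopy that need not fix $z$; the trajectory of $z$ under this isotopy determines a loop in $Y$, so that the action of $\tau_\gamma^2$ on $\CFm$ agrees with a basepoint-moving map associated to this loop. By \cite[Theorem D]{Zemkegraph}, the $\pi_1$-action on $\CFm(Y)$ is trivial up to $U$-equivariant chain homotopy for any integer homology sphere $Y$, and we conclude that $\tau \circ \tau \simeq \id$. The main obstacle I anticipate is carefully formulating and verifying the naturality identity $t \circ \Phi \simeq \Phi' \circ t$ within the framework of \cite{JTZ}; once this is in place, the homotopy involution property follows formally from Zemke's basepoint-triviality theorem together with the fact that $\tau^2 = \id$ at the level of unbased diffeomorphisms.
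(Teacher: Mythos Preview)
Your proposal is correct and follows essentially the same route as the paper. The paper likewise reduces to showing that the mapping class action of $\tau_\gamma^2$ is homotopic to the identity, and handles the non-basepoint-preserving isotopy by explicitly constructing a modified isotopy $H_s'$ (pushing $z$ back along the partial trajectory at each time) so that $H_1'$ differs from $\tau_\gamma^2$ by exactly the $\pi_1$-action along the loop traced by $z$; your final paragraph is a more compressed statement of the same idea.
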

\begin{proof}
A similar argument as in \cite[Section 2]{HM} shows that $\tau$ is well-defined up to homotopy equivalence (upon changing the choice of Heegaard data $\H$). If $\tau : Y \rightarrow Y$ fixes the basepoint of $Y$, then the action of $\tau$ on $\CFm$ is simply defined to be the usual mapping class group action of $\tau$. In this case, the rest of the claim follows from the fact that the action of the (based) mapping class group satisfies $(f \circ g)_* \simeq f_* \circ g_*$. If $\tau$ does not have a fixed point, then the action of $\tau$ is instead defined to be the mapping class group action of $\tau_\gamma = h_1 \circ \tau$. Now, $\tau_\gamma^2$ is evidently isotopic to the identity via
\[
H_s = (h_s \circ \tau) \circ (h_s \circ \tau) : Y \rightarrow Y.
\]
However, this isotopy does \textit{not} necessarily fix the basepoint $z$, so some care is needed. Define a modified isotopy $H_s'$ as follows. For each $s$, let $a_s$ be the arc traced out by $H_r(z)$ as $r$ ranges from $s$ back to zero. At time $s$, let $H_s'$ be equal to $H_s$, followed by the result of an isotopy pushing $H_s(z)$ back to $z$ along $a_s$. Then $H_s'$ fixes $z$ for all $s$. Clearly, $H_1'$ is equal to $H_1$ composed with an isotopy pushing $z$ around the closed curve $a_1$. Since the $\pi_1$-action on $\CFm(Y)$ is trivial, it follows that the induced actions of $H_1'$ and $\tau_\gamma^2$ coincide (up to $U$-equivariant homotopy). However, the former action is homotopy equivalent to the identity, since $H_1'$ is isotopic to the identity via a basepoint-preserving isotopy.
\end{proof}

We thus obtain:
\begin{definition}\label{def:3.1}
Let $Y$ be a homology sphere with an involution $\tau$. We define the \textit{$\tau$-complex} of $(Y, \tau)$ to be the pair $(\CFm(Y), \tau)$, where $\tau : \CFm(Y) \rightarrow \CFm(Y)$ is the homotopy involution defined above. This is well-defined up to the notion of homotopy equivalence in Definition~\ref{def:2.2}. We denote the local equivalence class of this complex by
\[
h_{\tau}(Y) = [(\CFm(Y)[-2], \tau)].
\]
Applying the construction of Section~\ref{sec:2.3}, we obtain the \textit{$\tau$-connected homology} $\Hc^\tau(Y, \tau)$. %and similarly the \textit{$\tau$-correction terms} $\du_\tau(Y, \tau)$ and $\dl_\tau(Y, \tau)$.
\end{definition}

\noindent
As we will see in Lemma~\ref{lem:3.3}, $\iota$ and $\tau$ homotopy commute. Hence their composition is another well-defined homotopy involution. We thus have:

\begin{definition}\label{def:3.2}
Let $Y$ be a homology sphere with an involution $\tau$. We define the \textit{$\ita$-complex} of $(Y, \tau)$ to be the pair $(\CFm(Y), \ita)$. This is well-defined up to the notion of homotopy equivalence in Definition~\ref{def:2.2}. We denote the local equivalence class of this complex by
\[
h_{\ita}(Y) = [(\CFm(Y)[-2], \ita)].
\]
Applying the construction of Section~\ref{sec:2.3}, we obtain the \textit{$\ita$-connected homology} $\Hc^{\ita}(Y)$. %and similarly the \textit{$\ita$-correction terms} $\du_{\ita}(Y, \tau)$ and $\dl_{\ita}(Y, \tau)$.
\end{definition}

Note that $\iota$ and $\tau$ homotopy commute, so nothing is gained by considering the homotopy involution $\tau \circ \iota$. This is just a re-phrasing of the fact that $\iota$ is well-defined up to homotopy (so that conjugating by any diffeomorphism replaces $\iota$ with a homotopy equivalent map), but for completeness we give the formal argument below:

\begin{lemma}\label{lem:3.3}
Let $Y$ be a homology sphere with an involution $\tau$. Then $\ita \simeq \tau \circ \iota$.
\end{lemma}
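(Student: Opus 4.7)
The plan is to show directly that $\iota$ and $\tau$ homotopy commute by unwinding the chain-level definitions and invoking naturality. Fix Heegaard data $\Hp$ for $Y$ and assume (as in Section~\ref{sec:3.1}) that $\tau$ has been isotoped to fix the basepoint. Then write $\iota_\Hp = \Phi(\bHp,\Hp)\circ\eta_\Hp$ and $\tau = \Phi(t\Hp,\Hp)\circ t$, where $t : \CFm(\Hp)\to\CFm(t\Hp)$ is the tautological pushforward and $\eta_\Hp$ is the conjugation isomorphism.

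The first step is to promote $t$ to a morphism of $\iota$-complexes, in the sense that $t\circ\iota_\Hp \simeq \iota_{t\Hp}\circ t$. This rests on two observations. On the nose, $t\circ\eta_\Hp = \eta_{t\Hp}\circ t$: applying the orientation-preserving diffeomorphism $\tau$ to a Heegaard diagram commutes with the operation of reversing the orientation of $\Sigma$ and swapping the $\alpha$- and $\beta$-curves, so the pushforward intertwines the two conjugation isomorphisms. Second, because the transition maps $\Phi$ are built from triangle maps and continuation maps, both of which are natural under diffeomorphisms of $Y$, we have $t\circ\Phi(\Hp_1,\Hp_2)\simeq\Phi(t\Hp_1,t\Hp_2)\circ t$ for any Heegaard pairs. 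Applying this with $(\Hp_1,\Hp_2)=(\bHp,\Hp)$ and using $t\bHp=\overline{t\Hp}$ gives the claim.

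To finish, I would appeal to the invariance of the $\iota$-complex up to homotopy (cf.\ \cite[Section 2.3]{HM}), which says that any transition map is itself a homotopy equivalence of $\iota$-complexes. In particular, $\Phi(t\Hp,\Hp)\circ\iota_{t\Hp}\simeq\iota_\Hp\circ\Phi(t\Hp,\Hp)$. Chaining the two homotopies yields
\[
\tau\circ\iota \;=\; \Phi(t\Hp,\Hp)\circ t\circ\iota_\Hp \;\simeq\; \Phi(t\Hp,\Hp)\circ\iota_{t\Hp}\circ t \;\simeq\; \iota_\Hp\circ\Phi(t\Hp,\Hp)\circ t \;=\; \iota\circ\tau,
\]
which is exactly $\ita\simeq\tau\circ\iota$. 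The only mildly subtle point, and what I expect to be the main obstacle to make precise, is the naturality statement $t\circ\Phi \simeq \Phi\circ t$ for the pushforward; this is standard in the Heegaard Floer literature but relies crucially on having first handled the basepoint issue for $\tau$ as discussed in Section~\ref{sec:3.1}.
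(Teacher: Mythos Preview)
Your argument is correct and essentially the same as the paper's. The only organizational difference is that you package the second step as an appeal to the well-definedness of the $\iota$-complex from \cite[Section 2.3]{HM} (i.e., that $\Phi(t\Hp,\Hp)$ intertwines $\iota_{t\Hp}$ and $\iota_{\Hp}$ up to homotopy), whereas the paper unpacks this inline: it observes that pushing forward Heegaard moves by $t$ and conjugating Heegaard moves by $\eta$ each give strictly commuting squares, and then invokes the Juh\'asz--Thurston--Zemke uniqueness statement to conclude that the two resulting compositions $\Phi(t\H,\H)\circ\Phi(\eta t\H,t\H)$ and $\Phi(\eta\H,\H)\circ\Phi(\eta t\H,\eta\H)$ are chain homotopic, since both arise from sequences of Heegaard moves from $\eta t\H$ to $\H$. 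Your citation of \cite{HM} is exactly this fact, so the two proofs coincide.
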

\begin{proof}
Let $\H$ be a choice of Heegaard data for $Y$, and let $t \H$ be as above. For notational convenience, let $\eta \H$ denote the conjugate Heegaard data $\smash{\bH}$ defined in Section~\ref{sec:2.2}. Note that we also have the Heegaard data $\eta t \H$, which consists of first pushing forward via $\tau$ and then interchanging the $\alpha$- and $\beta$-curves (and conjugating the almost complex structure). Similarly, we have the Heegaard data $t \eta \H$ which is formed by first conjugating and then pushing forward. However, it is evident that $\eta t\H = t \eta \H$, and moreover that $t$ and $\eta$ commute as isomorphisms of the relevant Floer complexes. Now choose any sequence of Heegaard moves from $\eta \H$ to $\H$. Taking the pushforward sequence of Heegaard moves gives the commutative diagram on the left in Figure~\ref{fig:3.1}. Similarly, choosing any sequence of Heegaard moves from $t\H$ to $\H$ and then applying $\eta$ gives the commutative diagram on the right. 

\begin{figure}[h!]
\center
\includegraphics[scale=0.91]{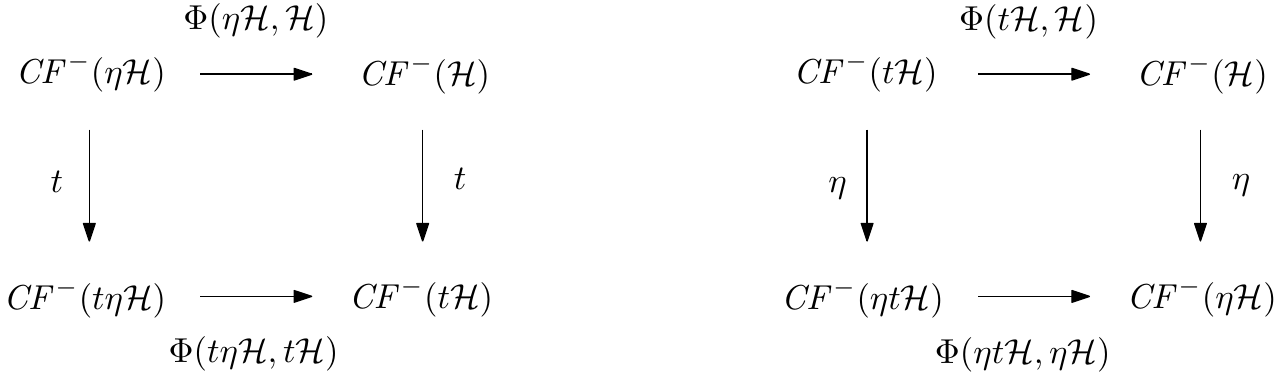}
\caption{Commutative diagrams for $t$ and $\eta$.}\label{fig:3.1}
\end{figure}

We thus have
\begin{align*}
\tau \circ \iota &= \Phi(t\H, \H) \circ t \circ \Phi(\eta\H, \H) \circ \eta \\
&= \Phi(t\H, \H) \circ \Phi(t\eta\H, t\H) \circ t \circ \eta \\
&= \Phi(t\H, \H) \circ \Phi(\eta t \H, t\H) \circ \eta \circ t \\
&\simeq \Phi(\eta\H, \H) \circ \Phi(\eta t \H, \eta\H) \circ \eta \circ t \\
&= \Phi(\eta\H, \H) \circ \eta \circ \Phi(t \H, \H) \circ t \\
&= \ita.
\end{align*}
Here, in the fourth line we have used the fact that the maps $\Phi(t\H, \H) \circ \Phi(\eta t \H, t\H)$ and $\Phi(\eta\H, \H) \circ \Phi(\eta t \H, \eta\H)$ are chain homotopic, since they are both induced by sequences of Heegaard moves from $\eta t \H$ to $\H$.
\end{proof}

\begin{remark}
In addition to the connected homology, there are several other algebraic constructions which can be associated to an $\iota$-complex $(C, \iota)$. For example, one can form the \textit{involutive complex} $I^-(C)$, which is given by the mapping cone
\[
I^-(C) = \text{Cone}(C \xrightarrow{Q(1 + \iota)} Q \cdot C[-1]).
\]
The homology $\HIm(C)$ of this is a $\ff[U, Q]$-module, where $Q$ is a formal variable of degree $-1$. One can similarly form the involutive complex associated to the connected complex $C_\textit{conn}$. Using $\HIm(C)$, we can define two numerical \textit{involutive correction terms}, denoted by $\du$ and $\dl$ (see \cite[Section 5]{HM}). These constructions are all invariants of the local equivalence class of $(C, \iota)$. We leave it to the reader to formalize the corresponding notions for $\tau$ and $\ita$.
\end{remark}

\subsection{Motivating examples} \label{sec:3.2}
We now introduce two  important examples in which we display the actions of various involutions. The proofs of these computations will be given in Section~\ref{sec:5.1}, but we state them here to so as to give some familiarity with the algebra of $\tau$- and $\ita$-complexes.

\begin{example}\label{ex:3.4}
Let $Y_1 = \Sigma(2, 3, 7)$ be given by $(+1)$-surgery on the figure-eight knot. In Figure~\ref{fig:3.2}, we have displayed two involutions (denoted by $\tau$ and $\sigma$) on $Y_1$, given by rotating the surgery diagram $180^{\circ}$ about the appropriate axes.
\begin{figure}[h!]
\center
\includegraphics[scale=0.65]{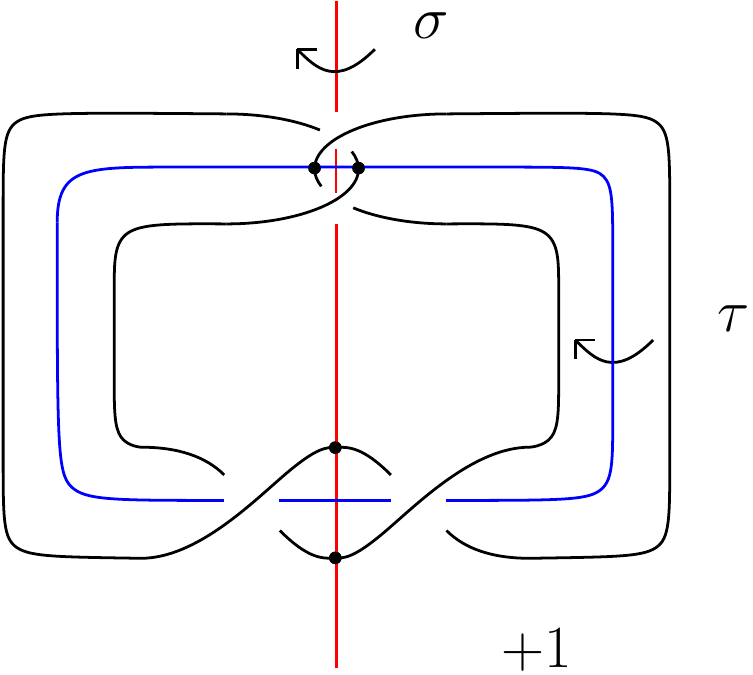}
\caption{$(+1)$-surgery on the figure eight knot, with involutions $\tau$ and $\sigma$.}\label{fig:3.2}
\end{figure}

The Floer homology $\HFm(Y_1)$ is displayed in Figure~\ref{fig:3.3}, together with the homology actions of $\tau$, $\ita$, $\sigma$, and $\iota \circ \sigma$. (Each of these is induced by an obvious action on the standard complex of Example~\ref{ex:2.9}.) Note that each map is either the identity or equal to $\iota$, as indeed these are the only two possible involutions on the Heegaard Floer complex. Thus the involutive invariants corresponding to $\ita$ and $\sigma$ are trivial, while those for $\tau$ and $\iota \circ \sigma$ are identical to those for $\iota$ in Example~\ref{ex:2.9}. In particular, as discussed in the introduction, the fact that $h_{\tau}(Y_1) \neq 0$ means that $\tau$ is non-extendable, while the fact that $h_{\iota \circ \sigma}(Y_1) \neq 0$ implies that $\sigma$ is non-extendable. Of course, since $h(Y_1) \neq 0$, the manifold in question does not bound \textit{any} homology ball, so the result in this case is trivial.

\begin{figure}[h!]
\center
\includegraphics[scale=0.9]{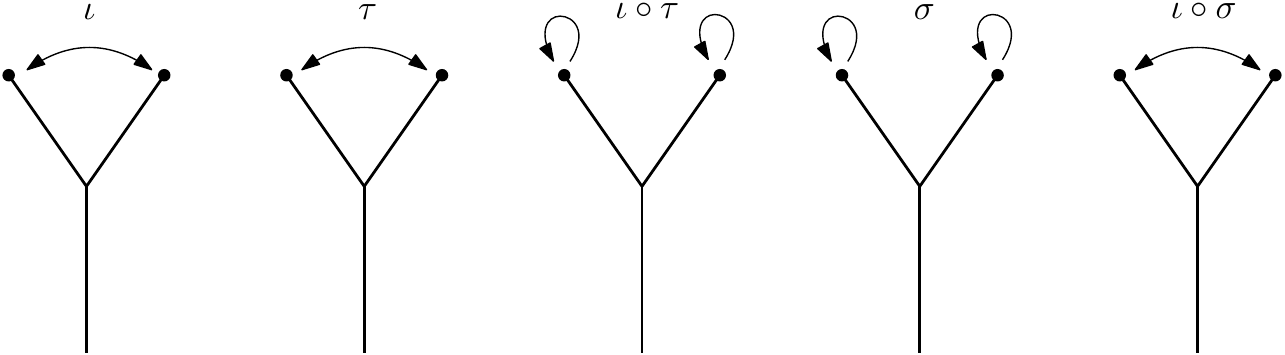}
\caption{Floer homology (and involution actions) for $Y_1 = S_{+1}(4_1)$.}\label{fig:3.3}
\end{figure}

\end{example}

\begin{example}\label{ex:3.5}
Let $Y_2$ be given by $(+1)$-surgery on the the stevedore knot $6_1$. In Figure~\ref{fig:3.4}, we have displayed two involutions (denoted by $\tau$ and $\sigma$) on $Y_2$, given by rotating the surgery diagram $180^{\circ}$ about the appropriate axes. The Floer homology $\HFm(Y_2)$ is displayed in Figure~\ref{fig:3.4}, together with the homology actions of $\tau$, $\ita$, $\sigma$, and $\iota \circ \sigma$. Note that (as in the previous example) $\tau = \iota \circ \sigma$ and $\sigma = \ita$. Each of these actions on $\HFm(Y_2)$ determines a chain-level action on the standard complex representative of $\CFm(Y_2)$. We have displayed these on the lower-left in Figure~\ref{fig:3.5}. The reader should check that each of these are chain maps and that they induce the claimed actions. 
\begin{figure}[h!]
\center
\includegraphics[scale=0.65]{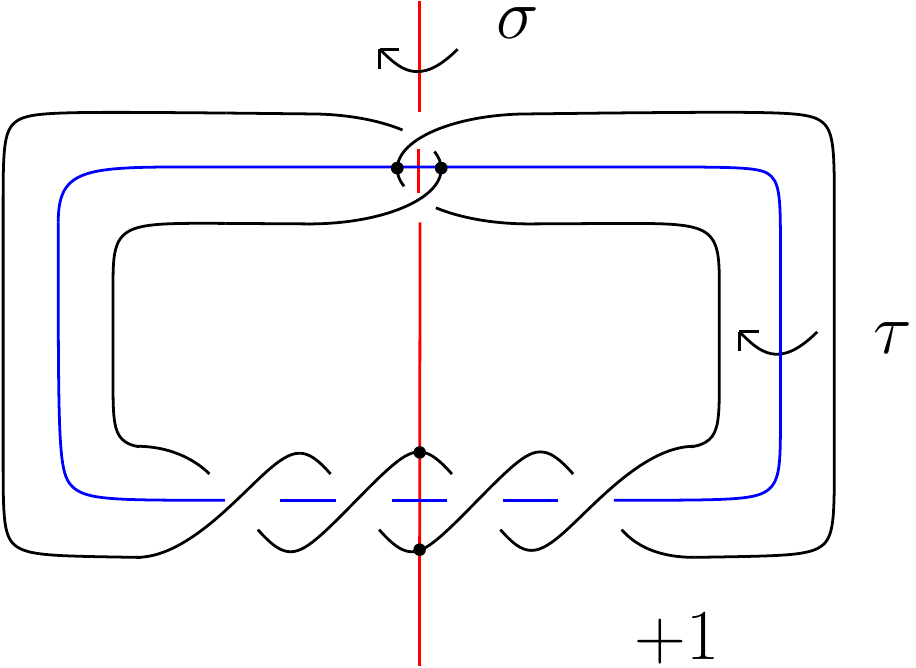}
\caption{$(+1)$-surgery on the stevedore knot, with involutions $\tau$ and $\sigma$.}\label{fig:3.4}
\end{figure}

\begin{figure}[h!]
\center
\includegraphics[scale=0.9]{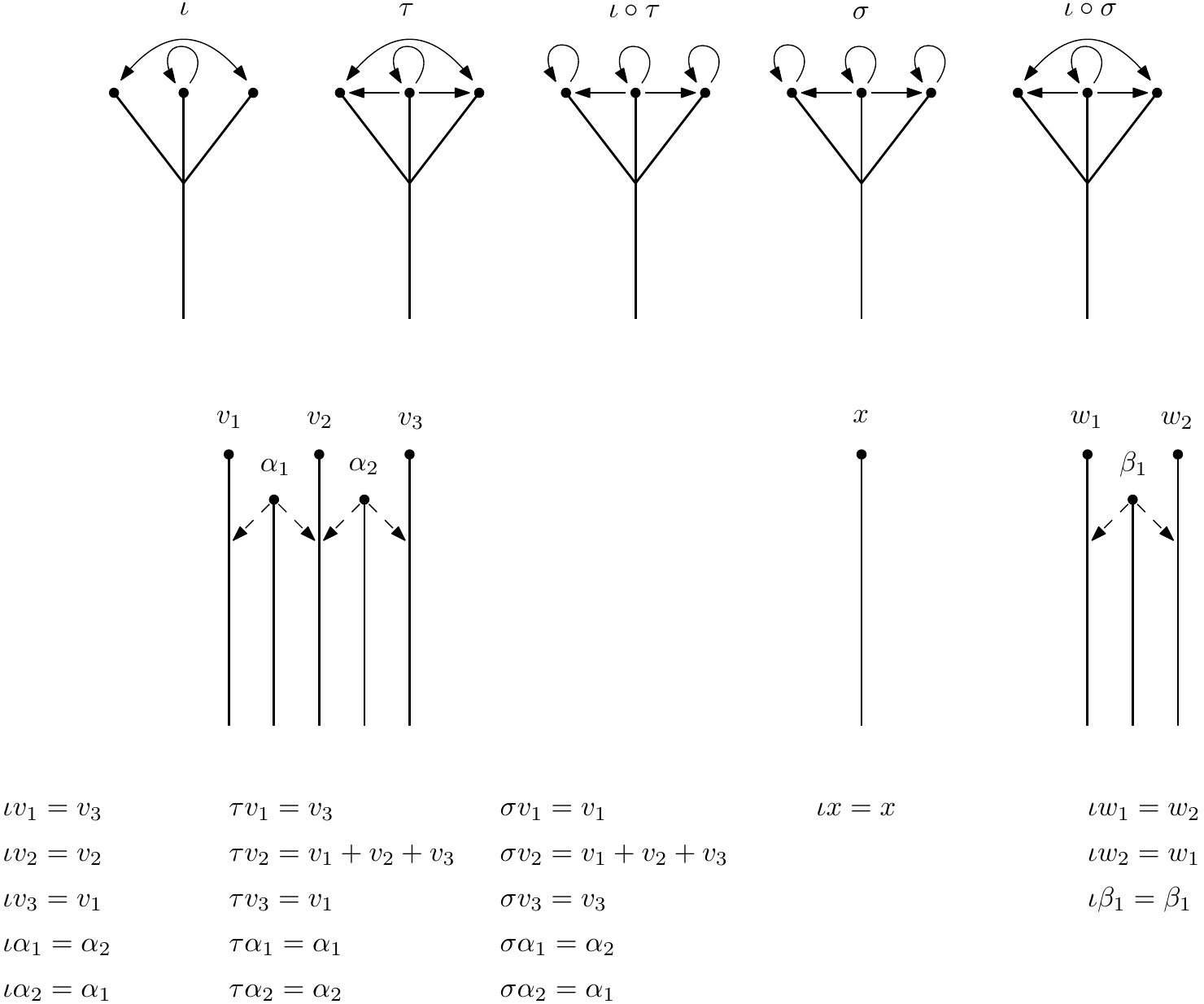}
\caption{Top: Floer homology (and involution actions) for $Y_2 = S_{+1}(6_1)$. Middle: standard complexes representatives of $\CFm(Y_2)$, $\CFm(Y_1)$ and $\CFm(S^3)$. Bottom: chain complex actions of various involutions.} \label{fig:3.5}
\end{figure}

We now claim that $h(Y_2)$ and $h_{\sigma}(Y_2)$ are both trivial. For $h(Y_2)$, this follows from the monotone root algorithm established in \cite[Section 6]{DM}, but it is easy to write down the local equivalence explicitly. Indeed, in the notation of Figure~\ref{fig:3.5}, setting
\begin{align*}
&f(v_1) = f(v_2) = f(v_3) = x & &g(x) = v_2 \\
&f(\alpha_1) = f(\alpha_2) = 0
\end{align*}
gives the local equivalence. For $h_{\sigma}(Y_2)$, we set
\begin{align*}
&f(v_1) = f(v_2) = f(v_3) = x & &g(x) = v_1 \\
&f(\alpha_1) = f(\alpha_2) = 0.
\end{align*}
The reader should carefully verify that $f$ and $g$ intertwine $\sigma$ on $\CFm(Y_2)$ with the (trivial) action of $\iota$ on $\CFm(S^3)$. Note that $g$ does \textit{not} intertwine the actions of $\iota$, even up to homotopy. It follows that the involutive invariants for $\sigma$ (and hence $\ita$) are trivial.

We furthermore claim that $h_\tau(Y_2)$ is locally equivalent to $h_\tau(Y_1) = h(Y_1)$. To effect the local equivalence, set
\begin{align*}
&f(v_1) = f(v_2) = w_1 \text{ and } f(v_3) = w_2 & &g(w_1) = v_1 \text{ and } g(w_2) = v_3 \\
&f(\alpha_1) = 0 \text{ and } f(\alpha_2) = \beta_1 & &g(\beta_1) = \alpha_1 + \alpha_2.
\end{align*}
The reader should again carefully verify that $f$ and $g$ intertwine $\tau$ on $\CFm(Y_2)$ with $\tau = \iota$ on $\CFm(Y_1)$. Note that $f$ does \textit{not} intertwine the actions of $\iota$, even up to homotopy. It follows that the involutive invariants for $\tau$ (and hence $\iota \circ \sigma$) are the same as those for $h_\tau(Y_1) = h(Y_1)$.

We now take a step back to re-iterate the salient points of Example~\ref{ex:3.5}. Firstly, note that in contrast to Example~\ref{ex:3.4}, the (usual) local equivalence class $h(Y_2)$ is zero. Hence $Y_2$ is not obstructed from bounding a homology ball; indeed, $Y_2$ bounds the Mazur manifold $W(0, 1)^-$ \cite{AKir}. Secondly, the nontriviality of $h_{\tau}(Y_2)$ obstructs $\tau$ from extending over any homology ball. Here, we have described this nontriviality by algebraically establishing a local equivalence between the claimed complexes of $h_\tau(Y_2)$ and $h_\tau(Y_1)$. The main idea of this paper will be to turn this procedure around and show that \textit{a priori} such a relationship must exist due to the existence a certain cobordism from $Y_2$ to $Y_1$. (In fact, this is precisely how we will show that the actions of $\tau$ and $\sigma$ on $\CFm(Y_2)$ are the ones claimed in Figure~\ref{fig:3.5}. See Lemma~\ref{lem:computation}.) Note, however, that $h(Y_1) \neq h(Y_2)$, so there is certainly no homology cobordism between the two.

Finally, observe that the nontriviality of $h_{\iota \circ \sigma}(Y_2)$ obstructs $\sigma$ from extending over any homology ball. This holds even though the local equivalence classes of both the $\iota$-complex and $\sigma$-complex of $Y_2$ are trivial. The point is that the local map $g$ in the $\sigma$-case is not $\iota$-equivariant (and similarly, the local map $g$ in the $\iota$-case is not $\sigma$-equivariant). Hence even though $h(Y_2)$ and $h_{\sigma}(Y_2)$ are both locally trivial, these local equivalences are not induced by the same map(s). Thus, as happens in this example, it is still possible for the class $h_{\iota \circ \sigma}(Y_2)$ of their composition to be nontrivial in local equivalence. 
\end{example}

\begin{figure}[h!]
\center
\includegraphics[scale=0.6]{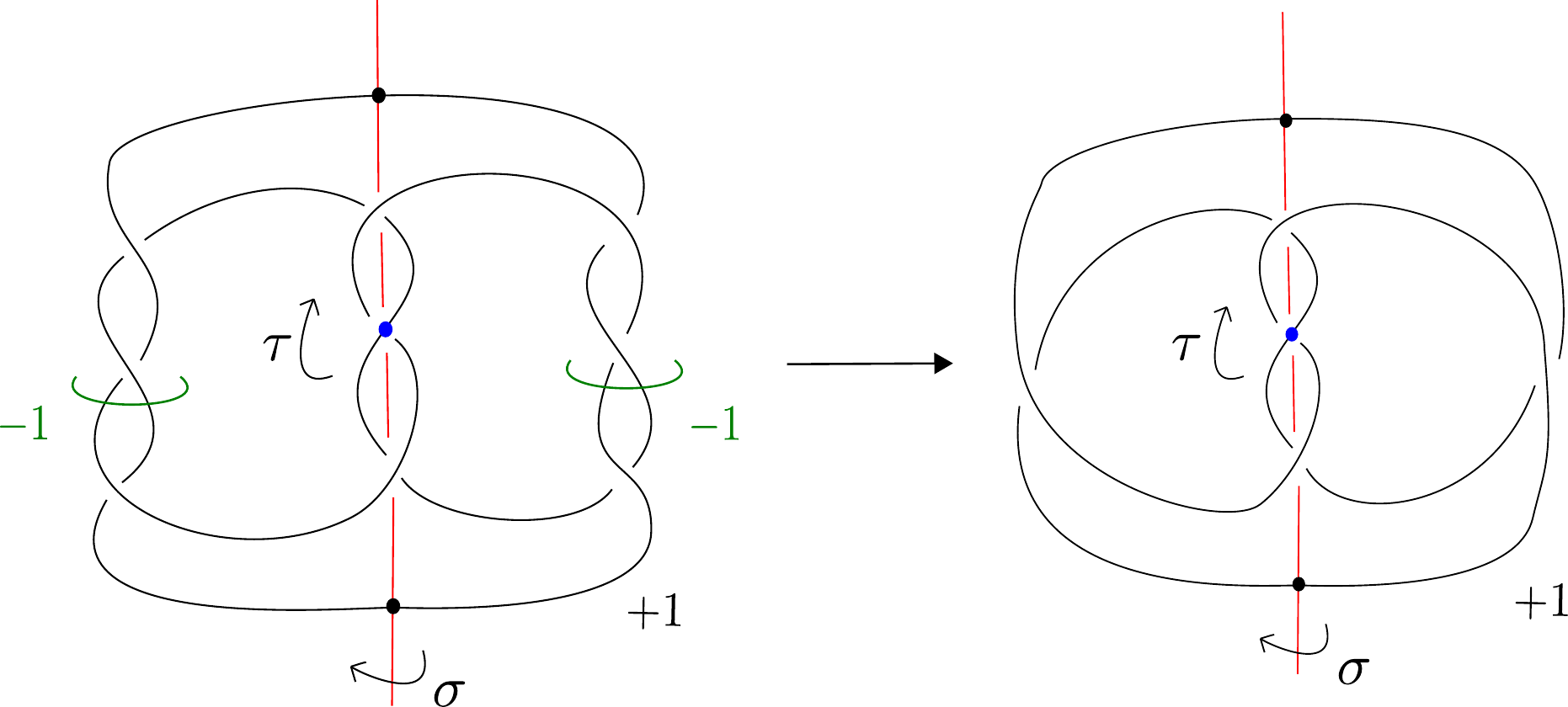}
\caption{A cobordism between two manifolds-with-involutions(s). Note that on the left we have $(+1)$-surgery on $P(-3, 3, -3)$ (which bounds a contractible manifold), while on the right we have $\Sigma(2, 3, 7)$. The cobordism in question is given by attaching $(-1)$-handles along the indicated green curves.}\label{fig:3.6}
\end{figure}

\begin{example}[Akbulut cork]\label{ex:akbulut}
As we will see, the methods of this paper can also easily be applied to the original Akbulut cork $M_1$. For our purposes, the simplest depiction of $M_1$ will be as $(+1)$-surgery on the pretzel knot $P(-3, 3, -3)$. In Figure~\ref{fig:3.6}, we have displayed an equivariant cobordism from $M_1$ (equipped with the indicated symmetries $\tau$ and $\sigma$) to $\Sigma(2, 3, 7)$. In Section~\ref{sec:5.1}, we leverage this cobordism to compute $h_\tau(M_1)$ and $h_{\ita}(M_1)$. It turns out that $\HFm(M_1)$ is isomorphic to $\HFm(Y_2)$, and that $\tau$ and $\sigma$ act on $\HFm(M_1)$ exactly as they do in Figure~\ref{fig:3.5}. Hence the same analysis as in Example~\ref{ex:3.5} shows that $(M_1, \tau)$ and $(M_1, \sigma)$ are strong corks. Note that the first computation of the action of $\tau$ on $\HFm(M_1)$ was carried out by Lin, Ruberman, and Saveliev \cite[Section 8.1]{LRS}, following work of Akbulut and Durusoy \cite{AD}. Here, our computation will be a straightforward consequence of Figure~\ref{fig:3.6}. See Lemma~\ref{lem:akbulut}.
\end{example}

\subsection{Cobordism maps}
We now investigate the behavior of $h_{\tau}$ and $h_{\ita}$ under negative-definite cobordisms. We begin with the simplest case, in which $Y_1$ and $Y_2$ are two homology spheres. Let $W$ be a cobordism from $Y_1$ to $Y_2$ and let $\s$ be a $\spinc$-structure on $W$. Recall that the associated Heegaard Floer grading shift is given by
\[
\Delta(W, \s) = \dfrac{c_1(\s)^2 - 2\chi(W) - 3\sigma(W)}{4}.
\]
In what follows, we will be concerned with negative-definite cobordisms admitting $\s$ for which $\Delta(W, \s) = 0$. 

\begin{remark}\label{rem:3.6}
Suppose that $W$ is definite. By a well-known result of Elkies \cite{Elkies}, $\Delta(W, \s) = 0$ if and only if the intersection form of $W$ is diagonalizable (over $\Z$) and $c_1(\s)$ has all coefficients equal to $\pm 1$ in the diagonal basis.
\end{remark}

\begin{proposition}\label{lem:3.7}
Let $Y_1$ and $Y_2$ be two homology spheres equipped with involutions $\tau_1$ and $\tau_2$, respectively. Let $(W, f, \s)$ be a negative-definite, $\spinc$-cobordism from $(Y_1, \tau_1)$ to $(Y_2, \tau_2)$ with $b_1(W) = 0$ and $\Delta(W, \s) = 0$. Then:
\begin{enumerate}
\item If $f_* \s = \s$, then $h_{\tau_1}(Y_1) \leq h_{\tau_2}(Y_2)$.
\item If $f_* \s = \bs$, then $h_{\iota \circ \tau_1}(Y_1) \leq h_{\iota \circ \tau_2}(Y_2)$.
\end{enumerate}
\end{proposition}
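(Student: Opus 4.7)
The plan is to show that the Ozsv\'ath-Szab\'o cobordism map $F_{W, \s} : \CFm(Y_1) \to \CFm(Y_2)$ itself realizes the desired local map of $\iota$-complexes. The bulk of the work lies in verifying the conditions of Definition \ref{def:2.3}: $F_{W, \s}$ should be $U$-equivariant and grading-preserving, induce an isomorphism on $U^{-1}H_\ast$, and intertwine (up to homotopy) the relevant involutions on source and target. $U$-equivariance holds in general, grading-preservation follows from $\Delta(W, \s) = 0$, and the $U$-localized isomorphism property follows from the standard fact that for a negative-definite cobordism between rational homology spheres with $b_1(W) = 0$, the induced map $F^{\infty}_{W, \s}$ on $\HFinf$ is an isomorphism (see e.g.\ \cite{OSabsgr}).

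For the intertwining property in case (1), I would appeal to the diffeomorphism naturality of cobordism maps developed by Juh\'asz-Thurston-Zemke and Zemke. The equivariant diffeomorphism $f$ of $W$ yields a chain homotopy
\[
\tau_2 \circ F_{W, \s} \simeq F_{W, f_\ast \s} \circ \tau_1,
\]
where $\tau_i$ denotes both the boundary involution and its induced action on $\CFm(Y_i)$. Under the hypothesis $f_\ast \s = \s$ this reduces to $\tau_2 \circ F_{W, \s} \simeq F_{W, \s} \circ \tau_1$, so $F_{W, \s}$ is a local map from $(\CFm(Y_1), \tau_1)$ to $(\CFm(Y_2), \tau_2)$, yielding $h_{\tau_1}(Y_1) \leq h_{\tau_2}(Y_2)$.

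For case (2), I would combine the above with the conjugation naturality of Hendricks-Manolescu \cite{HM}, which asserts
\[
\iota \circ F_{W, \s} \simeq F_{W, \bs} \circ \iota,
\]
so in particular, substituting $\s \mapsto \bs$, also $\iota \circ F_{W, \bs} \simeq F_{W, \s} \circ \iota$. Using the hypothesis $f_\ast \s = \bs$ and chaining the two naturalities gives
\[
(\iota \circ \tau_2) \circ F_{W, \s} \simeq \iota \circ F_{W, \bs} \circ \tau_1 \simeq F_{W, \s} \circ (\iota \circ \tau_1),
\]
so $F_{W, \s}$ realizes a local map from the $(\iota \circ \tau_1)$-complex to the $(\iota \circ \tau_2)$-complex, as desired.

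The main subtlety is the basepoint issue of Section \ref{sec:3.1}: since $\tau_i$ need not fix a basepoint of $Y_i$, the diffeomorphism naturality must be applied to a basepoint-adjusted diffeomorphism $\tau_\gamma$, and one must check that the ambiguity in the choice of $\gamma$ does not affect the identities above. This ambiguity is absorbed into the $\pi_1$-action on $\CFm$, which is trivial up to $U$-equivariant homotopy for homology spheres by \cite{Zemkegraph}, so the argument goes through unchanged.
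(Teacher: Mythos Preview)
Your proof is correct and follows essentially the same strategy as the paper: use $F_{W,\s}$ as the local map, invoke diffeomorphism naturality for the $\tau$-relation and the Hendricks--Manolescu conjugation relation for the $\iota$-relation, and chain them together in case (2). The one subtlety the paper flags that you underemphasize is that the cobordism map depends on a choice of path $\gamma$ in $W$, and diffeomorphism naturality \emph{a priori} relates $F_{W,\gamma,\s}$ to $F_{W,f(\gamma),f_*\s}$; it is the hypothesis $b_1(W)=0$ (not the basepoint issue on the $Y_i$) that makes $F_{W,\s}$ independent of the path and hence gives the clean identity you write---this is the content of Lemma~\ref{lem:secZ.1}.
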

\begin{proof}
The proposition is a straightforward consequence of the functorial properties of Heegaard Floer homology under cobordisms. By the proof of \cite[Theorem 9.1]{OSabsgr}, the cobordism map
\[
F_{W, \s} : \CFm(Y_1) \rightarrow \CFm(Y_2)
\]
sends $U$-nontorsion elements to $U$-nontorsion elements in homology. By \cite[Proposition 4.9]{HM}, we have
\[
F_{W, \bs} \circ \iota_1 \simeq \iota_2 \circ F_{W, \s}.
\]
The analogous commutation relation for $\tau$ is given by
\[
F_{W, f_*\s} \circ \tau_1 \simeq \tau_2 \circ F_{W, \s}.
\]
We defer the proof of this statement to Lemma~\ref{lem:secZ.1} below, although it is certainly well-known to experts in different forms (see for example \cite[Theorem 3.1]{OSsmooth4}, \cite[Theorem A]{Zemkegraph}). Note that implicitly, $F_{W, \s}$ depends on a choice of path $\gamma$ from $Y_1$ to $Y_2$. The two cobordism maps above should thus be taken with respect to different paths, with the map on the left being taken with respect to $f(\gamma)$. However, since $b_1(W) = 0$, one can show that $F_{W, \s}$ is independent of the choice of path (up to $U$-equivariant homotopy). For this and a discussion of several other subtleties (including the fact that $\tau_i$ need not fix the basepoint of $Y_i$), see Section~\ref{sec:secZ.1}.

If $f_*\s = \s$, then the commutation relation for $\tau$ immediately exhibits $F_{W, \s}$ as the desired local map for the first claim. If $f_*\s = \bs$, we instead observe that
\[
F_{W, \s} \circ (\iota_1 \circ \tau_1) \simeq \iota_2 \circ F_{W, \bs} \circ \tau_1 \simeq (\iota_2 \circ \tau_2) \circ F_{W, f_*\bs}.
\]
Noting that $f_*$ commutes with conjugation, we thus see that $F_{W, \s}$  effects the desired local map for the second claim.
\end{proof}
\noindent
This immediately yields a proof of Theorem~\ref{thm:1.1}:

\begin{proof}[Proof of Theorem~\ref{thm:1.1}]
We may isotope the extension of $\tau$ in the interior of $W$ to fix a ball (pointwise). Cutting out this ball gives a homology cobordism from $(Y, \tau)$ to $(S^3, \id)$. This constitutes a negative-definite cobordism in both directions, on which there is only one $\spinc$-structure. The claim then follows from Proposition~\ref{lem:3.7}.
\end{proof}

For ease of terminology, we define:
\begin{definition}\label{def:3.8}
Let $(W, f, \s)$ be a negative-definite, $\spinc$-cobordism between two homology spheres with $b_1(W) = 0$ and $\Delta(W, \s) = 0$. If $f_*\s = \s$, then we say that $(W, f, \s)$ is \textit{$\spinc$-fixing} with respect to $\s$. If $f_* \s = \bs$, then we say that $(W, f, \s)$ is \textit{$\spinc$-conjugating} with respect to $\s$. When there is no possible confusion, we will suppress writing $\s$. Note that if $\s = \bs = f_*\s$, then $(W, f, \s)$ is both $\spinc$-fixing and $\spinc$-conjugating.
\end{definition}

%%%%%%%%%%%%%%%%%%%%%%%%%%%%%%%%%%%%%%%%%%%%%%%%%%%%%%%%%%%%%%%%%%%%%%%%%%%%%%%%%%%%%%%%%%%%%%%%%%%%%%%%%%%%%%%%%%%%%%%%%%%%%%%%%%%%%%%%%%%%%%%%%%%%%%%%%%%%%%%%%%%%%%%%%%%%%%%%%%%%%%%%%%%%%%%%%%%%%%%%%%%%%%%%%%%%%%%%%%%%%%%%%%%%%%%%%%%%%%%%%%%%%%%%%%%%%%%%%%%%%%%%%%%%%%%%%%%%%%%%%%%%%%%%%%%%%%%%%%%%%%%%%%%%%%%%%%%%%%%%%%%%%%%%%%%%%%%%%%%%%%%%%%%%%%%%%%%%%%%%%%%%%%%%%%%%%%%%%%%%%%%%%%%%%%%%%%%%%%%%%%%%%%%%%%%%%%%%%%%%%%%%%%%%%%%%%%%%%%%%%%%%%%%%%%%%%%%%%%%%%%%%%%%%%%%%%%%%%%%%%%%%%%%%%%%%%%%%%%%%%%%%%%%%%%%%%%%%%%%%%%%%%%%%%%%%%%%%%%%%%%%%%%%%%%%%%%%%%%%%%%%%%%%%%%%%%%%%%%%%%%%%%%%%%%%%%%%%%%%%%%%%%%%%%%%%%%%%%%%%%%%%%%%%%%%%%%%%%%%%%%%%%%%%%%%%%%%%%%%%%%%%%%%%%%%%%%%%%%%%%%%%%%%%%%%%%%%%%%%%

\section{Equivariant Cobordisms}\label{sec:4}
In the preceding section, we showed that $h_\tau$ and $h_{\ita}$ are monotonic (with respect to the partial order on $\Inv$) under an appropriate class of equivariant negative-definite cobordisms. (See Lemma \ref{lem:3.7}.) In this section, we establish some simple topological methods for explicitly constructing equivariant cobordisms to which we can apply the aforementioned monotonicity. These cobordisms will serve as the key topological tool for our results.

\subsection{Equivariant surgery and handle attachments}\label{sec:4.1}

%Let $K$ be a knot in $S^3$, and let $\tau$ be an involution on $S^3$ that fixes $K$ setwise. Without loss of generality, we study the case where $\tau$ is $180^{\circ}$ rotation though some axis of symmetry. Indeed, work of Waldhausen implies that any orientation-preserving involution of $S^3$ is conjugate to one of this form \cite{Waldhausen}. Usually, we draw this axis as a line in $\R^3$, but sometimes it will be more convenient to draw the axis of rotation as an unknot, as in Figure~\ref{fig:1.1}. In this subsection, we verify that $\tau$ induces an involution on any manifold obtained by surgery on $K$ and, similarly, on any cobordism formed from handle attachment along $K$. This is well-known and implicit in many sources, e.g. \cite{Montesinos}, but we include the proofs here for completeness.

Let $K$ be a knot in a 3-manifold $Y$, and let $\tau$ be an orientation-preserving involution on $Y$ that fixes $K$ setwise. In the case that $Y = S^3$, we will often draw $\tau$ as $180^{\circ}$ rotation though some axis of symmetry. (By work of Waldhausen, any orientation-preserving involution of $S^3$ is conjugate to one of this form \cite{Waldhausen}.) Usually, we draw this axis as a line in $\R^3$, but sometimes it will be more convenient to draw the axis of rotation as an unknot, as in Figure~\ref{fig:1.1}. 

In this subsection, we verify that $\tau$ induces an involution on any manifold obtained by surgery on $K$ and, similarly, on any cobordism formed from handle attachment along $K$. This is well-known and implicit in many sources, e.g.\ \cite{Montesinos}, but we include the proofs here for completeness.

\begin{definition}\label{def:4.1}
An involution $\tau$ of $(Y,K)$ is said to be a \textit{strong involution} (or \textit{strong inversion}) if $\tau$ fixes two points on $K$. If instead the action is free on $K$, we say that $\tau$ is a \textit{periodic involution}. Note that a strong involution reverses orientation on $K$, while a periodic involution preserves orientation. We will sometimes refer to such a $K$ as an \textit{equivariant knot}.
\end{definition}

Now let $K$ be an equivariant knot in $Y$. It is easily checked that there exists an equivariant framing of $K$, as follows.\footnote{In fact, if $K$ is a knot in $S^3$, then the reader can check that the Seifert framing can be made equivariant.} By averaging an arbitrary Riemannian metric with its pullback under $\tau$, we may assume that $\tau$ acts as an isometry on $Y$, and hence also on the normal bundle to $K$. If we fix an arbitrary framing of $K$, we can choose coordinates 
\[
\nu(K) \cong S^1\times D^2=\{(z, w) : |z| = 1, |w| \leq 1\},
\]
such that:
\begin{enumerate}
\item If $\tau$ is strong, then the action of $\tau$ on $\nu(K)$ is $\tau(z, w) = (\bar{z}, A_z\bar{w})$.
\item If $\tau$ is periodic, then the action of $\tau$ on $\nu(K)$ is $\tau(z, w) = (-z, A_zw)$.
\end{enumerate}
In both cases, $A_z$ denotes a continuous family of matrices parametrized by $S^1$. In the strong case, we have $A_z\in O(2)$ and $\text{det }A_z=-1$, while in the periodic case, we have $A_z\in SO(2)$. If $\tau$ is strong, then $\tau$ fixes the two discs $\{1\} \times D^2$ and $\{-1\} \times D^2$ setwise, and has two fixed points on the boundary of each. Take any arc $\gamma$ on $\partial \nu(K)$ running from a fixed point of $\tau$ on $\{1\} \times S^1$ to a fixed point of $\tau$ on $\{-1\} \times S^1$. (We may also assume that $\gamma$ projects as a diffeomorphism onto a subarc of $K$.) Then $\gamma \cup \tau \gamma$ constitutes an equivariant framing of $K$ (and in fact any framing can be realized). If $\tau$ is periodic, then we instead take $\gamma$ to be a similar arc joining an arbitrary point $p$ in $\{1\} \times S^1$ to its image $\tau p$ in $\{-1\} \times S^1$. Clearly, $\gamma \cup \tau \gamma$ is again an equivariant framing of $K$.

It follows that we can re-parameterize our neighborhood of $K$ so that the equivariant framing constructed above is given by $S^1 \times \{1\}$. Then:
\begin{enumerate}
\item If $\tau$ is strong, then the action of $\tau$ on $\nu(K)$ is $\tau(z, w) = (\bar{z}, \bar{w})$.
\item If $\tau$ is periodic, then the action of $\tau$ on $\nu(K)$ is $\tau(z, w) = (-z, w)$.
\end{enumerate}
\noindent

\begin{lemma}\label{lem:4.3}
Let $K$ be an equivariant knot in $Y$ with symmetry $\tau$. Fix any framing $K'$ of $K$, and let $Y_{p/q}(K)$ be $(p/q)$-surgery on $K$ with respect to this framing. Then $\tau$ extends to an involution on $Y_{p/q}(K)$. This extension is unique up to isotopy.
\end{lemma}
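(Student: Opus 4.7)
The plan is to exploit the normal form for $\tau$ on $\nu(K) \cong S^1 \times D^2$ established in the preceding paragraphs. In these coordinates, $\tau$ acts linearly on $\partial\nu(K) = T^2$---as $(z,w) \mapsto (\bar z, \bar w)$ in the strong case, or $(z,w) \mapsto (-z, w)$ in the periodic case---and preserves (up to sign) both generating classes $\mu, \lambda \in H_1(\partial\nu(K))$. In particular, $\tau$ preserves the homology class of the surgery slope $p\mu + q\lambda$. Surgery replaces $\nu(K)$ with a new solid torus $N = D^2 \times S^1$ glued along a diffeomorphism $\phi \colon \partial N \to \partial\nu(K)$ sending $\partial D^2 \times \{1\}$ to a representative of $p\mu+q\lambda$; since $\tau$ already restricts to an involution on $Y \setminus \nu(K)$, the only task is to extend $\tau$ across $N$.

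For existence, I would argue that $\phi$ can be chosen so that the pullback involution $\phi^{-1} \circ \tau \circ \phi$ on $\partial N = T^2$ is affine (in the sense of lifting to an affine involution of $\R^2$) and preserves the meridian circle of $N$ setwise. This combines the fact that every involution on $T^2$ is isotopic to an affine one (since the mapping class group of $T^2$ is $GL_2(\Z)$) with the observation, from the homological calculation above, that the pullback must already preserve the meridian class. The resulting affine involution on $\partial N$ extends across the disk factor $D^2$ via the obvious radial formula to give an involution $\tilde\tau$ on $N$ with the correct boundary value, and pasting with $\tau$ on the knot complement yields a global involution on $Y_{p/q}(K)$ extending $\tau$.

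For uniqueness up to isotopy, suppose $\tilde\tau_1$ and $\tilde\tau_2$ are two such extensions. After an isotopy supported in a collar of $\partial N$, we may assume they agree on $\partial N$, so that $\tilde\tau_1 \circ \tilde\tau_2^{-1}$ is a self-diffeomorphism of $D^2 \times S^1$ restricting to the identity on the boundary. Such a diffeomorphism is smoothly isotopic rel boundary to the identity by a standard Alexander-trick argument applied fiberwise along the $S^1$ factor, and concatenating yields the desired isotopy. The main technical obstacle I anticipate lies in the existence step, specifically the simultaneous choice of $\phi$ effecting both the meridian-sending condition and the affine-linearization condition; an alternative approach that sidesteps this entirely is to realize $p/q$-surgery as a sequence of equivariant integer $2$-handle attachments via the continued fraction expansion of $p/q$, each of which admits a manifestly $\tau$-equivariant extension by the product action on $D^2 \times D^2$, with uniqueness propagating handle-by-handle.
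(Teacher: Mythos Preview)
Your existence argument is sound in outline and reaches the same conclusion, but the paper takes a more concrete route: it writes the gluing map explicitly as $f(z',w') = ((z')^s(w')^q,\,(z')^r(w')^p)$ and then simply exhibits the extension over the new solid torus by a formula---$(\bar z',\bar w')$ in the strong case, and one of $(\pm z',\pm w')$ in the periodic case according to the parities of $(p,q,r,s)$. Since the gluing is already linear in these coordinates, the pullback of $\tau$ is automatically affine, so the difficulty you flag about simultaneously arranging the linearization and the meridian condition on $\phi$ never arises; the explicit case analysis replaces your abstract radial-extension step.

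Your uniqueness argument has a genuine gap. The conclusion you want---that a diffeomorphism of $D^2\times S^1$ equal to the identity on the boundary is isotopic rel boundary to the identity---is correct, but ``Alexander-trick argument applied fiberwise along the $S^1$ factor'' does not prove it: the diffeomorphism need not preserve the $D^2$-fibers, so there is no coherent family of disk maps on which to run the Alexander trick. The standard repair is a meridian-disk argument. Any two properly embedded disks in the solid torus with the same boundary curve are isotopic rel boundary (innermost-disk argument in an irreducible $3$-manifold), so one can isotope the diffeomorphism to fix a meridian disk; cutting along that disk reduces the problem to the connectivity of $\mathrm{Diff}(B^3,\partial B^3)$. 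This is exactly the mechanism the paper uses, phrased directly for the extension $\tilde\tau$ rather than for the composite $\tilde\tau_1\tilde\tau_2^{-1}$.
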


\begin{proof}
It suffices to prove the claim under the additional assumption that $K'$ is equivariant. Indeed, since the claim of the lemma holds for all surgeries, proving the desired statement for a single framing establishes it for all framings.

On the complement of $\nu(K)$, we define our involution to be equal to $\tau$. Parameterize the boundary of $\nu(K)$ by $z$ and $w$, as above. The surgered manifold $Y_{p/q}(K)$ is obtained from the complement of $K$ by gluing in the solid torus
\[
S^1 \times D^2 = \{(z', w') : |z'| = 1, |w'| \leq 1\}
\]
via the boundary diffeomorphism
\[
f(z', w') = (z = (z')^s (w')^q, w = (z')^r (w')^p)
\]
where $r$ and $s$ are integers such that $ps - qr = 1$. If $\tau$ is strong, then we have the obvious extension by complex conjugation
\[
\tau(z', w') = (\bar{z}', \bar{w}').
\]
If $\tau$ is periodic, then we have the extension:
\[
\tau(z', w') = 
\begin{cases}
(-z', -w') &\text{if } (p, q, r, s) = (1, 0, 1, 1) \text{ or } (1, 1, 1, 0) \mod 2\\
(-z', w') &\text{if } (p, q, r, s) = (1, 0, 0, 1) \text{ or } (1, 1, 0, 1) \mod 2\\
(z', -w') &\text{if } (p, q, r, s) = (0, 1, 1, 0) \text{ or } (0, 1, 1, 1) \mod 2.
\end{cases}
\]
Note that the diffeomorphism between the gluings corresponding to $(p, q, r, s)$ and $(p, q, r+p, s+q)$ sends $(z', w')$ to $(z', z'w')$. This intertwines $\tau$, so (up to re-parameterization) our extension of $\tau$ does not depend on $(r, s)$.

It is easy to check that any two extensions of $\tau$ must be isotopic to each other. For example, in the case of a strong involution, $\tau$ fixes a meridional curve on the torus boundary setwise. Hence any extension of $\tau$ maps the disk $D$ bounded by this curve to some other disk $D'$ with $\partial D = \partial D'$. It is then clear that we can isotope $\tau$ (rel boundary) so that it fixes $D$. Cutting out $D$, we then use the fact that every diffeomorphism of $S^2$ extends uniquely over $B^3$ (up to isotopy).
\end{proof}
\noindent
Given an equivariant knot, we will thus freely view its symmetry as defining a symmetry on any surgered manifold.

\begin{lemma}\label{lem:4.4}
Let $K$ be an equivariant knot in $Y$ with symmetry $\tau$. Fix any framing $K'$ of $K$. Then $\tau$ extends over the 2-handle cobordism given by attaching a 2-handle along $K$ with framing $n$ (relative to $K'$). The involution on the boundary is the extension of $\tau$ to $Y_n(K)$ afforded by Lemma~\ref{lem:4.3}.
\end{lemma}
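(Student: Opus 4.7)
The plan is to extend $\tau$ to $W$ explicitly, using the equivariant framing machinery from Section~\ref{sec:4.1} to set up convenient coordinates on $\nu(K)$ and then writing down the extension over the 2-handle by hand. As in the proof of Lemma~\ref{lem:4.3}, I first reduce to the case that $K'$ is an equivariant framing: any two framings of $K$ differ by an integer number of twists, and relabeling $n$ by this shift replaces the cobordism attached with framing $n$ relative to $K'$ with a cobordism attached with framing $n+k$ relative to the equivariant framing. Since the class of cobordisms considered is thus unchanged, it suffices to prove the claim for the equivariant framing. With this choice, the discussion preceding Lemma~\ref{lem:4.3} gives coordinates on $\nu(K) \cong S^1 \times D^2$ in which $\tau$ acts as $(z, w) \mapsto (\bar z, \bar w)$ in the strong case and $(z, w) \mapsto (-z, w)$ in the periodic case.

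I then write $W = (Y \times [0, 1]) \cup_{\phi_n} (D^2 \times D^2)$ with standard attaching map $\phi_n(u, v) = (u, u^n v)$, extend $\tau$ to $Y \times [0, 1]$ as $\tau \times \mathrm{id}$, and propose
\[
\tilde\tau(u, v) = \begin{cases} (\bar u, \bar v) & \text{strong case,} \\ (-u, (-1)^n v) & \text{periodic case} \end{cases}
\]
on the 2-handle. Both formulas define orientation-preserving involutions of $D^2 \times D^2$, and a direct computation of $\phi_n \circ \tilde\tau$ and $\tau \circ \phi_n$ on the attaching region $\partial D^2 \times D^2$ verifies compatibility. In the periodic case the sign $(-1)^n$ is forced: it is precisely what cancels the factor $(-u)^n = (-1)^n u^n$ introduced by the twist in $\phi_n$, so that the naive candidate $(u, v) \mapsto (-u, v)$ only works when $n$ is even. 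Gluing these extensions yields an orientation-preserving involution of $W$ restricting to $\tau$ on $Y \times \{0\}$.

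Finally, the restriction of $\tilde\tau$ to the upper boundary $D^2 \times \partial D^2$ combines with $\tau$ on $Y \setminus \nu(K)$ to give an extension of $\tau$ to $Y_n(K)$, which by the uniqueness statement in Lemma~\ref{lem:4.3} must be isotopic to the extension afforded there. If desired, one can compare the explicit formulas directly after identifying coordinates, although the formula in Lemma~\ref{lem:4.3} depends on a choice of auxiliary integers $(r, s)$; for the periodic case this comparison is where the parity dichotomy reappears. The hardest part of the argument is precisely this bookkeeping of the parity-dependent sign in the periodic setting; once the correct $\tilde\tau$ is written down, all verifications are elementary.
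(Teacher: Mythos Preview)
Your proposal is correct and follows essentially the same approach as the paper: reduce to an equivariant framing, write the attaching map as $(u,v)\mapsto (u,u^n v)$, and extend $\tau$ over the 2-handle by $(\bar u,\bar v)$ in the strong case and $(-u,(-1)^n v)$ in the periodic case. The paper's proof is identical up to notation (it writes the periodic extension as two separate formulas according to the parity of $n$) and leaves the boundary-matching with Lemma~\ref{lem:4.3} implicit, whereas you invoke the uniqueness clause there explicitly.
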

\begin{proof}
It suffices to prove the claim under the additional assumption that $K'$ is equivariant. Indeed, since the claim of the lemma holds for all $n$, proving the desired statement for a single framing establishes it for all framings.

Parameterize the 2-handle by
\[
D^2 \times D^2 = \{(z', w') : |z'| \leq 1, |w'| \leq 1\}.
\]
The boundary subset $S^1 \times D^2 \subset D^2 \times D^2$ is identified with $\nu(K)$ via the map sending
\[
(z', w') \mapsto (z = z', w = (z')^n w').
\]
If $\tau$ is strong, then the extension is given by 
\[
\tau(z', w') = (\bar{z}', \bar{w}').
\]
If $\tau$ is periodic, then the extension is given by
\[
\tau(z', w') = 
\begin{cases}
(-z', -w') &\text{if } n \text{ is odd}\\
(-z', w') &\text{if } n \text{ is even},
\end{cases}
\]
as desired.
\end{proof}

\noindent
This shows that if $K$ is an equivariant knot, then equivariant handle attachment along $K$ is well-defined, and that $\tau$ moreover extends over the handle attachment cobordism.

We will also consider surgeries on links in which $\tau$ exchanges some pairs of link components (with the same framing), in addition to possibly fixing some components. Given the above treatment of the fixed link components, it is  clear that such  $\tau$ extend to involutions on the surgered manifolds and over the handle attachment cobordisms (whenever the surgery coefficients are integral). 

\subsection{Actions on $\spinc$-structures}\label{sec:4.2}
We now specialize to the case where $Y$ is a homology sphere. Let $K$ be an equivariant knot in $Y$ with symmetry $\tau$. Let $W$ be the cobordism formed by $(-1)$-handle attachment along $K$, relative to the Seifert framing. This is a negative-definite cobordism whose second cohomology $H^2(W)$ is generated by a single element $x$. Note that the $\spinc$-structures on $W$ with $c_1(\s) = \pm x$ have $\Delta(W, \s) = 0$. We claim that if the involution $\tau$ is periodic, then $W$ is $\spinc$-fixing, while if $\tau$ is strong, then $W$ is $\spinc$-conjugating. To see this, it suffices to understand the action of $\tau$ on $H^2(W)$. Under the isomorphism $H^2(W) \cong H_2(W, \partial W)$, the generator $x$ corresponds to the cocore of the attaching 2-handle. In the notation of Lemma~\ref{lem:4.4}, this is given by
\[
\{0\} \times D^2 = \{(z', w') : z' = 0, |w'| \leq 1 \}.
\]
An examination of the extension of $\tau$ over $W$ shows that $\tau$ reverses orientation on the cocore if $\tau$ is strong and preserves orientation if it is periodic. Hence if $\tau$ is strong, it acts via multiplication by $-1$ on $H^2(W)$, and otherwise fixes $H^2(W)$. We thus define:

\begin{definition}
Let $Y_1$ be a homology sphere with involution $\tau$. Let $K$ be an equivariant knot in $Y_1$. Suppose that $Y_2$ is obtained from $Y_1$ by doing $(-1)$-surgery on $K$, relative to the Seifert framing. Then the corresponding handle attachment cobordism constitutes an equivariant cobordism from $Y_1$ to $Y_2$, where the latter is equipped with the usual extension of $\tau$. This is $\spinc$-fixing if $\tau$ is periodic and $\spinc$-conjugating if $\tau$ is strong. We refer to these as \textit{$\spinc$-fixing $(-1)$-cobordisms} and \textit{$\spinc$-conjugating $(-1)$-cobordisms}, respectively. 
\end{definition}

Similarly, we may consider attaching a pair of handles to $Y_1$ along a two-component link with algebraic linking number zero whose components are interchanged by $\tau$. In this situation, $H^2(W)$ is generated by two elements $x$ and $y$, where $\tau x = y$ and $\tau y = x$ (with appropriately chosen orientations). Choosing the $\spinc$-structure $\s$ with $c_1(\s) = x + y$ then yields a $\spinc$-fixing cobordism, while choosing the $\spinc$-structure with $c_1(\s) = x - y$ yields a $\spinc$-conjugating cobordism. 

\begin{definition}
Let $Y_1$ be a homology sphere with involution $\tau$. Let $L$ be a two-component link in $Y_1$ with algebraic linking number zero whose components are interchanged by $\tau$. Let $Y_2$ be obtained from $Y_1$ by doing an additional $(-1)$-surgery on each component of $L$, relative to the Seifert framing. Then the corresponding handle attachment cobordism constitutes an equivariant cobordism from $Y_1$ to $Y_2$, where the latter is equipped with the usual extension of $\tau$. This is both $\spinc$-fixing and $\spinc$-conjugating (with respect to different $\spinc$-structures). We refer to such a cobordism as an \textit{interchanging $(-1, -1)$-cobordism}.
\end{definition}

In light of Proposition~\ref{lem:3.7}, we thus immediately obtain a proof of Theorem~\ref{thm:1.4}:

\begin{proof}[Proof of Theorem~\ref{thm:1.4}]
Using the conditions on the linking numbers, it is easily checked that the cobordisms in question satisfy the hypotheses of Proposition~\ref{lem:3.7}. Note that in the third claim of Theorem~\ref{thm:1.4}, we use two separate $\spinc$-structures to establish the two different inequalities.
\end{proof}

\noindent
Of course, we have the analogous notion of $(+1)$- and $(+1, +1)$-cobordisms. We obtain a similar set of inequalities (going in the opposite direction) by turning these cobordisms around.

\subsection{Further operations}\label{sec:4.3}
We will occasionally need to compare symmetries in two different surgery descriptions of the same 3-manifold. Although we will not belabor the point, the reader should check that the blow-up and blow-down operations displayed in Figure~\ref{fig:4.2} can be performed equivariantly. Note that if $u$ is an equivariant $(1/k)$-framed unknot which is split off from the rest of a surgery diagram, then $u$ can be deleted. Indeed, let $u$ be contained in a ball $B^3$. Then $(1/k)$-surgery on $u$ is again a ball, equipped with a slightly different extension of the 180-degree-rotation on $S^2 = \partial B^3$. However, every diffeomorphism of $\partial B^3$ extends uniquely over $B^3$ up to isotopy rel boundary. 

\begin{figure}[h!]
\center
\includegraphics[scale=0.83]{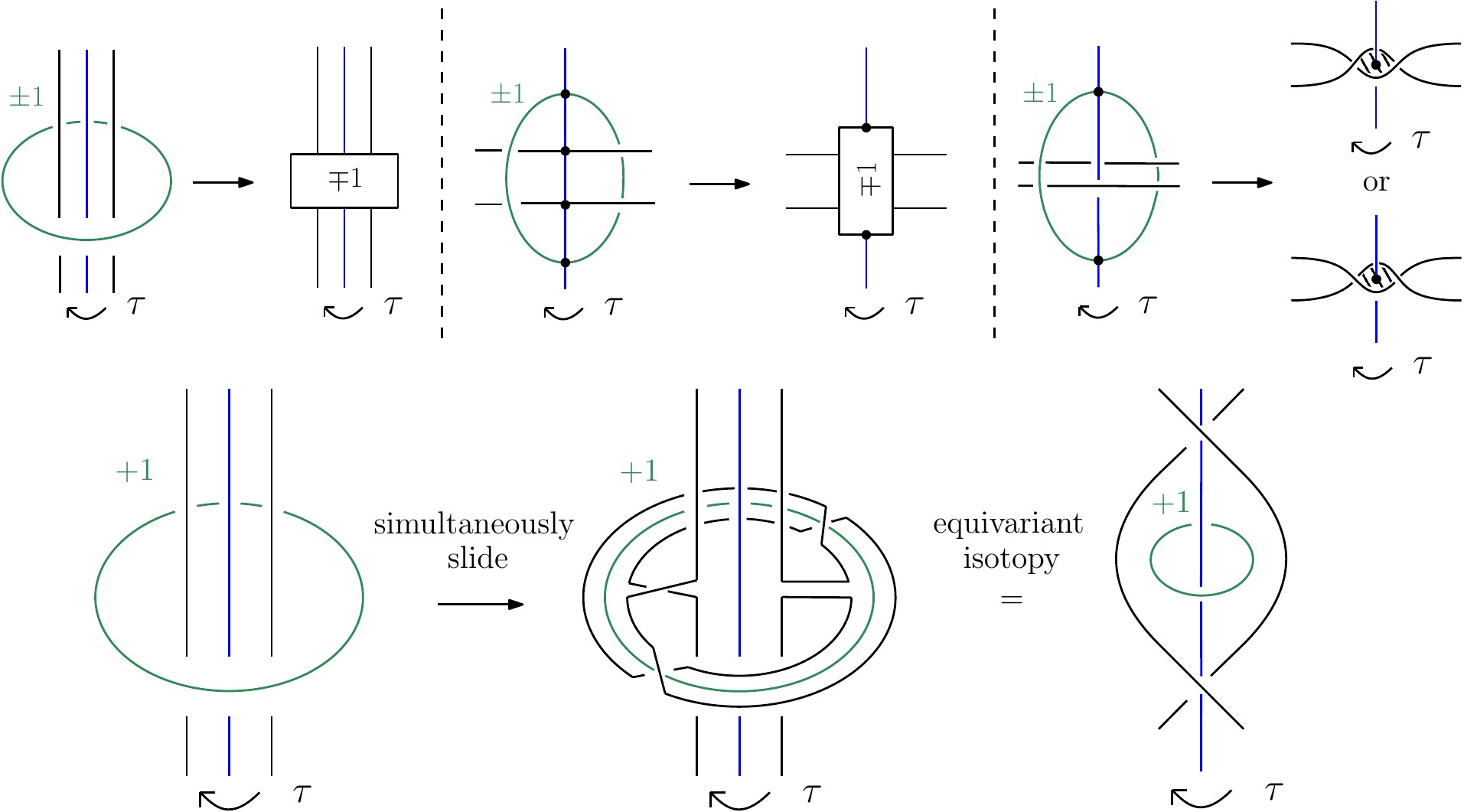}
\caption{Top: various equivariant blow-up/blow-down operations. Bottom: an equivariant (simultaneous) slide followed by an equivariant isotopy.}\label{fig:4.2}
\end{figure}

%Using equivariant handleslides, we obtain a proof of Theorem~\ref{thm:1.5}:
\begin{proof}[Proof of Theorem~\ref{thm:1.5}]
Let $K$ be a knot in $S^3$ with a strong involution $\tau$. Then $(1/k)$-surgery on $K$ is equivariantly diffeomorphic to the two-component link surgery consisting of $0$-surgery on $K$, together with $(-k)$-surgery on a meridian $\mu$ of $K$. Choosing $\mu$ to be an equivariant unknot near one of the fixed points on $K$ makes this diffeomorphism $\tau$-equivariant. Let $u$ and $\tau u$ be an additional pair of $(-1)$-framed unknots which each link $\mu$, as in Figure~\ref{fig:4.3}. Blowing down, the resulting manifold is equivariantly diffeomorphic to surgery on $K$ with coefficient $1/(k-2)$. We claim that handle attachment along $u$ and $\tau u$ constitutes an interchanging $(-1, -1)$-cobordism from $S_{1/k}(K)$ to $S_{1/(k-2)}(K)$. To see this, we equivariantly slide $u$ and $\tau u$ over $K$, which algebraically unlinks them from the rest of the diagram (see Figure~\ref{fig:4.3}). The claim then follows from Theorem~\ref{thm:1.4}.
\end{proof}

\begin{figure}[h!]
\center
\includegraphics[scale=0.95]{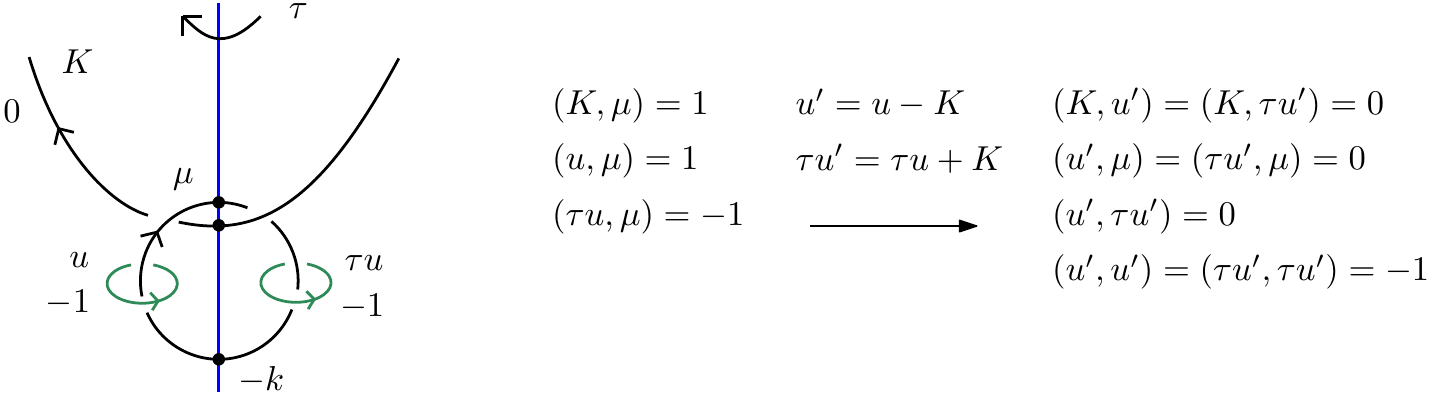}
\caption{Left: the equivariant cobordism used in the proof of Theorem~\ref{thm:1.5}. Right: handleslides establishing that this is an interchanging $(-1, -1)$-cobordism. Since $\tau$ reverses orientation on $K$, the indicated handleslides are $\tau$-equivariant.}\label{fig:4.3}
\end{figure}

%%%%%%%%%%%%%%%%%%%%%%%%%%%%%%%%%%%%%%%%%%%%%%%%%%%%%%%%%%%%%%%%%%%%%%%%%%%%%%%%%%%%%%%%%%%%%%%%%%%%%%%%%%%%%%%%%%%%%%%%%%%%%%%%%%%%%%%%%%%%%%%%%%%%%%%%%%%%%%%%%%%%%%%%%%%%%%%%%%%%%%%%%%%%%%%%%%%%%%%%%%%%%%%%%%%%%%%%%%%%%%%%%%%%%%%%%%%%%%%%%%%%%%%%%%%%%%%%%%%%%%%%%%%%%%%%%%%%%%%%%%%%%%%%%%%%%%%%%%%%%%%%%%%%%%%%%%%%%%%%%%%%%%%%%%%%%%%%%%%%%%%%%%%%%%%%%%%%%%%%%%%%%%%%%%%%%%%%%%%%%%%%%%%%%%%%%%%%%%%%%%%%%%%%%%%%%%%%%%%%%%%%%%%%%%%%%%%%%%%%%%%%%%%%%%%%%%%%%%%%%%%%%%%%%%%%%%%%%%%%%%%%%%%%%%%%%%%%%%%%%%%%%%%%%%%%%%%%%%%%%%%%%%%%%%%%%%%%%%%%%%%%%%%%%%%%%%%%%%%%%%%%%%%%%%%%%%%%%%%%%%%%%%%%%%%%%%%%%%%%%%%%%%%%%%%%%%%%%%%%%%%%%%%%%%%%%%%%%%%%%%%%%%%%%%%%%%%%%%%%%%%%%%%%%%%%%%%%%%%%%%%%%%%%%%%%%%%%%%%%

\pagebreak
\section{Bordisms and Local Equivalence}\label{sec:secZ}

\subsection{Graph cobordisms and equivariance}\label{sec:secZ.1}
We now turn to a proof of Theorem~\ref{thm:1.2}. For this, we will need to consider the more general situation of a cobordism with disconnected ends. We thus briefly review the functoriality package for Heegaard Floer homology developed by Zemke in \cite{Zemkegraph}, building on previous work of Ozsv\'ath-Szab\'o \cite{OSsmooth4} and Juh\'asz \cite{Juhasz}, \cite{JuhaszTQFT}. In what follows, we allow each manifold $Y$ to have a collection of basepoints $\w$. Usually, one introduces different $U$-variables to keep track of the different basepoints, but here we will identify all of these into a single $U$-variable. In the terminology of \cite{Zemkegraph}, this is called the \textit{trivial coloring}.

Let $W$ be a cobordism between two (possibly disconnected) 3-manifolds $(Y_1, \w_1)$ and $(Y_2, \w_2)$. A \textit{ribbon graph} in $W$ is an embedded graph $\Gamma$ whose intersection with each $Y_i$ is precisely $\w_i$. We also require that $\Gamma$ be given a \textit{formal ribbon structure}, which is a choice of cyclic ordering at every internal vertex of $\Gamma$. We refer to the pair $(W, \Gamma)$ as a \textit{ribbon graph cobordism}. Associated to any such $(W, \Gamma)$, Zemke constructs two chain maps
\[
F^A_{W, \Gamma, \s}, F^B_{W, \Gamma, \s} : \CFm(Y_1, \w_1, \s|_{Y_1}) \rightarrow \CFm(Y_2, \w_2, \s|_{Y_2}).
\]
These are well-defined up to $U$-equivariant homotopy and are an invariant of the smooth isotopy class of $\Gamma$ in $W$ \cite[Definition 3.4]{Zemkegraph}. In fact, $F^A$ and $F^B$ are invariant under a weaker notion of equivalence called \textit{ribbon equivalence}; see \cite[Corollary D]{Zemkelinkcobord}. Although $F^A$ and $F^B$ satisfy certain symmetries with respect to each other, they are not always equal. However, they have the same formal properties, so for convenience we will focus on $F^A$. See \cite[Section 3.1]{Zemkegraph} for precise definitions.

Now let $Y_1$ and $Y_2$ be disjoint unions of homology spheres, and equip each connected component of $Y_1$ and $Y_2$ with a single basepoint. Let $f$ be a diffeomorphism of $W$ restricting to $\tau_i$ on each $Y_i$. If $\tau_i$ fixes the basepoints of $Y_i$, then it follows from \cite[Theorem A]{Zemkegraph} (together with the well-definedness of graph cobordism maps up to $U$-equivariant homotopy) that
\begin{equation}\label{eq:1}
\tau_2 \circ F^A_{W, \Gamma, \s} \simeq F^A_{W, f(\Gamma), f_*(\s)} \circ \tau_1.
\end{equation}
See \cite[Equation 1.2]{Zemkegraph}. If $\tau_i$ does not fix the basepoints of $Y_i$, then (\ref{eq:1}) is not quite correct, since in this case we have defined the action of $\tau_i$ on $\CFm$ using an isotoped version of $\tau_i$ instead. Clearly, however, we can isotope $f$ so that it restricts to the isotoped versions of $\tau_i$ at either end. Thus, (\ref{eq:1}) holds after replacing $f(\Gamma)$ with a slightly altered graph $f(\Gamma)$ which has the same endpoints as $\Gamma$. (Usually, we will be sloppy and continue to write $f(\Gamma)$ despite this difference.) 

In order to define the $F^A$- and $F^B$-maps, Zemke first defines graph cobordism maps in the case of a product cobordism $Y \times I$. In this situation, we can use the projection map to view $\Gamma$ as being embedded in $Y$ (after perturbing slightly, if necessary). In \cite{Zemkegraph}, Zemke introduces a set of auxiliary maps on $\CFm(Y)$ which can be used to associate to any such graph an endomorphism $\fA_{\cG_\Gamma}$ of $\CFm(Y)$. These auxiliary maps include the \textit{free stabilization maps} $S^{\pm}_w$, as well as the \textit{relative homology maps} $A_\lambda$. We will assume some familiarity with these constructions; the reader is referred to \cite[Section 3]{HMZ} for a concise and helpful summary. 

In order to understand $F^A$ for a general cobordism $W$, it is helpful to keep in mind the desired composition law. Let $(W, \Gamma) = (W_2, \Gamma_2) \cup (W_1, \Gamma_1)$. If $\s_1$ and $\s_2$ are $\spinc$-structures on $W_1$ and $W_2$, then the obvious generalization of the usual composition law of Ozsv\'ath and Szab\'o yields:
\begin{equation}\label{eq:2}
F^A_{W_2, \Gamma_2, \s_2} \circ F^A_{W_1, \Gamma_1, \s_1} \simeq \sum_{\substack{\s \ \in \ \spinc(W) \\ \s|_{W_2} = \ \s_2 \\ \s|_{W_1} = \ \s_1} }F^A_{W, \Gamma, \s}.
\end{equation}
To this end, consider a parameterized Kirby decomposition for $W$, and split
\[
W = W_2 \circ W_1,
\]
where $W_1$ is the subcobordism consisting of all 0- and 1-handles. We denote the outgoing boundary of $W_1$ by $Y$. Note that for such a splitting, a $\spinc$-structure $\s$ on $W$ is uniquely determined by its restrictions $\s_i$ to each $W_i$.

The underlying Morse function on $W$ provides a gradient-like vector field $\vec{v}$ on $W$. After a small perturbation, we can assume that $\Gamma$ is disjoint from the descending manifolds of the index-one critical points, the ascending manifolds of the index-three critical points, and both the ascending and descending manifolds of the index-two critical points. Using $\vec{v}$, we flow each point of $\Gamma$ backwards or forwards so that it hits $Y$. This gives (possibly after a small perturbation) an embedded graph in $Y$, which we may think of as a ribbon graph in $Y \times (-\epsilon, \epsilon)$. We connect this to the basepoints of the $Y_i$ via arcs going along the flow lines of $\vec{v}$. Denote these collections of arcs by $\Gamma_1$ and $\Gamma_2$. The map $\smash{F^A_{W, \Gamma, \s}}$ is then equal to the composition
\begin{equation}\label{eq:3}
F^A_{W, \Gamma, \s} \simeq F^A_{W_2, \Gamma_2, \s_2} \circ \mathfrak{A}_{\mathcal{G}_\Gamma} \circ F^A_{W_1, \Gamma_1, \s_1}.
\end{equation}
Here, $\mathfrak{A}_{\mathcal{G}_\Gamma}: \CFm(Y) \rightarrow \CFm(Y)$ is the graph action map associated to the flowed image of $\Gamma$ in $Y$, and should be thought of as defining the cobordism map in the case where $W = Y \times I$. When no confusion is possible, we will sometimes suppress notation and write the outer two maps as $\smash{F^A_{W_1, \s_1}}$ and $\smash{F^A_{W_2, \s_2}}$. See Figure~\ref{fig:secZ.1}.

\begin{figure}[h!]
\center
\includegraphics[scale=0.8]{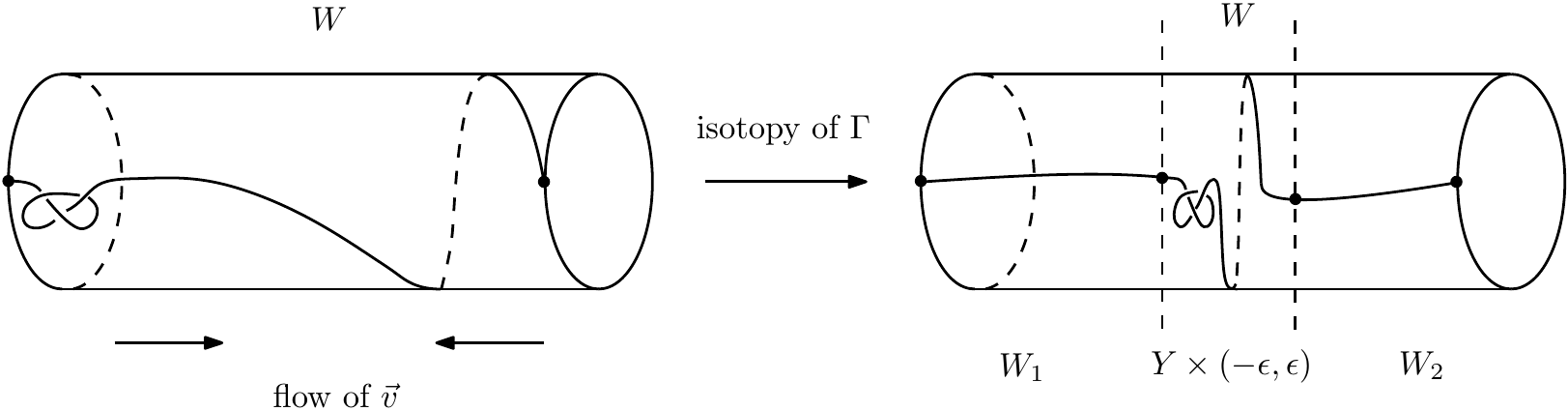}
\caption{Schematic depiction of flowing $\Gamma$ into $Y$. In actuality, $Y$ will have some topology and $\Gamma$ need not be a path.}\label{fig:secZ.1}
\end{figure}  

Roughly speaking, we think of the whole procedure as isotoping $\Gamma$ so that it is uninteresting outside of $Y$; the maps associated to $(W_1, \Gamma_1)$ and $(W_2, \Gamma_2)$ can then be defined using only a slight modification of the construction of Ozsv\'ath and Szab\'o. In what follows, we similarly use the technique of flowing $\Gamma$ so that it is ``concentrated" in a convenient slice. In particular, note that if $\Gamma$ and $\Gamma'$ are two ribbon graphs in $W$, then their flowed versions agree outside of $Y$.

For convenience, we also record the grading shift formula established in \cite[Proposition 4.1]{HMZ}. Let $(W, \Gamma)$ be a ribbon graph cobordism from $(Y_1, \w_1)$ to $(Y_2, \w_2)$ and let $\s$ be a $\spinc$-structure on $W$. Define the \textit{reduced Euler characteristic of $\Gamma$} to be
\[
\widetilde{\chi}(\Gamma) = \chi(\Gamma) - \dfrac{1}{2}(|\w_1| + |\w_2|).
\]
The grading shift associated to $F^A_{W, \Gamma, \s}$ is then given by
\[
\Delta(W, \Gamma, \s) = \dfrac{c_1(\s)^2 - 2\chi(W) - 3\sigma(W)}{4} + \widetilde{\chi}(\Gamma).
\]
Note that if $\Gamma$ is a path, then the reduced Euler characteristic of $\Gamma$ is zero.

\subsection{Independence for paths}\label{sec:secZ.2}
In this subsection, we verify that if $\Gamma$ is a path, then the map $\smash{F^A_{W, \Gamma}}$ depends only on the homology class $[\Gamma] \in H_1(W, \partial W) / \text{Tors}$. This is rather well-known to experts, but we record it here for completeness. Note that if $\Gamma$ is a path, then $F^A$ and $F^B$ are homotopy equivalent and coincide with the usual construction of Ozsv\'ath and Szab\'o by \cite[Theorem B]{Zemkegraph}. In this situation we will thus write $F$ instead of $F^A$.

\begin{lemma}\label{lem:secZ.1}
Let $W$ be a cobordism between two singly-based (connected) 3-manifolds $(Y_{1},w_{1})$ and $(Y_{2},w_{2})$. Let $\gamma$ and $\gamma'$ be two paths in $W$ from $w_1$ to $w_2$. Suppose that 
\[
[\gamma - \gamma']=0 \in H_{1}(W) / \text{Tors}.
\] 
Then 
\[
F_{W,\gamma, \s} \simeq F_{W,\gamma', \s}.
\]
\end{lemma}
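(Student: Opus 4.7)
The strategy is to localize the comparison of the two cobordism maps to a middle level set of $W$ and then appeal to the fact that the basepoint-moving action on Heegaard Floer homology factors through $H_1/\Tors$, as established in \cite[Theorem D]{Zemkegraph}.

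First, I would fix a handle decomposition of $W$ and split $W = W_2 \circ W_1$ along a slice $Y$ lying between the $1$- and $2$-handles, as in the discussion preceding the lemma. Using the isotopy invariance of $F^A_{W, \Gamma, \s}$ in the ribbon graph, I would flow both $\gamma$ and $\gamma'$ into $Y$ along the gradient-like vector field $\vec{v}$, producing paths $\eta, \eta' \subset Y$ with the same endpoints. The composition formula~(\ref{eq:3}) then yields
\[
F_{W, \gamma, \s} \simeq F_{W_2, \s_2} \circ \mathfrak{A}_\eta \circ F_{W_1, \s_1}, \qquad F_{W, \gamma', \s} \simeq F_{W_2, \s_2} \circ \mathfrak{A}_{\eta'} \circ F_{W_1, \s_1},
\]
so the comparison is reduced to understanding the difference between the path-action maps $\mathfrak{A}_\eta$ and $\mathfrak{A}_{\eta'}$ on $\CFm(Y)$.

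Since $\eta$ and $\eta'$ share their endpoints, their graph action maps involve the same free (de)stabilizations and, up to $U$-equivariant homotopy, differ only through the relative-homology factor $A_\lambda$ applied to the loop $\alpha = \eta * \bar{\eta}' \subset Y$. By \cite[Theorem A]{Zemkegraph}, this difference can be identified with the basepoint-moving action associated to $\alpha$, which by \cite[Theorem D]{Zemkegraph} depends only on $[\alpha] \in H_1(Y)/\Tors$. Thus proving the lemma reduces to showing that $F_{W_2, \s_2} \circ (\mathfrak{A}_\eta - \mathfrak{A}_{\eta'}) \circ F_{W_1, \s_1}$ is null-homotopic.

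The main obstacle I anticipate is that $[\alpha] \in H_1(Y)/\Tors$ need not itself vanish, even though the full loop $\gamma * \bar\gamma'$ is torsion in $H_1(W)/\Tors$. I would address this by further decomposing $W_2$ handle by handle: the classes in $H_1(Y)/\Tors$ that die upon inclusion into $H_1(W)/\Tors$ are precisely those spanned by the attaching circles of the $2$-handles of $W_2$ (the $3$-handles contribute only torsion). For each such $2$-handle, one checks via the composition law~(\ref{eq:2}) that the basepoint-moving action along the attaching circle is absorbed by the associated triangle map, so the contribution of $\alpha$ is killed before reaching $Y_2$. This sort of reduction is implicit in the original Ozsv\'ath--Szab\'o framework and adapts to Zemke's graph cobordism maps without essential change; the bookkeeping required to rigorously propagate the homotopies through the handle decomposition is where the real technical work will lie.
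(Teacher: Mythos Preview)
Your setup matches the paper's exactly: decompose $W = W_2 \circ W_1$, flow both paths into the intermediate slice $Y$, and reduce to showing that $F_{W_2,\s_2}$ kills the contribution of the closed loop $c = \eta * \bar\eta'$. The difference lies in how this last step is dispatched.

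The paper avoids your handle-by-handle bookkeeping with two observations. First, a short Mayer-Vietoris argument: since $W_1$ consists only of $0$- and $1$-handles, the inclusion $H_1(W_2) \to H_1(W)$ is injective, so the hypothesis $[c]=0$ in $H_1(W)/\Tors$ already forces some multiple of $[c]$ to be null in $W_2$ itself. Second, it invokes the commutation relation $F_{W,\s}\circ A_c \simeq A_{c'}\circ F_{W,\s}$ for $c \sim c'$ in $W$, due to Hedden--Ni \cite[Theorem~3.6]{HN} (refining results from \cite{OSabsgr}); taking $c'$ empty in $Y_2$ finishes in one line. Your proposed absorption of the attaching-circle actions by the $2$-handle triangle maps is essentially a special case of this commutation relation, so your route would succeed but amounts to reproving \cite[Theorem~3.6]{HN} inside the argument.

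One small correction: the operator governing the difference $\mathfrak{A}_\eta - \mathfrak{A}_{\eta'}$ is the relative-homology map $A_c$ (via $A_\eta - A_{\eta'} = A_c$ from \cite[Lemma~5.3]{Zemkegraph}), not the basepoint-moving action; your citations of Theorems~A and~D of \cite{Zemkegraph} are slightly off the mark here, though the underlying intuition that everything factors through $H_1(Y)/\Tors$ is correct.
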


\begin{proof}
Decompose $W$ as in Section~\ref{sec:secZ.1}. Flow $\gamma$ and $\gamma'$ into $Y$ and denote the images of $w_1$ and $w_2$ in $Y$ by $v_1$ and $v_2$. We obtain two arcs in $Y$ that go between $v_1$ and $v_2$ which, by an abuse of notation, we continue to denote by $\gamma$ and $\gamma'$. Let $\fA_{\cG}$ and $\fA_{\cG'}$ be the graph action maps on $\CFm(Y)$ associated to $\gamma$ and $\gamma'$. Note that $c = \gamma*(\gamma')^{-1}$ is a closed loop in $Y$ which is zero when included into $H_{1}(W) / \text{Tors}$. We now have:
\begin{align*}
F_{W,\gamma, \s} - F_{W,\gamma', \s} &\simeq F_{W_2, \s_2} \circ  \fA_{\mathcal{G}} \circ F_{W_1, \s_1} -F_{W_2, \s_2} \circ  \fA_{\mathcal{G}'} \circ F_{W_1, \s_1}\\
& = F_{W_2, \s_2} \circ  (\fA_{\mathcal{G}}- \fA_{\mathcal{G}'}) \circ F_{W_1, \s_1} \\
& = F_{W_2, \s_2} \circ  S^{-}_{v_1}(A_{\gamma}-A_{\gamma'}) S^{+}_{v_2} \circ F_{W_1, \s_1} \\
& = F_{W_2, \s_2} \circ  S^{-}_{v_1}A_c S^{+}_{v_2} \circ F_{W_1, \s_1} \\
& \simeq F_{W_2, \s_2} \circ  A_cS^{-}_{v_1} S^{+}_{v_2} \circ F_{W_1, \s_1}
\end{align*}
Here, in the third line, we have used the definition of $\fA_{\cG_\Gamma}$ \cite[Equation 7.5]{Zemkegraph}, while in the fourth and fifth lines we have used \cite[Lemma 5.3]{Zemkegraph} and \cite[Lemma 6.13]{Zemkegraph}, respectively. 

Note that $A_c$ is the usual $H_1(Y)/\text{Tors}$-action on $\CFm(Y)$. We claim that the map $F_{W_2, \s_2} \circ  A_c$ is $U$-equivariantly nullhomotopic. For this, we use the following result from \cite{HN}. Let $W$ be a cobordism from $Y$ to $Y'$, and let $c \subseteq Y$ and $c' \subseteq Y'$ be two closed curves that are homologous in $W$. Then \cite[Theorem 3.6]{HN} states that
\[
F_{W, \s} \circ A_c \simeq A_{c'} \circ F_{W, \s},
\]
where $F_{W, \s}$ is the usual cobordism map of Ozsv\'ath and Szab\'o (see also results in \cite{OSabsgr}).\footnote{As written, \cite[Theorem 3.6]{HN} deals with the total homology map on $\HFhat$. However, the proof is easily modified to hold on the level of $U$-equivariant homotopy (for $\CFm$), and can be refined to take into account individual $\spinc$-structures. See \cite[Remark 3.7]{HN}.} In our case, note that $W_1$ consists of adding 1-handles to $Y_1$. An easy Mayer-Vietoris argument then shows that the inclusion of $H_1(W_2)$ into $H_1(W)$ is injective. Hence some multiple of $[c]$ is actually nullhomologous in $W_2$. The claim then follows from the above commutation relation by choosing $c'$ in $Y_2$ to be empty (or a small unknot).
\end{proof}

\subsection{Pseudo-homology bordisms and local equivalence}\label{sec:secZ.3}
In this subsection, we prove that any pseudo-homology bordism induces a local equivalence between the $\tau$-complexes (and $\ita$-complexes) of its incoming and outgoing ends. Throughout, let $(W, f)$ be a pseudo-homology bordism between $(Y_1, \tau_1)$ and $(Y_2, \tau_2)$, where $Y_1$ and $Y_2$ are disjoint unions of homology spheres. We equip each connected component of $Y_1$ and $Y_2$ with a single basepoint. For simplicity, assume that $W$ itself is connected.

Let $W_a$ be the cobordism formed by an iterated sequence of 1-handle attachments joining together the components of $Y_1$, as displayed in Figure~\ref{fig:secZ.2}. Let $W_b$ be (the reverse of) the analogous cobordism joining together the components of $Y_2$. Clearly, we can embed $W_a$ and $W_b$ in $W$ to obtain a decomposition
\[
W = W_b \circ W_0 \circ W_a,
\]
where $W_0$ is now a cobordism between two homology spheres. Note that the inclusion of $W_0$ into $W$ induces an isomorphism on $H_1$.

\begin{definition}\label{def:secZ.2}
We define a ribbon graph $\Gamma$ in $W$ as follows. On $W_a$, let $\Gamma$ be any trivalent 1-skeleton corresponding to the iterated sequence of 1-handle attachments, as displayed in Figure~\ref{fig:secZ.2}. For concreteness, we fix an ordering for the connected components of $Y_1$. (This specifies an order for taking the iterated connected sum, and also a way to choose a cyclic ordering at each internal vertex.) We define $\Gamma$ on $W_b$ similarly. To define $\Gamma$ on $W_0$, first choose a path $\gamma$ running between the two ends of $W_0$. Fix an ordered basis $e_1, \cdots, e_n$ of $H_1(W_0)$, and represent each $e_k$ by a simple closed curve $c_k$ that does not intersect $\gamma$. We then join each $c_k$ to $\gamma$ via an arc, which we refer to as a \textit{connecting arc}. Again, for concreteness, fix a cyclic ordering at each internal vertex. We call any $\Gamma$ constructed in this fashion a \textit{standard graph}. See Figure~\ref{fig:secZ.2}. 
\end{definition}

\begin{figure}[h!]
\center
\includegraphics[scale=0.9]{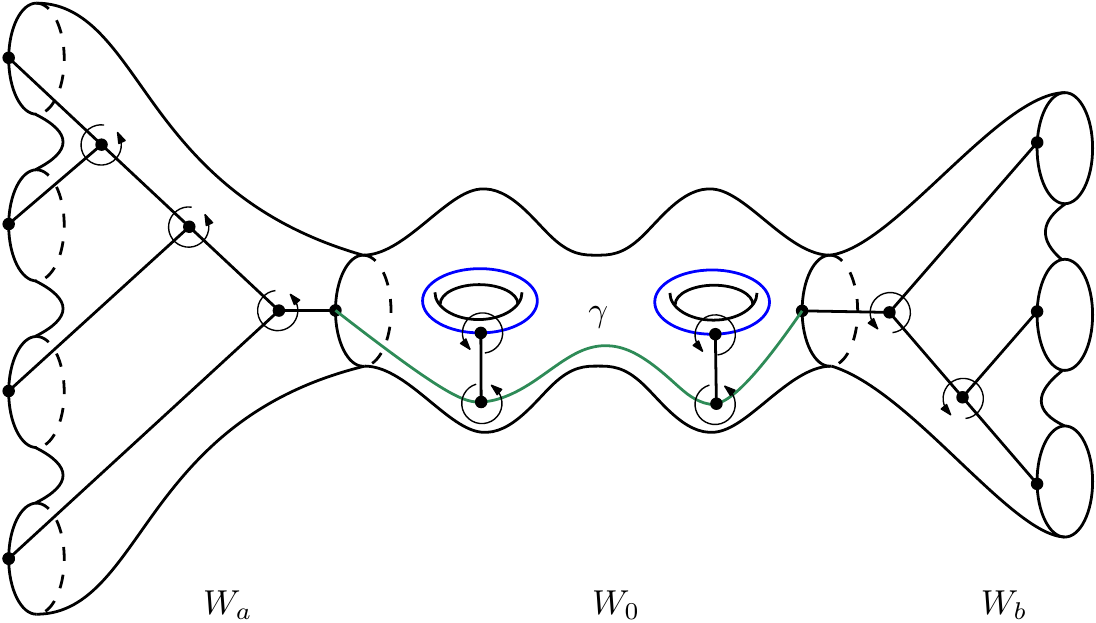}
\caption{Schematic decomposition $W = W_b \circ W_0 \circ W_a$. The path $\gamma$ is drawn in green, while the curves $c_k$ are drawn in blue. We choose the indicated cyclic ordering at each internal vertex.}\label{fig:secZ.2}
\end{figure} 

Now consider the cobordism map $\smash{F^A_{W, \Gamma}}$ associated to a standard graph. Our goal will be to show that this is a local map (with respect to both $\tau$ and $\ita$). As a first step, it will be helpful for us to have the following alternative formulation of $\smash{F^A_{W, \Gamma}}$. Let $\Gamma_\text{red}$ be the ``reduced" ribbon graph formed by replacing the subgraph $\Gamma \cap W_0$ in Definition~\ref{def:secZ.2} with the path $\gamma$. Let $W_\text{red}$ be obtained from $W$ by surgering out the curves $c_k$. Note that $W_\text{red} = W_b \circ W_h \circ W_a$, where $W_h$ is a homology cobordism. The image of $\Gamma_\text{red}$ under this surgery defines a ribbon graph in $W_\text{red}$, which we also denote by $\Gamma_{\text{red}}$. 

\begin{lemma}\label{lem:Z.3}
Let $\Gamma$ be a standard graph in $W$. Then
\[
F^A_{W, \Gamma} \simeq F^A_{W_\text{red}, \Gamma_\text{red}}.
\]
\end{lemma}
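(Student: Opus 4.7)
The proof proceeds by using the composition law to localize the comparison to the middle portion of a common decomposition of $W$ and $W_\text{red}$. First I would write $W = W_b \circ W_0 \circ W_a$ and $W_\text{red} = W_b \circ W_h \circ W_a$, where $W_a$ (resp.\ $W_b$) is the 1-handle cobordism joining the components of $Y_1$ (resp.\ $Y_2$). Next, I would flow the middle portion $\Gamma \cap W_0$ of each ribbon graph into a level set $Y$ of $W_0$ along the gradient-like vector field, exactly as in the setup preceding \eqref{eq:3}. Since the flowed images of $\Gamma$ and $\Gamma_\text{red}$ agree outside of $Y$ (the two graphs were only modified inside $W_0$), the outer factors in the triple composition from \eqref{eq:3} are identical on both sides, and it suffices to compare the middle graph action maps in $Y$.

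Next I would invoke Zemke's explicit formula for $\fA_{\cG_\Gamma}$ in terms of the building-block operators. The flowed image of $\Gamma \cap W_0$ in $Y$ differs from the flowed image of $\Gamma_\text{red} \cap W_0$ by a collection of closed loops $c_k$, each attached to $\gamma$ via a connecting arc. By the standard formula for the graph action map \cite[Equation 7.5]{Zemkegraph}, each such closed-loop-plus-connecting-arc inserts a factor of the form $S^-_{w_k} \circ A_{c_k} \circ S^+_{w_k}$, where $A_{c_k}$ is the relative homology map of \cite[Section 5]{Zemkegraph} and $w_k$ is the attaching basepoint. Hence the middle term in the decomposition of $F^A_{W, \Gamma}$ is homotopic to the corresponding middle term for $\Gamma_\text{red}$ pre-composed with $\prod_k S^-_{w_k} A_{c_k} S^+_{w_k}$.

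The final step is to identify this product with the effect of passing from $W_0$ to $W_h$. Topologically, $W_h$ is obtained from $W_0$ by attaching a 2-handle along each $c_k$, since surgering out a circle in a 4-manifold is the same as attaching a 2-handle along it. Zemke's 2-handle attachment formula expresses the corresponding cobordism map (summed over compatible $\spinc$-structures on the 2-handle cobordism) in terms of precisely the same combination of relative homology maps and free stabilizations. The hypothesis $H_2(W) = 0$, together with Remark~\ref{rem:homological} applied to both $W$ and $W_\text{red}$, forces each of these cobordisms to admit a unique $\spinc$-structure, so the a priori sum collapses to a single term. Combining this identification with the previous paragraph yields the desired homotopy equivalence $F^A_{W, \Gamma} \simeq F^A_{W_\text{red}, \Gamma_\text{red}}$.

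The main obstacle is the technical bookkeeping in Zemke's formalism: one must verify that the connecting arcs can be routed so the attaching vertices $w_k$ match the basepoints used in the 2-handle formula, that the framings of the $c_k$ coming from the ribbon structure on $\Gamma$ coincide with the surgery framings used to form $W_\text{red}$, and that the unique $\spinc$-structure on $W_\text{red}$ restricts correctly to each individual 2-handle piece. These compatibilities are routine but must be carefully checked, since a mismatch in framings would introduce extra $U$-powers in the relative homology maps.
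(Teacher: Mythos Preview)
Your approach diverges from the paper's and contains a genuine gap at the crucial step. The paper does not flow into a slice and invoke a 2-handle formula; instead it uses Zemke's puncturing invariance \cite[Proposition 11.1]{Zemkegraph} to excise a tubular neighborhood $\nu(c_k)\cong S^1\times B^3$ of each loop and then compares the two local fillings $(W_{S^1\times B^3},\Gamma_{S^1\times B^3})$ and $(W_{D^2\times S^2},\Gamma_{D^2\times S^2})$ as cobordisms from $S^3$ to $S^1\times S^2$. The complement of the $\nu(c_k)$ is literally the same in $W$ and $W_\text{red}$, so the composition law reduces everything to the model computation $F^A_{W_{S^1\times B^3},\Gamma_{S^1\times B^3}}\simeq F^A_{W_{D^2\times S^2},\Gamma_{D^2\times S^2}}$, which is elementary.

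Your third paragraph is where the argument breaks down. The sentence ``surgering out a circle in a 4-manifold is the same as attaching a 2-handle along it'' is simply false: surgery on an interior circle removes $S^1\times B^3$ and glues in $D^2\times S^2$, which is not a 2-handle attachment (the latter is along a boundary circle). Consequently there is no ``2-handle attachment formula'' in Zemke's work that directly converts the map for $W_0$ into the map for $W_h$ by inserting $\prod_k S^-_{w_k}A_{c_k}S^+_{w_k}$; you have asserted precisely the identity that needs to be proved. Relatedly, the formal ribbon structure on $\Gamma$ is a cyclic ordering at vertices, not a framing of the loops $c_k$, so the ``framing compatibility'' you flag as a bookkeeping issue is not even the right object to check. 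If you want to salvage this route, you would still need to reduce to a local model comparison essentially equivalent to the paper's, at which point the puncturing argument is both shorter and cleaner.
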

\begin{proof}
Note that by \cite[Proposition 11.1]{Zemkegraph}, the cobordism maps $F^A$ are unchanged under puncturing. More precisely, suppose that $(W, \Gamma)$ is any cobordism from $Y_1$ to $Y_2$. Puncture $W$ at any interior point and equip the new boundary $S^3$ with a single basepoint. We modify the original ribbon graph $\Gamma$ by joining this basepoint to $\Gamma$ via an arc (and choosing any cyclic ordering at the new internal vertex). Let the new incoming boundary be given by $Y_1 \sqcup S^3$. Then it follows from \cite[Proposition 11.1]{Zemkegraph} that under the identification of $\CFm(Y_1)$ with $\CFm(Y_1 \sqcup S^3) \simeq \CFm(Y_1) \otimes \CFm(S^3)$, the cobordism map remains unchanged up to $U$-equivariant homotopy. 

In our case, consider the cobordism $\smash{W_{S^1 \times B^3}}$ from $S^3$ to $S^1 \times S^2$ formed by puncturing $S^1 \times B^3$ at any interior point. We define a ribbon graph $\Gamma_{S^1 \times B^3}$ on $\smash{W_{S^1 \times B^3}}$ by taking a closed loop generating $H_1(S^1 \times B^3)$ and joining this to each boundary component via an arc. Now identify a neighborhood of each $c_k$ with $\nu(c_k) \cong S^1 \times B^3$, and puncture $W$ at an interior point of each of these neighborhoods. This punctured version of $W$ may be viewed as the composition of several copies of $(W_{S^1 \times B^3}, \Gamma_{S^1 \times B^3})$, together with the complement of the $\nu(c_k)$ in $W$. We similarly define $W_{D^2 \times S^2}$ by puncturing $D^2 \times S^2$ at any interior point and equipping this with an arc $\Gamma_{D^2 \times S^2}$ running between the two boundary components. Then $W_\text{red}$ may be viewed (after puncturing) as several copies of $(W_{D^2 \times S^2}, \Gamma_{D^2 \times S^2})$, together with the same complement as before. By the composition law, to establish the lemma it thus suffices to show that
\[
F^A_{W_{S^1 \times B^3}, \Gamma_{S^1 \times B^3}} \simeq F^A_{W_{D^2 \times S^2}, \Gamma_{D^2 \times S^2}}
\]
as maps from $\CFm(S^3)$ to $\CFm(S^1 \times S^2)$. This is a standard calculation.
\end{proof}

In light of Lemma~\ref{lem:Z.3}, the reader may wonder why we have not simply defined our cobordism maps directly in terms of $W_\text{red}$ and $\Gamma_\text{red}$, rather than $\Gamma$. (Indeed, this corresponds to the usual approach in Floer theory when dealing with cobordisms with $b_1 > 0$; see for example the proof of \cite[Theorem 9.1]{OSabsgr}.) The reason is that $c_k$ need not be fixed by $f$, so the surgered cobordism $W_\text{red}$ may not inherit an extension of $\tau_i$. Thus, \textit{a priori} there is no reason to think that the surgered cobordism interacts nicely with $\tau$. In actuality, we will show that $\smash{F^A_{W, \Gamma}}$ homotopy commutes with $\tau$, which implies that $\smash{F^A_{W_\text{red}, \Gamma_\text{red}}}$ does also. Alternatively, one can also define $F^A_{W, \Gamma}$ by considering the graph $\Gamma_\text{red}$ in $W$ and cutting down via the $H_1(W)/\text{Tors}$-actions of each of the $e_k$. This is essentially what we do in Lemma~\ref{lem:Z.7}, except in a language more amenable to that of \cite{Zemkegraph}.

When dealing with the action of $f$ on $W$, we will thus need to take a slightly different approach. We begin with a more refined decomposition theorem, which is essentially taken from the proof of \cite[Theorem 9.1]{OSabsgr}.

\begin{lemma}\label{lem:Z.4}
Let $W$ be a definite cobordism between two 3-manifolds. Then there exists a decomposition $W = W_2 \circ W_1$ of $W$ for which the following holds:
\begin{enumerate}
\item $W_1$ consists of 1- and 2-handles, 
\item $W_2$ consists of 2- and 3-handles; and, 
\item Let $Y$ be the slice given by the outgoing boundary of $W_1$. Then the map induced by the inclusion of $Y$ into $W$
\[
i_* : H_1(Y)/\text{Tors} \rightarrow H_1(W)/\text{Tors}
\]
is an isomorphism.
\end{enumerate}
\end{lemma}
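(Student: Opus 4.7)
The plan is to produce the decomposition by standard handle reorganization. First I would choose any relative handle decomposition of $W$ built on $Y_1 \times I$. Under mild connectedness assumptions on $W$ and its boundary, the $0$- and $4$-handles may be eliminated: any $0$-handle contributes a component disjoint from $Y_1$, which must be joined to the rest of $W$ by a canceling $1$-handle, and dually for $4$-handles. By the standard handle rearrangement lemma (\emph{cf.}\ Milnor's proof of the $h$-cobordism theorem), the remaining $1$-, $2$-, and $3$-handles can then be reordered so that all $1$-handles are attached before all $2$-handles, which in turn precede all $3$-handles.

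With the handles so ordered, define $W_1$ to consist of $Y_1 \times I$ together with all $1$- and $2$-handles, and define $W_2$ to be the remaining subcobordism, consisting entirely of $3$-handles. Let $Y$ denote the outgoing boundary of $W_1$. Conditions (1) and (2) of the lemma are then immediate; in particular, $W_2$ vacuously ``consists of $2$- and $3$-handles'' even though it contains none of the former.

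For condition (3), I would argue via the long exact sequence of the pair $(W,Y)$. The relative CW-pair $(W_2, Y)$ has only $3$-cells --- one for each $3$-handle in $W_2$ --- so $H_k(W_2, Y) = 0$ for $k \neq 3$. Excision gives $H_k(W, Y) \cong H_k(W_2, Y)$, so the relevant segment of the long exact sequence reads
\[
0 \;=\; H_2(W, Y) \longrightarrow H_1(Y) \longrightarrow H_1(W) \longrightarrow H_1(W, Y) \;=\; 0.
\]
Hence the inclusion $Y \hookrightarrow W$ induces an isomorphism $H_1(Y) \xrightarrow{\cong} H_1(W)$, which descends to an isomorphism on the free quotients $H_1(\cdot)/\mathrm{Tors}$, as required.

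The only real technicality is the handle reorganization itself, which is classical but requires some care when the boundary components are disconnected. I note that the definiteness hypothesis on $W$ plays no role in the argument above; indeed the conclusion $H_1(Y) \cong H_1(W)$ obtained here is stronger than what is claimed in (3), and the flexibility of allowing $2$-handles in $W_2$ is not needed for this existence statement.
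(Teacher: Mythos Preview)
Your excision step is incorrect. Excision gives $H_k(W, W_1) \cong H_k(W_2, Y)$, not $H_k(W, Y) \cong H_k(W_2, Y)$. Equivalently, if you view $W$ as built from the slice $Y$ by attaching cells in both directions, the downward handles of $W_1$ (its 1- and 2-handles) dualize to 3- and 2-cells relative to $Y$; the 2-cells contribute to $C_2(W, Y)$, so $H_2(W, Y)$ need not vanish. Your long exact sequence then only yields surjectivity of $i_* : H_1(Y) \to H_1(W)$, not injectivity.

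This is a genuine failure, not a fixable technicality, and it explains why definiteness is actually needed. Consider the cobordism $W$ from $S^3$ to $S^1 \times S^2$ given by attaching a single $0$-framed 2-handle along an unknot. Your construction puts $W_1 = W$ and $W_2$ trivial, so $Y = S^1 \times S^2$. But $H_1(Y) = \Z$ while $H_1(W) = 0$, so $i_*$ is not injective. This $W$ has degenerate intersection form, so the lemma itself is not contradicted --- but your proof claimed not to use definiteness, and this example shows no such proof can exist.

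The paper's argument uses definiteness in an essential way. It invokes the ``standard ordering'' of 2-handles from the proof of \cite[Theorem 9.1]{OSabsgr}, available precisely when $W$ is definite, under which $b_1$ of the successive slices first monotonically decreases, is then constant, and finally increases. One takes $Y$ to be a slice where $b_1$ attains its minimum, so that in general some 2-handles land in $W_1$ and others in $W_2$. The point is then that the 2-handles in $W_2$ are attached along curves which are rationally null-homologous in $Y$ (this is forced by minimality of $b_1(Y)$), and from this one deduces that $i_*$ is an isomorphism on $H_1/\text{Tors}$.
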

\begin{proof}
Give $W$ a handle decomposition consisting of 1-handles, 2-handles, and 3-handles (attached in that order). According to the proof of \cite[Theorem 9.1]{OSabsgr}, we can re-index the sequence of 2-handle attachments as follows. Let the 2-handles be denoted by $\{h_i\}_{i = 1}^n$, and for each $i$ let $S_i$ be the outgoing boundary obtained after attaching $h_i$. Let the incoming boundary of the very first 2-handle be denoted by $S_0$. According to the proof of \cite[Theorem 9.1]{OSabsgr}, we may assume that the sequence of Betti numbers $\{b_1(S_i)\}_{i = 0}^n$ at first monotonically decreases with $i$, then is constant, and then finally monotonically increases with $i$. Ozsv\'ath and Szab\'o refer to such an ordering of the $h_i$ as a \textit{standard ordering}. This can be achieved whenever $W$ is definite.

We now choose $Y = S_i$ to be any slice in the above sequence for which $b_1(S_i)$ attains its minimum value. This decomposes $W$ into two subcobordisms $W_a$ and $W_b$ that obviously satisfy the first two desired properties. Let the 2-handles $h_j$ for $j > i$ be attached to $Y$ along a link whose components we denote by $\mathbb{K}_j$. We claim that each of these components is rationally nullhomologous in $Y$. Indeed, the condition $b_1(S_i) \leq b_1(S_{i+1})$ implies that $\mathbb{K}_{i+1}$ is rationally nullhomologous in $Y$; proceeding by induction, we assume that $\mathbb{K}_{i+1}, \dots, \mathbb{K}_l$ are rationally nullhomologous in $Y$. Now, $\mathbb{K}_{l+1}$ is rationally nullhomologous in $S_l$, which is obtained from $Y$ by integer surgery along $\mathbb{K}_{i+1}, \dots, \mathbb{K}_l$. The inductive hypothesis then easily implies that $\mathbb{K}_{l+1}$ is rationally nullhomologous in $Y$ also. 

It follows immediately that the induced inclusion map $i_* : H_1(Y)/\text{Tors} \rightarrow H_1(W_b)/\text{Tors}$ is an isomorphism, since $W_b$ is built from $Y \times I$ via attaching rationally nullhomologous 2-handles and then some 3-handles. Turning the cobordism around, we obtain the same result with $W_a$ in place of $W_b$. A standard Mayer-Vietoris argument then gives the desired claim. 
\end{proof}

\begin{definition}\label{def:Z.5}
Let $Y$ be any 3-manifold equipped with a collection of incoming basepoints $\mathcal{V}_\text{in}$ and outgoing basepoints $\mathcal{V}_\text{out}$. We say that a ribbon graph $\Lambda$ in $Y \times I$ is \textit{star-shaped} if it has a unique internal vertex, which is connected to each basepoint via a single arc. We also fix a formal ribbon structure; this corresponds to a cyclic ordering of $\mathcal{V}_\text{in} \cup \mathcal{V}_\text{out}$. Note that given any incoming basepoint $v_i$ and outgoing basepoint $v_j$, there is a unique path in $\Lambda$ going from $v_i$ to $v_j$, which we denote by $l_{ij}$.
\end{definition}

The proof of the next technical lemma is similar to that of \cite[Lemma 7.13]{Zemkegraph}. The authors would like to thank Ian Zemke for help with the proof and a discussion of the surrounding ideas.

\begin{lemma}\label{lem:Z.6}
Let $\Lambda$ and $\Lambda'$ be two star-shaped graphs in $Y \times I$. Suppose that for any incoming basepoint $v_i$ and outgoing basepoint $v_j$, we have
\[
[l_{ij} - l'_{ij}] = 0 \in H_1(Y) / \text{Tors}.
\]
Suppose moreover that $\Lambda$ and $\Lambda'$ have the same formal ribbon structure (viewed as cyclic orderings of the set of basepoints). Then for any $\spinc$-structure $\s$ on $Y \times I$, we have
\[
F^A_{Y \times I, \Lambda, \s} \simeq F^A_{Y \times I, \Lambda', \s}.
\]
\end{lemma}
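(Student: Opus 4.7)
The plan is to exploit the fact that, because both $\Lambda$ and $\Lambda'$ are star-shaped with the \emph{same} cyclic ordering at the unique internal vertex $v$, the graph action maps $\mathfrak{A}_{\mathcal{G}_\Lambda}$ and $\mathfrak{A}_{\mathcal{G}_{\Lambda'}}$ admit essentially identical descriptions in the formalism of \cite{Zemkegraph}: a free stabilization $S^+_v$, followed by the alternating product of relative homology maps $A_\lambda$ along the petal arcs (composed in the cyclic order dictated by the formal ribbon structure), followed by the appropriate boundary operations and a final free destabilization. Thus the composed cobordism maps $F^A_{Y\times I,\Lambda,\s}$ and $F^A_{Y\times I,\Lambda',\s}$ differ only by the choice of petal arcs joining $v$ to each basepoint. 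Since such graph action maps are invariant up to homotopy of each petal arc rel endpoints (by standard isotopy invariance of the relative homology maps), we may assume that $\Lambda$ and $\Lambda'$ share the same set of basepoints, the same ordering, and the same internal vertex $v$, and differ only in the homotopy classes of the petal arcs rel endpoints.

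First, I would reduce to a single arc change. By modifying one petal at a time, it suffices to treat the case when $\Lambda$ and $\Lambda'$ coincide except along one petal $\lambda$ (resp.\ $\lambda'$) joining $v$ to some basepoint $v_k$. Under this restricted change, for any pair $(v_i,v_j)$ of incoming/outgoing basepoints with $k \in \{i,j\}$ the hypothesis $[l_{ij} - l'_{ij}] = 0 \in H_1(Y)/\mathrm{Tors}$ specializes to $[c] = 0 \in H_1(Y)/\mathrm{Tors}$, where $c = \lambda \ast (\lambda')^{-1}$ is the closed loop traced by the two petals; for pairs not involving $v_k$ the paths $l_{ij}$ are unchanged and there is nothing to check. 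Hence we have reduced to showing that if $c$ is rationally null-homologous in $Y$, then replacing the petal $\lambda$ by $\lambda'$ does not change the cobordism map up to $U$-equivariant homotopy.

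Next, I would compute the difference directly. In the explicit product decomposition of $\mathfrak{A}_{\mathcal{G}_\Lambda}$, the only factor that changes is the relative homology map along the affected petal, contributing $A_\lambda - A_{\lambda'} = A_c$. Using the commutation and anticommutation relations for $A$ and $S^{\pm}$ maps from \cite[Sections~5--7]{Zemkegraph} (analogous to what is used in the proof of Lemma~\ref{lem:secZ.1}), the insertion of $A_c$ in the middle of the composition can be moved to the outside, where it becomes precisely the usual $H_1(Y)/\mathrm{Tors}$-action on $\CFm(Y)$ associated to the homology class $[c]$. By hypothesis this class is zero, so $A_c \simeq 0$ as an endomorphism, and the difference of graph action maps vanishes up to $U$-equivariant homotopy.

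The main technical obstacle is the bookkeeping in this middle step: one must check that commuting $A_c$ past the free stabilization at $v$ and the remaining $A_{\lambda_i}$ factors produces no stray anticommutation terms that survive up to homotopy, and that the resulting outer $A_c$ is genuinely the standard $H_1/\mathrm{Tors}$-action rather than some twisted variant. This is closely parallel to the argument in \cite[Lemma~7.13]{Zemkegraph}, from which the needed commutation identities can be extracted; the preservation of the cyclic ordering is what ensures no sign discrepancies arise when $A_c$ is transported past the other petals.
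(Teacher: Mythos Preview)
Your reduction to changing one petal at a time contains a genuine gap. When you replace a single petal $e_k$ by $e_k'$, the loop $c_k = e_k * (e_k')^{-1}$ need not be rationally null-homologous. The hypothesis $[l_{ij} - l_{ij}'] = 0$ for all incoming--outgoing pairs only tells you that $[e_i' - e_i] = [e_j' - e_j]$ for every $i, j$; that is, all of the edge differences agree with a \emph{common} class $\lambda \in H_1(Y)/\text{Tors}$, which may well be nonzero. (This is precisely the first step of the paper's proof.) Consequently, the intermediate star-shaped graphs in your chain $\Lambda = \Lambda_0 \to \Lambda_1 \to \cdots \to \Lambda_n = \Lambda'$ do not satisfy the hypothesis of the lemma relative to one another, and you cannot invoke the argument of Lemma~\ref{lem:secZ.1} at each step.

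The paper handles this as follows. Writing $A_{e_i'} \simeq A_{e_i} + A_\lambda$ for every $i$ and expanding the product, one uses $A_\lambda \circ A_\lambda \simeq 0$ (since $\lambda$ is closed) to reduce to a single $A_\lambda$ composed with the sum $\sum_i A_{e_n} \circ \cdots \circ \widehat{A}_{e_i} \circ \cdots \circ A_{e_1}$. The nontrivial content is then showing that this sum, sandwiched between $S_v^-$ and $S_v^+$, is null-homotopic; the paper does this by an inductive argument on $n$ using ribbon-equivalence manipulations of star-shaped graphs (the base case $n=3$ being a direct computation). Your sketch does not produce anything analogous to this step, and simply ``commuting $A_c$ to the outside'' cannot work, since the relevant $A_\lambda$ is not null-homotopic.
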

\begin{proof}
Without loss of generality, we may isotope $\Lambda$ and $\Lambda'$ so that they share the same internal vertex $v$. For any basepoint $v_i$, denote the edge of $\Lambda$ joining $v_i$ to $v$ by $e_i$.\footnote{By \cite[Lemma 5.3]{Zemkegraph}, note that $A_{-e_i} = - A_{e_i}$. Since this coincides with $A_{e_i}$ mod $2$, we will occasionally use $e_i$ to also denote the same edge with reversed orientation.} We claim that there is a fixed element $\lambda \in H_1(Y)/\text{Tors}$ such that $[e_i' - e_i] = \lambda$ for all $i$. Indeed, consider any pair of incoming and outgoing vertices $v_i$ and $v_j$. Then
\[
[e_i' - e_i] - [e_j' - e_j] = [l_{ij}' - l_{ij}] = 0 \in H_1(Y)/\text{Tors}.
\]
Set $\lambda = [e_i' - e_i]$. Varying $j$ (and then varying $i$) gives the claim.

We now turn to the assertion of the lemma. Without loss of generality, let the basepoints of $Y$ be given by $\mathcal{V}_\text{in} \cup \mathcal{V}_\text{out} = \{v_i\}_{i = 1}^n$, and let the cyclic order corresponding to the formal ribbon structure be $v_1, \ldots, v_n$. By \cite[Equation 7.2]{Zemkegraph},
\[
F^A_{Y \times I, \Lambda} = \left( \prod_{x \in \mathcal{V}_\text{in} \cup \{v\}} S^-_{x} \right) \circ A_{e_n} \circ \cdots \circ A_{e_1} \circ \left( \prod_{x \in \mathcal{V}_\text{out} \cup \{v\}} S^+_{x} \right).
\]
A similar expression holds for $\Lambda'$ after replacing each $e_i$ with $e_i'$. By \cite[Lemma 5.3]{Zemkegraph} and the fact that $[e_i' - e_i] = \lambda$, we have $\smash{A_{e_i'} \simeq A_{e_i} + A_{\lambda}}$. Hence
\begin{align*}
A_{e_n'} \circ \cdots \circ A_{e_1'} &\simeq (A_{e_n} + A_\lambda) \circ \cdots \circ (A_{e_1} + A_\lambda) \\
&\simeq A_{e_n} \circ \cdots \circ A_{e_1} + A_{\lambda} \circ \left(\sum_i A_{e_n} \circ \cdots \circ \widehat{A}_{e_i} \circ \cdots \circ A_{e_1}\right).
\end{align*}
Here, the notation $\smash{\widehat{A}_{e_i}}$ means that $\smash{A_{e_i}}$ should be omitted from the composition. In the second line, we have expanded the product and used the fact that $A_\lambda \circ A_\lambda \simeq 0$ whenever $\lambda$ is a closed curve (see \cite[Lemma 5.5]{Zemkegraph}). Substituting this into the expression for $\smash{F^A_{Y \times I, \Lambda'}}$, it thus clearly suffices to show
\[
S^-_v \circ \left( \sum_i A_{e_n} \circ \cdots \circ \widehat{A}_{e_i} \circ \cdots \circ A_{e_1} \right) \circ S^+_v  \simeq 0.
\]
Throughout, we have used the fact that $A_{\lambda}$ commutes with the $A_{e_i}$ and the stabilization maps $S^{\pm}_v$, since $\lambda$ is a closed curve. (See \cite[Lemma 5.4]{Zemkegraph} and \cite[Lemma 6.13]{Zemkegraph}.)

\begin{figure}[h!]
\center
\includegraphics[scale=0.8]{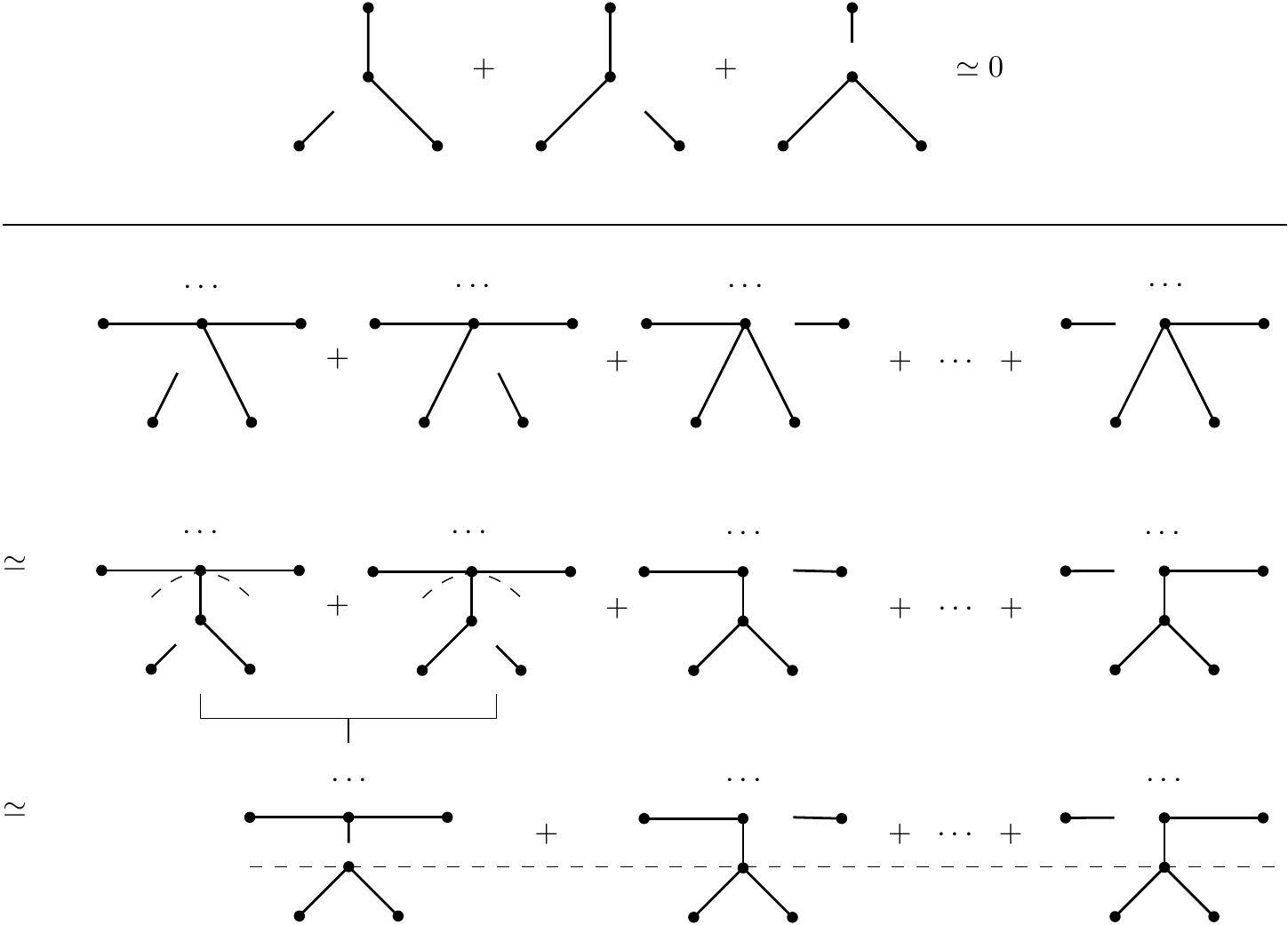}
\caption{Diagrammatic proof of Lemma~\ref{lem:Z.6}. The ellipses above each star-shaped graph indicate further edges attached to the interior vertex.}\label{fig:secZ.3}
\end{figure} 

We proceed by induction. For $n = 3$, we claim that 
\[
S^-_v (A_{e_3}A_{e_2} + A_{e_3}A_{e_1} + A_{e_2}A_{e_1}) S^+_v \simeq S^-_v(A_{e_3} + A_{e_2})(A_{e_2} + A_{e_1})S^+_v.
\]
This follows by expanding the right-hand side and noting that $S^-_v A_{e_2} A_{e_2} S^+_v \simeq U S^-_v S^+_v \simeq 0$ by Lemmas 5.5 and 6.15 of \cite{Zemkegraph}. On the other hand, we have
\[
S^-_v(A_{e_3} + A_{e_2})(A_{e_2} + A_{e_1})S^+_v \simeq S^-_v A_{e_3*e_2} A_{e_2* e_1}S^+_v \simeq A_{e_3 * e_{2}} A_{e_2 * e_1} S^-_v S^+_v 
\simeq 0.
\]
Here, to obtain the second homotopy equivalence, we have used \cite[Lemma 6.13]{Zemkegraph} and the fact that $e_3 * e_2$ and $e_2 * e_1$ are paths which do not have $v$ as an endpoint. This establishes the base case. 

The inductive step is diagrammatically described in Figure~\ref{fig:secZ.3}. In the first row of Figure~\ref{fig:secZ.3}, we have displayed three graphs corresponding to the three terms in the case $n = 3$. In the second row, we have displayed the sum in question for general $n$. We modify each of the graphs in the second row by introducing an additional internal vertex and edge, as in the third row of Figure~\ref{fig:secZ.3}. Note that this does not change the ribbon equivalence class. We then view the first two terms as composite graphs with the splittings indicated by the dashed arcs, and apply the $n = 3$ case to obtain the fourth row. We similarly view each graph in the fourth row as a composition of two subgraphs, corresponding to the pieces above and below the dashed line. Factoring out the map corresponding to the subgraph below the dashed line, the remaining sum is precisely the inductive hypothesis for $n-1$. This completes the proof.
\end{proof}

We now come to the central lemma of this section:

\begin{lemma}\label{lem:Z.7}
Let $(W, f)$ be a pseudo-homology bordism and let $\Gamma$ be a standard graph in $W$. Then
\[
F^A_{W, \Gamma} \simeq F^A_{W, f(\Gamma)}.
\]
\end{lemma}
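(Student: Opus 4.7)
The plan is to reduce the comparison of $F^A_{W,\Gamma}$ and $F^A_{W,f(\Gamma)}$ to the statement that $f$ acts as the identity on the appropriate homology groups of $W$. After isotoping $f$ so that its restrictions to each $Y_i$ fix the chosen basepoints (as discussed in Section~\ref{sec:secZ.1}), the graph $f(\Gamma)$ has the same endpoints as $\Gamma$ and differs from it only in its embedding within the interior of $W$. Apply Lemma~\ref{lem:Z.4} to decompose $W = W_2 \circ W_1$ so that the intermediate slice $Y$ satisfies $i_\ast : H_1(Y)/\Tors \xrightarrow{\cong} H_1(W)/\Tors$. Using a gradient-like vector field for the underlying Morse function, flow both $\Gamma$ and $f(\Gamma)$ into a neighborhood of $Y$, producing ribbon graphs $\cG_\Gamma, \cG_{f(\Gamma)} \subset Y \times (-\epsilon,\epsilon)$ with connecting arcs running along flow lines. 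Using the composition formula~\eqref{eq:3},
\[
F^A_{W,\Gamma} \simeq F^A_{W_2,\Gamma_2} \circ \fA_{\cG_\Gamma} \circ F^A_{W_1,\Gamma_1},
\]
and analogously for $f(\Gamma)$. Crucially, the outer maps $F^A_{W_i,\Gamma_i}$ depend only on the cobordism structure, the boundary basepoints, and the flow-line arcs (which are determined by the Morse function, not the particular embedding of $\Gamma$), so they coincide for $\Gamma$ and $f(\Gamma)$. The problem thus reduces to establishing $\fA_{\cG_\Gamma} \simeq \fA_{\cG_{f(\Gamma)}}$ as endomorphisms of $\CFm(Y)$.

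\textbf{Homological invariance.} The essential input is that $f$ acts as the identity on $H_1(W)/\Tors$. This follows from the pseudo-homology bordism hypothesis together with the observation that, since $\partial W$ is a disjoint union of homology spheres, $H_1(\partial W) = 0$; hence the long exact sequence of the pair forces $H_1(W) \hookrightarrow H_1(W,\partial W)$, and identity on the latter forces identity on the former. Via the isomorphism from Lemma~\ref{lem:Z.4}, $f$ then acts as the identity on $H_1(Y)/\Tors$ as well. Consequently, after flowing, the closed loops arising from the basis $c_k$ of $H_1(W_0) \cong H_1(W)$ and their $f$-images are pairwise homologous in $Y$. Similarly, for each pair of incoming and outgoing basepoints $(v_i,v_j)$, the corresponding paths $l_{ij}$ in $\cG_\Gamma$ and $l'_{ij}$ in $\cG_{f(\Gamma)}$ satisfy $[l_{ij} - l'_{ij}] = 0$ in $H_1(Y)/\Tors$: their difference is a closed loop whose class maps to zero in $H_1(W,\partial W)/\Tors$ by $f$-invariance, hence lies in the image of $H_1(\partial W) = 0$.

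\textbf{Conclusion via Lemma~\ref{lem:Z.6}, and the main obstacle.} To produce the homotopy $\fA_{\cG_\Gamma} \simeq \fA_{\cG_{f(\Gamma)}}$, absorb each closed loop $\lambda_k$ in $\cG_\Gamma$ (and correspondingly in $\cG_{f(\Gamma)}$) into a relative homology action $A_{\lambda_k}$, using the definition of the graph action map together with Lemmas 5.3, 5.5, 6.13 of \cite{Zemkegraph}. Since $[\lambda_k] = [f(\lambda_k)]$ in $H_1(Y)/\Tors$, the corresponding $A$-actions coincide up to $U$-equivariant homotopy, reducing the comparison to that of the skeletal pieces without loops. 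Rearrange these skeletal pieces by free stabilization and sliding so that they take the form of star-shaped graphs (in the sense of Definition~\ref{def:Z.5}) with the same cyclic ordering at the central vertex --- inherited from the fixed ordering in Definition~\ref{def:secZ.2} and the fact that $f$ preserves each component of $Y_i$ setwise. The null-homologous path condition was verified above, so Lemma~\ref{lem:Z.6} applies and delivers the conclusion. The main obstacle is the reduction to star-shaped form: the trivalent $1$-skeletons of the standard graph within $W_a$ and $W_b$ contribute multiple internal vertices after flowing, and simplifying these to a single central vertex while preserving the ribbon-equivalence class requires a careful induction on the number of internal vertices, using $H_1$-action identities and stabilization/destabilization moves analogous to those employed in the proof of Lemma~\ref{lem:Z.6}.
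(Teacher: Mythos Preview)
Your approach is essentially the same as the paper's: flow both graphs into the slice $Y$ from Lemma~\ref{lem:Z.4}, factor off the closed loops as $A_{c_k}$-actions (which agree because $f_*$ is the identity on $H_1(W)/\Tors$), and then compare the remaining tree-shaped pieces via Lemma~\ref{lem:Z.6}. The homological bookkeeping you do is correct.

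However, what you flag as the ``main obstacle'' is in fact a triviality. Once the loops $c_k$ and their connecting arcs have been stripped off, the remaining graph $\Gamma_{\mathrm{red}}$ is a tree, and one simply \emph{contracts all of its internal edges} to produce a star-shaped graph. Contracting an internal edge of a ribbon graph does not change its ribbon equivalence class, so this is a one-line move rather than a careful induction; no stabilization or $H_1$-action identities are needed at this stage. The paper does exactly this. Your proposed inductive simplification would presumably work, but it is unnecessary labor.

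A minor point: the factorization $F^A_{Y\times I,\Gamma}\simeq F^A_{Y\times I,\Gamma_{\mathrm{red}}}\circ\prod_k A_{c_k}$ that you invoke informally is precisely \cite[Proposition~4.6]{Zemkeduality}; the lemmas you cite from \cite{Zemkegraph} are ingredients but do not by themselves give this statement, so you should either cite that proposition or supply its (short) proof.
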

\begin{proof}
For convenience, denote $\Gamma' = f(\Gamma)$. Decompose $W$ as in Lemma~\ref{lem:Z.4}, and flow $\Gamma$ and $\Gamma'$ into the slice $Y$ afforded by Lemma~\ref{lem:Z.4}. (Here, we are using the fact that $W_a$ consists of 1- and 2-handles, while $W_b$ consists of 2- and 3-handles.) Without loss of generality, we may thus assume that $\Gamma$ and $\Gamma'$ agree outside of $Y \times I$. By abuse of notation, we denote the subgraphs $\Gamma \cap (Y \times I)$ and $\Gamma' \cap (Y \times I)$ by $\Gamma$ and $\Gamma'$ also. Applying the composition law, it clearly suffices to prove that $F^A_{Y \times I, \Gamma} \simeq F^A_{Y \times I, \Gamma'}$. 
Note that we implicitly equip $Y \times I$ with the pullback of the single $\spinc$-structure on $W$. See the top-left of Figure~\ref{fig:secZ.4}.

\begin{figure}[h!]
\center
\includegraphics[scale=0.75]{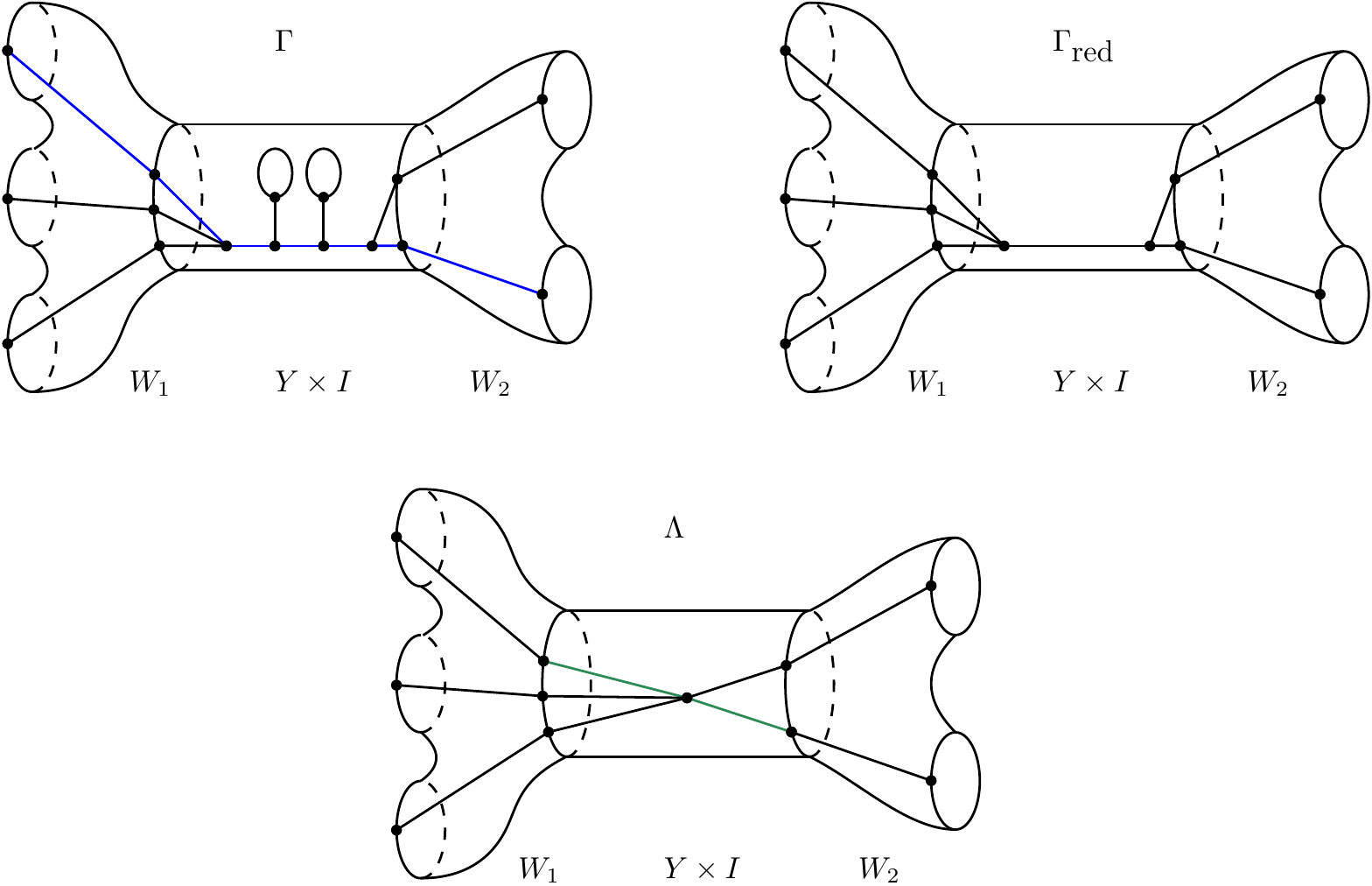}
\caption{Top left: the flowed graph $\Gamma$. Top right: the modified graph $\Gamma_\text{red}$. Bottom middle: the graph $\Lambda$. The path $l_{ij}$ from the proof of Lemma~\ref{lem:Z.7} is marked in green; the path $g_{ij}$ is marked in blue. In general, $Y$ will have some topology.}\label{fig:secZ.4}
\end{figure} 

Define $\Gamma_\text{red}$ to be $\Gamma$ with the curves $c_k$ and connecting arcs deleted. By \cite[Proposition 4.6]{Zemkeduality}, we have\footnote{Compare Figure~\ref{fig:secZ.4} and \cite[Figure 4.5]{Zemkeduality}. In our case, contracting each individual connecting arc to a point does not change the ribbon equivalence class.}
\[
F^A_{Y \times I, \Gamma} \simeq F^A_{Y \times I, \Gamma_\text{red}} \circ \left( \prod_k A_{c_k} \right).
\]
Note that $\Gamma'$ is combinatorially isomorphic to $\Gamma$. In particular, $\Gamma'$ consists of a set of closed loops $c_k'$, which are joined to an underlying tree via connecting arcs. These loops are in correspondence with the analogous loops $c_k$ in $\Gamma$. Defining $\Gamma'_\text{red}$ similarly, we have
\[
F^A_{Y \times I, \Gamma'} \simeq F^A_{Y \times I, \Gamma_\text{red}'} \circ \left( \prod_k A_{c_k'} \right).
\]
Since $f$ acts as the identity on homology, we have $[c_k'] = [c_k]$ in $H_1(W)$ for each $k$. By Lemma~\ref{lem:Z.4}, this implies that $[c_k'] = [c_k]$ in $H_1(Y)/\text{Tors}$, and thus that $\smash{A_{c_k'} \simeq A_{c_k}}$ for each $k$ by \cite[Proposition 5.8]{Zemkegraph}. Hence to establish the claim, it suffices to prove that $\smash{F^A_{Y \times I, \Gamma_\text{red}} \simeq F^A_{Y \times I, \Gamma_\text{red}'}}$. See the top-right of Figure~\ref{fig:secZ.4}.

We now contract all of the internal edges in $\Gamma_\text{red}$ to obtain a star-shaped graph $\Lambda$, as displayed in the second row of Figure~\ref{fig:secZ.4}. This does not change the ribbon equivalence class of $\Gamma_\text{red}$. We similarly contract all the edges of $\Gamma_\text{red}'$ to obtain a star-shaped graph $\Lambda'$. It remains to verify the hypotheses of Lemma~\ref{lem:Z.6}. Let $v_i$ be an incoming basepoint in $Y \times I$ and let $v_j$ be an outgoing basepoint. Let $g_{ij}$ be the obvious path in $\Gamma$ (viewed as a graph in $W$) going between the corresponding basepoints $w_i$ and $w_j$ of $W$, as in Figure~\ref{fig:secZ.4}. Define $\smash{g'_{ij}}$ similarly. Note that $g_{ij}$ and $\smash{g'_{ij}}$ agree outside of $Y \times I$, and $[g_{ij}] = [\smash{g'_{ij}}] \in H_1(W, \partial W)$ since $f$ acts as the identity on $H_1(W, \partial W)$. Clearly, $l_{ij}$ and $g_{ij} \cap (Y \times I)$ are isotopic in $Y \times I$ (rel boundary), and similarly for $\smash{l'_{ij}}$ and $\smash{g'_{ij}}$. Hence
\[
[l_{ij} - l'_{ij}] = [g_{ij} - g'_{ij}] = 0 \in H_1(W).
\]
By Lemma~\ref{lem:Z.4}, we thus have that $[l_{ij} - l'_{ij}] = 0$ in $H_1(Y)/\text{Tors}$. Applying Lemma~\ref{lem:Z.6} completes the proof.
\end{proof}

We are now in a position to prove that $h_{\tau}$ and $h_{\ita}$ are homomorphisms from $\G$ to $\Inv$:

\begin{proof}[Proof of Theorem~\ref{thm:1.2}]
Let $(W, f)$ be a pseudo-homology bordism from $(Y_1, \tau_1)$ to $(Y_2, \tau_2)$. We wish to show:
\begin{enumerate}
\item $F^A_{W, \Gamma} \circ \iota_1 \simeq \iota_2 \circ F^A_{W, \Gamma}$; 
\item $F^A_{W, \Gamma} \circ \tau_1 \simeq \tau_2 \circ F^A_{W, \Gamma}$; and,
\item $F^A_{W, \Gamma}$ maps $U$-nontorsion elements in homology to $U$-nontorsion elements in homology (and has zero grading shift).
\end{enumerate}
The first and third claims follow immediately from Lemma~\ref{lem:Z.3} and standard results of Hendricks, Manolescu, and Zemke. Indeed, according to Lemma~\ref{lem:Z.3}, we have
\[
F^A_{W, \Gamma} \simeq F^A_{W_\text{red}, \Gamma_\text{red}}.
\]
The latter cobordism is equal to the composition $W_b \circ W_h \circ W_a$, where the outer two terms are compositions of connected sum cobordisms (or their reverses), and $W_h$ is a homology cobordism equipped with a path $\gamma$. By \cite[Proposition 5.10]{HMZ} and \cite[Proposition 4.9]{HM}, the maps associated to each of these pieces commutes with $\iota$ up to homotopy. Applying the composition law, we thus see that $\smash{F^A_{W, \Gamma}}$ homotopy commutes with $\iota$ also. The third claim is similarly verified by establishing the desired condition for each piece. To prove the second claim, we apply (\ref{eq:1}) and Lemma~\ref{lem:Z.7}:
\[
F^A_{W, \Gamma} \circ \tau_1 \simeq \tau_2 \circ F^A_{W, f(\Gamma)} \simeq \tau_2 \circ F^A_{W, \Gamma}.
\]
This proves that $\smash{F^A_{W, \Gamma}}$ is a local map with respect to $\tau$. Turning $W$ around shows that $h_{\tau_1}(Y_1) = h_{\tau_2}(Y_2)$, as desired. To show that $\smash{F^A_{W, \Gamma}}$ preserves $h_{\ita}$, we apply the first and second claims to obtain
\[
F^A_{W, \Gamma} \circ (\iota_1 \circ \tau_1) \simeq (\iota_2 \circ \tau_2) \circ F^A_{W, \Gamma}.
\]
Hence $h_\tau$ and $h_{\ita}$ are well-defined maps from $\G$ to $\Inv$. Since $\CFm$ takes disjoint unions to tensor products (for the trivial coloring), this completes the proof.
\end{proof}

For completeness, we also record:

\begin{proposition}\label{lem:secZ.4}
Let $Y_1$ and $Y_2$ be homology spheres with involution $\tau_1$ and $\tau_2$. Suppose that we have $\tau_i$-equivariant balls $B_i$ in $Y_i$ and a diffeomorphism from $B_1$ to $B_2$ which intertwines $\tau_1$ and $\tau_2$. Then we have a $U$-equivariant homotopy equivalence
\[
(\CFm(Y_1 \# Y_2), \tau_1 \# \tau_2) \simeq (\CFm(Y_1) \otimes \CFm(Y_2) [-2], \tau_1 \otimes \tau_2).
\]
Here, recall that the connected sum is performed along the given diffeomorphism.
\end{proposition}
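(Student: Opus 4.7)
The plan is to realize both sides of the claimed equivalence via a $\tau$-equivariant $1$-handle cobordism, combining Zemke's graph cobordism formalism with the equivariance relation (\ref{eq:1}). First, I will build an equivariant cobordism $W$ from $Y_{1}\sqcup Y_{2}$ to $Y_{1}\#Y_{2}$ together with an orientation-preserving involution $f\colon W\to W$ restricting to $\tau_{1}\sqcup\tau_{2}$ and $\tau_{1}\#\tau_{2}$ at the two ends. Because any orientation-preserving involution of $D^{3}$ has a nonempty fixed-point set in the interior (either the entire disc or an axis of rotation), I may choose $\tau_{i}$-fixed basepoints $w_{i}\in\operatorname{int}(B_{i})$ and, after an isotopy supported in $B_{1}$, arrange that the intertwining diffeomorphism $B_{1}\to B_{2}$ sends $w_{1}$ to $w_{2}$ and conjugates the linearized $\tau_{i}$-actions. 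Performing the connected sum by deleting small $\tau_{i}$-equivariant balls around the $w_{i}$ and gluing via this diffeomorphism yields $Y_{1}\#Y_{2}$ with its involution $\tau_{1}\#\tau_{2}$, and the trace of the operation is the desired $1$-handle cobordism $W$ with its extension $f$. Since the actions on the two attaching regions of the $1$-handle are identified, $f$ extends over the $D^{3}$-factor of the handle so as to fix the core arc pointwise.

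Next, I will construct an $f$-invariant ribbon graph $\Gamma\subset W$. Place a $(\tau_{1}\#\tau_{2})$-fixed basepoint $w$ on $Y_{1}\#Y_{2}$ in the gluing region, let $v$ be the midpoint of the core arc of the $1$-handle (which is fixed by $f$), and connect $w_{1},w_{2},w$ to $v$ by three arcs whose tangent vectors at $v$ lie in an $f$-invariant normal $2$-disc to the core. The resulting ``Y''-shaped graph satisfies $f(\Gamma)=\Gamma$, and the cyclic ordering at $v$ inherited from the orientation of the invariant normal disc is $f$-invariant. By the Ozsv\'ath--Szab\'o connected sum theorem, as reformulated in Zemke's graph cobordism language, the resulting map
\[
F^{A}_{W,\Gamma}\colon \CFm(Y_{1})\otimes_{\ff[U]}\CFm(Y_{2})[-2] \longrightarrow \CFm(Y_{1}\#Y_{2})
\]
is a grading-preserving $U$-equivariant chain homotopy equivalence.

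Finally, I will verify $\tau$-equivariance. Applying (\ref{eq:1}) together with $f(\Gamma)=\Gamma$ yields
\[
(\tau_{1}\#\tau_{2})\circ F^{A}_{W,\Gamma} \;\simeq\; F^{A}_{W,f(\Gamma)}\circ(\tau_{1}\otimes\tau_{2}) \;=\; F^{A}_{W,\Gamma}\circ(\tau_{1}\otimes\tau_{2}),
\]
which is precisely the intertwining required for a homotopy equivalence of $\tau$-complexes. The chief technical obstacle is the second step: arranging that $\Gamma$, together with its formal cyclic structure at $v$, is literally $f$-invariant rather than just $f$-isotopic, so that (\ref{eq:1}) delivers the desired identity on the nose. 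Were $f$ to reverse the cyclic order at $v$, one would instead need to appeal to the ribbon-equivalence invariance of $F^{A}$ or to the $F^{A}/F^{B}$ symmetry of Zemke's maps; the invariant-normal-disc construction above is arranged precisely so that this does not occur.
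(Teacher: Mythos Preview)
Your proof is correct and follows essentially the same strategy as the paper: use the connected-sum (1-handle) cobordism, invoke the fact that the associated graph-cobordism map is a homotopy equivalence (the paper cites \cite[Proposition 5.2]{HMZ}), and establish $\tau$-equivariance via (\ref{eq:1}). The only real difference is that you arrange $\Gamma$ to be literally $f$-invariant, so that $F^A_{W,f(\Gamma)} = F^A_{W,\Gamma}$ holds on the nose; the paper instead appeals to the proof of Theorem~\ref{thm:1.2}, which in turn uses Lemma~\ref{lem:Z.7} to identify $F^A_{W,\Gamma}$ with $F^A_{W,f(\Gamma)}$ for an arbitrary standard graph. Your shortcut is perfectly legitimate in this simple case, where the symmetry of the 1-handle makes an invariant graph easy to build; the paper's route is no harder here but has the advantage of being a direct specialization of machinery already in place for general pseudo-homology bordisms.
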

\begin{proof}
This is just a special case of the proof of Theorem~\ref{thm:1.2}, except that the maps associated to connected sum cobordisms are homotopy equivalences (see \cite[Proposition 5.2]{HMZ}).
\end{proof}

\begin{remark}\label{rem:summand}
Using the connected sum formula, one can prove that the kernel of $F$ contains an infinite-rank summand via the following observation, which is essentially due to Lin, Ruberman, and Saveliev \cite[Remark 8.4]{LRS}. Let $Y$ be a Brieskorn sphere with $d(Y) = 0$ but $h(Y) \neq 0$, and let $\tau_1$ be the Montesinos involution on $Y$. If $\tau_2$ is any involution isotopic to the identity map on $Y$, then we may form the connected sum involution $\tau = \tau_1 \# \tau_2$ on $Y \# \overline{Y}$. Then
\[
h_{\tau}(Y \# \overline{Y}) = h_{\tau_1}(Y) = h(Y) \neq 0,
\]
while $Y \# \overline{Y}$ tautologically bounds a homology ball. Combining such examples with the results of \cite{DHSTcobordism} yields the desired summand. However, these need not in general bound contractible manifolds. Moreover, they are all manifestly reducible, while in our case each $W_n$ is irreducible (see \cite[Corollary 2.7]{Yamada}).
\end{remark}

%%%%%%%%%%%%%%%%%%%%%%%%%%%%%%%%%%%%%%%%%%%%%%%%%%%%%%%%%%%%%%%%%%%%%%%%%%%%%%%%%%%%%%%%%%%%%%%%%%%%%%%%%%%%%%%%%%%%%%%%%%%%%%%%%%%%%%%%%%%%%%%%%%%%%%%%%%%%%%%%%%%%%%%%%%%%%%%%%%%%%%%%%%%%%%%%%%%%%%%%%%%%%%%%%%%%%%%%%%%%%%%%%%%%%%%%%%%%%%%%%%%%%%%%%%%%%%%%%%%%%%%%%%%%%%%%%%%%%%%%%%%%%%%%%%%%%%%%%%%%%%%%%%%%%%%%%%%%%%%%%%%%%%%%%%%%%%%%%%%%%%%%%%%%%%%%%%%%%%%%

\section{Results and Computations}\label{sec:5}
\subsection{Preliminary examples}\label{sec:5.1}
We begin with several toy examples of the theory we have built up so far. We remind the reader that all cobordisms in this section are equivariant (in the sense that they admit diffeomorphisms extending involutions on the boundary components). 

Let $Y_1 = \Sigma(2, 3, 7)$ be given by $(-1)$-surgery on the right-handed trefoil. There are two involutions on the trefoil, which are displayed in Figure~\ref{fig:5.1}. 

\begin{lemma}\label{lem:5.1}
Let $Y_1 = \Sigma(2, 3, 7)$ be given by $(-1)$-surgery on the right-handed trefoil. Let $\tau$ and $\sigma$ be the strong and periodic involutions in Figure~\ref{fig:5.1}, respectively. Then
\begin{enumerate}
\item $h_\tau(Y_1) = h(Y_1) < 0$ and $h_{\ita}(Y_1) = 0$; and,
\item $h_\sigma(Y_1) = 0$ and $h_{\iota \circ \sigma}(Y_1) = h(Y_1) < 0$.
\end{enumerate}
\end{lemma}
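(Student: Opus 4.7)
The plan is to treat the two involutions separately, handling $\tau$ via Theorem~\ref{thm:2.1} (which identifies the Montesinos involution with the involutive $\iota$) and handling $\sigma$ by a direct geometric argument showing it acts trivially on $\CFm(Y_1)$.

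First I would establish $\tau \simeq \iota$ on $\CFm(Y_1)$. The key observation is that the strong inversion $\tau$ on the right-handed trefoil, combined with the $(-1)$-surgery presentation, realizes $Y_1 = \Sigma(2,3,7)$ as the double branched cover of the Montesinos knot $k(2,3,7)$ with $\tau$ as the covering involution. Once this identification is made, Theorem~\ref{thm:2.1} immediately gives $\tau \simeq \iota$. Combined with the computation $h(S_{-1}(T_{2,3})) = X_1$ recorded at the end of Section~\ref{sec:2.4} (following the proof of \cite[Theorem 1.4]{HHL}), this yields
\[
h_\tau(Y_1) = h(Y_1) = X_1 < 0,
\]
where the strict inequality is Example~\ref{ex:2.9}. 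The vanishing $h_{\ita}(Y_1) = 0$ then follows because $\ita \simeq \iota \circ \iota \simeq \id$, so the $\ita$-complex is locally equivalent to the trivial complex via the identity.

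Next I would show that $\sigma$ is isotopic to the identity diffeomorphism of $Y_1$, which implies $\sigma \simeq \id$ on $\CFm(Y_1)$ by the naturality results of \cite{JTZ}. Geometrically, the periodic involution on the right-handed trefoil extends to an $S^1$-action on $Y_1 = \Sigma(2,3,7)$ coming from its Seifert fibered structure: the axis of $\sigma$ in the surgery diagram can be identified with a regular fiber of the Seifert fibration, and $\sigma$ itself is the order-two element of the circle action. Since a nontrivial element of any connected Lie group action is isotopic to the identity through the group path, $\sigma$ is isotopic to $\id$. Hence $h_\sigma(Y_1) = 0$, and since $\iota \circ \sigma \simeq \iota$, we conclude $h_{\iota \circ \sigma}(Y_1) = h(Y_1) = X_1 < 0$.

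The main obstacle will be the two geometric identifications: recognizing $\tau$ as the Montesinos covering involution in the specified surgery picture, and verifying that $\sigma$ arises from the Seifert circle action. Both are reasonably classical but need to be checked against Figure~\ref{fig:5.1}. If the direct $S^1$-action argument for $\sigma$ is inconvenient to justify in a figure-by-figure manner, one can instead bypass it by applying Theorem~\ref{thm:1.4} to an equivariant $(-1, -1)$-cobordism relating $(Y_1, \sigma)$ to a manifold where the action is already known to be trivial, combined with the rigidity result from \cite[Section 4]{DM} that homotopy involutions on the standard complex $X_1$ are determined by their action on $H_*$. Either route produces the claimed values.
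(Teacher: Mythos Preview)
Your proposal is correct but follows a genuinely different route from the paper's own argument. You identify $\tau$ as the Montesinos covering involution and invoke Theorem~\ref{thm:2.1} directly, and you identify $\sigma$ as lying in the Seifert $S^1$-action so that it is isotopic to the identity. The paper instead uses its own cobordism machinery: the tautological $(-1)$-handle attachment along the trefoil is an equivariant cobordism from $S^3$ to $Y_1$, which is $\spinc$-conjugating for $\tau$ and $\spinc$-fixing for $\sigma$. Theorem~\ref{thm:1.4} then gives $0 \leq h_{\ita}(Y_1)$ and $0 \leq h_\sigma(Y_1)$. Since the standard complex $X_1$ admits only two homotopy involutions ($\id$ and $\iota$), and the $\iota$-class is strictly negative, these inequalities force $\ita \simeq \id$ and $\sigma \simeq \id$; the rest follows.

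The paper in fact discusses your approach explicitly in the paragraph following the proof, noting that one can recover the same conclusions by identifying the branch loci (the Montesinos knot $k(2,3,7)$ for $\tau$, the torus knot $T_{3,7}$ for $\sigma$), but that ``identifying the branch locus even in this simple case is not immediate, as the knots in question have fourteen crossings.'' Your argument buys a more classical, geometry-first explanation, at the cost of these nontrivial identifications; the paper's argument is nearly formal and showcases the monotonicity theorem that drives the rest of Section~\ref{sec:5}. One small correction to your fallback: the relevant cobordism is a single $(-1)$-cobordism (the surgery trace itself), not a $(-1,-1)$-cobordism.
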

\begin{proof}
Figure~\ref{fig:5.1} constitutes a $(-1)$-cobordism from $S^3$ to $Y_1$, which conjugates $\spinc$-structures in the case of $\tau$ and fixes $\spinc$-structures in the case of $\sigma$. By Theorem~\ref{thm:1.4}, we thus have $0 \leq h_{\ita}(Y_1)$.  Now, $\id$ and $\iota$ are the only two possible homotopy involutions on the standard complex of $\CFm(Y_1)$, and the involutive complex corresponding to $\iota$ is strictly less than zero. Hence $\ita = \id$, which shows $\tau = \iota$. Similarly, we have that $0 \leq h_\sigma(Y_1)$, which implies $\sigma = \id$ and $\iota \circ \sigma = \iota$.
\end{proof}
\begin{figure}[h!]
\center
\includegraphics[scale=0.6]{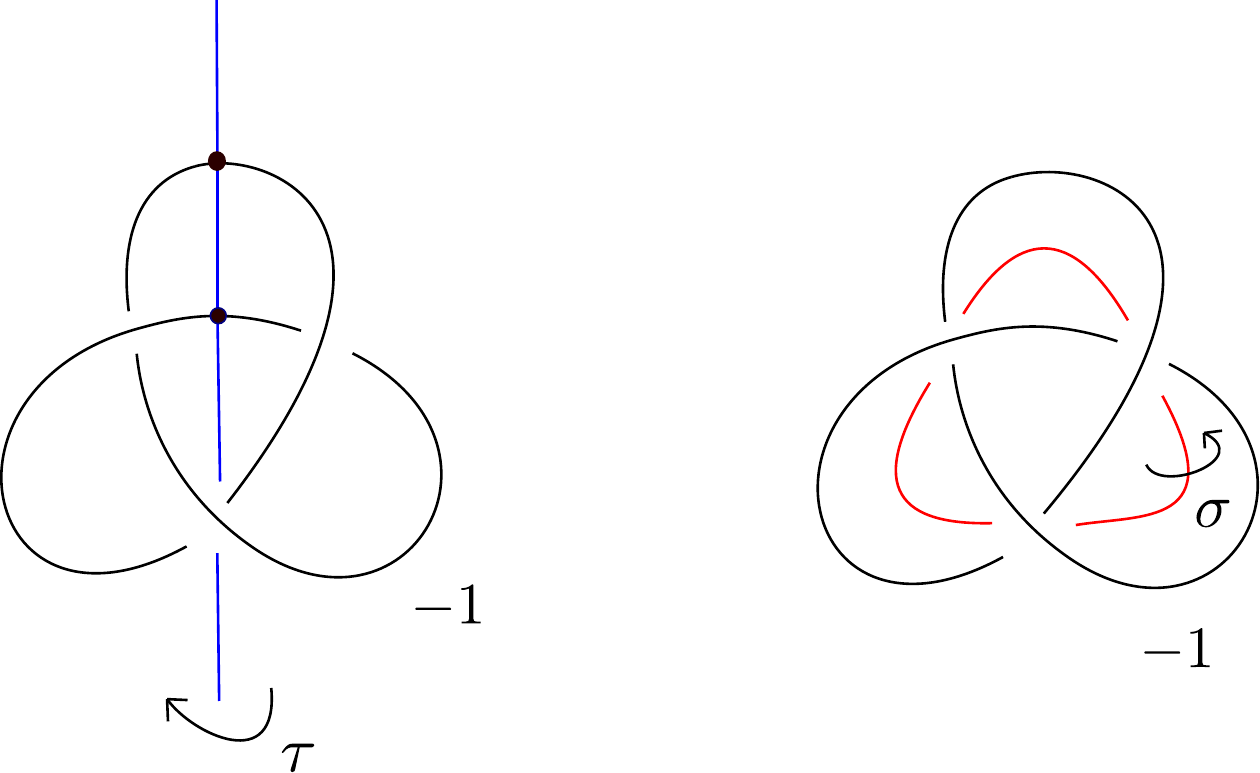}
\caption{Strong and periodic involutions on the right-handed trefoil. The cobordism from $S^3$ to $Y_1$ is tautologically given by $(-1)$-framed handle attachment along the right-handed trefoil in $S^3$.}\label{fig:5.1}
\end{figure}

Note that there are certainly other ways to determine the actions of $\tau$ and $\sigma$ on $\HFm(Y_1)$. For example, one can check Lemma~\ref{lem:5.1} by turning the equivariant surgery diagram of Figure~\ref{fig:5.1} into a double-branched-cover picture (see for example the algorithm described \cite[Section 1.1.12]{Savelievinvariants}). In the case of $\tau$, this exhibits $Y_1$ as coming from the double branched cover over the Montesinos knot $k(2, 3, 7)$, while $\sigma$ comes from the double branched cover over the torus knot $T_{3, 7}$. (For $p$ and $q$ odd, the involution coming from the double branched cover over $T_{p, q}$ is isotopic to the identity, and hence acts trivially on Floer homology.) Observe, however, that identifying the branch locus even in this simple case is not immediate, as the knots in question have fourteen crossings. In contrast, our computation is almost trivial, given the formal properties of Floer cobordism maps.

We now consider $Y_1 = \Sigma(2, 3, 7)$ as $(+1)$-surgery on the figure-eight knot. We have drawn two mirrored copies of this in Figure~\ref{fig:5.2}, with involutions suggestively denoted by $\tau$ and $\sigma$. It turns out that $\tau$ and $\sigma$ are \textit{not} the same element of $\textit{MCG}(Y_1)$, and in fact act differently on the Floer homology. (This is because even though the figure-eight knot is amphichiral, it is not \textit{equivariantly} amphichiral.) Of course, to show this, one can simply find a diffeomorphism between the trefoil and the figure-eight surgery descriptions of $\Sigma(2, 3, 7)$ and show that the maps $\tau$ and $\sigma$ in Figure~\ref{fig:5.2} correspond to those in Figure~\ref{fig:5.1} . Here, however, we show that our approach immediately distinguishes the two actions:

\begin{lemma}\label{lem:5.2}
Let $Y_1 = \Sigma(2, 3, 7)$ be given by $(+1)$-surgery on the figure-eight knot, and let $\tau$ and $\sigma$ be as in Figure~\ref{fig:5.2}. Then 
\begin{enumerate}
\item $h_\tau(Y_1) = h(Y_1) < 0$ and $h_{\ita}(Y_1) = 0$
\item $h_\sigma(Y_1) = 0$ and $h_{\iota \circ \sigma}(Y_1) = h(Y_1) < 0$.
\end{enumerate}
\end{lemma}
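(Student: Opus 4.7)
The plan is to follow the strategy of Lemma~\ref{lem:5.1}, constructing appropriate equivariant negative-definite cobordisms and invoking the monotonicity of Proposition~\ref{lem:3.7}. Since $\CFm(Y_1)$ is locally equivalent to the standard complex $X_1$ of Example~\ref{ex:2.9}, the only possible homotopy involutions on $\CFm(Y_1)$ are $\id$ and $\iota$. It therefore suffices to show $\tau \simeq \iota$ and $\sigma \simeq \id$; the remaining two equalities then follow automatically from the relation $\iota^2 \simeq \id$.

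First I would work with the \emph{dual} cobordism. The $+1$-surgery presentation of $Y_1$ yields a positive-definite cobordism $W \colon S^3 \to Y_1$, which cannot be fed directly into Proposition~\ref{lem:3.7}. However, its reverse $\bar W \colon Y_1 \to S^3$ is a negative-definite $(-1)$-cobordism, realized as $(-1)$-surgery on the dual knot $K^* \subset Y_1$ (the core of the solid torus glued in during the original surgery). Since the extensions of $\tau$ and $\sigma$ to $Y_1$ furnished by Lemma~\ref{lem:4.3} preserve the glued solid torus and restrict to involutions on $K^*$, the cobordism $\bar W$ is equivariant with respect to either one.

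Next I would analyze the $\spinc$-behavior. A direct coordinate computation (using the local models of Section~\ref{sec:4.1}) determines whether the induced involution on $K^*$ is strong or periodic; by the discussion of Section~\ref{sec:4.2}, this decides whether $\bar W$ is $\spinc$-conjugating or $\spinc$-fixing. The key point — and what distinguishes $\tau$ from $\sigma$ — is that the two extensions produced by the mirrored diagrams in Figure~\ref{fig:5.2} give cobordisms of \emph{opposite} $\spinc$-type: one $\spinc$-conjugating and one $\spinc$-fixing. Applying Proposition~\ref{lem:3.7} yields $h_{\ita}(Y_1) \leq h_\iota(S^3) = 0$ in the $\tau$ case and $h_\sigma(Y_1) \leq h_{\id}(S^3) = 0$ in the $\sigma$ case. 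To convert these one-sided bounds into equalities (since $h(Y_1) < 0$ is also $\leq 0$), I would produce complementary inequalities via a second equivariant cobordism going the opposite direction — for instance, by realizing $Y_1$ equivariantly as the boundary of a negative-definite plumbing coming from its Brieskorn description, or by constructing an equivariant $(-1,-1)$-cobordism from another manifold whose invariants are already understood via Lemma~\ref{lem:5.1}. Combined with the bounds above, this forces $h_{\ita}(Y_1) = h_\sigma(Y_1) = 0$, hence $\tau \simeq \iota$ and $\sigma \simeq \id$, and consequently $h_\tau(Y_1) = h_{\iota \circ \sigma}(Y_1) = h(Y_1) < 0$.

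The main obstacle is the careful construction and $\spinc$-tracking of these cobordisms. Pinning down the $\spinc$-type of $\bar W$ in each case amounts to following the meridional coordinates of Section~\ref{sec:4.1} through the $+1$-surgery identification, and it is exactly this computation — rather than any identification with the trefoil description — that separates $\tau$ from $\sigma$ as elements of $\text{MCG}(Y_1)$. Producing the complementary cobordism for the reverse inequality is the most delicate topological step, since the natural handle attachment from $S^3$ along $4_1$ has the wrong sign.
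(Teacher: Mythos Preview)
Your reversal of the surgery trace produces an inequality in the \emph{wrong direction}, and this is fatal rather than a minor inconvenience. The bound you obtain from $\bar W\colon Y_1\to S^3$ is $h_{\ita}(Y_1)\le 0$ (and similarly $h_\sigma(Y_1)\le 0$). But the only two candidates for $h_{\ita}(Y_1)$ are $0$ and $h(Y_1)<0$, both of which already satisfy $\le 0$; the inequality therefore carries no information whatsoever. The decisive inequality is the \emph{opposite} one, $h_{\ita}(Y_1)\ge 0$ (resp.\ $h_\sigma(Y_1)\ge 0$), which would immediately rule out the value $h(Y_1)<0$ and force $\ita\simeq\id$, $\sigma\simeq\id$. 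Your proposed ``second equivariant cobordism going the opposite direction'' is exactly what is needed, but the suggestions you give (an equivariant negative-definite plumbing, or a $(-1,-1)$-cobordism from a manifold handled by Lemma~\ref{lem:5.1}) are not worked out; the first would require identifying $\tau$ and $\sigma$ in the Seifert-fibered picture, which is precisely the step the lemma is meant to avoid.

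The paper's proof bypasses all of this with a single, different cobordism. Instead of reversing the trace of the figure-eight surgery, one places an equivariant $(+1)$-framed unknot in the surgery diagram for $Y_1$ so that blowing it down untwists the figure-eight and yields $S^3$. This is a $(+1)$-cobordism from $Y_1$ to $S^3$; turned around it is a $(-1)$-cobordism from $S^3$ to $Y_1$, and Proposition~\ref{lem:3.7} then gives $h_{\ita}(Y_1)\ge 0$ and $h_\sigma(Y_1)\ge 0$ directly. One also reads off from the diagram whether the unknot carries a strong or periodic action, which is what distinguishes the $\tau$ and $\sigma$ cases. There is no need for a complementary cobordism. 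I would also caution that your assertion that $\bar W$ has opposite $\spinc$-type for $\tau$ versus $\sigma$ is not justified: both involutions on the figure-eight are strong inversions, and the local models of Section~\ref{sec:4.1} then show both act as strong inversions on the dual knot $K^*$, so both copies of $\bar W$ are $\spinc$-conjugating.
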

\begin{proof}
Doing $(+1)$-surgery on the unknot indicated on the left in Figure~\ref{fig:5.2} (and blowing down) gives a $\spinc$-conjugating $(+1)$-cobordism from $(Y_1, \tau)$ to $S^3$. Hence $h_{\ita}(Y_1) \geq 0$. Similarly, doing $(+1)$-surgery on the unknot indicated on the right gives a $\spinc$-fixing $(+1)$-cobordism from $(Y_1, \sigma)$ to $S^3$. Hence $h_\sigma(Y_1) \geq 0$. The claim then follows as in the proof of Lemma~\ref{lem:5.1}.
\end{proof}

\begin{figure}[h!]
\center
\includegraphics[scale=0.47]{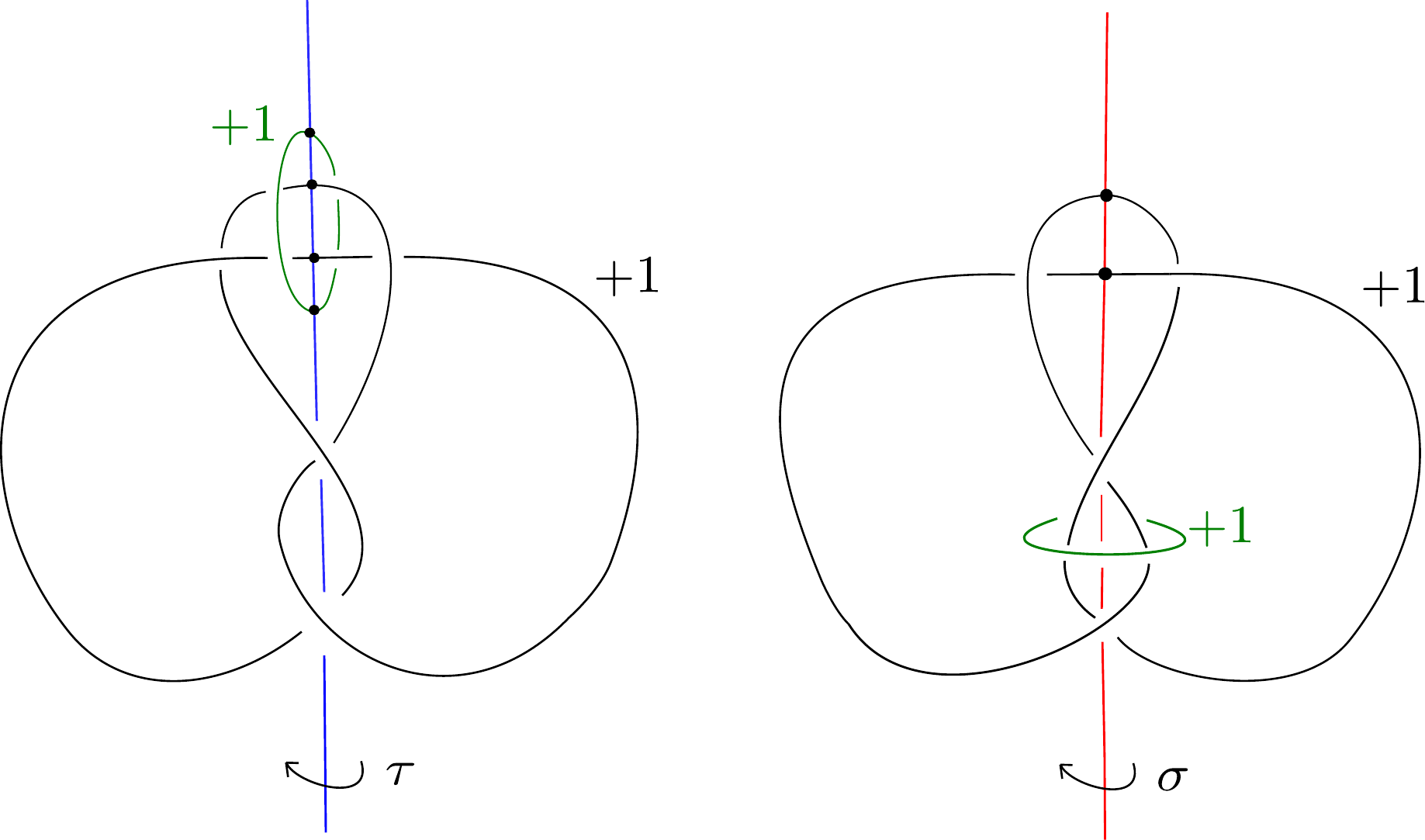}
\caption{Two involutions on the figure-eight knot, with equivariant cobordisms of Lemma~\ref{lem:5.2}.}\label{fig:5.2}
\end{figure}

We now turn to our first example of a cork. Let $Y_2$ be given by $(+1)$-surgery on the stevedore knot $6_1$, displayed on the left in Figure~\ref{fig:5.3}. Note that $Y_2$ bounds a Mazur manifold (see for example Lemma~\ref{lem:5.7}). Alternatively, it is a general fact that any $(1/k)$-surgery on a slice knot is a homology sphere which bounds a contractible manifold; see for example \cite[\S 6, Corollary 3.1.1]{Gordon}. We begin with the following weak form of the calculation in Example~\ref{ex:3.5}.

\begin{figure}[h!]
\center
\includegraphics[scale=0.6]{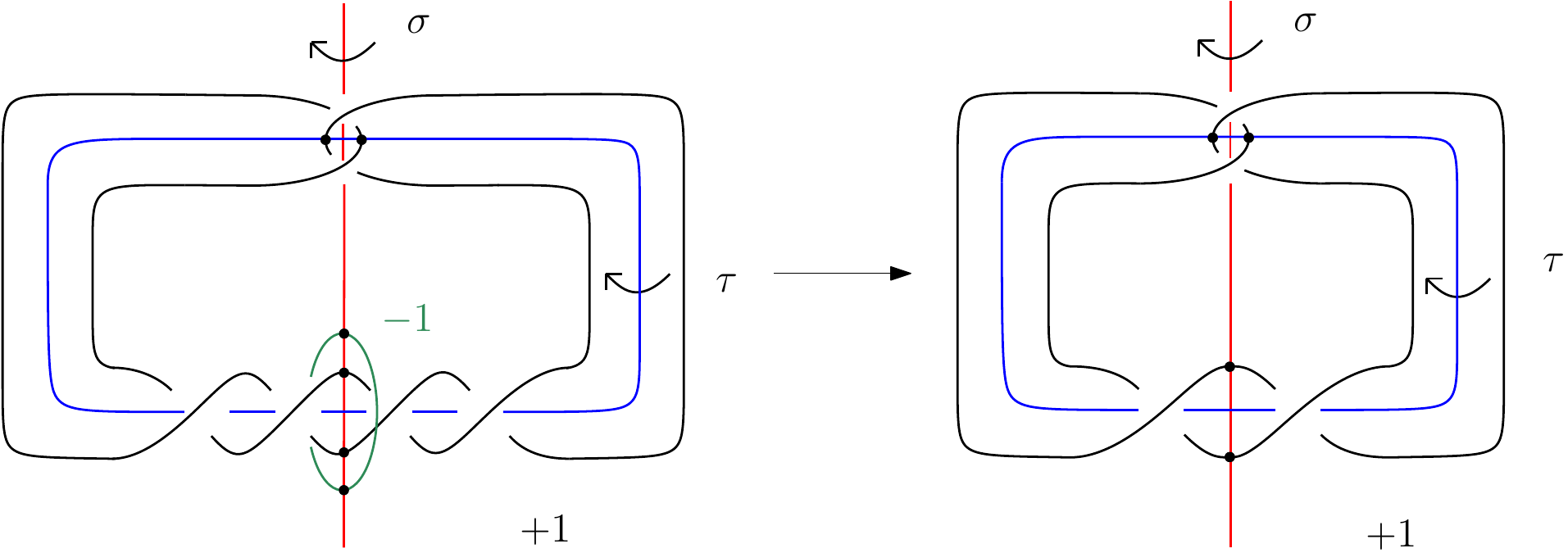}
\caption{Cobordism from $S_{+1}(6_1)$ to $\Sigma(2, 3, 7)$.}\label{fig:5.3}
\end{figure}

\begin{lemma}\label{lem:5.3}
Let $Y_2 = S_{+1}(6_1)$ be given by $(+1)$-surgery on the stevedore knot $6_1$, and let $\tau$ and $\sigma$ be as shown on the left in Figure~\ref{fig:5.3}. Then $h_\tau(Y_2) < 0$ and $h_{\iota \circ \sigma}(Y_2) < 0$. In particular, neither $\tau$ nor $\sigma$ extends over any homology ball that $Y$ bounds.
\end{lemma}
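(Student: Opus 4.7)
The plan is to exploit the cobordism depicted in Figure~\ref{fig:5.3} together with the monotonicity result of Theorem~\ref{thm:1.4}, reducing the claim to the already-computed case of $Y_1 = \Sigma(2,3,7)$ in Lemma~\ref{lem:5.2}. Concretely, I will argue that Figure~\ref{fig:5.3} depicts an equivariant negative-definite 2-handle cobordism from $(Y_2,\tau)$ to $(Y_1,\tau)$ and from $(Y_2,\sigma)$ to $(Y_1,\sigma)$, where the involutions on the two ends are matched via the equivariant handle attachment procedure of Section~\ref{sec:4}.

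First, I would verify that the 2-handle cobordism from $S_{+1}(6_1)$ to $S_{+1}(4_1)$ is built by attaching a handle along a knot (or symmetric pair of knots) equivariant with respect to each of $\tau$ and $\sigma$, with appropriate framing so that the resulting cobordism is negative-definite with $\Delta(W,\mathfrak{s})=0$ for the relevant $\spinc$-structure (using the criterion in Remark~\ref{rem:3.6}). Next, I would determine the type of each cobordism: according to the dichotomy of Section~\ref{sec:4.2}, a handle attached along a strong involution's axis yields a $\spinc$-conjugating $(-1)$-cobordism, while one attached along a periodic axis yields a $\spinc$-fixing $(-1)$-cobordism; a symmetric pair of handles interchanged by the involution yields an interchanging $(-1,-1)$-cobordism. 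Inspecting Figure~\ref{fig:5.3}, I expect to see that (possibly after a small equivariant Kirby manipulation as in Section~\ref{sec:4.3}) the cobordism is $\spinc$-fixing (or interchanging) with respect to $\tau$ and $\spinc$-conjugating (or interchanging) with respect to $\sigma$, matching exactly what we need.

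With the cobordism type identified, Theorem~\ref{thm:1.4} then delivers the inequalities
\[
h_\tau(Y_2) \leq h_\tau(Y_1) \quad\text{and}\quad h_{\iota\circ\sigma}(Y_2) \leq h_{\iota\circ\sigma}(Y_1).
\]
By Lemma~\ref{lem:5.2}, the right-hand sides both satisfy $h_\tau(Y_1)=h_{\iota\circ\sigma}(Y_1)=h(Y_1)<0$, so both $h_\tau(Y_2)$ and $h_{\iota\circ\sigma}(Y_2)$ are strictly less than $0$ in $\Inv$. Non-extendability of $\tau$ and $\sigma$ over any homology ball bounded by $Y_2$ is then immediate from Theorem~\ref{thm:1.1}, since a non-zero invariant obstructs such an extension.

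The main obstacle is the bookkeeping in the middle step: one must check that the particular axes of symmetry in Figure~\ref{fig:5.3} act in the right way (strong versus periodic) on the attaching curves, and confirm that the matching of boundary involutions under the equivariant handle attachment really identifies the two ``$\tau$'s'' (respectively the two ``$\sigma$'s'') as claimed. This is straightforward but requires tracking axes of symmetry through the diagrammatic change-of-surgery; once done, the remainder of the proof is a direct appeal to Theorem~\ref{thm:1.4}, Lemma~\ref{lem:5.2}, and Theorem~\ref{thm:1.1}. No new Floer-theoretic input is needed beyond what has already been established.
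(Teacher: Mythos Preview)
Your proposal is correct and follows essentially the same route as the paper's proof: attach a single $(-1)$-framed 2-handle along the indicated equivariant unknot, observe that this gives a $\spinc$-fixing $(-1)$-cobordism for $\tau$ (the unknot is periodic under $\tau$) and a $\spinc$-conjugating $(-1)$-cobordism for $\sigma$ (the unknot is strong under $\sigma$), and then apply Theorem~\ref{thm:1.4} together with Lemma~\ref{lem:5.2}. The hedging about interchanging cobordisms and Kirby manipulations is unnecessary---it is a single handle, not a pair---but this does not affect the argument.
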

\begin{proof}
The claim is immediate from Figure~\ref{fig:5.3}. Doing $(-1)$-surgery on the indicated unknot gives a $\spinc$-fixing cobordism from $(Y_2, \tau)$ to $(Y_1, \tau)$ and a $\spinc$-reversing cobordism from $(Y_2, \sigma)$ to $(Y_1, \sigma)$. It is easily checked that the involutions $\tau$ and $\sigma$ on the right in Figure~\ref{fig:5.3} are the same as those defined in Lemma~\ref{lem:5.2}.
\end{proof}

Lemma~\ref{lem:5.3} already shows that $Y_2 = S_{+1}(6_1)$ is a (strong) cork (with either of the involutions $\tau$ and $\sigma$). To the best of the authors' knowledge, even the fact that $Y_2$ bounds a cork was not previously known. Again, we stress here that the entire argument is almost completely formal: the only actual computation we have used so far is the (involutive) Floer homology of $Y_1 = \Sigma(2, 3, 7)$. In particular, we have not needed to determine the Floer homology of $Y_2$ (involutive or otherwise). 

We now demonstrate that in simple cases, it is possible to use the techniques we have developed so far to explicitly compute the action of the mapping class group on $\HFm(Y)$. Note that by Lemma~\ref{lem:3.3}, the induced action of any involution must commute with $\iota$. In the case of $Y_2$, this constraint (together with Lemma~\ref{lem:5.3}) will suffice to prove that the induced actions of $\tau$ and $\sigma$ are as advertised in Example~\ref{ex:3.5}.

\begin{lemma}\label{lem:computation}
The induced actions of $\tau$ and $\sigma$ on $\HFm(Y_2)$ are as in Example~\ref{ex:3.5}.
\end{lemma}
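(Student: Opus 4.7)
The plan is to combine three ingredients: (i) the explicit standard complex representative $C = C^{\mathrm{st}}$ of $\CFm(Y_2)$ with its action of $\iota$, (ii) the constraint from Lemma~\ref{lem:3.3} that $\tau$ and $\sigma$ each $\ff[U]$-equivariantly chain-homotopy commute with $\iota$, and (iii) the strict inequalities $h_\tau(Y_2) < 0$ and $h_{\iota \circ \sigma}(Y_2) < 0$ from Lemma~\ref{lem:5.3}. Concretely, first I would record the five generators $v_1, v_2, v_3, \alpha_1, \alpha_2$ of $C$ with differentials $\partial \alpha_1 = U(v_1 + v_2)$ and $\partial \alpha_2 = U(v_2 + v_3)$ as in Figure~\ref{fig:3.5}, and observe that the condition $h(Y_2) = 0$ forces the action of $\iota$ on $C$ up to $\ff[U]$-equivariant chain homotopy to be the obvious reflection $v_1 \leftrightarrow v_3$, $\alpha_1 \leftrightarrow \alpha_2$, $v_2 \mapsto v_2$.

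Second, I would enumerate the grading-preserving $\ff[U]$-equivariant chain endomorphisms $f \colon C \to C$ modulo $\ff[U]$-equivariant chain homotopy. Grading and chain-map constraints force $f(v_i) \in \mathrm{span}_{\ff}\{v_1, v_2, v_3\}$ and $f(\alpha_i) \in \mathrm{span}_{\ff}\{\alpha_1, \alpha_2\}$ modulo image-of-$\partial$, and the compatibility with $\partial$ cuts this down to a small finite-dimensional space of homotopy classes. Restricting further to homotopy involutions that commute with $\iota$ up to chain homotopy yields a short explicit list of candidates.

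Third, I would eliminate the candidates using Lemma~\ref{lem:5.3}. The condition $h_\tau(Y_2) < 0$ rules out every candidate $f$ for which $(C,f)$ is locally trivial, and $h_{\iota \circ \sigma}(Y_2) < 0$ similarly rules out every candidate for which $(C, \iota \circ f)$ is locally trivial. The upper bounds $h_\tau(Y_2) \le h(Y_1)$ and $h_{\iota \circ \sigma}(Y_2) \le h(Y_1)$ arising from the $\spinc$-fixing and $\spinc$-conjugating cobordism of Figure~\ref{fig:5.3} combined with Lemma~\ref{lem:5.1} force $h_\tau(Y_2) = h_{\iota \circ \sigma}(Y_2) = h(Y_1)$. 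Matching candidates against these local equivalence classes singles out the actions displayed in Example~\ref{ex:3.5} as the unique homotopy representatives of $\tau$ and $\sigma$, which descend to the claimed actions on $\HFm(Y_2)$.

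The main obstacle is the second step: the enumeration is elementary but requires care in tracking the ambiguities introduced by $\ff[U]$-equivariant chain homotopies and by the freedom in choosing a specific $\iota$ within its homotopy class. Because $C$ has only five generators concentrated in two adjacent Maslov gradings, the calculation reduces to finite-dimensional linear algebra, after which the local-equivalence constraints from Lemma~\ref{lem:5.3} and the commutation constraint from Lemma~\ref{lem:3.3} close the argument.
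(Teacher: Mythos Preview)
Your strategy matches the paper's closely, but there is a genuine gap in step~(i). You assert that $h(Y_2)=0$ forces the action of $\iota$ on the standard complex $C$ to be the obvious reflection $v_1\leftrightarrow v_3$, $\alpha_1\leftrightarrow \alpha_2$. This is not true: local triviality of $(C,\iota)$ does not pin down $\iota$ up to homotopy. Indeed, once the computation is complete one sees that at least three distinct homotopy involutions on $C$ (the identity, the reflection you call $\iota$, and the map called $\sigma$ in Figure~\ref{fig:3.5}) all yield locally trivial $\iota$-complexes. So nothing in the local-equivalence information alone tells you which of these is the Hendricks--Manolescu $\iota$; and without knowing $\iota$, the commutation constraint from Lemma~\ref{lem:3.3} has no bite, and the condition $h_{\iota\circ\sigma}(Y_2)<0$ cannot be translated into a constraint on $\sigma$ alone.

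The paper fills this gap with genuine Floer-theoretic input: since $6_1$ is alternating of genus one, one can apply the involutive large surgery formula of \cite[Theorem~1.5]{HM} together with Hendricks--Manolescu's determination of $\iota_K$ for thin knots to compute $\iota$ on $\CFm(Y_2)$ explicitly (this is the content of Figure~\ref{fig:5.4}). Only after $\iota$ is known does the enumeration of $\iota$-commuting involutions go through; the paper then carries out that enumeration on $\HFm(Y_2)$ rather than on the chain level, finding exactly four candidates, and applies Lemma~\ref{lem:5.3} as you describe. Your steps~(ii) and~(iii) are essentially correct once~(i) is repaired, though note that Lemma~\ref{lem:5.3} only gives the strict inequality $h_\tau(Y_2)<0$, not the equality $h_\tau(Y_2)=h(Y_1)$ you claim; the inequality is all that is needed.
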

\begin{proof}
The first order of business is to determine the action of $\iota$ on $\HFm(Y_2)$. Since the stevedore knot is alternating, its knot Floer homology is determined by its Alexander polynomial and signature \cite{Petkova}. It is thus easily checked that the knot Floer complex is as given on the left in Figure~\ref{fig:5.4}. We can then use the involutive Heegaard Floer large surgery formula of Hendricks and Manolescu \cite[Theorem 1.5]{HM} to determine the homotopy type of the complex $(\CFm(Y_2), \iota)$. (Note that the stevedore knot has genus one.) More precisely, we consider the map $\iota_0$ on $A_0^-$ induced by the knot Floer involution $\iota_K$. This determines the map $\iota$ on the surgered complex $\CFm(Y_2)$; see \cite[Section 6.1]{HM} and \cite[Section 6.3]{HM}. In \cite[Section 8.3]{HM}, Hendricks and Manolescu determined $\iota_K$ for all thin knots, using the fact that $\iota_K$ squares to the Sarkar involution. The resulting calculation in our case is displayed in Figure~\ref{fig:5.4}, and gives the action of $\iota$ on $\HFm(Y)$ shown on the right. A straightforward basis change then gives the action of $\iota$ shown in Figure~\ref{fig:3.5}.

\begin{figure}[h!]
\center
\includegraphics[scale=1.1]{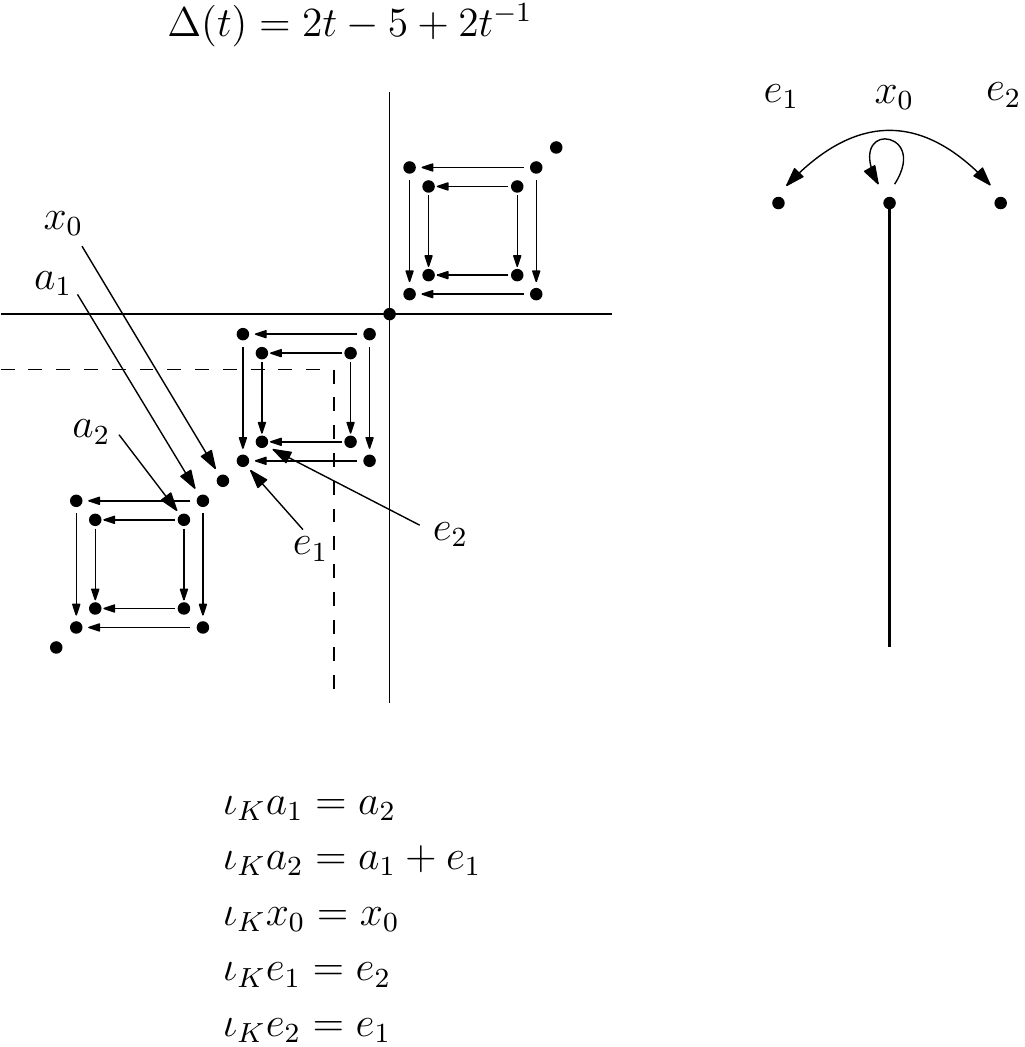}
\caption{Left: knot Floer complex of $6_1$. Right: Heegaard Floer homology of $S_{+1}(6_1)$, with $\iota$-action. Below: the action of $\iota_K$.}\label{fig:5.4}
\end{figure}

Now let us consider the actions of $\tau$ and $\sigma$ on $\HFm(Y_2)$. By Lemma~\ref{lem:3.3}, these commute with the action of $\iota$. We claim that up to isomorphism, there are only four involutions on $\HFm(Y)$ which commute with $\iota$: the identity, $\iota$ itself, and the maps $\tau$ and $\sigma$ displayed in Figure~\ref{fig:3.5}. Indeed, let $f : \HFm(Y_2) \rightarrow \HFm(Y_2)$ be such an involution. Since $f^2 = \id$, clearly $f$ must map $U$-nontorsion elements to $U$-nontorsion elements. Thus we have that $f(v_1)$ equals one of $v_1, v_2, v_3,$ or $v_1 + v_2 + v_3$, where the $v_i$ are as in Figure~\ref{fig:3.5}. In this last case, we may perform an $\iota$-equivariant change of basis and set $v_2' = v_1 + v_2 + v_3$, so without loss of generality we may assume that $f(v_1) = v_i$ for some $i$. Applying $\iota$-equivariance, we thus see that we must have
\begin{enumerate}
\item $f(v_1) = v_1$ and $f(v_3) = v_3$; or,
\item $f(v_1) = v_3$ and $f(v_3) = v_1$; or,
\item $f(v_1) = f(v_3) = v_2$.
\end{enumerate}
A similar argument shows that either
\begin{enumerate}
\item $f(v_2) = v_2$; or,
\item $f(v_2) = v_1 + v_2 + v_3$.
\end{enumerate}
It is straightforward to check that the third possibility for $f(v_1)$ is then ruled out, as neither possibility for $f(v_2)$ gives a map which squares to the identity. This shows that there are exactly four possibilities for $f$, which (together with the identity) are precisely those enumerated in Figure~\ref{fig:3.5} As we saw in Example~\ref{ex:3.5}, three of the resulting local equivalence classes are trivial, while one is strictly less than zero. Hence Lemma~\ref{lem:5.3} actually forces the actions of $\tau$ and $\sigma$ to be as described in Figure~\ref{fig:3.5}.
\end{proof}
We have essentially the same computation for the Akbulut cork $M_1$. Figure~\ref{fig:3.6} displays a $(-1, -1)$-cobordism from $M_1$ to $\Sigma(2,3,7)$, which is interchanging for both $\tau$ and $\sigma$. (The reader should check that the involutions $\tau$ and $\sigma$ on $\Sigma(2, 3, 7)$ in Figure~\ref{fig:3.6} are the same as those displayed in Figure~\ref{fig:5.2}.) In particular, we have that $h_\tau(M_1) < 0$ and $h_{\iota \circ \sigma}(M_1) < 0$. This immediately shows that $(M_1, \tau)$ and $(M_1, \sigma)$ are both strong corks. The reader should compare the following with \cite[Section 8.1]{LRS}:

\begin{lemma}\label{lem:akbulut}
The induced actions of $\tau$ and $\sigma$ on $\HFm(M_1)$ are as in Example~\ref{ex:akbulut}.
\end{lemma}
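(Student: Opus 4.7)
The plan is to mimic the proof of Lemma~\ref{lem:computation} essentially verbatim, with $P(-3,3,-3)$ playing the role of $6_1$. First I would compute the $\iota$-complex $(\CFm(M_1),\iota)$ by passing through the knot Floer complex of $P(-3,3,-3)$. This knot is quasi-alternating (hence thin) with Alexander polynomial $2t-5+2t^{-1}$ and signature zero, exactly matching the stevedore knot $6_1$. By Petkova's theorem its knot Floer complex is isomorphic to that of $6_1$, and the rigidity argument from \cite[Section 8.3]{HM} (using that $\iota_K^2$ is the Sarkar involution, which on a thin knot forces $\iota_K$ up to homotopy) upgrades this to an isomorphism of $\iota_K$-complexes. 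The involutive surgery formula then implies $(\CFm(M_1),\iota) \simeq (\CFm(Y_2),\iota)$, so that $\HFm(M_1)$ and the induced action of $\iota$ are those in Figures~\ref{fig:5.4} and \ref{fig:3.5}.

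Next I would repeat the enumeration argument from the proof of Lemma~\ref{lem:computation}. By Lemma~\ref{lem:3.3}, the induced actions of $\tau$ and $\sigma$ on $\HFm(M_1)$ commute with $\iota$, and the same case analysis as before (an involution must send $U$-nontorsion elements to $U$-nontorsion elements, and $\iota$-equivariance plus the requirement of squaring to the identity leaves only finitely many possibilities) shows that up to isomorphism the only involutions commuting with $\iota$ are the identity, $\iota$ itself, and the two maps labelled $\tau$ and $\sigma$ in Figure~\ref{fig:3.5}. As computed in Example~\ref{ex:3.5}, among these four possibilities only the first gives $h_\tau(M_1)<0$, and only the map labelled $\sigma$ gives $h_{\ita\circ\sigma}(M_1) = h_{\iota\circ\sigma}(M_1)<0$; the remaining three choices in each case all give the trivial class in $\Inv$.

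Finally, I would use the equivariant cobordism in Figure~\ref{fig:3.6}. Attaching $(-1)$-framed $2$-handles along the two green curves gives a cobordism from $M_1$ to $\Sigma(2,3,7)$ whose components are interchanged by each of $\tau$ and $\sigma$, so by the construction of Section~\ref{sec:4.2} it is an interchanging $(-1,-1)$-cobordism for both involutions. The involutions induced on the outgoing end of $\Sigma(2,3,7)$ are precisely those of Figure~\ref{fig:5.2}, a straightforward visual check. Applying Theorem~\ref{thm:1.4}(3) and Lemma~\ref{lem:5.2} yields
\[
h_\tau(M_1) \leq h_\tau(\Sigma(2,3,7)) < 0, \qquad h_{\iota\circ\sigma}(M_1) \leq h_{\iota\circ\sigma}(\Sigma(2,3,7)) < 0.
\]
Combined with the enumeration above, this forces the actions of $\tau$ and $\sigma$ on $\HFm(M_1)$ to be exactly the ones displayed in Figure~\ref{fig:3.5}.

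The main obstacle is the first step: pinning down the $\iota$-complex of $M_1$. Once the rigidity of $\iota_K$ on thin knots delivers $\CFK(P(-3,3,-3)) \simeq \CFK(6_1)$ as $\iota_K$-complexes, everything else is either formal (Lemma~\ref{lem:3.3} and enumeration) or a direct appeal to the monotonicity machinery (Theorem~\ref{thm:1.4}) applied to the explicit cobordism in Figure~\ref{fig:3.6}.
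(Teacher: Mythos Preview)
Your proposal is correct and follows essentially the same approach as the paper's own proof. The paper's argument is terser but identical in substance: it notes that $P(-3,3,-3)$ has the same Alexander polynomial (and is thin) as $6_1$, so the involutive large surgery computation of Lemma~\ref{lem:computation} applies verbatim to give the $\iota$-action; it then invokes the inequalities $h_\tau(M_1)<0$ and $h_{\iota\circ\sigma}(M_1)<0$ coming from the interchanging $(-1,-1)$-cobordism of Figure~\ref{fig:3.6}, and finishes with the same four-involution enumeration to pin down $\tau$ and $\sigma$.
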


\begin{proof}
The computation of $\HFm(M_1)$ appears in \cite{AD}, but for our purposes it is more convenient to use the involutive large surgery formula and the fact that $M_1$ is $(+1)$-surgery on the pretzel knot $P(-3, 3, -3)$. Indeed, $P(-3, 3, -3)$ is alternating and has Alexander polynomial given by $\Delta(t) = 2t - 5 + 2t^{-1}$. Hence the same computation as in Lemma~\ref{lem:computation} applies. Since we analogously have $h_\tau(M_1) < 0$ and $h_{\iota \circ \sigma}(M_1) < 0$, the same algebraic analysis as in Lemma~\ref{lem:computation} completes the proof.
\end{proof}

\noindent
One can also prove that $M_1$ is a strong cork by using its (usual) description as surgery on a two-component link. See the proof of Theorem~\ref{thm:1.10}.
\subsection{Brieskorn spheres}\label{sec:5.2}

In this subsection, we analyze our invariants for a number of Brieskorn spheres. Though these examples do not bound corks, exploiting the equivariant $2$-handle cobordisms constructed in Section \ref{sec:4} will allow us to use these computations to produce the families of strong corks claimed in Section~\ref{sec:1}. One input here comes from the work of Alfieri-Kang-Stipsicz \cite{AKS}, who showed that if $\tau$ is the involution on $Y = \Sigma(p, q, r)$ obtained by viewing $Y$ as the double branched cover of the Montesinos knot $k(p, q, r)$, then $\tau \simeq \iota$. In fact, thanks to the following theorem (implicit in \cite{BO} and \cite{MS}), all we will need from their work is that there \textit{exists} a geometric involution on $Y$ which acts as $\iota$ on $\CFm(Y)$:

\begin{theorem}\cite{BO, MS}\label{thm:5.4}
If $Y = \Sigma(p, q, r)$ is a Brieskorn (integer) homology sphere other than $S^3$ or $\Sigma(2, 3, 5)$, then $\textit{MCG}(Y) = \Z/2\Z$.
\end{theorem}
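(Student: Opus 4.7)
The plan is to reduce the computation of $\textit{MCG}(\Sigma(p,q,r))$ to a computation on the base orbifold of its Seifert fibration. Recall that $\Sigma(p,q,r)$ is a small Seifert fibered space over the base orbifold $B = S^2(p,q,r)$ with three exceptional fibers of multiplicities $p,q,r$. Because $\Sigma(p,q,r)$ is an integer homology sphere, the triple $(p,q,r)$ is pairwise coprime and in particular consists of pairwise distinct integers $\geq 2$. Removing $S^3$ and $\Sigma(2,3,5)$ from consideration ensures that $\Sigma(p,q,r)$ is not of spherical geometry, and hence lies outside the short list of ``exceptional'' Seifert fibered 3-manifolds for which uniqueness of the Seifert fibration fails.

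The first main step is to invoke the classical rigidity theorem for Seifert fibrations, due to Waldhausen and Orlik and sharpened in the diffeomorphism-group setting by Boileau--Otal: for any orientable Seifert fibered 3-manifold outside a standard exceptional list (certain lens and prism manifolds, $S^3$, $T^3$, $S^2 \times S^1$, the Heisenberg nilmanifold, and the spherical homology sphere $\Sigma(2,3,5)$), the Seifert fibration is unique up to isotopy, and every orientation-preserving self-diffeomorphism is isotopic to a fiber-preserving one. This produces a natural homomorphism $\textit{MCG}(\Sigma(p,q,r)) \to \textit{MCG}^{\mathrm{orb}}(B)$ whose kernel is generated by ``vertical'' (fiber-direction) mapping classes, namely fiber Dehn twists.

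The second step is to verify that this kernel is trivial. A nontrivial fiber Dehn twist acts nontrivially on the first homology of the complement of a regular fiber; integrating this obstruction against $H_1(\Sigma(p,q,r)) = 0$, combined with the standard description of $\textit{MCG}$ of a Seifert fibered space as a central extension of the orbifold mapping class group by a group of fiber twists, forces the kernel to be trivial. Thus $\textit{MCG}(\Sigma(p,q,r))$ embeds into the subgroup of $\textit{MCG}^{\mathrm{orb}}(B)$ corresponding to those orbifold mapping classes that lift to orientation-preserving diffeomorphisms of the total space (when combined with an appropriate choice of fiber orientation).

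Finally, one computes $\textit{MCG}^{\mathrm{orb}}(B)$ directly. Because $p,q,r$ are pairwise distinct, every orbifold homeomorphism of $B = S^2(p,q,r)$ must fix each of the three cone points individually. The mapping class group of $S^2$ fixing three marked points is $\mathbb{Z}/2\mathbb{Z}$: the orientation-preserving pure mapping class group of the thrice-punctured sphere is trivial, while the nontrivial element is realized by the orientation-reversing reflection of $S^2$ through the great circle containing the three cone points. Combining this reflection with a simultaneous reversal of the fiber orientation lifts it to an orientation-preserving involution of $\Sigma(p,q,r)$, which is precisely the Montesinos involution coming from the presentation of $\Sigma(p,q,r)$ as the double branched cover of the Montesinos knot $k(p,q,r)$. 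This gives $\textit{MCG}(\Sigma(p,q,r)) \cong \mathbb{Z}/2\mathbb{Z}$, as claimed. The main obstacle is bookkeeping: one must carefully verify that none of the Brieskorn homology spheres except $\Sigma(2,3,5)$ appear on the exceptional list of Boileau--Otal, and then track orientation conventions when passing between the base orbifold, the fiber, and the total space, since an orientation-reversing orbifold diffeomorphism can lift to an orientation-preserving diffeomorphism of $\Sigma(p,q,r)$ only when compensated by reversing the fiber orientation.
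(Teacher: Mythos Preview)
Your overall architecture matches the paper's: reduce to the orbifold mapping class group via uniqueness of the Seifert fibration, then compute $\textit{MCG}^{\mathrm{orb}}(S^2(p,q,r)) \cong \Z/2\Z$ using that $p,q,r$ are pairwise distinct. There are two points of divergence worth noting.

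First, for the kernel of the map to the orbifold mapping class group, the paper argues via the fibration
\[
\text{Diff}_v(Y) \rightarrow \text{Diff}_f(Y) \rightarrow \text{Diff}_0^{\phantom{.}orb}(O)
\]
and cites specific lemmas from Meeks--Scott showing $\text{Diff}_v(Y) \simeq S^1$, hence connected, so the kernel on $\pi_0$ vanishes. Your argument (``a nontrivial fiber Dehn twist acts nontrivially on $H_1$ of the complement of a regular fiber; integrating this against $H_1(Y)=0$\ldots'') is vague as written. The clean reason the vertical kernel dies is that the base is $S^2$, not that $H_1(Y)=0$; you should either cite the relevant structure theorem for $\text{Diff}_v$ of a Seifert fibered space or give the $\text{Diff}_v \simeq S^1$ argument directly.

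Second, and more interestingly, your lower bound (that $\textit{MCG}$ is not trivial) is obtained by explicitly lifting the orientation-reversing reflection of the base to the Montesinos involution and observing that this hits the nontrivial element of the orbifold mapping class group; injectivity then forces $\textit{MCG}(Y) \cong \Z/2\Z$. The paper instead argues only that $\textit{MCG}(Y)$ has at most two elements from the fibration, and then invokes Heegaard Floer homology for the lower bound: if the Montesinos involution $\tau$ were isotopic to the identity, then $\tau \simeq \iota$ would act trivially on $\CFm(Y)$, forcing $Y$ to be an $L$-space, whence $Y = \Sigma(2,3,5)$. Your route is more elementary and self-contained; the paper's route is circuitous but thematically consistent with its Floer-theoretic framework (and has the side benefit of re-establishing, independently of \cite{AKS}, that the Montesinos involution is nontrivial in $\textit{MCG}$).
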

\begin{proof}
We give an explanation here, as the authors could not find the result explicitly stated in the literature. According to \cite[Theorem 9.1]{MS}, every diffeomorphism of $Y$ is isotopic to one that maps fibers to fibers. Each such diffeomorphism thus induces a diffeomorphism of the base orbifold $O$. The map from the space of fiber-preserving diffeomorphisms $\text{Diff}_f(Y)$ to the space of orbifold diffeomorphisms $\smash{\text{Diff}^{\phantom{.}orb}(O)}$ is a fibration over its image (see the discussion in \cite[Section 8]{MS}):
\[
\text{Diff}_v(Y) \rightarrow \text{Diff}_f(Y) \rightarrow \text{Diff}_0^{\phantom{.}orb}(O).
\]
The fiber $\text{Diff}_v(Y)$ is the space of vertical diffeomorphisms, which by Lemmas 8.1 and 8.2 of \cite{MS} is homotopy equivalent to $S^1$. We claim that $\smash{\text{Diff}^{\phantom{.}orb}(O)}$ has two connected components. Indeed, since $p, q,$ and $r$ are distinct, clearly any diffeomorphism of $O$ must fix the three cone points individually; it is then easily checked that up to isotopy, there is one orientation-preserving and one orientation-reversing diffeomorphism of $O$. Hence $\textit{MCG}(Y)$ has at most two elements. To see that the Montesinos involution $\tau$ is nontrivial, note that if $\tau$ were trivial in $\textit{MCG}(Y)$, then the induced action of $\tau$ on $\CFm(Y)$ would be the identity. Since $\tau \simeq \iota$, by \cite{DM} this is only possible if $Y$ is a Heegaard Floer $L$-space. However, the only Brieskorn sphere which is an $L$-space is $\Sigma(2, 3, 5)$ (see for example \cite[Theorem 1.6]{CK}).
\end{proof}
\noindent
Hence for Brieskorn spheres, there are at most two possibilities for the induced action of $\tau$ on $\CFm(Y)$ (up to $U$-equivariant homotopy): either the identity or $\iota$.

For the results in this paper, we will only need to consider two families of Brieskorn spheres. The computation of the invariants of these may be viewed as generalizations of Lemmas~\ref{lem:5.2} and \ref{lem:5.1}, respectively.

\begin{figure}[h!]
\center
\includegraphics[scale=1]{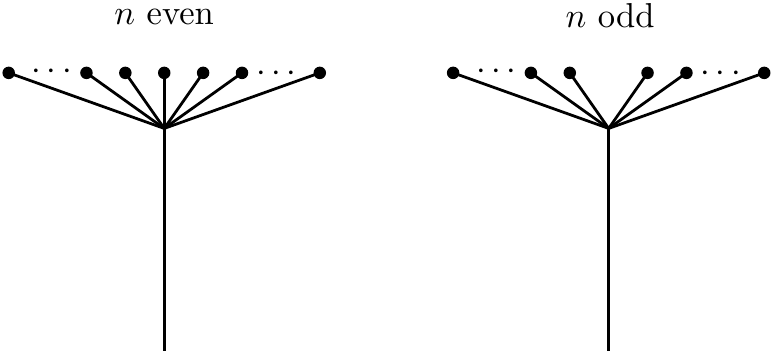}
\caption{Local equivalence class $h(A_n)$.}\label{fig:5.5}
\end{figure}

\begin{lemma}\label{lem:5.5}
Let $K_n$ be the family of twist knots displayed in Figure~\ref{fig:5.6}, equipped with the indicated involutions $\tau$ and $\sigma$. Let $A_n = S_{+1}(K_n) = \Sigma(2, 3, 6n+1)$. For $n$ positive and odd, we have
\begin{enumerate}
\item $h_{\tau}(A_n) = h_{\iota \circ \sigma}(A_n) = h(A_n) < 0$
\item $h_{\ita}(A_n) = h_{\sigma}(A_n) = 0$.
\end{enumerate}
\end{lemma}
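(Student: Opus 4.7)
The plan is to proceed by induction on odd $n$, with the base case $n = 1$ furnished by Lemma~\ref{lem:5.2}. The key ingredients are: the fact that $h(A_n) < 0$ in $\Inv$ for the Brieskorn spheres $A_n = \Sigma(2,3,6n+1)$ (a standard computation, as in \cite{HM}); Theorem~\ref{thm:5.4}, which forces any involution on $A_n$ to act as either $\id$ or $\iota$ on $\CFm(A_n)$ up to $U$-equivariant homotopy; and Theorem~\ref{thm:1.4} together with its $(+1)$-analog (discussed immediately after the statement of Theorem~\ref{thm:1.4}) to propagate constraints along equivariant cobordisms.

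For the inductive step, I use that $K_n$ can be obtained from $K_{n-2}$ by the addition of two full twists in the twist region. Equivalently, $A_n$ admits a surgery diagram given by $K_{n-2}$ with framing $+1$, together with a pair of $(+1)$-framed unknots placed in the twist region. For the strong involution $\tau$, I will place this pair mirror-symmetrically across the rotation axis, so that $\tau$ swaps the two unknots; for the periodic involution $\sigma$, I will place the pair analogously so that $\sigma$ swaps them. Blowing down then produces an interchanging $(+1, +1)$-cobordism from $(A_n, \tau)$ to $(A_{n-2}, \tau)$ and from $(A_n, \sigma)$ to $(A_{n-2}, \sigma)$. By the $(+1)$-analog of Theorem~\ref{thm:1.4}(3), this gives
\[
h_{\ita}(A_n) \geq h_{\ita}(A_{n-2}) \quad \text{and} \quad h_\sigma(A_n) \geq h_\sigma(A_{n-2}).
\]
Iterating these inequalities from the base-case values $h_{\ita}(A_1) = h_\sigma(A_1) = 0$ furnished by Lemma~\ref{lem:5.2} yields $h_{\ita}(A_n) \geq 0$ and $h_\sigma(A_n) \geq 0$ for every odd $n \geq 1$.

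Combining with the mapping-class-group constraint $h_f(A_n) \in \{0, h(A_n)\}$ (from Theorem~\ref{thm:5.4}) and the fact that $h(A_n) < 0$, these inequalities force $h_{\ita}(A_n) = 0$ and $h_\sigma(A_n) = 0$. Equivalently, $\tau \simeq \iota$ and $\sigma \simeq \id$ on $\CFm(A_n)$ up to $U$-equivariant homotopy, from which $h_\tau(A_n) = h(A_n) < 0$ and $h_{\iota \circ \sigma}(A_n) = h(A_n) < 0$, completing the argument.

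The main obstacle lies in the equivariant Kirby-calculus step: namely, verifying that the symmetric pair of $(+1)$-framed unknots can indeed be placed so as to be genuinely interchanged by $\tau$ (respectively $\sigma$), and that the resulting blowdown transports the Kirby diagram for $A_n$ to one for $A_{n-2}$ in a manner compatible with the relevant involution. This is essentially the same equivariant-surgery reasoning that underlies the proof of Theorem~\ref{thm:1.5}, adapted here to vary the underlying knot rather than the surgery coefficient.
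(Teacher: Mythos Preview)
Your argument is correct, but the paper takes a more direct route. Rather than proceeding inductively via interchanging $(+1,+1)$-cobordisms from $A_n$ to $A_{n-2}$, the paper constructs a single equivariant cobordism from $A_n$ directly to $S^3$, consisting of $n$ simultaneously attached $(+1)$-framed unknots placed in the twist region (Figure~\ref{fig:5.6}). This cobordism is $\spinc$-conjugating for $\tau$ (here the parity of $n$ enters) and $\spinc$-fixing for $\sigma$, yielding $h_{\ita}(A_n) \geq 0$ and $h_\sigma(A_n) \geq 0$ in one stroke. The paper's approach thus sidesteps the equivariant-blowdown compatibility check you flag as the main obstacle; in particular, it does not require that $\sigma$ interchange a pair of unknots (in the paper's configuration, $\sigma$ fixes each unknot setwise as a periodic involution rather than swapping them, cf.\ the analogous remark in the proof of Theorem~\ref{thm:1.8}). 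Both arguments terminate identically, invoking the mapping-class-group dichotomy of Theorem~\ref{thm:5.4} together with the computation $h(A_n) < 0$ for odd $n$ (which the paper obtains from the monotone root algorithm of \cite{DM} rather than \cite{HM}) to pin down the actions.
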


\begin{proof}
The Heegaard Floer homology $\HFm(A_n)$ is displayed in Figure~\ref{fig:5.5}. This can be computed either by using the usual Heegaard Floer surgery formula, or by using the graded roots algorithm of \cite{CK}. The action of $\iota$ on $\HFm(A_n)$ is given by reflection across the obvious vertical axis. Using the monotone root algorithm of \cite[Section 6]{DM}, $h(A_n)$ is locally trivial for $n$ even and locally equivalent to $h(\Sigma(2, 3, 7))$ for $n$ odd. In the latter case, this means that $h(A_n) < 0$. In Figure~\ref{fig:5.6}, we have displayed a cobordism from $A_n$ to $S^3$ consisting of $n$ unknots with framing $+1$. Note that this is $\spinc$-conjugating for $\tau$ (since $n$ is odd) and $\spinc$-fixing for $\sigma$. Hence $h_{\ita}(A_n) \geq 0$. This implies $\tau \simeq \iota$, since either $\tau \simeq \iota$ or $\tau \simeq \id$. Similarly, we have $h_{\sigma}(A_n) \geq 0$, which implies $\sigma \simeq \id$.
\end{proof}

\begin{figure}[h!]
\center
\includegraphics[scale=0.6]{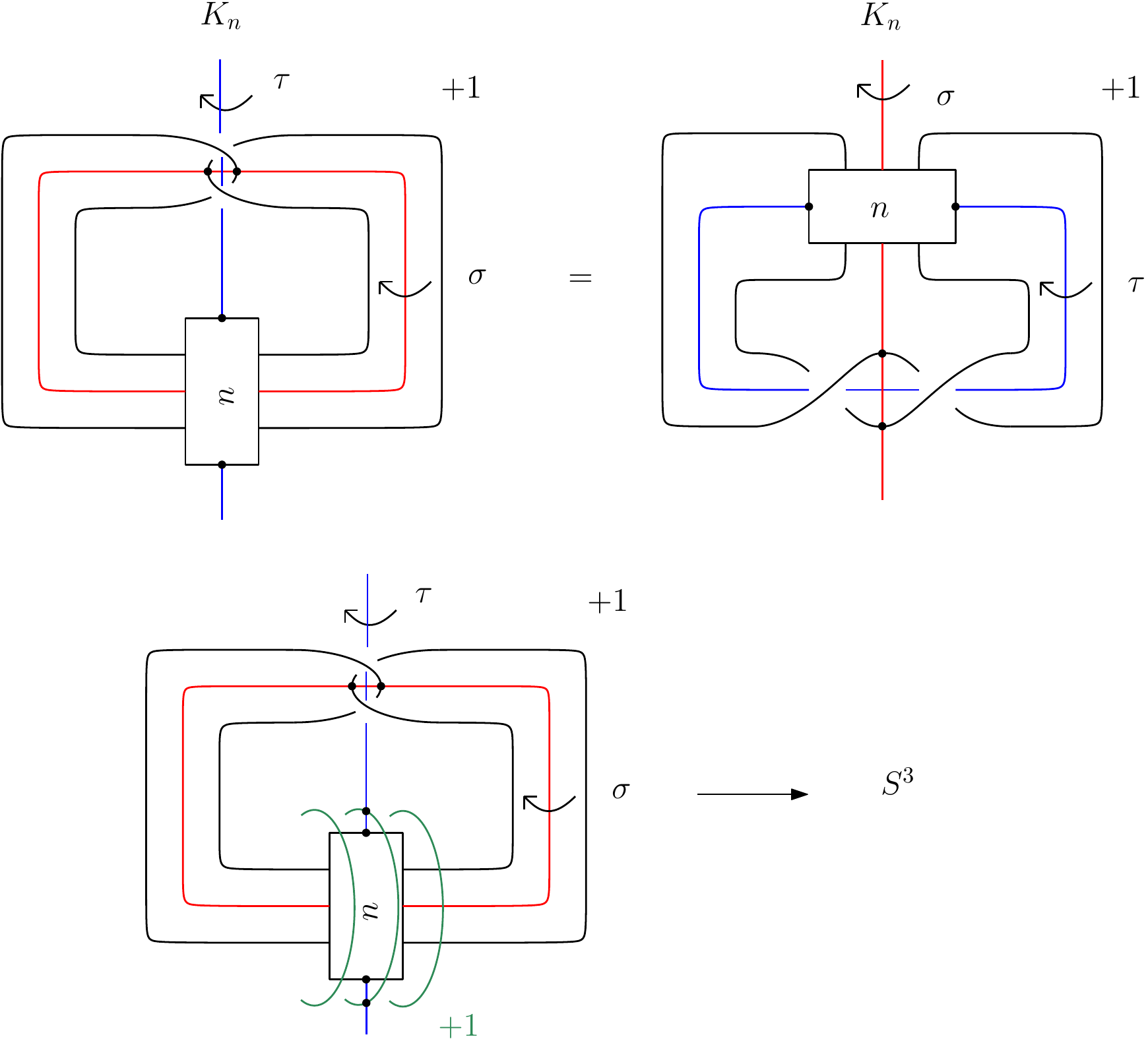}
\caption{Top: two equivalent diagrams for $A_n = S_{+1}(K_n)$. Bottom: cobordism from $A_n$ to $S^3$.}\label{fig:5.6}
\end{figure}

\begin{lemma}\label{lem:5.6}
Let $T_{2, 2n+1}$ be the right-handed $(2, 2n+1)$-torus knot, equipped with the involutions $\tau$ and $\sigma$ indicated in Figure~\ref{fig:5.7}. Let $B_n = S_{-1}(T_{2, 2n+1}) = \Sigma(2, 2n+1, 4n+3)$. Then
\begin{enumerate}
\item $h_{\tau}(B_n) = h_{\iota \circ \sigma}(B_n) = h(B_n) = X_{\lfloor (n+1)/2 \rfloor} < 0$
\item $h_{\ita}(B_n) = h_{\sigma}(B_n) = 0$
\end{enumerate}
\end{lemma}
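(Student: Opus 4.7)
The plan is to parallel the argument of Lemma~\ref{lem:5.1}, now applied to the entire family. The main observation is that $B_n = S_{-1}(T_{2,2n+1})$ is obtained from $S^3$ by attaching a single $(-1)$-framed $2$-handle along $T_{2, 2n+1}$, and $T_{2, 2n+1}$ carries both the indicated strong involution $\tau$ and periodic involution $\sigma$. First I would verify, using the machinery of Sections~\ref{sec:4.1} and~\ref{sec:4.2}, that this cobordism $W: S^3 \to B_n$ is an equivariant $(-1)$-cobordism; by the analysis of the action on $H^2(W)$ in Section~\ref{sec:4.2}, it is $\spinc$-conjugating with respect to the strong $\tau$ and $\spinc$-fixing with respect to the periodic $\sigma$. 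Applying Theorem~\ref{thm:1.4} (and using that $h_\iota(S^3) = h_{\id}(S^3) = 0$) would then yield the inequalities
\[
h_{\ita}(B_n) \geq 0 \qquad \text{and} \qquad h_{\sigma}(B_n) \geq 0.
\]

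The second step is to constrain the homotopy class of the induced action on $\CFm(B_n)$. By Theorem~\ref{thm:5.4}, we have $\textit{MCG}(B_n) = \Z/2\Z$ for all $n \geq 1$, since $B_n = \Sigma(2, 2n+1, 4n+3)$ is distinct from $S^3$ and $\Sigma(2, 3, 5)$. The nontrivial mapping class is realized by the Montesinos involution, and by Theorem~\ref{thm:2.1} its induced action on $\CFm(B_n)$ is homotopic to $\iota$. Consequently, any orientation-preserving involution on $B_n$ acts (up to $U$-equivariant homotopy) as either $\id$ or $\iota$ on $\CFm(B_n)$. Combined with the identification $h(B_n) = X_{\lfloor (n+1)/2 \rfloor}$ recalled at the end of Section~\ref{sec:2.4}, which is strictly less than zero, the inequality $h_{\ita}(B_n) \geq 0$ rules out the possibility $\ita \simeq \iota$; hence $\ita \simeq \id$, which forces $\tau \simeq \iota$ and gives $h_\tau(B_n) = h(B_n) = X_{\lfloor(n+1)/2\rfloor}$. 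An identical argument with $\sigma$ in place of $\ita$ shows $\sigma \simeq \id$, whence $\iota \circ \sigma \simeq \iota$ and $h_{\iota \circ \sigma}(B_n) = h(B_n)$.

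The only subtlety I anticipate is confirming that the diagrammatic involutions $\tau$ and $\sigma$ in Figure~\ref{fig:5.7} really are of strong and periodic type respectively, and that they extend across the handle attachment with the claimed $\spinc$-behavior. Both points are directly handled by Lemma~\ref{lem:4.4} and the sign computation on $H^2(W)$ in Section~\ref{sec:4.2}; no additional geometric input is required. Beyond this, the argument is essentially formal: the hard work is packaged into Theorems~\ref{thm:1.4},~\ref{thm:2.1}, and~\ref{thm:5.4}, and into the known computation of $h(B_n)$.
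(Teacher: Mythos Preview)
Your proposal is correct and follows essentially the same argument as the paper: both use the tautological $(-1)$-handle attachment along $T_{2,2n+1}$ as a $\spinc$-conjugating (for $\tau$) and $\spinc$-fixing (for $\sigma$) cobordism from $S^3$ to $B_n$, apply Theorem~\ref{thm:1.4} to obtain $h_{\ita}(B_n) \geq 0$ and $h_\sigma(B_n) \geq 0$, and then invoke the dichotomy (via Theorems~\ref{thm:2.1} and~\ref{thm:5.4}) that any involution acts as either $\id$ or $\iota$ together with $h(B_n) = X_{\lfloor(n+1)/2\rfloor} < 0$ to pin down all four classes. The paper compresses this by saying ``the proof proceeds as in Lemma~\ref{lem:5.5},'' but the content is identical.
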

\begin{proof}
As discussed after Example~\ref{ex:2.9}, $\smash{h(B_n) = X_{\lfloor (n+1)/2 \rfloor}}$, which is strictly less than zero. Figure~\ref{fig:5.7} constitutes a $(-1)$-cobordism from $S^3$ to $Y$, which conjugates $\spinc$-structures in the case of $\tau$ and fixes $\spinc$-structures in the case of $\sigma$. The proof then proceeds as in Lemma~\ref{lem:5.5}.
\end{proof}

\begin{figure}[h!]
\center
\includegraphics[scale=0.8]{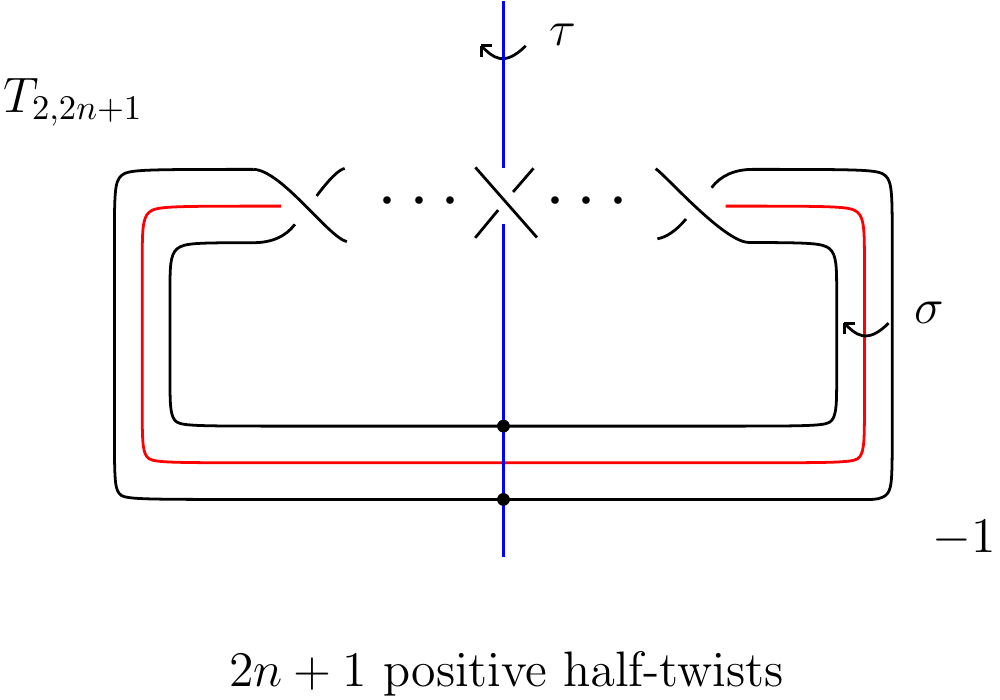}
\caption{Diagram of $B_n = S_{-1}(T_{2, 2n+1})$.}\label{fig:5.7}
\end{figure}
\noindent
We will actually not need the details of Lemma~\ref{lem:5.6}, but we include it for completeness. Note of course that $B_1 = A_1 = \Sigma(2, 3, 7)$.

\subsection{Surgeries on knots} \label{sec:5.3}
We now turn to several examples of corks given by surgeries on equivariant slice knots. To the best of the authors' knowledge, this method for generating corks has not generally been explored in the literature. Note that $(1/k)$-surgery on a slice knot always bounds a contractible manifold; see for example \cite[Section 6]{Gordon}. In fact, we have the following standard result:

\begin{lemma}\label{lem:5.7}
Let $K$ be a slice knot that bounds a ribbon disk with two minima. Then $(1/k)$-surgery on $K$ yields a homology sphere that bounds a Mazur manifold.
\end{lemma}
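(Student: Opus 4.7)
Since $K$ bounds a ribbon disk $D$ with two minima and one saddle, we can write $K$ as the band sum of a two-component unlink $U_1\cup U_2$ via a single band $b$; equivalently, $K$ has ribbon fusion number one. The plan is to use this structure to exhibit a Mazur manifold with boundary $S^3_{1/k}(K)$.

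First, I would consider the ribbon-disk complement $V_0 := B^4 \setminus \nu(D)$ and show that it has a handle decomposition with one $0$-handle, two $1$-handles (dual to the minima of $D$), and one $2$-handle (dual to the saddle), with $H_*(V_0) \cong H_*(S^1)$ (via Mayer--Vietoris applied to $B^4 = V_0 \cup \nu(D)$, whose intersection deformation-retracts to $S^1$) and $\partial V_0 = S^3_0(K)$. Next, the slam-dunk identity $(0,-k)$-surgery on $K\cup \mu$ equals $1/k$-surgery on $K$ (where $\mu$ is a meridian of $K$) yields $S^3_{1/k}(K) = \partial V_k$, where $V_k := V_0 \cup h_2^{(-k)}$ is built by attaching a $2$-handle to $V_0$ along $\mu \subset \partial V_0$ with framing $-k$. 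A short homological computation --- using that $\mu$ normally generates $\pi_1(V_0)$, since $V_0$ capped off with a $0$-framed $2$-handle along $\mu$ reconstructs $B^4$ --- gives that $V_k$ is simply connected and acyclic, hence contractible; its handle decomposition has the form $(1,2,2,0)$.

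The final step is to reduce this decomposition to the canonical Mazur form $(1,1,1,0)$. Because $V_k$ is contractible while $\partial V_k = S^3_{1/k}(K) \neq S^3$ for $k \neq 0$, the chain-level boundary map $\Z^2 \to \Z^2$ from the $2$-handles to the $1$-handles must have rank exactly one, so exactly one $1$-handle/$2$-handle canceling pair is present. Geometrically, this pair consists of the meridional $2$-handle $h_2^{(-k)}$ together with the $1$-handle of $V_0$ coming from a minimum of $D$ adjacent to the saddle $b$: the band-sum structure ensures that $\mu$ meets the cocore of this $1$-handle transversely in a single point. Carrying out the cancellation produces a Mazur manifold $W_k$ with $\partial W_k = S^3_{1/k}(K)$.

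The main obstacle I anticipate is promoting the algebraic cancellation to an honest geometric cancellation of a $1$-handle/$2$-handle pair, which requires following the band $b$ through the ribbon-induced Morse structure on $V_0$ and locating the intersection of $\mu$ with the correct cocore. An alternative, perhaps more transparent route is to write $W_k$ directly as a Kirby diagram built from the band presentation --- one dotted unknot placed near $b$ (the $1$-handle) together with a single $(-k)$-framed $2$-handle along a knot derived from $K$ and $\mu$ --- and to verify contractibility (via algebraic linking $\pm 1$ with the dotted circle) and the boundary identification (via slam-dunk and handle slides) in one pass; the trade-off is between a cleaner conceptual story and a more explicit but notationally heavier diagram.
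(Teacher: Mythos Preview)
Your approach is essentially correct and genuinely different from the paper's, but there is a reasoning error in the cancellation step. You claim that since $V_k$ is contractible with $\partial V_k \neq S^3$, the chain-level boundary map $\partial\colon C_2 \cong \Z^2 \to C_1 \cong \Z^2$ has rank exactly one. This is wrong: acyclicity of $V_k$ forces $H_1 = H_2 = 0$, which means $\partial$ is an \emph{isomorphism}, i.e., has rank two. The fact that $\partial V_k \neq S^3$ is a geometric obstruction to cancelling \emph{both} pairs, not an algebraic one. Fortunately your argument does not actually use this claim: you go on to correctly identify a specific geometric cancelling pair, namely the meridional $2$-handle (with $\mu$ chosen as a meridian of one unlink component $U_1$, away from the band) against the $1$-handle dual to $U_1$. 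Since $\mu$ links the dotted circle $U_1$ exactly once geometrically, the cancellation goes through and yields the desired $(1,1,1,0)$ decomposition.

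The paper's proof is considerably more direct and avoids the ribbon-disk complement entirely. It observes that performing an additional $0$-surgery on an unknot $\gamma$ encircling the band $b$ turns $Y = S^3_{1/k}(K)$ into $S^1 \times S^2$. This gives a cobordism from $Y$ to $S^3$ consisting of a single $2$-handle (the trace of surgery on $\gamma$) followed by a single $3$-handle (filling the $S^1 \times S^2$); turning this cobordism around and capping with $B^4$ produces the Mazur manifold in one stroke. Your approach has the advantage of being more systematic (it would adapt to ribbon disks with more minima, producing contractible manifolds with more handles), while the paper's has the advantage that the curve $\gamma$ is explicitly visible in the band picture --- which the paper later exploits to show that certain involutions extend over the resulting Mazur manifold.
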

\begin{proof}
Let $Y$ be $(1/k)$-surgery on $K$. Note that $K$ can be drawn by starting with an unlink of two components, and then attaching a band $b$ connecting them (where $b$ of course may link the two components many times). However, doing an additional $0$-surgery on an unknot that goes around $b$ turns $Y$ into $S^1 \times S^2$, as shown in Figure~\ref{fig:5.8}. Hence $Y$ admits a cobordism to $S^3$ consisting of a single $2$-handle (given by the trace of the surgery) and then a single $3$-handle (filling in the $S^1 \times S^2$). Turning this cobordism around gives the desired Mazur manifold.
\end{proof}
\begin{figure}[h!]
\center
\includegraphics[scale=0.85]{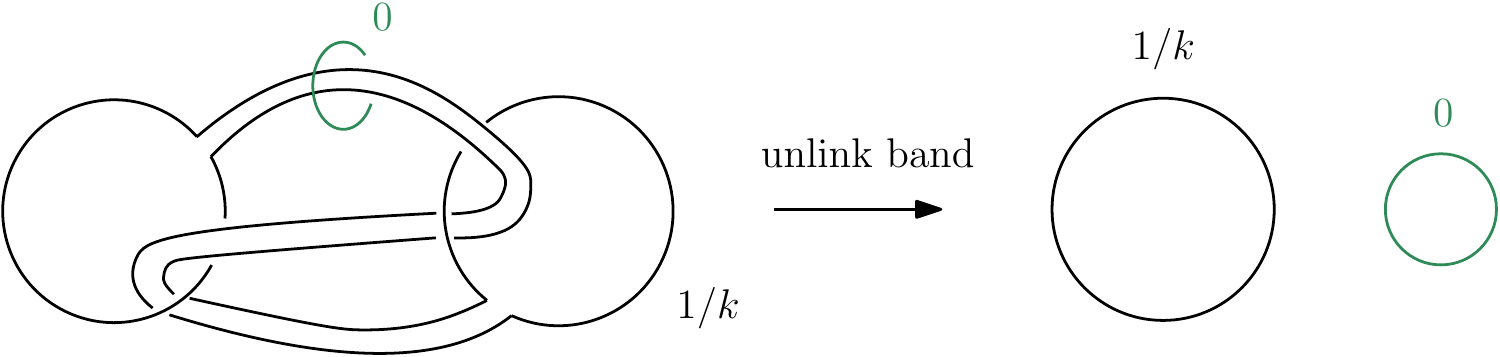}
\caption{Ribbon disk with two minima (and no maxima).}\label{fig:5.8}
\end{figure}

We are now in place to establish Theorems~\ref{thm:1.8} and \ref{thm:1.9}. Recall that $V_{n, k}$ is defined to be $(1/k)$-surgery on the doubly twist knot:
\[
V_{n,k}  = 
\begin{cases}
S_{1/k}(\overline{K}_{-n, n+1}) &\text{if } n \text{ is odd}\\
S_{1/k}(K_{-n, n+1}) &\text{if } n \text{ is even}.
\end{cases}
\]
Each $V_{n,k}$ is equipped with the involutions $\tau$ and $\sigma$ displayed in Figure~\ref{fig:1.1} (or rather, the mirrored involutions in the case that $n$ is odd). \\

\begin{figure}[h!]
\center
\includegraphics[scale=0.7]{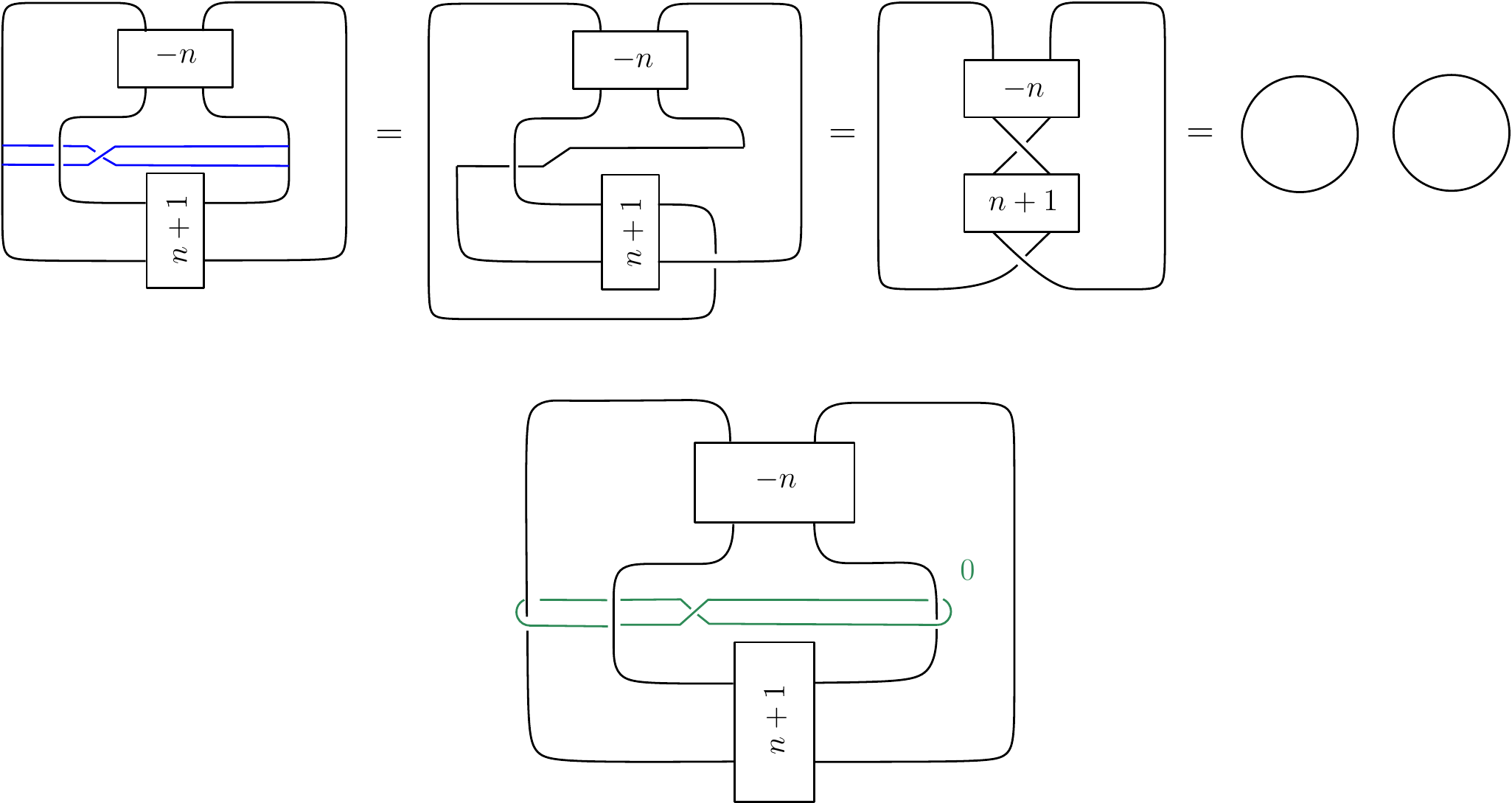}
\caption{Ribboning for $K_{-n, n+1}$.}\label{fig:5.9}
\end{figure}

\begin{proof}[Proof of Theorem~\ref{thm:1.8}]
First note that $K_{-n, n+1}$ bounds a slice disk with two minima (and no maxima). One possible ribboning is displayed along the top row of Figure~\ref{fig:5.9}. We claim $\tau \circ \sigma$ extends over the Mazur manifold afforded by Lemma~\ref{lem:5.7}. To see this, we utilize the following technique of Akbulut \cite{Akbulut4}. Note that the fusion band $b$ in the proof of Lemma~\ref{lem:5.7} is dual to the blue fission band of Figure~\ref{fig:5.9}. An unknot $\gamma$ going around $b$ is thus drawn in green in Figure~\ref{fig:5.9}. It is easily checked that the isotopy class of $\gamma$ is fixed by $\tau \circ \sigma$, so $\tau \circ \sigma$ extends over the $2$-handle attachment along $\gamma$. The claim then follows since every diffeomorphism of $S^1 \times S^2$ extends over $S^1 \times B^3$.

We now turn to a study of $\tau$ and $\sigma$. We claim that for $k = 1$, we have
\begin{enumerate}
\item If $n$ is odd, $h_{\ita}(V_{n,1}) \leq h_{\iota \circ \sigma}(A_n)$ and $h_{\sigma}(V_{n,1}) \leq h_\tau(A_n)$.
\item If $n$ is even, $h_{\tau}(V_{n,1}) \leq h_{\tau}(A_{n+1})$ and $h_{\iota \circ \sigma}(V_{n,1}) \leq h_{\iota \circ \sigma}(A_{n+1})$.
\end{enumerate}
The relevant equivariant surgeries are displayed in Figure~\ref{fig:5.10}. (Compare Figure~\ref{fig:5.6}.) Note that we always attach an odd number of $(-1)$-framed 2-handles. In the case that $n$ is odd, note that $\tau$ acts as a strong involution on a single unknot and interchanges the others in pairs, while $\sigma$ acts as a periodic involution on each unknot. (The roles of $\tau$ and $\sigma$ are reversed in the case where $n$ is even.) By Lemma~\ref{lem:5.5}, we thus see that all of the above local equivalence classes are strictly less than zero. Applying Theorem~\ref{thm:1.5} completes the proof.
\end{proof}

\begin{figure}[h!]
\center
\includegraphics[scale=0.65]{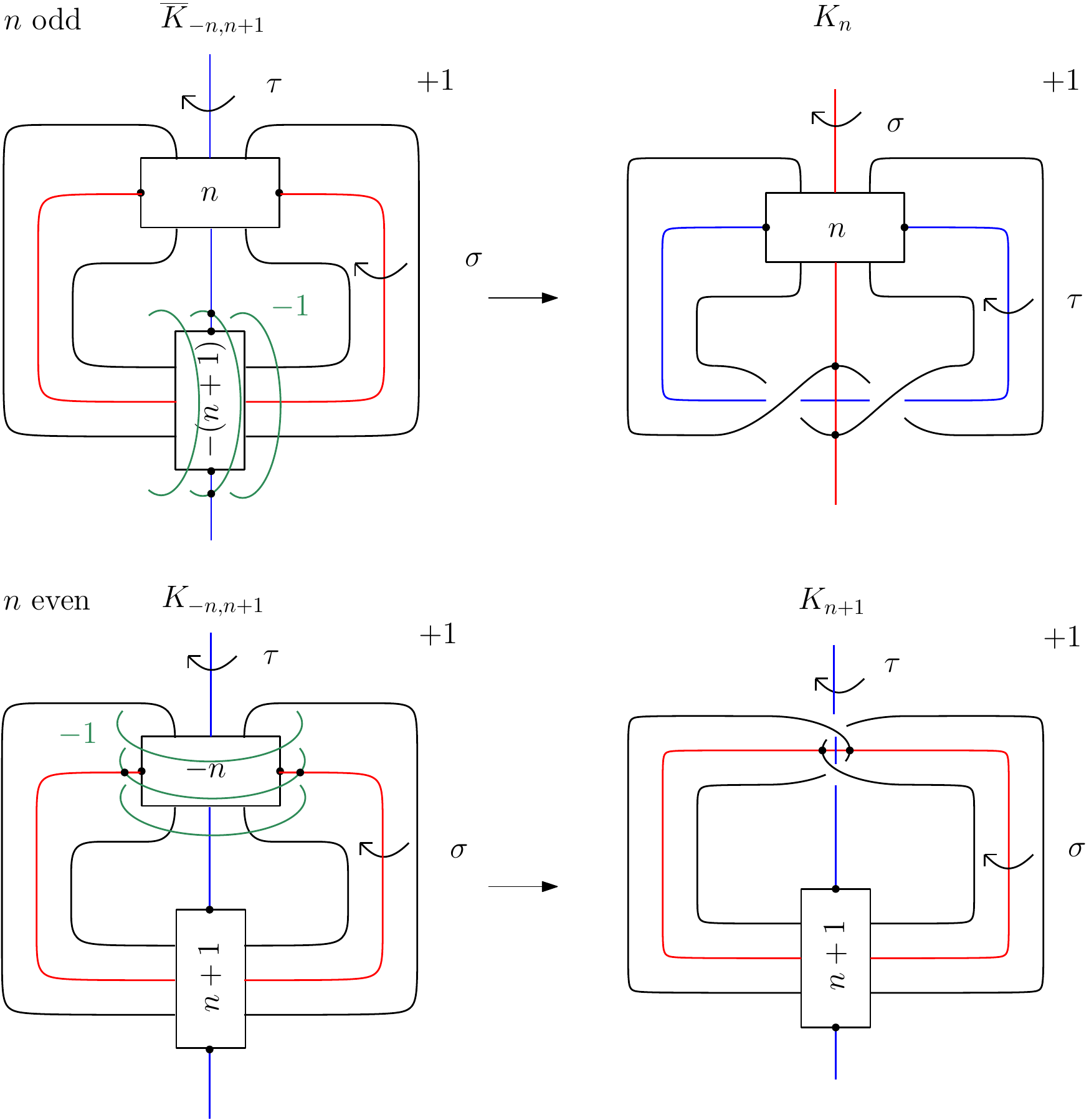}
\caption{Top ($n$ odd): cobordism from $V_{n,1}$ to $A_n$; there are $n$ green curves. Note that $\tau$ on $\overline{K}_{-n, n+1}$ is sent to $\sigma$ on $K_n$. Bottom ($n$ even): cobordism from $V_{n,1}$ to $A_{n+1}$; there are $n - 1$ green curves.}\label{fig:5.10}
\end{figure}

\begin{proof}[Proof of Theorem~\ref{thm:1.9}]
We first consider $(+1)$-surgery on the slice knots in question. In each case, Figure~\ref{fig:5.11} displays a negative-definite cobordism to $\Sigma(2, 3, 7)$ with appropriate involution(s). Applying Theorem~\ref{thm:1.5} completes the proof.
\end{proof}

\begin{figure}[h!]
\center
\includegraphics[scale=0.23]{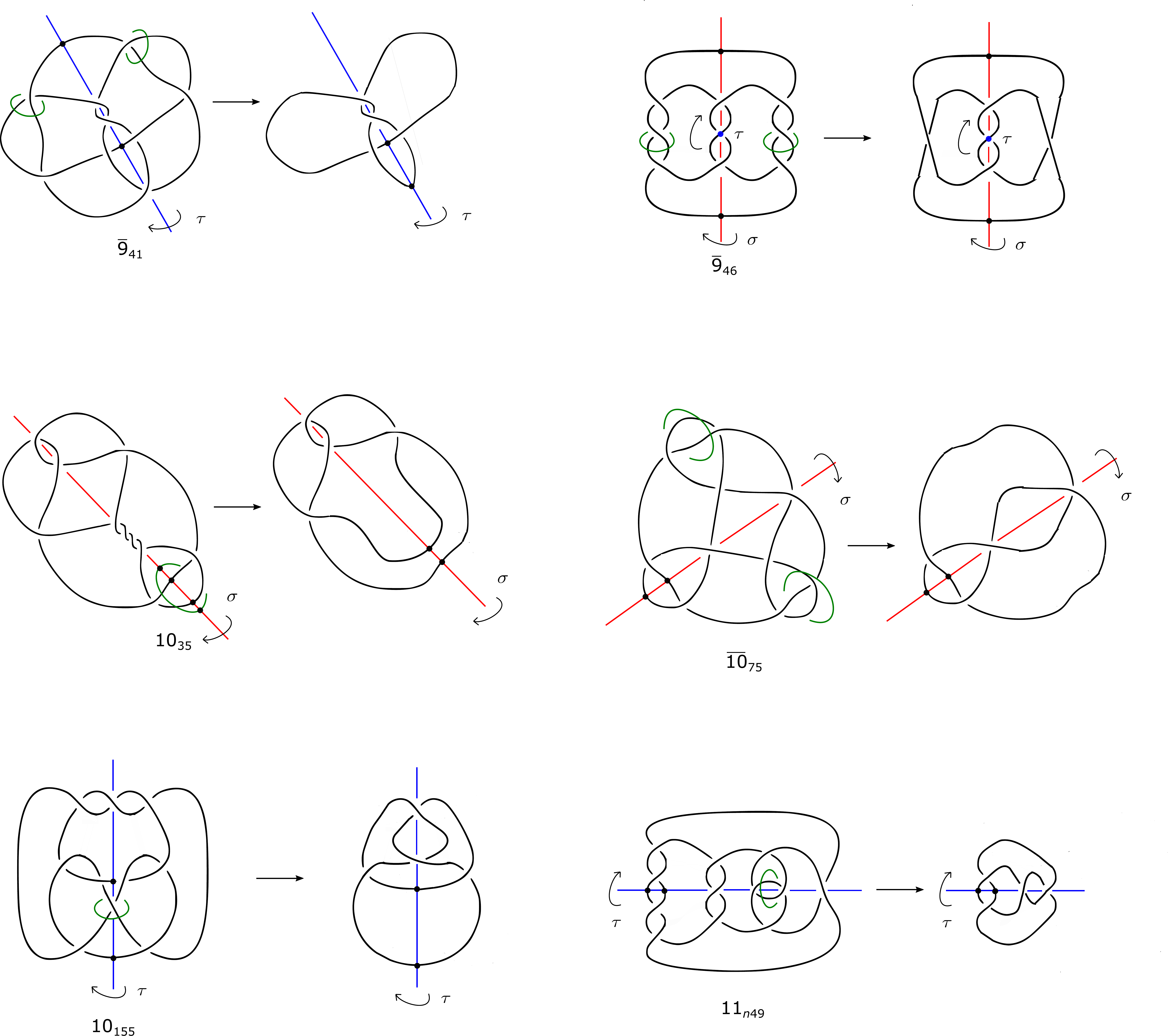}
\caption{Equivariant cobordisms in the proof of Theorem~\ref{thm:1.9}. All green unknots are equipped with framing $-1$.}\label{fig:5.11}
\end{figure}

\subsection{Surgeries on links} \label{sec:5.4}
We now turn to some examples given by surgeries on links. 

\begin{proof}[Proof of Theorem~\ref{thm:1.10}]
We begin by describing a handle attachment cobordism on $M_n$. Let the components of $M_n$ be $\alpha$ and $\beta$, oriented such that $(\alpha, \beta) = 1$. Consider a pair of $(-1)$-framed unknots $x$ and $y$ that link parallel strands of $\alpha$ and $\beta$, as displayed on the left in Figure~\ref{fig:5.12}. We claim that the handle attachment cobordism corresponding to $x$ and $y$ is an interchanging $(-1, -1)$-cobordism from $M_n$ to some manifold $Y_n$. Indeed, a quick computation shows that sliding $x$ over $\beta$ and $y$ over $\alpha$ gives the desired claim (see Figure~\ref{fig:5.12}). On the right in Figure~\ref{fig:5.12}, we have displayed an alternative diagram for $Y_n$ in which $x$ and $y$ are replaced by two zero-framed unknots $p$ and $q$, which are themselves linked by a $(+1)$-framed unknot $r$. As a surgery diagram for $Y_n$, this is equivariantly diffeomorphic to the previous.

\begin{figure}[h!]
\center
\includegraphics[scale=0.85]{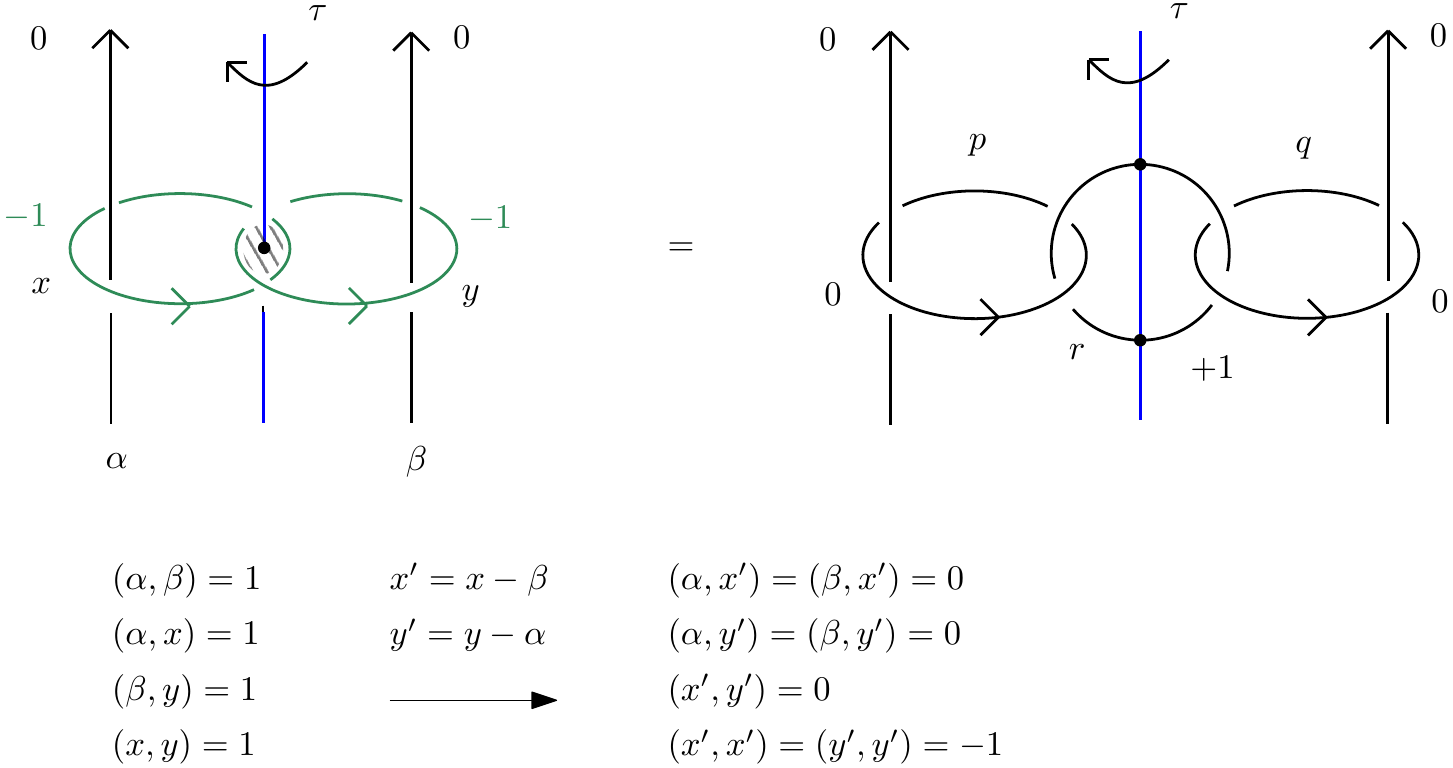}
\caption{Fundamental cobordism in the proof of Theorem~\ref{thm:1.10}.}\label{fig:5.12}
\end{figure}

We attach the configuration of Figure~\ref{fig:5.12} to the bottom of the link defining $M_n$. Clearly, $Y_n$ can be given the alternative equivariant surgery diagram shown in Figure~\ref{fig:5.13}. We modify this diagram by equivariantly sliding all of the $(-1)$-framed horizontal unknots over $p$ and $q$ and deleting them. This yields the second diagram in Figure~\ref{fig:5.13}. Through equivariant isotopy, we transfer the two half-twists of the vertical $(-1)$-curves onto $r$, and then slide the horizontal $(+1)$-framed unknots over $p$ and $q$. We then blow down everything except for $r$. This yields the final diagram in Figure~\ref{fig:5.13}. 

\begin{figure}[h!]
\center
\includegraphics[scale=0.6]{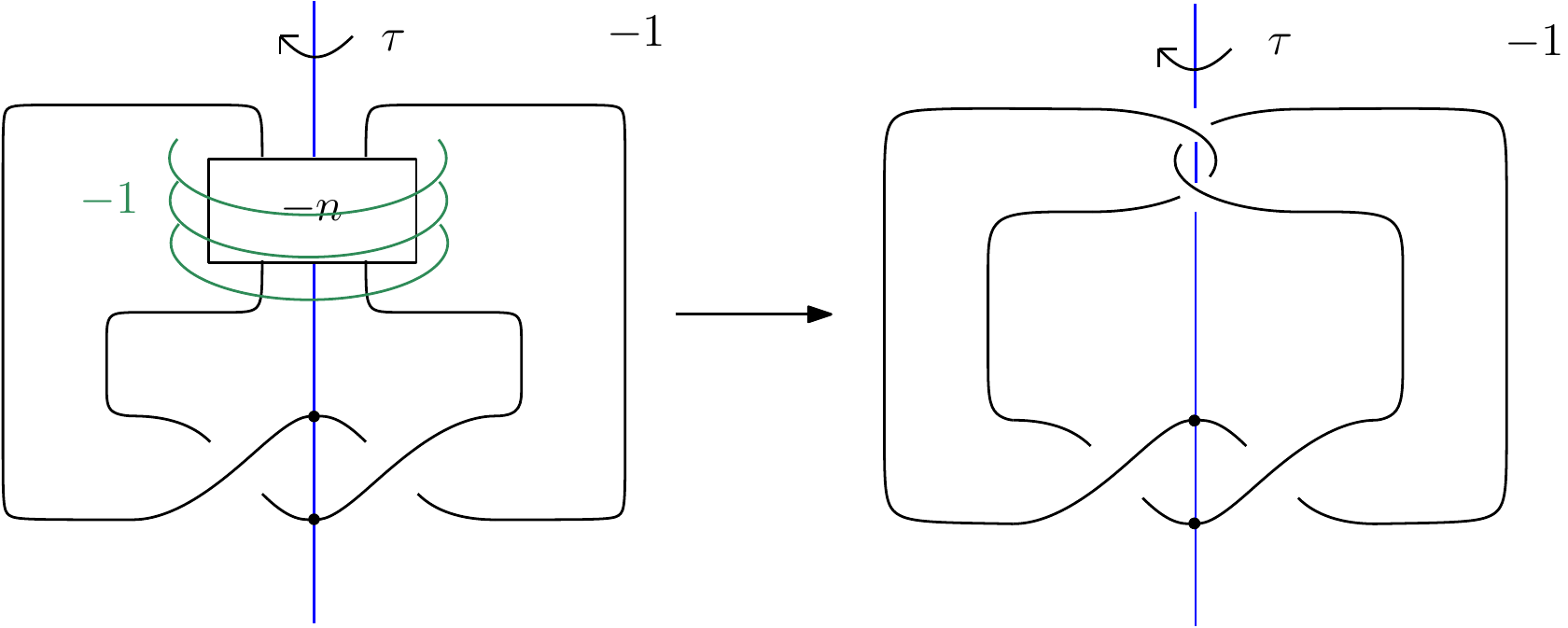}
\caption{Completing the cobordism from $M_n$ to $\Sigma(2, 3, 7)$.}\label{fig:5.14}
\end{figure}

In Figure~\ref{fig:5.14}, we display a $\spinc$-fixing equivariant cobordism from $Y_n$ to $\Sigma(2, 3, 7)$. This consists of attaching $(-1)$-framed unknots and blowing down until only one full negative twist remains. The resulting knot is just the right-handed trefoil, equipped with the strong involution of Lemma~\ref{lem:5.1}. It follows that $h_\tau(M_n) < 0$, as desired. Moreover, it is clear that if $M'$ is constructed from $M_n$ by introducing any number of symmetric pairs of negative full twists (as in Figure~\ref{fig:1.3}), then $M'$ admits a sequence of interchanging $(-1, -1)$-cobordisms to $M_n$. This completes the proof.
\end{proof}

\begin{figure}[h!]
\center
\includegraphics[scale=0.77]{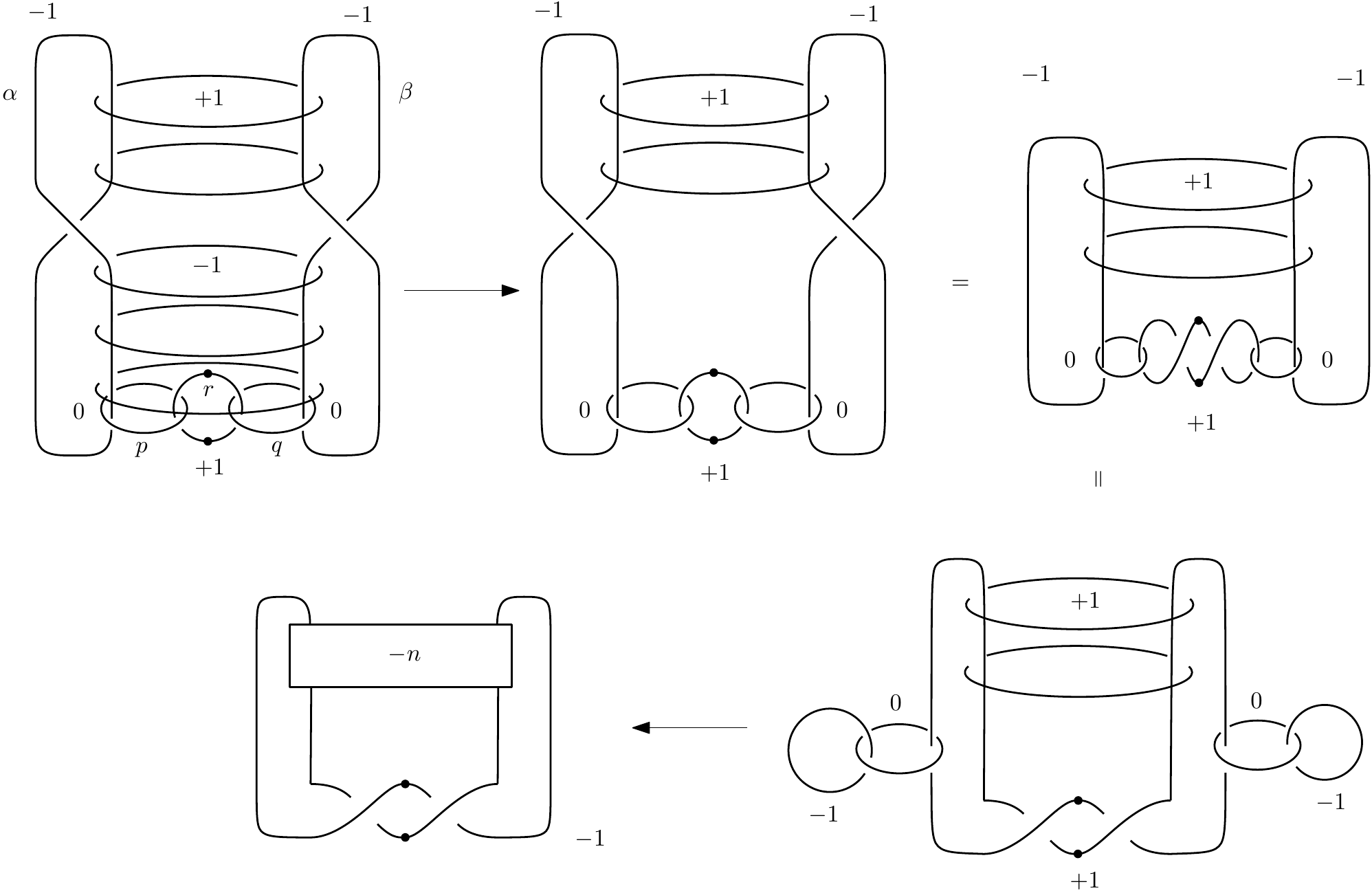}
\caption{Proof of Theorem~\ref{thm:1.10}. In the upper left, there are $n$ horizontal $(+1)$-curves and $n+1$ horizontal $(-1)$-curves.}\label{fig:5.13}
\end{figure}

We now turn to the family of examples in Theorem~\ref{thm:1.11}. The precise definition of the involution $\tau$ is slightly different in this case, as the link is not as obviously symmetric. Explicitly, we first consider an ambient isotopy $f_t$ of $S^3$ which transfers the half-twist in $L_2$ onto $L_1$, as displayed in Figure~\ref{fig:5.15}. This can be done by twisting (for example) only the upper half of the diagram, so that the lower half remains fixed throughout $f_t$. We then compose the diffeomorphism $f_1$ with $180^{\circ}$ rotation about the usual vertical axis. The reader can check that if $f_t$ is chosen carefully, this composition in fact defines an involution $\tau$ of $S^3$ which exchanges $L_1$ and $L_2$. Note that near the lower half of the link, $\tau$ is simply equal to rotation about the vertical axis.

\begin{figure}[h!]
\center
\includegraphics[scale=0.7]{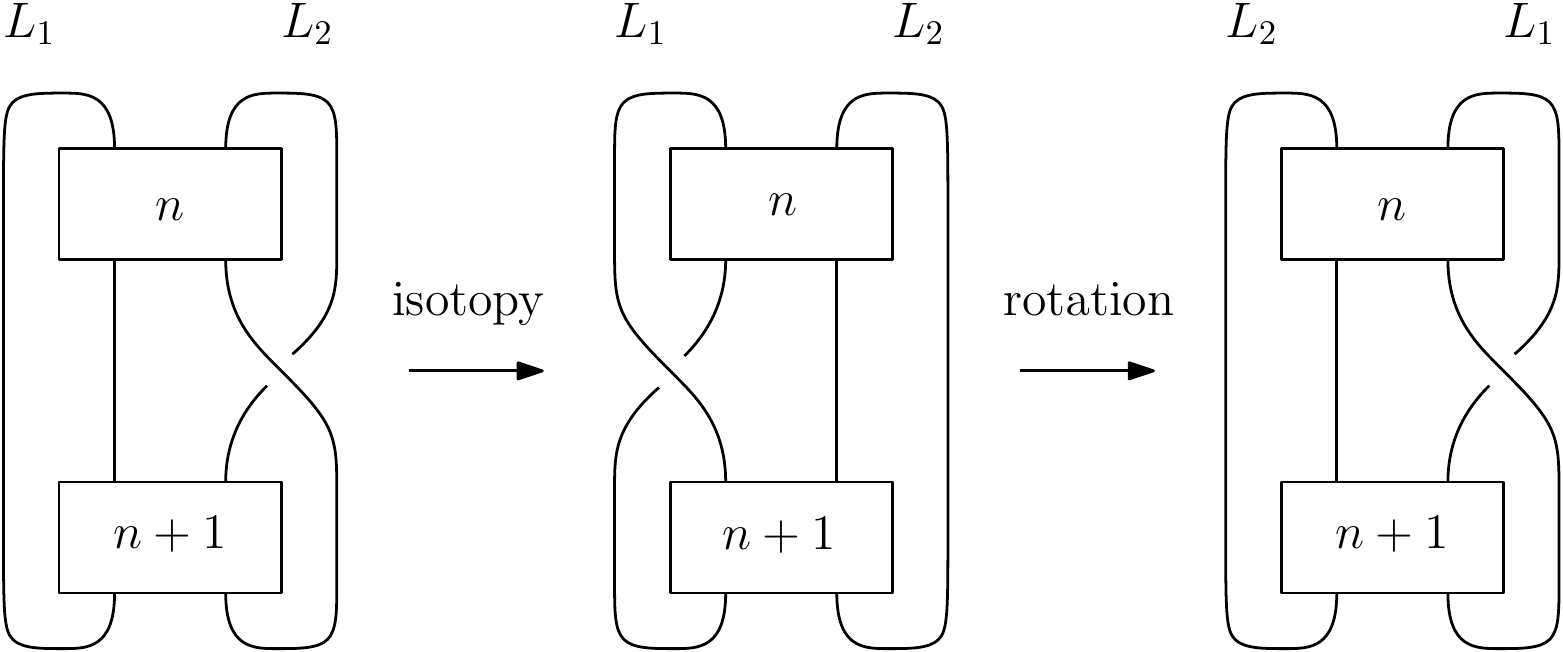}
\caption{Schematic action of $\tau$ on $W_n$.}\label{fig:5.15}
\end{figure}

\begin{proof}[Proof of Theorem~\ref{thm:1.11}]
We begin by using the same handle attachment cobordism as in Figure~\ref{fig:5.12}. This constitutes an interchanging $(-1, -1)$-cobordism from $W_n$ to some other manifold-with-involution, which we claim is $S_{-1}(T_{2, 2n+1})$. To see this, first examine the alternative surgery diagram in Figure~\ref{fig:5.16}. By sliding the $(-1)$-curves over $p$ and $q$ and canceling, we obtain the second diagram in Figure~\ref{fig:5.16}. (For the moment, we will not worry about isotopy/handleslide equivariance with respect to $\tau$.) As before, we transfer the half-twist onto $r$ and slide the remaining $(+1)$-curves over $p$ and $q$. Blowing down everything except for $r$ (keeping in mind the first row of Figure~\ref{fig:5.16}) establishes the claim.

\begin{figure}[h!]
\center
\includegraphics[scale=0.75]{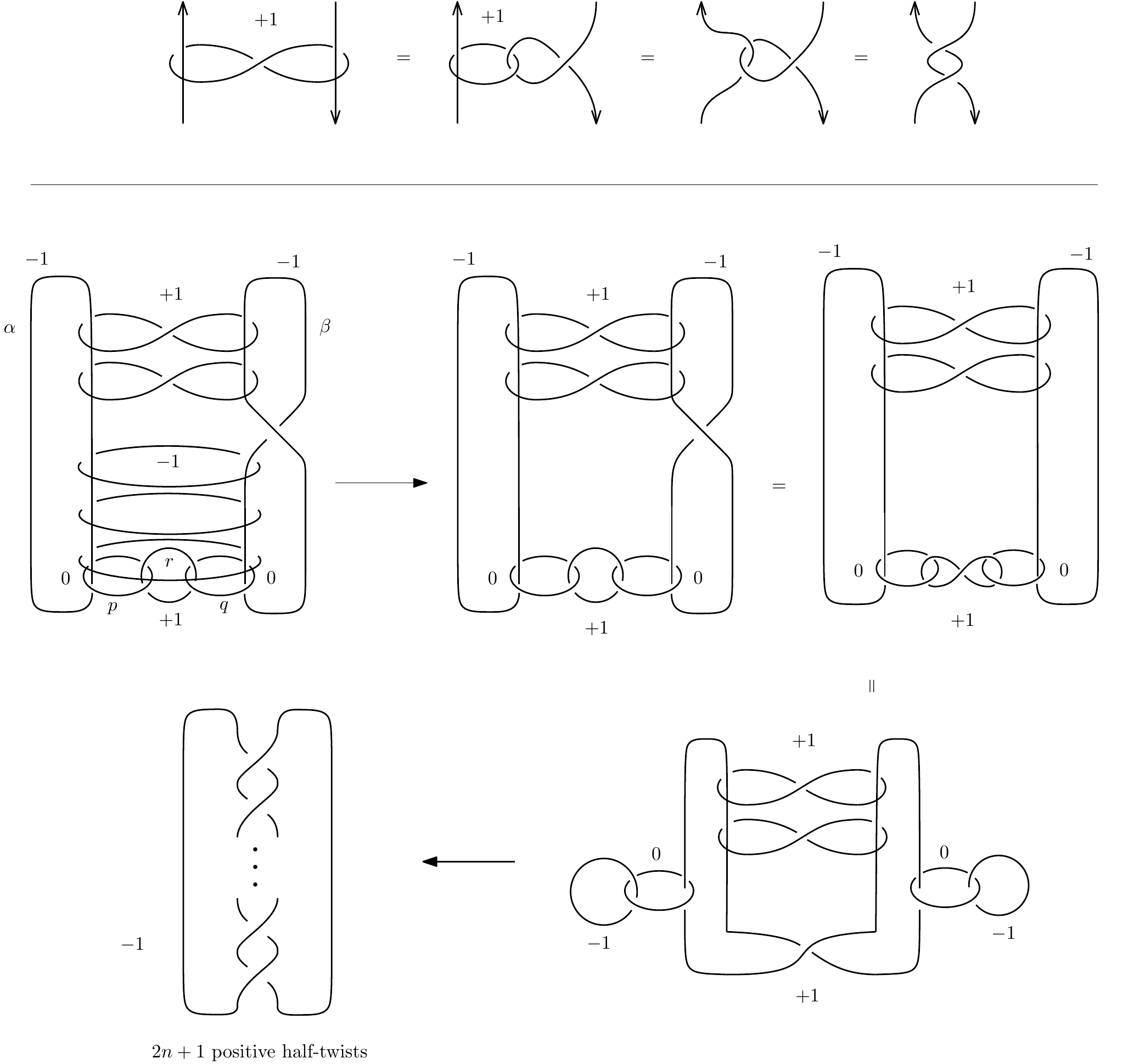}
\caption{Proof of Theorem~\ref{thm:1.11}. In the upper left, there are $n$ horizontal $(+1)$-curves and $n + 1$ horizontal $(-1)$-curves.}\label{fig:5.16}
\end{figure}

This shows that $W_n$ admits an interchanging $(-1, -1)$-cobordism to $B_n = S_{-1}(T_{2, 2n+1})$, where the latter is equipped with some involution $\upsilon$ which we have not identified.\footnote{The enterprising reader can verify that $\upsilon$ is actually the strong involution $\tau$ of Lemma~\ref{lem:5.6}. However, we will not need this for the proof given here.} Hence 
\[
h_\tau(W_n) \leq h_\upsilon(B_n) \text{ and } h_{\ita}(W_n) \leq h_{\iota \circ \upsilon}(B_n).
\] 
However, as discussed in  Section~\ref{sec:5.2}, either $\upsilon \simeq \iota$ or $\upsilon \simeq \id$. Thus one of $h_\upsilon(B_n)$ and $h_{\iota \circ \upsilon}(B_n)$ is equal to $h(B_n) = X_{\lfloor (n+1)/2 \rfloor} < 0$. This shows $W_n$ is a strong cork. Moreover, it is clear that if $W'$ is constructed from $W_n$ by introducing any number of symmetric pairs of negative full twists (as in Figure~\ref{thm:1.4}), then $W'$ admits a sequence of interchanging $(-1, -1)$-cobordisms to $W_n$. This completes the proof.
\end{proof}

Finally, we show that the examples of Theorem~\ref{thm:1.11} span an infinite-rank subgroup of $\G$. Examining the proof of Theorem~\ref{thm:1.11}, it is clear that we can either find an infinite sequence of the $W_i$ for which $h_\tau(W_i) \leq X_{\lfloor (i+1)/2 \rfloor}$, or the same for $h_{\ita}$. Without loss of generality, assume the former. Since the classes $X_i$ are linearly independent (see Section~\ref{sec:2.4}), one might naively expect the $h_\tau(W_i)$ to be linearly independent also. Unfortunately, it is not immediately obvious that this is the case. For example, the $h_{\tau}(W_i)$ could hypothetically all be equal to a fixed $\iota$-complex, which is itself dominated by all of the $X_i$. (While one can prove that this does not happen, we must take care to rule out such situations.) We will thus need to employ some technical results from \cite{HHL} and \cite{DHSTcobordism} in order to complete the proof.

We begin with the following straightforward criterion:

\begin{lemma}\label{lem:5.8}
Let $C_1, \ldots, C_n \in \Inv$ be a sequence of local equivalence classes, and let $C$ also be an equivalence class in $\Inv$. Suppose that $\Hconn(C)$ contains a $U$-torsion tower of length $l$. Assume that $l$ does not appear as a tower length in any $\Hconn(C_1), \ldots, \Hconn(C_{n})$. Then $C$ does not lie in the span of $C_1, \ldots, C_{n}$.
\end{lemma}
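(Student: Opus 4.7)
The plan is to argue by contradiction. Suppose that $C$ lies in the span of $C_1, \ldots, C_n$ in $\Inv$, so that
\[
C = a_1 C_1 + \cdots + a_n C_n
\]
for some integers $a_i$, where the group law on $\Inv$ is tensor product and negative coefficients denote dualization. Since $\Hconn$ depends only on the local equivalence class of an $\iota$-complex, I would compute $\Hconn$ of both sides and attempt to extract a contradiction from the tower-length hypothesis on the left.

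The principal technical ingredient is a statement controlling how tower lengths in $\Hconn$ behave under the group operations on $\Inv$. Drawing on the results of \cite{HHL} and \cite{DHSTcobordism}, I would first note that dualization commutes with passage to the connected complex (up to a grading convention), and that the connected complex of a tensor product can be realized as a local-equivalent summand of the tensor product of the connected complexes via suitably chosen maximal self-local equivalences. Coupled with the elementary observation that tensoring two $U$-torsion towers produces torsion towers each of whose lengths is at most the minimum of the two input lengths, this gives, by induction on the number of factors, the inclusion
\[
\{\text{tower lengths in } \Hconn(a_1 C_1 + \cdots + a_n C_n)\} \subseteq \bigcup_{i=1}^n \{\text{tower lengths in } \Hconn(C_i)\}.
\]
Since $l$ is by assumption a tower length of $\Hconn(C)$ that does not appear in any $\Hconn(C_i)$, this contradicts the presumed equality above.

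The main obstacle I anticipate is making the tensor-product claim for $\Hconn$ rigorous. In general $\Hconn(A \otimes B)$ is not literally $\Hconn(A) \otimes \Hconn(B)$, but only a local-equivalent summand thereof obtained by carefully composing maximal self-local equivalences on each factor. Tracking which tower lengths can survive this process, and verifying that no new lengths are introduced, is the delicate point. As a backup, if the direct tensor-product analysis proves awkward, I would instead extract tower-length-detecting homomorphisms $\Inv \to \Z$ from the framework of \cite{DHSTcobordism}: producing a homomorphism that vanishes on each $C_i$ but is nonzero on $C$ would directly obstruct $C$ from lying in the span, bypassing the need to analyze $\Hconn$ of the full tensor product explicitly.
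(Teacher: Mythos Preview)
Your approach is essentially the same as the paper's. The paper's proof is quite short: replace $A\otimes B$ by $A_{\mathrm{conn}}\otimes B_{\mathrm{conn}}$ (legitimate since $\Hconn$ is a local equivalence invariant), observe that $\Hconn(A\otimes B)$ is then a summand of $H_*(A_{\mathrm{conn}}\otimes B_{\mathrm{conn}})$, and apply K\"unneth over $\ff[U]$ together with the elementary fact that both $\ff[U]/U^a \otimes_{\ff[U]} \ff[U]/U^b$ and $\mathrm{Tor}^{\ff[U]}(\ff[U]/U^a,\ff[U]/U^b)$ are isomorphic to $\ff[U]/U^{\min(a,b)}$, while tensoring with the free part returns the original tower. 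Since $\min(a,b)\in\{a,b\}$, no new tower lengths appear; this is precisely the point where your anticipated ``obstacle'' dissolves, and your backup route via homomorphisms from \cite{DHSTcobordism} is unnecessary.
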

\begin{proof}
We claim that if $A$ and $B$ are $\iota$-complexes, then any $U$-torsion tower length appearing in $\Hconn(A \otimes B)$ must appear as a tower length in either $\Hconn(A)$ or $\Hconn(B)$. To see this, note that $\Hconn$ is an invariant of local equivalence, so when computing the connected homology of $A \otimes B$, without loss of generality we can replace $A \otimes B$ with $A_\textrm{conn} \otimes B_\textrm{conn}$. Thus $\Hconn(A \otimes B)$ is a summand of $H_*(A_\textrm{conn} \otimes B_\textrm{conn})$. Using the K\"unneth formula and an examination of the Tor functor, it is easily checked that any tower length appearing in $H_*(A_\textrm{conn} \otimes B_\textrm{conn})$ must appear in either $H_*(A_\textrm{conn})$ or $H_*(B_\textrm{conn})$, giving the desired claim. This shows that if $C$ is a linear combination of $C_1, \ldots, C_{n}$, then any tower length appearing in $\Hconn(C)$ must appear in the connected homology of some factor.
\end{proof}

We now come to a crucial technical lemma. The proof of this relies on a more subtle analysis of the machinery of $\iota$-complexes than we have encountered so far. In particular, we will need some familiarity with the results of \cite{DHSTcobordism}, in which the algebra of $\iota$-complexes is studied through the use of the \textit{almost local equivalence group}. As this is the only place in which we utilize this notion, we have chosen not to emphasize it, but for convenience of the reader, we give a brief overview here.

In \cite[Section 3.1]{DHSTcobordism}, it is shown that one can define a slightly modified group $\smash{\widehat{\Inv}}$ by requiring various equalities in Definitions~\ref{def:2.2} and \ref{def:2.3} to hold only modulo $U$. More precisely, we say that a pair $(C, \iota)$ is an \textit{almost $\iota$-complex} if $C$ is an $\ff[U]$-complex (as in Definition~\ref{def:2.2}) and $\iota: C \rightarrow C$ is a grading-preserving endomorphism such that:
\begin{enumerate}
\item $\iota \partial + \partial \iota \equiv 0$ mod $U$; and,
\item $\iota^2 + \id \equiv \partial H + H \partial$ mod $U$ for some homotopy $H$.
\end{enumerate}
Similarly, local maps between almost $\iota$-complexes are only required to commute with $\iota$ up to homotopy modulo $U$. We may analogously form an almost local equivalence group $\smash{\widehat{\Inv}}$ by considering the set of almost $\iota$-complexes modulo this modified notion of local equivalence. This comes with a forgetful homomorphism
\[
f: \Inv \rightarrow \smash{\widehat{\Inv}},
\]
so that $\Inv$ can be partially understood by studying $\smash{\widehat{\Inv}}$. Indeed, the precise details of the above definitions are not very important here; instead, the reader should think of $\smash{\widehat{\Inv}}$ as an algebraic artifice which turns out to be easier to understand than $\Inv$.

There are several advantages to working with $\smash{\widehat{\Inv}}$. Firstly, it turns out that it is possible to enumerate the different local equivalence classes in $\smash{\widehat{\Inv}}$. Each local equivalence class has a preferred representative, called a \textit{standard complex}. The set of standard complexes is parameterized by certain finite sequences of symbols, which (among other things) record $U$-torsion tower lengths in the usual $\ff[U]$-homology of each standard complex; see \cite[Definition 4.1]{DHSTcobordism}. Secondly, the partial order on $\smash{\widehat{\Inv}}$ turns out to be a total order \cite[Theorem 3.25]{DHSTcobordism}, and the forgetful homomorphism respects this ordering. The ordering on $\smash{\widehat{\Inv}}$ can be described in terms of a lexicographic ordering on the set of parameter sequences \cite[Section 4.2]{DHSTcobordism}.

\begin{lemma}\label{lem:5.9}
Let $C \in \Inv$ be a local equivalence class. Suppose $C \leq X_i$ for some fixed $i$, where $X_i$ is the $\iota$-complex of Example~\ref{ex:2.9}. Then for all $n \neq 0$, the connected homology $\Hconn(nC)$ contains a $U$-torsion tower of length at least $i$.
\end{lemma}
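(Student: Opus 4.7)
The strategy is to reduce the lemma to a standard-complex computation in the almost local equivalence group $\widehat{\Inv}$, exploiting the order-preserving forgetful homomorphism $f : \Inv \to \widehat{\Inv}$ together with the machinery just recalled. Every class in $\widehat{\Inv}$ is represented by a standard complex $\mathcal{C}(b_1, h_1, \ldots, b_k, h_k)$; the partial order on $\widehat{\Inv}$ is total and described lexicographically on the parameter sequences; and the $U$-torsion tower lengths in the $\ff[U]$-homology of a standard complex are recorded by its parameters. These tower lengths match the tower lengths appearing in $\Hconn$ of any lift to $\Inv$, so it suffices to carry out the argument at the level of standard complexes in $\widehat{\Inv}$.

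The first step is to pin down the leading parameter pair of $f(C)$. The class $f(X_i)$ has standard complex $\mathcal{C}(-1, i)$, whose torsion homology is a single tower of length $i$. Applying $f$ to the hypothesis $C \leq X_i$ gives $f(C) \leq f(X_i)$ in the total order on $\widehat{\Inv}$. This rules out $f(C) = 0$ (otherwise one would have $0 \leq f(X_i) < 0$, contradicting totality of the order) and, via the lexicographic description, forces the standard complex of $f(C)$ to begin with a pair $(-1, h_1)$ where $h_1 \geq i$.

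The second step is, for each $n \neq 0$, to extract the leading parameter pair of the standard complex of $nf(C) = f(nC)$ using the tensor-product and dualization rules for standard complexes from \cite[Section 4]{DHSTcobordism}. The key claim is that this leading pair has depth at least $h_1 \geq i$: repeated tensor product with $f(C)$ cannot shorten the initial descending segment governed by $(-1, h_1)$, and dualization (relevant when $n < 0$) converts it into a $(+1, h_1)$ pair whose contribution to the torsion homology is still a tower of length $h_1 \geq i$. Reading off the torsion homology of the standard complex of $nf(C)$ then yields a $U$-torsion tower of length at least $i$ in $\Hconn(nC)$.

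The main obstacle lies in the second step. The leading-depth claim is intuitive, but the tensor-product rule for standard complexes is combinatorially intricate, and one must rule out the possibility that a later parameter pair of $f(C)$ interacts with the leading $(-1, h_1)$ pair under tensoring in a way that erodes its depth. Handling this will require leveraging the lex-ordered total-order structure of $\widehat{\Inv}$ to control the leading pair of $nf(C)$ by the leading pair of $f(C)$ alone, together with a careful bookkeeping of the combinatorics of \cite[Section 4]{DHSTcobordism}.
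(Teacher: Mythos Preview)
Your overall framework---passing to $\widehat{\Inv}$ and reading off tower lengths from the standard-complex parameters---is the same as the paper's. However, there is a gap in Step~1 and a significant detour in Step~2.

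In Step~1, the lexicographic order alone does \emph{not} force $h_1 \geq i$. From $(a_1, b_1, \ldots) \leq^{!} (-, +i)$ one only obtains $a_1 = -$ and then either $b_1 < 0$ or $b_1 \geq i$; both are consistent with the order on $\widehat{\Inv}$. The possibility $b_1 < 0$ must be ruled out separately, and this is where the hypothesis that $C$ is a \emph{genuine} $\iota$-complex (not merely an almost $\iota$-complex) enters: it is \cite[Theorem~8.3]{DHSTcobordism} that forbids a standard complex in the image of $f$ from beginning with $(-, b_1)$ for $b_1 < 0$. You have not invoked this, so the claim $h_1 \geq i$ is unjustified as written.

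In Step~2, you are working much harder than necessary. Rather than attempting to track the leading parameter pair of $f(nC)$ through tensor-product combinatorics---which you correctly identify as the main obstacle and do not resolve---the paper simply observes that $C \leq X_i$ implies $nC \leq nX_i$ for all $n > 0$, and then computes $f(nX_i)$ directly: by \cite[Theorem~8.1]{DHSTcobordism} its standard complex has parameter sequence $(-, +i, -, +i, \ldots, -, +i)$. One then reruns the $n = 1$ argument verbatim with $nX_i$ in place of $X_i$, obtaining a leading pair $(-, b_1)$ with $b_1 \geq i$ for $f(nC)$. This completely sidesteps the tensor-product bookkeeping. For $n < 0$, the paper appeals to the behavior of $\Hconn$ under dualizing \cite[Proposition~4.1]{HHL}, which transfers the result from $|n|C$ to $nC$.

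One further imprecision: the tower lengths in the standard complex of $f(C)$ do not ``match'' those of $\Hconn(C)$; what is true (and sufficient) is that the homology of the standard complex injects as a summand of $H_*(C_{\mathrm{conn}})$, so towers in the former appear in the latter. This is \cite[Theorem~6.3]{DHSTcobordism} (applied to $C_{\mathrm{conn}}$), and the paper is careful to invoke it in this one-sided form.
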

\begin{proof}
For simplicity, we first consider the case $n = 1$. Let the almost $\iota$-complex parameter sequence corresponding to $f(C)$ be given by $(a_1, b_1, \ldots, a_k, b_k)$. Since the parameter sequence corresponding to $f(X_i)$ is given by $(-, +i)$ (see \cite[Section 4.1]{DHSTcobordism}), by \cite[Theorem 4.6]{DHSTcobordism} we have
\[
(a_1, b_1, \ldots, a_k, b_k) \leq^{!} (-, +i),
\]
where $\leq^{!}$ is the lexicographic order of \cite[Section 4.2]{DHSTcobordism}. Examining the definition of $\leq^{!}$, it is clear that $a_1 = -$. There are then two possibilities for $b_1$ consistent with the above inequality: either $b_1 < 0$, or $b_1 \geq i$. The first possibility is ruled out by \cite[Theorem 8.3]{DHSTcobordism}, since $C$ is a genuine $\iota$-complex. This means that the standard complex representative of $f(C)$ has a $U$-torsion tower of length $b_1 \geq i$ in its homology. 

It follows from the proof of \cite[Theorem 6.3]{DHSTcobordism} that the homology of the standard complex representative of $f(C)$ is a summand of the usual homology of $C$. (In \cite[Theorem 6.3]{DHSTcobordism}, it is stated that $C$ is locally equivalent to an almost $\iota$-complex of which the standard complex is a summand. However, an examination of the proof shows that actually the local equivalence condition can be replaced with homotopy equivalence. For the analogous statement in knot Floer homology, see \cite[Corollary 6.2]{DHSTconcordance}.) Since the standard complex is an invariant of local equivalence, we may replace $C$ with $C_\mathrm{conn}$ without loss of generality. Hence we see that $\Hconn(C)$ contains a $U$-torsion tower of length $b_1 \geq i$, as desired.

Now suppose $n > 1$. Note that the inequality $C \leq X_i$ implies $nC \leq nX_i$ for all $n \geq 0$. By \cite[Theorem 8.1]{DHSTcobordism}, the parameter sequence corresponding to $f(nX_i)$ is given by
\[
(-, +i, -, +i, \ldots, -, +i).
\]
Thus, the same argument as before gives the desired result when $n > 1$. The case when $n \leq -1$ follows from the behavior of $\Hconn$ under dualizing; see \cite[Proposition 4.1]{HHL}.
\end{proof}

\begin{proof}[Proof of Theorem~\ref{thm:1.3}]
As mentioned previously, the proof of Theorem~\ref{thm:1.11} makes clear that we can either find an infinite sequence of the $W_i$ for which $h_\tau(W_i) \leq X_{\lfloor (i+1)/2 \rfloor}$, or the same for $h_{\ita}$. Without loss of generality, assume the former. It is straightforward to inductively construct an infinite linearly independent subsequence $W_{i_p}$, as follows. At the $p$-th stage, let $i_p$ be any integer for which $\lfloor (i_p+1)/2 \rfloor$ is larger than the maximal $U$-torsion tower length appearing amongst $\Hc^\tau(W_{i_1}), \ldots, \Hc^\tau(W_{i_{p-1}})$. By Lemma~\ref{lem:5.9}, the connected homology $\Hc^\tau(nW_{i_p})$ must contain a $U$-torsion tower of length at least $\lfloor (i_p+1)/2 \rfloor$ for all $n \neq 0$. It follows from Lemma~\ref{lem:5.8} that no nonzero multiple of $h_\tau(W_{i_p})$ lies in the span of $h_\tau(W_{i_1}), \ldots, h_\tau(W_{i_{p-1}})$. Since $h_\tau$ is a homomorphism from $\G$ to $\Inv$, this proves that $W_{i_1}, \ldots, W_{i_p}$ are linearly independent. Proceeding inductively yields a $\Z^\infty$-subgroup.
\end{proof}

\begin{remark}\label{rem:5.10}
We do \textit{not} show that all the $W_i$ are linearly independent, although we conjecture this is the case. 
\end{remark}

We now turn to a brief discussion of $\Z/2\Z$-homology spheres. It is straightforward to alter Definition~\ref{def:1.13} by changing all coefficient rings to $\Z/2\Z$; following Definitions~\ref{def:homologydiff} and \ref{def:1.17}, we then obtain a $\Z/2\Z$-bordism group of involutions, which we denote by $\smash{\Theta^{\tau}_{\Z/2\Z}}$. Note that in this context, each homology sphere and cobordism comes equipped with a unique spin structure, which is necessarily fixed under any self-diffeomorphism. We leave it to the reader to formulate and verify the appropriate analogues of Theorems~\ref{thm:1.1} and \ref{thm:1.2}.

\begin{proof}[Proof of Corollary~\ref{cor:branched}]
As can be seen from \cite[Figure 4]{Harper}, the pairs $(W_i, \tau_i)$ arise as double branched covers of knots in $S^3$. It is easily checked that the proof of Theorem~\ref{thm:1.3} establishes linear independence (of the appropriate subsequence) over the group $\smash{\Theta^{\tau}_{\Z/2\Z}}$.
\end{proof}

\begin{remark}
One can also consider the family of knots
\[
k(2, p, q) \# - T(p, q)
\]
given by the connected sum of a Montesinos knot with an orientation-reversed torus knot. The double branched cover of this is $\Sigma(2, p, q) \# - \Sigma(2, p, q)$, which evidently bounds a homology ball. The covering involution on the first factor is nontrivial, while the covering involution on the second factor is isotopic to the identity. As in Remark~\ref{rem:summand}, it follows that an appropriate family of such knots spans a $\mathbb{Z}^\infty$-summand of $\mathcal{C}$ and maps injectively into $\smash{\Theta^{\tau}_{\Z/2\Z}}$. However, note that the resulting double branched covers do not in general bound contractible manifolds.
\end{remark}

\subsection{Further computations}\label{sec:secP}
We now turn to a slightly more involved example, in which we construct an equivariant cobordism from a candidate strong cork to a manifold other than a Brieskorn sphere. To constrain the invariants of this manifold, we leverage some simple calculations coming from involutive Heegaard Floer homology. We have included this computation to demonstrate that in favorable situations, one can apply our techniques to other ``base cases" than the manifolds established in \cite{AKS}, so as to create more examples of strong corks. 

\begin{proof}[Proof of Theorem~\ref{thm:P}]
We begin by constructing an interchanging $(-1, -1)$-cobordism from $P$ to another manifold-with-involution. To this end, consider the fundamental cobordism displayed on the left in Figure~\ref{fig:5.17}. This is formed by attaching two $(-1)$-handles to parallel strands of $P$. Figure~\ref{fig:5.17} is analogous to Figure~\ref{fig:5.12}, but differs slightly due to the fact that the two components of $P$ (with the orientations displayed in Figure~\ref{fig:5.17}) have linking number $-1$, rather than $+1$. Performing a change-of-basis shows that this is an interchanging $(-1, -1)$-cobordism. On the right in Figure~\ref{fig:5.17}, we have displayed an alternative surgery diagram for the resulting manifold. The reader should check that this is equivariantly diffeomorphic to the previous.

\begin{figure}[h!]
\center
\includegraphics[scale=0.8]{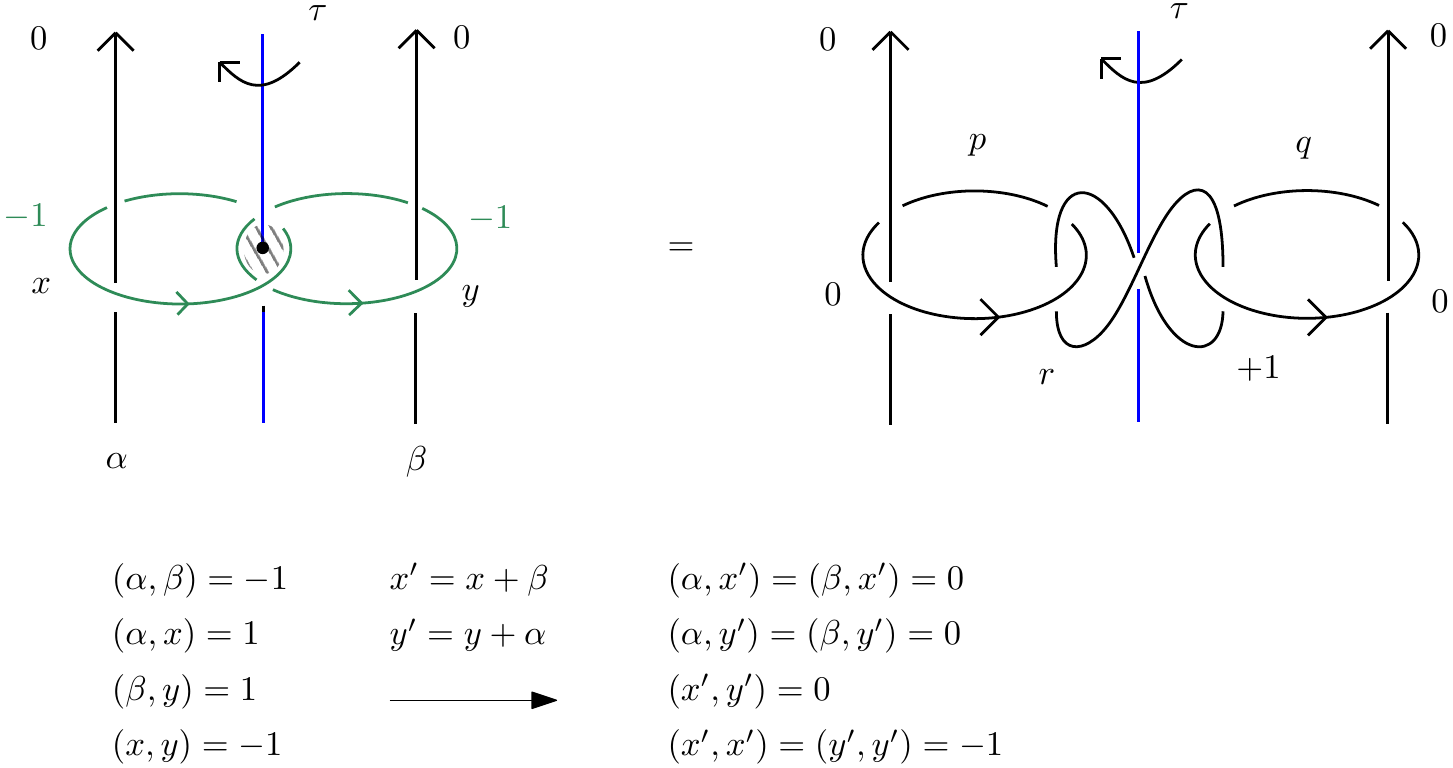}
\caption{Fundamental cobordism in the proof of Theorem~\ref{thm:P}. Here, $\alpha$ and $\beta$ are parallel strands in the two components of $P$. Note the difference in crossings from Figure~\ref{fig:5.12}.}\label{fig:5.17}
\end{figure}

\begin{figure}[h!]
\center
\includegraphics[scale=0.83]{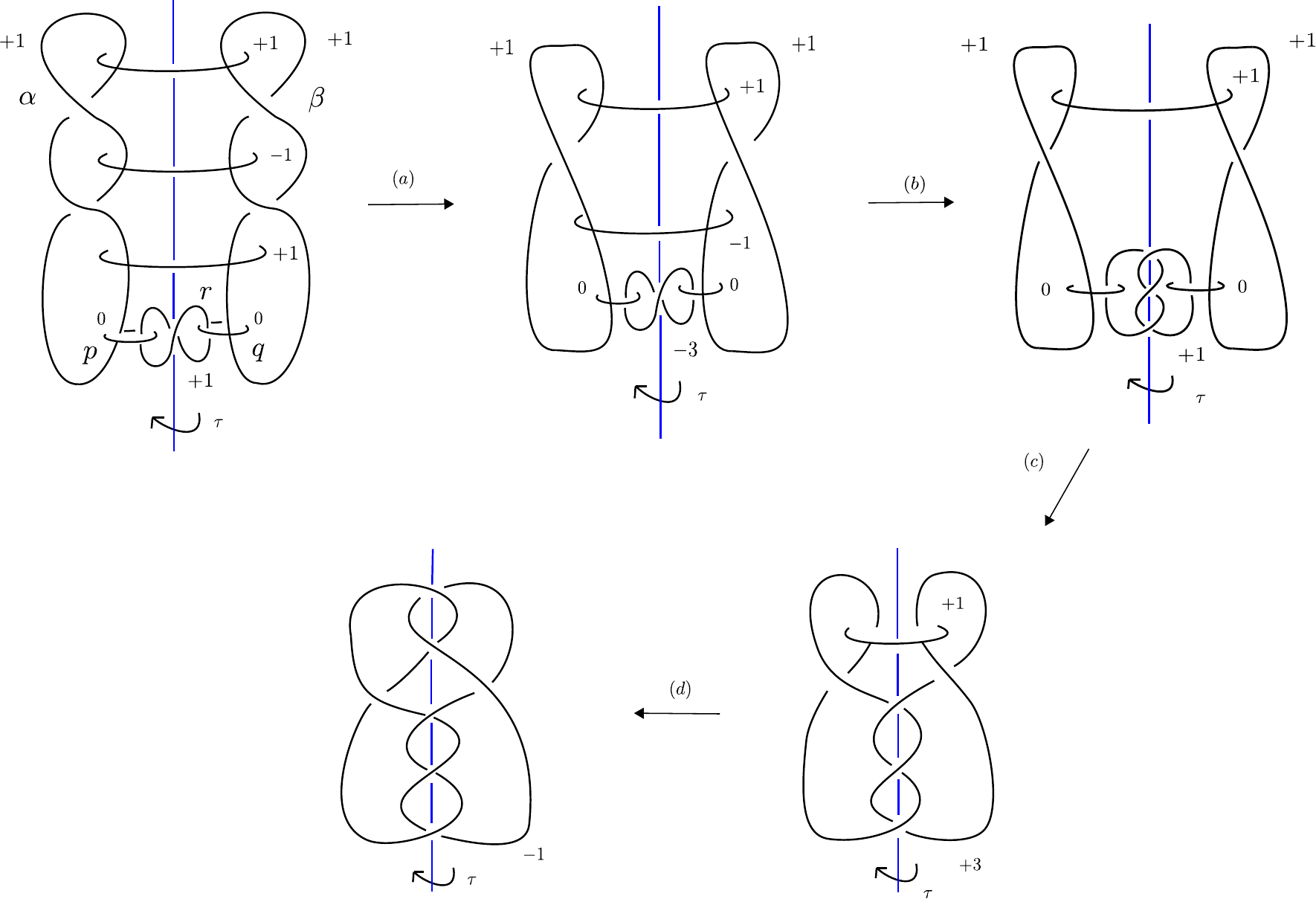}
\caption{Equivariant cobordism used in the proof of Theorem~\ref{thm:P}. The first diagram is obtained by attaching the configuration of Figure~\ref{fig:5.17} to an alternative surgery diagram for $P$. In $\textbf{(a)}$ we slide the nearest $(+1)$-curve over $p$ and $q$, blow down, and transfer two of the half-twists in $\alpha$ and $\beta$ to $r$. In $\textbf{(b)}$ we similarly slide the $(-1)$-curve over $p$ and $q$ and blow down. In $\textbf{(c)}$ we transfer the remaining half-twists in $\alpha$ and $\beta$ to $r$, slide the horizontal $(+1)$-curve over $p$ and $q$, and then blow down the $(+1)$-curves on either side. Finally, in $\textbf{(d)}$ we blow down the remaining $(+1)$-curve. This yields $(-1)$-surgery on a knot which the reader can check is $6_{2}$.}\label{fig:5.18}
\end{figure}

Using the Kirby calculus manipulations shown in Figure~\ref{fig:5.18}, one can prove that our new manifold is equivariantly diffeomorphic to $S_{-1}(6_2)$, equipped with the indicated involution $\tau$. Hence by Theorem~\ref{thm:1.4}, we have
\[
h_\tau(P) \leq h_\tau(S_{-1}(6_{2})) \text{ and } h_{\ita}(P) \leq h_{\iota \circ \tau}(S_{-1}(6_{2})).
\] 
It thus suffices to show that either of the invariants of $S_{-1}(6_2)$ are strictly less than zero. For simplicity, we work on the level of homology by ruling out the existence of an equivariant $\ff[U]$-module map from the trivial module $\ff[U]$ (equipped with the identity involution) to $\HFm(S_{-1}(6_2))$ (equipped with either involution $\tau_*$ or $\iota_* \circ \tau_*$), as in Remark~\ref{rem:2.H}. 

%\blue{Comment about constraining maps on homology moved to Remark~\ref{rem:2.H}. Also the equivariant map is required to take $U$-nontorsion elements to $U$-nontorsion elements. (Otherwise there does actually exist such an $\ff[U]$-module map.)}

To this end, we first compute the Heegaard Floer homology of $S_{-1}(6_2)$. Since $6_2$ is alternating, its knot Floer complex is determined by its Alexander polynomial. It is then straightforward to calculate $\HFm(S_{-1}(6_2))$ via the usual surgery formula \cite{OSinteger}, although for technical reasons we display the computation for $\HFp(S_{+1}(\overline{6}_2))$ instead. (See Figure~\ref{fig:5.19}.) For convenience, denote $K = \overline{6}_2$. Note that since $K$ has genus two, the desired Floer homology is \textit{not} given by the large surgery formula, but rather the homology of the mapping cone $\mathbb{X^{+}}(1)$ displayed in Figure~\ref{fig:5.20}. In this case, the desired homology is quasi-isomorphic to the kernel of the (truncated) mapping cone map with domain $H_*(A_{-1}^+) \oplus H_*(A_{0}^+) \oplus H_*(A_{+1}^+)$. The resulting calculation is displayed on the right in Figure~\ref{fig:5.19}.

\begin{figure}[h!]
\center
\includegraphics[scale=0.8]{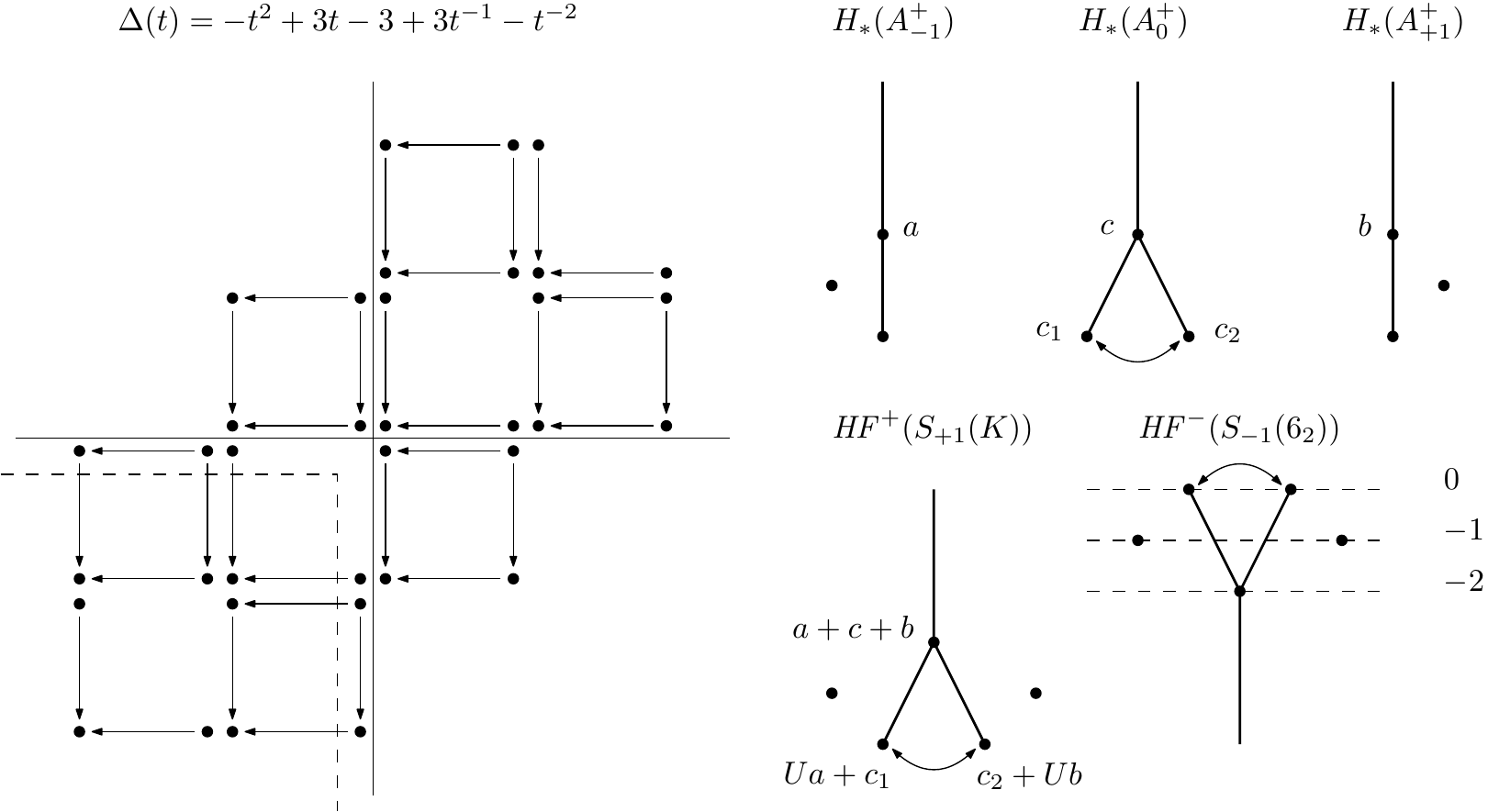}
\caption{Left: the knot Floer complex of $K$, with the dotted line marking the boundary of the quotient complex $A^{+}_{0}$. Right: various homologies $H_*(A_i^+)$, together with the calculation of $\HFp(S_{+1}(K))$.}\label{fig:5.19}
\end{figure}

\begin{figure}[h!]
\center
\includegraphics[scale=0.7]{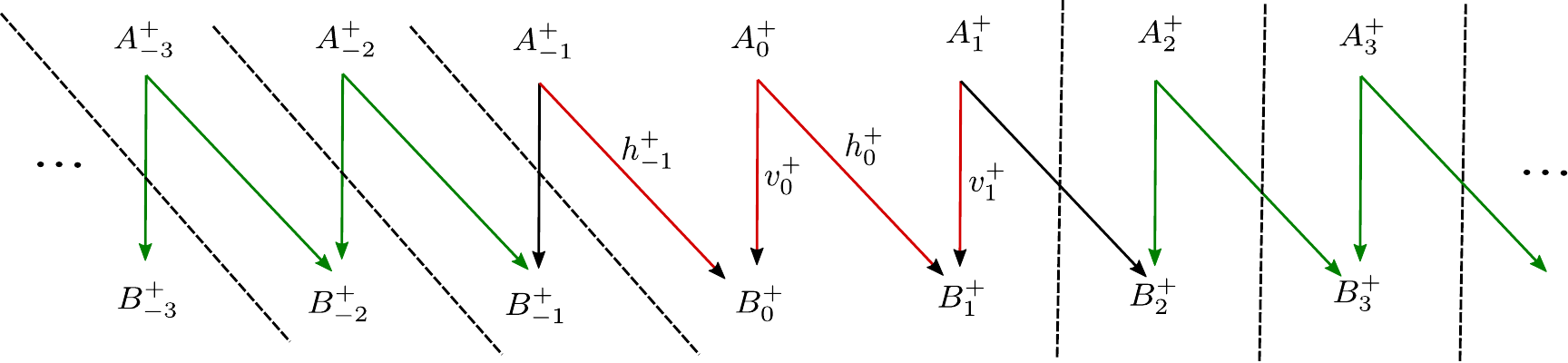}
\caption{The mapping cone $\mathbb{X^{+}}(1)$. Green arrows are homotopy equivalences. The truncated mapping cone (which carries the homology) consists of the red arrows.}\label{fig:5.20}
\end{figure}

We now attempt to obtain partial information regarding the action of $\iota_*$ on $\HFp(S_{+1}(K))$. As before, we can compute the action of $\iota_K$ on the knot Floer complex of $K$; this is given by reflection across the obvious diagonal. However, we cannot use the involutive large surgery formula and (at the time of writing) there is not a general involutive surgery formula. We thus resort to the following trick. Observe that there is a map 
\begin{equation*}
q: \mathbb{X}^+(1) \longrightarrow A^{+}_0
\end{equation*}
formed by quotienting out $\mathbb{X}^{+}(1)$ by everything other than $A^+_0$. In the basis of Figure~\ref{fig:5.19}, the induced map $q_* : H_*(\mathbb{X}^+(1)) \rightarrow H_*(A_0^+)$ sends the two obvious unmarked generators to zero and acts as an isomorphism on the rest of the homology. According to the proof of integer surgery formula in \cite{OSinteger}, under the identification of $H_*(\mathbb{X}^+(1))$ with $\HFp(S_{+1}(K))$, the quotient map $q$ coincides (on homology) with the triangle-counting map
\begin{equation*}
\Gamma^{+}_{0}: \CFp(S_{+1}(K)) \longrightarrow A^{+}_0
\end{equation*}
defined in \cite{OSinteger}. Furthermore, following the proof of \cite[Theorem 1.5]{HM}, one can show that $(\Gamma^{+}_{0})_*$ intertwines the actions of $\iota_*$ on $\HFp(S_{+1}(K))$ and $(\iota_0)_*$ on $H_*(A_0^+)$; that is,
\[
(\iota_0)_* \circ (\Gamma^+_0)_* = (\Gamma^+_0)_* \circ \iota_*.
\]
More precisely, Hendricks and Manolescu consider the map $\smash{\Gamma^+_{0,p}} : \CFp(S_p(K)) \rightarrow A^+_0$ when $p$ is large, and show that this intertwines $\iota$ and $\iota_0$. However, their proof of this fact does not depend on the surgery coefficient $p$. Of course, $\smash{\Gamma^+_{0,p}}$ no longer induces an isomorphism for small surgeries. See \cite[Equation 26]{HM} and \cite[Section 6.6]{HM} .
 
Using Hendricks and Manolescu's computation of $\iota_K$ for thin knots \cite{HM}, we can calculate that the action of $(\iota_0)_*$ on $H_*(A_0^+)$ interchanges the two elements of lowest grading. Hence $\iota_*$ on $\HFp(S_{+1}(K))$ must also interchange the two elements of lowest grading. Reflecting $\HFp(S_{+1}(K))$ over a horizontal line gives $\HFm(S_{-1}(6_2))$, with the action of $\iota_*$ exchanging the two elements of (shifted) grading zero, as displayed in Figure~\ref{fig:5.19}. Hence one of $\tau_*$ or $(\iota \circ \tau)_*$ on $\HFm(S_{-1}(6_2))$ must also exchange the pair of elements in grading zero. Clearly, there is no map (satisfying the properties of Remark~\ref{rem:2.H}) from the trivial $\ff[U]$-module, equipped with the identity involution, to $\HFm(S_{-1}(6_2))$, equipped with an involution acting nontrivially on the two elements of highest grading.

%\blue{Slight typo; there are two elements of highest grading. Also we should stress that the grading is zero, although this is in the ``shifted" world with the [-2]. (This is sort of an annoying point which maybe we should just gloss over.) However, it is important that both complexes have the same highest grading.} 

This completes the proof that $(P, \tau)$ is a strong cork. Moreover, it is clear that if $P'$ is constructed from $P$ by introducing any number of symmetric pairs of negative full twists (as in Figure~\ref{fig:1.Pos}), then $P'$ admits a sequence of interchanging $(-1, -1)$-cobordisms to $P$.
\end{proof}

%\newpage
\bibliographystyle{amsalpha}
\bibliography{bib}

\providecommand{\bysame}{\leavevmode\hbox to3em{\hrulefill}\thinspace}
\providecommand{\MR}{\relax\ifhmode\unskip\space\fi MR }
% \MRhref is called by the amsart/book/proc definition of \MR.
\providecommand{\MRhref}[2]{%
  \href{http://www.ams.org/mathscinet-getitem?mr=#1}{#2}
}
\providecommand{\href}[2]{#2}
\begin{thebibliography}{DHST19}

\bibitem[Akb91]{Akbulutfake}
Selman Akbulut, \emph{A fake compact contractible {$4$}-manifold}, J.
  Differential Geom. \textbf{33} (1991), no.~2, 335--356.

\bibitem[Akb16]{Akbulut4}
\bysame, \emph{4-manifolds}, Oxford Graduate Texts in Mathematics, vol.~25,
  Oxford University Press, Oxford, 2016.
  
\bibitem[AD05]{AD}
Selman Akbulut and Selahi Durusoy, \emph{An involution acting nontrivially on
  {H}eegaard-{F}loer homology}, Geometry and topology of manifolds, Fields
  Inst. Commun., vol.~47, Amer. Math. Soc., Providence, RI, 2005, pp.~1--9.

\bibitem[AK12]{AKar}
Selman Akbulut and {\c{C}}a{\u{g}}r{\i} Karakurt, \emph{Action of the cork
  twist on {F}loer homology}, Proceedings of the {G}\"{o}kova
  {G}eometry-{T}opology {C}onference 2011, Int. Press, Somerville, MA, 2012,
  pp.~42--52.
  
\bibitem[AK79]{AKir}
Selman Akbulut and Robion Kirby, \emph{Mazur manifolds}, Michigan Math. J.
  \textbf{26} (1979), no.~3, 259--284.

\bibitem[AM97]{AMexotic}
Selman Akbulut and Rostislav Matveyev, \emph{Exotic structures and adjunction
  inequality}, Turkish J. Math. \textbf{21} (1997), no.~1, 47--53.

\bibitem[AM98]{AMconvex}
\bysame, \emph{A convex decomposition theorem for {$4$}-manifolds}, Internat.
  Math. Res. Notices (1998), no.~7, 371--381.

\bibitem[AR16]{AR}
Selman Akbulut and Daniel Ruberman, \emph{Absolutely exotic compact
  4-manifolds}, Comment. Math. Helv. \textbf{91} (2016), no.~1, 1--19.

\bibitem[AY08]{AY}
Selman Akbulut and Kouichi Yasui, \emph{Corks, plugs and exotic structures}, J.
  G\"{o}kova Geom. Topol. GGT \textbf{2} (2008), 40--82.
  
\bibitem[AKS20]{AKS}
Antonio Alfieri, Sungkyung Kang, and Andr\'{a}s~I. Stipsicz, \emph{Connected
  {F}loer homology of covering involutions}, Math. Ann. \textbf{377} (2020),
  no.~3-4, 1427--1452.

\bibitem[AKMR17]{AKMR}
Dave Auckly, Hee~Jung Kim, Paul Melvin, and Daniel Ruberman, \emph{Equivariant
  corks}, Algebr. Geom. Topol. \textbf{17} (2017), no.~3, 1771--1783.

\bibitem[BO91]{BO}
Michel Boileau and Jean-Pierre Otal, \emph{Scindements de {H}eegaard et groupe
  des hom\'{e}otopies des petites vari\'{e}t\'{e}s de {S}eifert}, Invent. Math.
  \textbf{106} (1991), no.~1, 85--107.

\bibitem[Bon83]{Bonahon}
Francis Bonahon, \emph{Cobordism of automorphisms of surfaces}, Ann. Sci.
  \'{E}cole Norm. Sup. (4) \textbf{16} (1983), no.~2, 237--270.

\bibitem[Bro68]{Browder}
William Browder, \emph{Surgery and the theory of differentiable transformation
  groups}, Proc. {C}onf. on {T}ransformation {G}roups ({N}ew {O}rleans, {L}a.,
  1967), Springer, New York, 1968, pp.~1--46.

\bibitem[CK14]{CK}
Mahir~B. Can and {\c{C}}a{\u{g}}r{\i} Karakurt, \emph{Calculating
  {H}eegaard-{F}loer homology by counting lattice points in tetrahedra}, Acta
  Math. Hungar. \textbf{144} (2014), no.~1, 43--75.
  
\bibitem[Cer68]{Cerf}
Jean Cerf, \emph{Sur les diff\'{e}omorphismes de la sph\`ere de dimension trois
  {$(\Gamma \sb{4}=0)$}}, Lecture Notes in Mathematics, No. 53,
  Springer-Verlag, Berlin-New York, 1968.

\bibitem[CFHS96]{CFHS}
Cynthia~L. Curtis, Michael~H. Freedman, Wu-Chung Hsiang, and Richard Stong,
  \emph{A decomposition theorem for {$h$}-cobordant smooth simply-connected
  compact {$4$}-manifolds}, Invent. Math. \textbf{123} (1996), no.~2, 343--348.

\bibitem[DHST18]{DHSTcobordism}
Irving Dai, Jennifer Hom, Matthew Stoffregen, and Linh Truong, \emph{An
  infinite-rank summand of the homology cobordism group}, 2018, preprint,
  arXiv:1810.06145.

\bibitem[DHST19]{DHSTconcordance}
\bysame, \emph{More concordance homomorphisms from knot {F}loer homology},
  2019, preprint, arXiv:1902.03333.

\bibitem[DM19]{DM}
Irving Dai and Ciprian Manolescu, \emph{Involutive {H}eegaard {F}loer homology
  and plumbed three-manifolds}, J. Inst. Math. Jussieu \textbf{18} (2019),
  no.~6, 1115--1155.

\bibitem[DS19]{DS}
Irving Dai and Matthew Stoffregen, \emph{On homology cobordism and local
  equivalence between plumbed manifolds}, Geom. Topol. \textbf{23} (2019),
  no.~2, 865--924.

\bibitem[Elk95]{Elkies}
Noam~D. Elkies, \emph{A characterization of the {${\bf Z}^n$} lattice}, Math.
  Res. Lett. \textbf{2} (1995), no.~3, 321--326.

\bibitem[Fre82]{Freedman}
Michael~H. Freedman, \emph{The topology of four-dimensional manifolds}, J.
  Differential Geometry \textbf{17} (1982), no.~3, 357--453.

\bibitem[Gor75]{Gordon}
Cameron~McA. Gordon, \emph{Knots, homology spheres, and contractible
  {$4$}-manifolds}, Topology \textbf{14} (1975), 151--172.

\bibitem[Har14]{Harper}
Eric Harper, \emph{On instanton homology of corks {$W_n$}}, Topology Appl.
  \textbf{171} (2014), 1--6.
  
\bibitem[HP20]{HP}
Kyle Hayden and Lisa Piccirillo, \emph{New curiosities in the menagerie of corks}, 
2020, preprint, arXiv:2005.08928.

\bibitem[HN13]{HN}
Matthew Hedden and Yi~Ni, \emph{Khovanov module and the detection of unlinks},
  Geom. Topol. \textbf{17} (2013), no.~5, 3027--3076.
  
\bibitem[HHL18]{HHL}
Kristen Hendricks, Jennifer Hom, and Tye Lidman, \emph{Applications of
  involutive {H}eegaard {F}loer homology}, 2018, preprint, arXiv:1802.02008.

\bibitem[HM17]{HM}
Kristen Hendricks and Ciprian Manolescu, \emph{Involutive {H}eegaard {F}loer
  homology}, Duke Math. J. \textbf{166} (2017), no.~7, 1211--1299.

\bibitem[HMZ18]{HMZ}
Kristen Hendricks, Ciprian Manolescu, and Ian Zemke, \emph{A connected sum
  formula for involutive {H}eegaard {F}loer homology}, Selecta Math. (N.S.)
  \textbf{24} (2018), no.~2, 1183--1245.

\bibitem[Juh16]{Juhasz}
Andr\'{a}s Juh\'{a}sz, \emph{Cobordisms of sutured manifolds and the
  functoriality of link {F}loer homology}, Adv. Math. \textbf{299} (2016),
  940--1038.

\bibitem[Juh18]{JuhaszTQFT}
\bysame, \emph{Defining and classifying {TQFT}s via surgery}, Quantum Topol.
  \textbf{9} (2018), no.~2, 229--321.

\bibitem[JTZ12]{JTZ}
Andr\'as Juh\'asz, Dylan Thurston, and Ian Zemke, \emph{Naturality and mapping
  class groups in {H}eegaard {F}loer homology}, to appear \emph{Mem. Amer. Math. Soc.}
  arXiv:1210.4996.
  
\bibitem[Kre76]{Kreckbordism}
Matthias Kreck, \emph{Bordism of diffeomorphisms}, Bull. Amer. Math. Soc.
  \textbf{82} (1976), no.~5, 759--761.

\bibitem[Kre84]{Kreck}
\bysame, \emph{Bordism of diffeomorphisms and related topics}, Lecture Notes in
  Mathematics, vol. 1069, Springer-Verlag, Berlin, 1984, With an appendix by
  Neal W. Stoltzfus. \MR{755877}

\bibitem[LRS18]{LRS}
Jianfeng Lin, Daniel Ruberman, and Nikolai Saveliev, \emph{On the {F}r{\o}yshov
  invariant and monopole {L}efshetz number}, 2018, preprint, arXiv:1802.07704.

%\bibitem[LM98]{LMstein}
%Paolo Lisca and Gordana Mati\'{c}, \emph{Stein {$4$}-manifolds with boundary
% and contact structures}, vol.~88, 1998, Symplectic, contact and
%  low-dimensional topology (Athens, GA, 1996), pp.~55--66.

\bibitem[LM15]{LMdoubly}
Charles Livingston and Jeffrey Meier, \emph{Doubly slice knots with low
  crossing number}, New York J. Math. \textbf{21} (2015), 1007--1026.

\bibitem[LM19]{KnotInfo}
Charles Livingston and Allison~H. Moore, \emph{Knotinfo: Table of knot
  invariants}, 2019.
  
\bibitem[Mat96]{Matveyev}
Rostislav Matveyev, \emph{A decomposition of smooth simply-connected
  {$h$}-cobordant {$4$}-manifolds}, J. Differential Geom. \textbf{44} (1996),
  no.~3, 571--582.

\bibitem[Maz61]{Mazur}
Barry Mazur, \emph{A note on some contractible {$4$}-manifolds}, Ann. of Math.
  (2) \textbf{73} (1961), 221--228.
  
\bibitem[MS13]{MS}
Darryl McCullough and Teruhiko Soma, \emph{The {S}male conjecture for {S}eifert
  fibered spaces with hyperbolic base orbifold}, J. Differential Geom.
  \textbf{93} (2013), no.~2, 327--353.

\bibitem[Mel79]{Melvin}
Paul Melvin, \emph{Bordism of diffeomorphisms}, Topology \textbf{18} (1979),
  no.~2, 173--175.

\bibitem[Mon75]{Montesinos}
Jos\'{e}~M. Montesinos, \emph{Surgery on links and double branched covers of
  {$S\sp{3}$}}, Knots, groups, and {$3$}-manifolds ({P}apers dedicated to the
  memory of {R}. {H}. {F}ox), 1975, pp.~227--259. Ann. of Math. Studies, No.
  84.

\bibitem[OS03]{OSabsgr}
Peter Ozsv{\'a}th and Zolt{\'a}n Szab{\'o}, \emph{Absolutely graded {F}loer
  homologies and intersection forms for four-manifolds with boundary}, Adv.
  Math. \textbf{173} (2003), no.~2, 179--261.
  
\bibitem[OS04a]{OS3manifolds1}
\bysame, \emph{Holomorphic disks and topological invariants for closed
  three-manifolds}, Ann. of Math. (2) \textbf{159} (2004), no.~3, 1027--1158.

\bibitem[OS04b]{OS3manifolds2}
\bysame, \emph{Holomorphic disks and three-manifold invariants: properties and
  applications}, Ann. of Math. (2) \textbf{159} (2004), no.~3, 1159--1245.

\bibitem[OS06]{OSsmooth4}
\bysame, \emph{Holomorphic triangles and invariants for smooth four-manifolds},
  Adv. Math. \textbf{202} (2006), no.~2, 326--400.

\bibitem[OS08]{OSinteger}
\bysame, \emph{Knot {F}loer homology and integer surgeries}, Algebr. Geom.
  Topol. \textbf{8} (2008), no.~1, 101--153.

\bibitem[Pet13]{Petkova}
Ina Petkova, \emph{Cables of thin knots and bordered {H}eegaard {F}loer
  homology}, Quantum Topol. \textbf{4} (2013), no.~4, 377--409.

\bibitem[Sav99]{Savelievfloer}
Nikolai Saveliev, \emph{Floer homology of {B}rieskorn homology spheres}, J.
  Differential Geom. \textbf{53} (1999), no.~1, 15--87.

\bibitem[Sav02]{Savelievinvariants}
\bysame, \emph{Invariants of homology $3$-spheres}, Encyclopaedia of
  Mathematical Sciences, Springer, Berlin, 2002.

\bibitem[Sav03]{Savelievnote}
\bysame, \emph{A note on {A}kbulut corks}, Math. Res. Lett. \textbf{10} (2003),
  no.~5-6, 777--785.

\bibitem[Wal69]{Waldhausen}
Friedhelm Waldhausen, \emph{\"{U}ber {I}nvolutionen der {$3$}-{S}ph\"{a}re},
  Topology \textbf{8} (1969), 81--91. \MR{236916}

\bibitem[Yam18]{Yamada}
Yuichi Yamada, \emph{Exceptional {D}ehn surgeries along the {M}azur link}, J.
  G\"{o}kova Geom. Topol. GGT \textbf{12} (2018), 40--70.

\bibitem[Zem15]{Zemkegraph}
Ian Zemke, \emph{Graph cobordisms and {H}eegaard {F}loer homology}, 2015,
  preprint, arXiv:1512.01184.

\bibitem[Zem16]{Zemkelinkcobord}
\bysame, \emph{Link cobordisms and functoriality in link {F}loer homology},
  2016, preprint, arXiv:1610.05207.

\bibitem[Zem18]{Zemkeduality}
\bysame, \emph{Duality and mapping tori in heegaard floer homology}, 2018,
  preprint, arXiv:1801.09270.

\end{thebibliography}

\end{document}